                \newcommand{\R}{\mathbb R}        
\newcommand{\f}{\varphi}                
\newcommand{\D}{\partial}               
                 \newcommand{\g}{\gamma} 
            \newcommand{\G}{\Gamma} 
                 \def\l{\lambda}
\newcommand{\sph}{\mathbb S}                        
\newcommand{\comp}{{\small{\circ}}}     \def\a{\alpha}
\newcommand{\C}{\mathcal C}
               \newcommand{\E}{{\mathcal E}}
            \newcommand{\RP}{{\mathbb R\mathrm P}}
\DeclareMathOperator{\Tr}{Tr}           \newcommand{\K}{\mathcal{K}}
\def\t{\tau}
\def\hat{\widehat}                      \def\P{\mathbb P}  
\newcommand{\flecha}{\overrightarrow}
\newcommand{\Abf}{\boldsymbol{A}}               \newcommand{\Bbf}{\boldsymbol{B}}
\newcommand{\Cbf}{\boldsymbol{C}}               \newcommand{\Hbf}{\boldsymbol{H}}
\newcommand{\Nbf}{\boldsymbol{N}}                 
\newcommand{\ebf}{\boldsymbol{e}}               \newcommand{\nbf}{\boldsymbol{n}}
\newcommand{\ubf}{\boldsymbol{u}}               \newcommand{\kbf}{\boldsymbol{k}}
\newcommand{\Obf}{\boldsymbol{O}}               \newcommand{\Idbf}{\boldsymbol{Id}}
\newcommand{\Rbf}{\boldsymbol{R}}               
\newcommand{\lbf}{\boldsymbol{\lambda}}         \newcommand{\vbf}{\boldsymbol{v}}
\journal{Journal of Geometry and Physics}
\begin{document}

\captionsetup[figure]{labelfont={bf},labelformat={default},labelsep=period,name={\small Fig.}}

%%%%%%%%%%%%%%%%%%%%%%%%%%%%%%%%%%%%%%%%%%%%%%%%%%%%%%%%%%%%%%%%%%%%%%%%%%%%%%%%%%%%%%%%%%%%%%%%%%%%%%%%
%%%%%%%%%%%%%%%%ù     STYLE  THEOREM    %%%%%%%%%%%%%%%%%%%%%%%%%%%%%%%%%%%%%%%%%%%%%%%%%%%%%%%%%%%%%%%%
%%%%%%%%%%%%%%%%%%%%%%%%%%%%%%%%%%%%%%%%%%%%%%%%%%%%%%%%%%%%%%%%%%%%%%%%%%%%%%%%%%%%%%%%%%%%%%%%%%%%%%%%

\numberwithin{equation}{section}                % numerazione delle equazioni
\theoremstyle{plain}% default
\newtheorem{theorem}{\bf Theorem}
\numberwithin{theorem}{subsection}
\newtheorem*{theorem*}{\bf Theorem}
\newtheorem{teorema}{\bf Theorem}
\def\theteorema{\Roman{teorema}}

\newtheorem{proposition}{\bf Proposition}
\numberwithin{proposition}{subsection}
\newtheorem*{proposition*}{\bf Proposition}
\newtheorem{lemma}[proposition]{\bf Lemma}
\newtheorem*{lemma*}{\bf Lemma}
\newtheorem*{mlemma}{\bf Main Lemma}

\newtheorem{corollary}[proposition]{\bf Corollary}
\newtheorem*{corollary*}{\bf Corollary}
\newtheorem*{conjecture}{\bf Conjecture}%[section]
\newtheorem*{fact*}{\bf Fact}
\newtheorem{fact}[proposition]{\bf Fact}

\newtheorem{property}{\bf Property}

%%%%%%%%%%%%%%%%%%%%%%%%%%%%%%%%%%%%%%%%%%%%%%%%%%%%%%%%%%%%%%%%%%%%%%%%%%%%%%%%%%%%%%%%%%%%%%%%%%%%%%%%
%%%%%%%%%%%%%%%%ù     STYLE DEFINITION    %%%%%%%%%%%%%%%%%%%%%%%%%%%%%%%%%%%%%%%%%%%%%%%%%%%%%%%%%%%%%%
%%%%%%%%%%%%%%%%%%%%%%%%%%%%%%%%%%%%%%%%%%%%%%%%%%%%%%%%%%%%%%%%%%%%%%%%%%%%%%%%%%%%%%%%%%%%%%%%%%%%%%%%

\theoremstyle{definition}
\newtheorem{definition}{\bf Definition}%[section]
\newtheorem*{definition*}{\bf Definition}

\newtheorem{example}{\bf Example}
\newtheorem*{example*}{\bf Example}
\newtheorem{Example}{\bf Example}[section]
\theoremstyle{remark}
\newtheorem*{remark*}{\bf Remark}
\newtheorem{remark}{\bf Remark}
\numberwithin{remark}{section}

\newtheorem*{problem}{\bf Problem}
\newtheorem*{exercise*}{\bf Exercise}
\newtheorem*{note*}{\bf Note}
\newtheorem{note}[proposition]{\bf Note}

%%%%%%%%%%%%%%%%%%%%%%%%%%%%%%%%%%%%%%%%%%%%%%%%%%%%%%%%%%%%%%%%%%%%%%%%%%%%%%%%%%%%%%%%%%%%%%%%%%%%%%%%
%%%%%%%%%%%%%%%%ù     TITLE    %%%%%%%%%%%%%%%%%%%%%%%%%%%%%%%%%%%%%%%%%%%%%%%%%%%%%%%%%%%%%%%%%%%%%%%%%
%%%%%%%%%%%%%%%%%%%%%%%%%%%%%%%%%%%%%%%%%%%%%%%%%%%%%%%%%%%%%%%%%%%%%%%%%%%%%%%%%%%%%%%%%%%%%%%%%%%%%%%%

\begin{frontmatter}

\title{On Surfaces in $\R^n$ via Gauss Map, Caustics, Duality and\\ 
Pseudo Euclidean Geometry of Quadratic Forms}

\author[label1,label2]{Ricardo\,Uribe-Vargas}
\affiliation[label1]{organization={Institut de Math\'ematiques de Bourgogne, UMR 5584, CNRS \& 
		Universit\'e Bourgogne Europe},%Department and Organization
            %addressline={}, 
            city={Dijon},
           % postcode={21000}, 
            %state={},
            country={France}}
\affiliation[label2]{organization={Laboratory Solomon Lefschetz, UMI 2001, CNRS \& 
Universidad Nacional Auton\'oma de M\'exico},%Department and Organization
            %addressline={}, 
            city={Ciudad de M\'exico},
           % postcode={}, 
           % state={},
            country={M\'exico}}
%\footnote{Institut de Math\'ematiques de Bourgogne, UMR 5584, CNRS \& 
%		Universit\'e Bourgogne Europe, Dijon France and 
%  Laboratory Solomon Lefschetz, UMI 2001, CNRS \&  Universidad Nacional Auton\'oma de M\'exico, Cuernavaca, M\'exico. r.uribe-vargas@u-bourgogne.fr}}

\date\empty                     % Data
% \maketitle
% \vspace*{-0.5cm}

%\addtolength{\topmargin}{-67pt}
%\addtolength{\textheight}{100pt}
%\oddsidemargin=0pt
%\textwidth=6in

%\newtheorem{theorem}{Theorem}[section]
%\newtheorem{lemma}[theorem]{Lemma}
%\newtheorem{conjecture}[theorem]{Conjecture}
%\newtheorem{proposition}[theorem]{Proposition}
%\newtheorem{corollary}[theorem]{Corollary}
%{\theorembodyfont{\rmfamily}
%\newtheorem{remark}[theorem]{Remark}
%\newtheorem{example}[theorem]{Example}
%\newtheorem{definition}[theorem]{Definition}
%\newtheorem{problem}[theorem]{Problem}
%}

\let\h\theta
\let\t\tau

\begin{abstract}
\noindent
We get new results (and rederive some know ones) on smooth surfaces in %Euclidean space 
$\R^n$ by unifying several view points %(some of them new) 
into a coherent general view. Namely, we show and use new relations of the evolute (caustic) 
with the curvature ellipse, the Gauss map and the pseudo-Euclidean geometry of the $3$-space of quadratic forms on $\R^2$. 
A key result (Th.\,\ref{th:focal_quadric-indicatrix}): \textit{for a surface $M$ in $\R^n$ the intersection 
of its caustic with the normal space $N_pM$ is the polar dual hypersurface (in $N_pM$) of the curvature ellipse 
at $p$}. Moreover, all local objects $X$ (cf. the invariants and their relations) have a ``paired'' version $X^*$ 
(with ${X^*}^*=X$) - this provides new results on the original objects. 
\end{abstract} 

\begin{highlights}
\item The natural generalisation of Gaussian curvature at a point $p$ 
of a surface $M$ in $\R^{n\geq 4}$ is a quadratic form $\mathcal{G}_p:N_pM\to\R$
\item The coefficients of characteristic polynomial of the quadratic fom $\mathcal{G}_p:N_pM\to\R$ 
are the usual local invariants of $M$ at $p$ 
\item The eigenvalues of $\mathcal{G}_p$ are alternative local invariants (\textit{principal focal curvatures}) 
\item In $N_pM$, the local caustic $\mathcal{C}_p$ is the polar dual hypersurface of the indicatrix ellipse
\item The local caustic $\mathcal{C}_p$ is a level set of the Gauss quadratic form $\mathcal{G}_p:N_pM\to\R$
\item The local invariants of a surface in $\R^4$ or in $\R^5$ are provided by the pseudo-Euclidean 
$3$-space of quadratic forms on $\R^2$ with ist Lie algebra structure 
\item There are new inequalities between the local invariants for $M$ in $\R^4$ (and $\R^5$) 
\item The local quadratic map $\f_p$ of a surface in $\R^n$ has a ``paired'' quadratic map $\f_p^*$, satisfying ${\f_p^*}^*=\f_p$, 
which has its own paired invariants 
\end{highlights}

\begin{keyword}
Surface, optical caustic, evolute, Poisson bracket, singularity, curvature, Gauss map.
\end{keyword}
\end{frontmatter}
% {\footnotesize
% \noindent 
% \textbf{Keywords}. Surface, optical caustic, evolute, Poisson bracket, singularity, curvature, Gauss map.
% }
% \smallskip
%
{\footnotesize
\noindent 
\textbf{MSC}. 53A20, 53A55, 53D10, 57R45, 58K05
}
\bigskip

\centerline{\textit{To Jos\'e ``Pepe'' Seade in his 65th birthday}}

%%%%%%%%%%%%%%%%%%%%%%%%%%%%%%%%%%%%%%%%%%%%%%%%%%%%%%%%%%%%%%%%%%%%%%%%%%%%%%%%%%%%%%%%%%%%%%%%%%%%%%%%
%%%%%%%%%%%%%%%%%     BODY     %%%%%%%%%%%%%%%%%%%%%%%%%%%%%%%%%%%%%%%%%%%%%%%%%%%%%%%%%%%%%%%%%%%%%%%%%
%%%%%%%%%%%%%%%%%%%%%%%%%%%%%%%%%%%%%%%%%%%%%%%%%%%%%%%%%%%%%%%%%%%%%%%%%%%%%%%%%%%%%%%%%%%%%%%%%%%%%%%%

%%%%%%%%%%%%%%%%%%%%%%%%%%%%%%%%%%%%%%%%%%%%%%%%%%%%%%%%%%%%%%%%%%%%%%%%%%%%%%%%%%%%%%%%%%%%%%%%%%%%%%%
%%%%%%%%%%%%%%%%%%%%%%%%%%%%%%%%%%%%%%%%%%%%%%%%%%%%%%%%%%%%%%%%%%%%%%%%%%%%%%%%%%%%%%%%%%%%%%%%%%%%%%%
\section*{Introduction and Results}
%%%%%%%%%%%%%%%%%%%%%%%%%%%%%%%%%%%%%%%%%%%%%%%%%%%%%%%%%%%%%%%%%%%%%%%%%%%%%%%%%%%%%%%%%%%%%%%%%%%%%%% 
{\footnotesize
	The geometry of submanifolds in $\R^n$ is fundamental in thermodynamics, mechanics, relativity, optics, etc. The simplest local invariants of a smooth submanifold in Euclidean space are the second order 
ones.} %- which depend on the degree 2 terms of the Taylor expansion. 
%
% \begin{example*} 
% For the plane curve $\g:t\mapsto (t,f(t))$, with $f(t)=\frac{1}{2}kt^2+\rm{h.o.t.}(t)$, 
% the curvature at $p=(0,0)$ is $f''(0)=k$. %and $R=1/a$ is the radius of the osculating circle of $\g$ at $p$. 
% \end{example*}
%
\begin{example*} 
For a smooth surface in $\R^3$ locally given by $\g(s,t)=(s,t,f(s,t))$, with 
$f(s,t)=\frac{1}{2}as^2+bst+\frac{1}{2}ct^2+\rm{h.o.t.}(s,t)$, the 
Gauss curvature at $p=(0,0,0)$ is $ac-b^2$. 
%The radii of the ``osculating spheres'' of $M$at $p$ are $R_1=1/u$ and $R_2=1/v$. 
\end{example*}

A smooth surface $M$ in $\R^{2+\ell}$ can be always locally given in Monge form 
\[\g:(s,t)\mapsto (s,t;f_1(s,t),\ldots,f_\ell(s,t))\,,\] 
where $f_i(s,t)=Q_i(s,t)+\mathrm{h.o.t.}(s,t)$ and 
$Q_i(s,t)=\frac{1}{2}(a_is^2+2b_ist+c_it^2)$. 

A surface in $\R^4$ has the following local invariants obtained from $Q_1$ and $Q_2$ (\protect\cite{Little}): 
\smallskip

\noindent 
$\bullet$ \textit{Gaussian curvature} $K=k_1+k_2$, \ \  where $k_1=a_1c_1-b_1^2$ and $k_2=a_2c_2-b_2^2$; 
\smallskip

\noindent 
$\bullet$ \textit{Normal curvature} $N=(a_1-c_1)b_2-(a_2-c_2)b_1$; 
\smallskip

\noindent 
$\bullet$ \textit{Determinant invariant} $\Delta=\frac{1}{4}\left(4(a_1c_1-b_1^2)(a_2c_2-b_2^2)-((a_1c_2+a_2c_1)-2b_1b_2)^2\right)$
\smallskip

\noindent
$\bullet$ \textit{Mean curvature vector} $\Hbf=((a_1+c_1)/2, (a_2+c_2)/2)$. 
\smallskip

\noindent
$\bullet$ An ellipse $\mathcal{E}_p$ in the normal plane $N_pM$, centred at $\Hbf$, called \textit{indicatrix ellipse}. 
\medskip

%The simplest relations between the indicatrix ellipse $\mathcal{E}_p$ and the other four local invariants 
%are described in \protect\cite{Little, Goncalves}. 
In the last 40 years, the singularity theory teams in Universities of Valencia (led by C. Romero-Fuster, J.J. Nu\~no Ballesteros), Hokkaido (led by S. Izumiya, G. Ishikawa), 
Liverpool (led by J.W. Bruce, P. Giblin) and Sao Carlos (led by M.A.S. Ruas, F. Tari) 
made lots of contributions to the theory of surfaces in $\R^n$, collected in the book \protect\cite{IRFRT}. 
Trying to explain to S.\,L\'opez de Medrano those contributions ``up to order two'' %(our initial aim), 
we unify distinct view points 
into a coherent one, discovering new results and insights, and we rederive and ``explain'' some know results from another perspective. 
\smallskip

We start, in \S\ref{sect-Indic-ellipse&point-class}, by recalling the notion of indicatrix ellipse $\E_p$, at $p\in M$, 
and the classification of points of a surface $M$ in terms of the indicatrix ellipse. 

We show, in \S\ref{sect-GaussMap-GQF}, that the natural generalisation of Gaussian curvature at a point $p$ 
of a surface $M$ in $\R^{n\geq 4}$ is not a number but a quadratic form $\mathcal{G}_p:N_pM\to\R$, and that
the coefficients of its characteristic polynomial are local invariants of $M$ at $p$. 
For example, for $M$ in $\R^4$ the characteristic polynomial is $P_{\mathcal{G}_p}(\l)=\l^2-K\l+\Delta$. 
We present the eigenvalues of $\mathcal{G}_p$ as alternative local invariants (\textit{principal focal curvatures}) 
and the eigendirections as characteristic normal directions.  
%We also got local information from the linear map $\flecha{\mathcal{G}_p}:N_pM\to N_pM$ associted to $\mathcal{G}_p$. 

% The quadratic part of the Monge form of $M\subset\R^{2+\ell}$ at $p$, $\phi_p:(s,t)\mapsto(Q_1,\ldots,Q_\ell)$ 
% determines the following \textit{indicatrix ellipse} in $N_pM$: $\E_p(\ubf)=2\f_p(\ubf)$, with $\|\ubf\|=1$. 

In \S\ref{section-C-S}, we recall %the caustic of $M$ in $\R^n$ and 
the \textit{local caustic $\mathcal{C}_p$ of $M\subset\R^{2+\ell}$ at $p$} (formed by the centres of the hyperspheres having higher contact with $M$ at $p$). We show $\mathcal{C}_p$ 
is a quadric in $N_pM$ - we note $\Rbf$ its centre if it exists. 
%if $p$ is not parabolic and $\mathcal{C}_p$ is not degenerate, then the quadric $\mathcal{C}_p$. 
Theorem\,\ref{th:focal_quadric-indicatrix} states: \textit{in $N_pM$, the local caustic $\mathcal{C}_p$ is 
the polar dual hypersurface of the indicatrix ellipse} $\E_p$ and any 
line through $p$ in $N_pM\sqcup\RP^{\ell-1}_\infty$ cuts $\mathcal{C}_p$ at two real points counting multiplicities. 
%and provides geometric properties of $\mathcal{C}_p$.
Most known results on %the local caustic 
$\mathcal{C}_p$ become natural %and \guillemotleft explained\guillemotright\,
(without calculations) after Th.\,\ref{th:focal_quadric-indicatrix}, which also has new corollaries. 

In \S\ref{Section-relation-inequalities}, noting $\flecha{\mathcal{G}_p}:N_pM\to N_pM$ the symmetric linear map 
associated to $\mathcal{G}_p$, we prove that $\Hbf=\flecha{\mathcal{G}_p}\Rbf$, and
get the equalities \,$\langle \Rbf\,,\,\Hbf\rangle=1-N^2/4\Delta$\, 
for $p\in M$ in $\R^4$ and \,$\langle \Rbf\,,\,\Hbf\rangle=1$\, for $p\in M$ in $\R^5$. 
We describe the possible locus of $\Rbf$ and $\Hbf$ in terms of the local invariants. The local caustic $\mathcal{C}_p$ is written as a level set of the 
Gauss quadratic form $\mathcal{G}_p$ and is related to the so-called binormal vectors. 

In \S\ref{Section-Pseudo-Euclidean}, we describe the local invariants of a surface in $\R^4$ or in $\R^5$ in terms 
of the features of the parallelogram (or parallelepiped) formed by $Q_1$, $Q_2$ (or $Q_1$, $Q_2$, $Q_3$) in the pseudo-Euclidean 
$3$-space of quadratic forms on $\R^2$ with ist Lie algebra structure. 

In \S\ref{section:Inequalities-Local-Invariants}, we get new inequalities between the local invariants for $M$ in 
$\R^4$ (and $\R^5$). 

In \S\ref{section:Paired Quadratic Form}, to a local quadratic map $\f_p$ of a surface in $\R^n$ 
(see \S\S\ref{subsect-indic-ellipse&paramtrisation}) we associate a ``paired'' quadratic map $\f_p^*$, satisfying ${\f_p^*}^*=\f_p$, 
which has its own paired invariants ($K^*$, $\Delta^*$, $\E_p^*$, $\mathcal{C}_p^*$, $\mathcal{G}_p^*$, etc.). 
The relations between the original objects and the paired ones (cf. $\Hbf^*=\Rbf$, 
$\flecha{\mathcal{G}_p}(\mathcal{C}_p)=\mathcal{C}_p^*$) lead to new results on the original objects. 
\smallskip

{\footnotesize
\noindent
\textbf{Note}. Most results of this paper have natural generalisations for $k$-dimensional submanifolds in $\R^{k+\ell}$,  
which will appear in a forthcoming paper \protect\cite{Uribe_k-dim_submnfd-Rn}. Those general results arise from basic properties 
on the singularities of homogeneous quadratic maps $\R^k\to\R^\ell$, to appear in \protect\cite{Uribe_QM-CVS-D}.  
}
\smallskip

\noindent
{\footnotesize 
\textbf{Acknowledgements.} These investigations answer a series of questions of Santiago L\'opez de Medrano 
(I'm very grateful to him) and were initiated and mostly done in Laboratory UMI2001 CNRS Solomon Lefschetz UNAM M\'exico 
(I'm very grateful to Jos\'e Seade for his kind invitation). I'm grateful to Efrain Vega Landa for helpful conversations and questions, 
and to F.\,S\'anchez Bringas who let me know his paper \protect\cite{Bayard_Mendez_S-B} on surfaces in $\R^5$ (where some invariants, 
including $\Delta$ and $\tau$, were algebraically found). %and an interesting ellipse in $N_pM$, 
%noted $\delta$, was introduced (one obtains it from $\E_p^*$ and $\tau$, namely,  $\delta=\tau\,\E_p^*$). 
} 
\newpage

{\footnotesize
\noindent 
\textbf{Contents}
\medskip

\noindent
\textbf{1 Indicatrix Ellipse and Classification of Points}
\smallskip
% $\bullet$ Indicatrix Ellipse $\E_p$ and its Parametrisation. 
% \smallskip
% $\bullet$ Classification of Points of $M\subset\R^4$ by the Indicatrix Ellipse $\E_p$.
% \smallskip

\noindent
\textbf{2 Gauss Map, ``Gauss Quadratic Form'' $\mathcal{G}_p$ and Local Invariants of Surfaces}

$\bullet$ The Gauss Quadratic Form $\mathcal{G}_p$. 

$\bullet$ The Matrix of the Gauss Quadratic Form $\mathcal{G}_p$ and Local Invariants of $M$. 

$\bullet$ Principal Normal Directions and Principal Focal Curvatures. 

$\bullet$ The Paired Quadratic Form $\mathcal{G}_p^*$ of the Gauss Quadratic Form $\mathcal{G}_p$. 
\smallskip

\noindent
\textbf{3 Optical Caustic and Duality Between the Local Caustic and the Indicatrix Ellipse} 

$\bullet$ Focal Set and Optical Caustic. 

$\bullet$ The Polar Dual of a Subvariety.

$\bullet$ Duality Between the Local Caustic $\mathcal{C}_p$ and the Indicatrix Ellipse $\E_p$ for sufaces in $\R^n$. 

$\bullet$ Classification of Points of a Surface in $\R^4$ ($\R^5$) by the Local Caustic. 
\smallskip

\noindent
\textbf{4 Relations Between $\E_p$, $\Hbf$, $\mathcal{C}_p$, $\Rbf$ and the Local Invariants}

$\bullet$ The Relation Between $\Hbf$ and $\Rbf$ by the Gauss Quadratic Form. 

$\bullet$ The Locus of the Centres $\Hbf$ and $\Rbf$ in Terms of the Local Invariants.

$\bullet$ The Local Caustic $\mathcal{C}_p$ as Level Set of the Gauss Quadratic Form $\mathcal{G}_p$. 

$\bullet$ Local Caustic Related to Degenerate and Binormal Vectors. 
\smallskip

\noindent
\textbf{5 Pseudo-Euclidean Geometry of Quadratic Forms on $\R^2$ and Local Invariants of Surfaces}

$\bullet$ Pseudo-Euclidean Space of Quadratic Forms on $\R^2$. 

$\bullet$ Poisson Bracket vs Vector Product. 

$\bullet$ From Pseudo-Orthogonality in $\R^3$ to Polar Duality in $\RP^2$.

$\bullet$ Local Invariants of Surfaces in $\R^4$ ($\R^5$) Via Pseudo-Euclidean Geometry of Quadratic Forms. 
\smallskip

\noindent
\textbf{6 Inequalities Between Local Invariants of Surfaces in $\R^4$ and in $\R^5$}

$\bullet$ Inequalities that Restrict the Local Invariants of Surfaces in $\R^4$ 

$\bullet$ Inequalities Bounding the Local Invariants of Surfaces in $\R^5$ and $\R^n$ 
\smallskip

\noindent
\textbf{7 The Paired Quadratic Map, its Paired Quadratic Form $\mathcal{G}_p^*$ and its Invariants}

$\bullet$ Legendre Transform and Paired Quadratic Map. 

$\bullet$ Paired Indicatrix $\E_p^*$, Paired Local Caustic $\mathcal{C}_p^*$ and Paired Invariants. 

$\bullet$ Relations Between $\E_p$, $\mathcal{C}_p$ and their Paired Versions $\E_p^*$, $\mathcal{C}_p^*$.  

$\bullet$ The Cones $\Sigma_p$, $\Sigma_p^*$ related to $\E_p$, $\E_p^*$ and to $K$, $K^*=\mathcal{A}/\Delta$ for $M$ in $\R^5$.

$\bullet$ Relations Between the Components of Two Paired Quadratic Maps. 
}

%\tableofcontents

%%%%%%%%%%%%%%%%%%%%%%%%%%%%%%%%%%%%%%%%%%%%%%%%%%%%%%%%%%%%%%%%%%%%%%%%%%%%%%%%%%%%%%%%%%%%%%%%%%%%%%%%%%%%%%%%%%%
\section{Indicatrix ellipse and classification of points}\label{sect-Indic-ellipse&point-class}
%%%%%%%%%%%%%%%%%%%%%%%%%%%%%%%%%%%%%%%%%%%%%%%%%%%%%%%%%%%%%%%%%%%%%%%%%%%%%%%%%%%%%%%%%%%%%%%%%%%%%%%%%%%%%%%%%%% 
Along this paper we  shall use the fact that a $k$ dimensional smooth submanifold in $\R^n$ is locally the 
graph of a map $f:\R^k\to \R^{n-k}$ (by the Implicit Function Theorem). 
%%%%%%%%%%%%%%%%%%%%%%%%%%%%%%%%%%%%%%%%%%%%%%%%%%%%%%%%%%%%%%%%%%%%%%%%%%%%%%%%%%%%%%%%%%%%%%%%%%%%%%%%%%%%%%%%%%%%%%%%%%%%%%
\subsection{\textbf{Indicatrix ellipse $\mathcal{E}_p$ and its parametrisation}}\label{subsect-indic-ellipse&paramtrisation}
%%%%%%%%%%%%%%%%%%%%%%%%%%%%%%%%%%%%%%%%%%%%%%%%%%%%%%%%%%%%%%%%%%%%%%%%%%%%%%%%%%%%%%%%%%%%%%%%%%%%%%%%%%%%%%%%%%%%%%%%%%%%%%
Consider a point $p$ of a smooth surface $M$ in $\R^{2+\ell}$ and take an orthonormal frame 
$\{\ebf_1, \ebf_2, \nbf_1,\ldots,\nbf_\ell\}$ of $\R^{2+\ell}$ such that $\{\ebf_1, \ebf_2\}$ is a basis of the 
tangent plane $T_pM$ and $\{\nbf_1,\ldots,\nbf_\ell\}$ is a basis of the normal space $N_pM$. 
Then the local Monge form of $M$ at $p$ (the origin) is given by: 
\[\g:(s,t)\mapsto (s,t; f_1(s,t),\ldots,f_\ell(s,t))\,, \qquad \g(0,0)=p\,,\] 
where the functions $f_1(s,t),\ldots,f_\ell(s,t)$ have vanishing $1$-jet at $(0,0)$ and Taylor expansion 
$f_j(s,t)=Q_j(s,t)+\rm{h.o.t.}(s,t)$,\, with $Q_j=\frac{1}{2}(a_js^2+2b_jst+c_jt^2)$.

The second order invariants of $M$ at $p$ depend only on the quadratic part of $\g$:
\smallskip

\noindent 
\textit{\textbf{\small Local Quadratic Map}}. The quadratic part $\f_p:T_pM\to N_pM$ of the local Monge form at $p$, 
$\f_p:(s,t):\mapsto (Q_1,\ldots, Q_\ell)$ is called the \textit{local quadratic map} of $M$ at $p$. 
Of course, its components $Q_1, \ldots, Q_\ell$ depend on the choice 
of the frame $\nbf_1,\ldots,\nbf_\ell$ on $N_pM$. 
\medskip

\noindent
\textbf{Notation}. 
The $\ell$ quadratic forms $Q_j$ determine the following 3 vectors of %the normal space 
$N_pM$: 
\begin{equation}\label{eq:vectors-ABC}
\Abf=a_1\nbf_1+\ldots+a_\ell\nbf_\ell\,, \quad 
\Bbf=b_1\nbf_1+\ldots+b_\ell\nbf_\ell\,, \quad \Cbf=c_1\nbf_1+\ldots+c_\ell\nbf_\ell\,.  
\end{equation}

Using these vectors, the Taylor expansion of the Monge form of $M$ at $p$ is written as 
\begin{equation}\label{eq:simple-Monge-form}
\g(s,t)=s\ebf_1+t\ebf_2+\frac{1}{2}\left(s^2\Abf+2st\Bbf+t^2\Cbf\right)+{\rm h.o.t.}(s,t)\,. 
\end{equation}

Thus the local quadratic map $\f_p:T_pM\to N_pM$ is expressed as 
\begin{equation}\label{eq:LQM}
  \f_p(s,t)=(Q_1,\ldots,Q_\ell)=\frac{1}{2}\left(s^2\Abf+2st\Bbf+t^2\Cbf\right)\,.
\end{equation}

\noindent 
\textit{\textbf{\small Indicatrix ellipse}}. 
Given a unit vector $\ubf_\theta=u_1\ebf_1+u_2\ebf_2\in T_pM$, with $\ubf_1=\cos\theta$, 
$\ubf_2=\sin\theta$ for some $\theta\in\R$, write $\a_{\theta}$ for the normal section 
of $M$ in the direction of $\ubf_\theta$ and denote $\kbf(\ubf_\theta)$ the curvature vector of $\a_{\theta}$ at $p$. 
The \textit{indicatrix set} $\mathcal{E}_p$ of $M$ at $p$ is defined as the set of curvature vectors 
$\kbf(\ubf_\theta)$ taken over all unit vectors $\ubf_\theta\in T_pM$.

%%%%%%%%%%%%%%%%%%%%%%%%%%%%%%%%%%%% ELLIPSE INDICATRIX %%%%%%%%%%%%%%%%%%%%%%%%%%%%%%%%%%%%%%%%%%%%%%%%%%%
\begin{proposition*}[\protect\cite{Moore-Wilson}]
The indicatrix set $\mathcal{E}_p$ is an ellipse {\rm (possibly singular)}. 
\end{proposition*}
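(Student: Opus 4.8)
The plan is to compute the curvature vector $\kbf(\ubf_\theta)$ explicitly as a function of the direction angle $\theta$ and then recognize the resulting locus as a parametrised ellipse (with its degenerate cases accounting for the word ``singular'').

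First I would identify the normal section $\a_\theta$ concretely. Since $T_pM=\mathrm{span}(\ebf_1,\ebf_2)$ and the tangential part of $\g(s,t)$ is exactly $s\,\ebf_1+t\,\ebf_2$, a point $\g(s,t)$ lies in the affine $(\ell+1)$-plane $p+\mathrm{span}(\ubf_\theta,N_pM)$ iff $(s,t)$ is proportional to $(\cos\theta,\sin\theta)$. Hence $\a_\theta$ is the curve $r\mapsto\g(r\cos\theta,r\sin\theta)$, and by \eqref{eq:simple-Monge-form}
\[
\a_\theta(r)=r\,\ubf_\theta+\tfrac{r^2}{2}\big(\cos^2\theta\,\Abf+2\cos\theta\sin\theta\,\Bbf+\sin^2\theta\,\Cbf\big)+\mathrm{h.o.t.}(r).
\]

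Next I would extract the curvature vector at $p=\a_\theta(0)$. Here $\a_\theta'(0)=\ubf_\theta$ is a unit vector, and the second derivative $\a_\theta''(0)=\cos^2\theta\,\Abf+2\cos\theta\sin\theta\,\Bbf+\sin^2\theta\,\Cbf$ lies in $N_pM$, hence is orthogonal to $\a_\theta'(0)$. Consequently $|\a_\theta'(r)|=1+O(r^2)$, so $r$ is an arc-length parameter to second order at $0$ and the tangential correction in the curvature formula drops out; therefore $\kbf(\ubf_\theta)=\a_\theta''(0)$. This step — justifying that the curvature vector equals the ambient second derivative — is the only point requiring genuine care, and it is settled by the two observations that the acceleration is purely normal and that the speed is unit at the base point.

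Finally I would linearise in the doubled angle. Using $\cos^2\theta=\tfrac{1+\cos2\theta}{2}$, $\sin^2\theta=\tfrac{1-\cos2\theta}{2}$ and $2\cos\theta\sin\theta=\sin2\theta$, the expression becomes
\[
\kbf(\ubf_\theta)=\Hbf+\cos(2\theta)\,\tfrac{\Abf-\Cbf}{2}+\sin(2\theta)\,\Bbf,\qquad \Hbf=\tfrac{\Abf+\Cbf}{2}.
\]
As $\theta$ runs over $[0,\pi)$ the doubled angle sweeps a full circle, so $\kbf(\ubf_\theta)$ traces the affine image of the unit circle under the linear map $(\cos\psi,\sin\psi)\mapsto\cos\psi\,\tfrac{\Abf-\Cbf}{2}+\sin\psi\,\Bbf$, translated to the centre $\Hbf$. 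This is by definition an ellipse in $N_pM$, centred at the mean curvature vector $\Hbf$. It is non-degenerate precisely when $\tfrac{\Abf-\Cbf}{2}$ and $\Bbf$ are linearly independent; when these two vectors are dependent the image collapses to a segment (one of them nonzero) or to the single point $\Hbf$ (both zero), which accounts for the parenthetical ``possibly singular'' and completes the identification of $\E_p$.
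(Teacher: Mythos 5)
Your proof is correct and follows essentially the same route as the paper: reparametrise the normal section as $r\mapsto\g(r\ubf_\theta)$, identify $\kbf(\ubf_\theta)$ with the second derivative at $0$, and use the double-angle identities to exhibit the locus as $\Hbf+\cos(2\theta)\tfrac{\Abf-\Cbf}{2}+\sin(2\theta)\Bbf$, i.e.\ an affine image of a circle. The only difference is that you spell out the justification that the curvature vector equals the ambient second derivative (unit speed at $0$, purely normal acceleration), a step the paper takes for granted.
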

\begin{proof}
Taking the curve $\a_{\theta}(r)=\g(r\ubf_\theta)$ on $M$, the curvature vector $\kbf(\ubf_\theta)$
is equal to $\a_{\theta}''(0)$, that is $\kbf(\ubf_\theta)=u_1^2\Abf+2u_1u_2\Bbf+u_2^2\,\Cbf$. 
Using the equality $u_1^2+u_2^2=1$, we get 
\[\kbf(\ubf_\theta)=\frac{\Abf+\Cbf}{2}+(u_1^2-u_2^2)\frac{\Abf-\Cbf}{2}+2u_1u_2\Bbf\,.\]
The equality $\kbf(\ubf_\theta)=\kbf(-\ubf_\theta)$ means that the indicatrix set is parametrised by the 
projective line $\RP^1=\{\theta\in\R\mod \pi\}$, $\E_p: \RP^1\to N_pM$: 
\begin{equation}\label{eq:parametrisation-E_p}
\E_p(\theta)=\frac{\Abf+\Cbf}{2}+\cos 2\theta\frac{\Abf-\Cbf}{2}+\sin 2\theta \Bbf\,.
\end{equation}
The point $[u_1:u_2]\in\RP^1=\sph^1/\{\pm 1\}$ corresponds to the line $\ell_\theta$ 
directed by $\pm\ubf_\theta$. 
\end{proof}

\noindent
\textbf{\small Notation}. We note $\E_p$ the indicatrix ellipse (as a set) and also its 
parametrisation \eqref{eq:parametrisation-E_p}.
Its double covering, defined on $\sph^1=\{\ubf\in T_pM:\|u\|=1\}$, is noted $\hat{\E}_p:\sph^1\to N_pM$. Notice that  
\begin{equation}\label{eq:double-covering-E_p}
 \hat{\E}_p(\ubf):=2\f_p(\ubf)\qquad \mbox{with }\, \ubf\in T_pM, \hspace*{0.4cm} \|\ubf\|=1 \,. 
\end{equation}

%%%%%%%%%%%%%%%%%%%%%%%%%%%%%%%%%%%%%%%%%%%%  DEFINITION  %%%%%%%%%%%%%%%%%%%%%%%%%%%%%%%%%%%%%%%%%%%%%%%%%%%%%%%%%
%\subsection{\textbf{Classification of Points of a Surface by the Indicatrix Ellipse $\E_p$}} 
%%%%%%%%%%%%%%%%%%%%%%%%%%%%%%%%%%%%%%%%%%%%%%%%%%%%%%%%%%%%%%%%%%%%%%%%%%%%%%%%%%%%%%%%%%%%%%%%%%%%%%%%%%%%%%%%%%%
\noindent
\textbf{Types of points of surfaces in $\R^4$} \protect\cite{Little}. 
A point $p\in M$ is called \textit{elliptic} (\textit{hyperbolic, parabolic}) iff the 
ellipse $\E_p$ is non degenerate and $p$ lies inside (resp. outside, on) $\E_p$, and  $p$ is called 
\textit{semiumbilic} iff $\E_p$ degenerates to a non radial segment. If $\E_p$ is a radial segment, 
then $p$ is called \textit{inﬂection}. An inﬂection point is of \textit{real type} 
(\textit{imaginary type}, \textit{ﬂat}) if $p$ is an interior point of $\E_p$ 
(resp. does not belong to $\E_p$, is an end point of $\E_p$). 

The point $p$ is called \textit{umbilic} if the indicatrix ellipse $\E_p$ degenerates to a point distinct from $p$, 
and the point $p$ is called a \textit{flat umbilic} if $\E_p$ reduces to $p$. Moreover (\protect\cite{Little}),
\begin{equation}\label{eq:Delta>0-elliptic}
  \mbox{\textit{$p$ is elliptic iff }}\, \Delta>0 \quad \mbox{ and } \quad   \mbox{\textit{$p$ is hyperbolic  iff }} \, \Delta<0\,. 
\end{equation} 

A classification of points of generic surfaces in $\R^5$, up to order to, is given in \S\,\ref{sect-Class-Points-R5}.

% % passes through $p$, the point $p$ is called \textit{flat-elliptic} (resp. \textit{-hyperbolic}, \textit{-parabolic}) 
% % if it lies inside (resp. outside, on) $\E_p$. A point $p$ is called \textit{semi-umbilic} if 
% % $\E_p$ is a non radial segment. 
% If $\E_p$ is a radial segment, then $p$ is called \textit{real} (resp. \textit{imaginary}, 
% \textit{flat}) \textit{inflection} if $p$ is an interior point of $\E_p$ (resp. does not belong to $\E_p$, is an end-point 
% of $\E_p$). A point $p$ is called \textit{umbilic} if $\E_p$ reduces to a point distinct from $p$, and $p$ is called \textit{flat umbilic} if 
% $\E_p$ reduces to $p$. 

%%%%%%%%%%%%%%%%%%%%%%%%%%%%%%%%%%%%%%%%%%%%%%%%%%%%%%%%%%%%%%%%%%%%%%%%%%%%%%%%%%%%%%%%%%%%%%%%%%%%%%%%%%%%%%%%%%%%%%%%%
%%%%%%%%%%%%%%%%%%%%%%%%%%%%%%%%%%%%%%%%%%%%%%%%%%%%%%%%%%%%%%%%%%%%%%%%%%%%%%%%%%%%%%%%%%%%%%%%%%%%%%%%%%%%%%%%%%%%%%%%%
\section{Gauss Map and its ``Gauss Quadratic Form'' $\mathcal{G}_p$}\label{sect-GaussMap-GQF}
%%%%%%%%%%%%%%%%%%%%%%%%%%%%%%%%%%%%%%%%%%%%%%%%%%%%%%%%%%%%%%%%%%%%%%%%%%%%%%%%%%%%%%%%%%%%%%%%%%%%%%%%%%%%%%%%%%%%%%%%%
%%%%%%%%%%%%%%%%%%%%%%%%%%%%%%%%%%%%%%%%%%%%%%%%%%%%%%%%%%%%%%%%%%%%%%%%%%%%%%%%%%%%%%%%%%%%%%%%%%%%%%%%%%%%%%%%%%%%%%%%%

%%%%%%%%%%%%%%%%%%%%%%%%%%%%%%%%%%%%%%%%%%%%%%%%%%%%%%%%%%%%%%%%%%%%%%%%%%%%%%%%%%%%%%%%%%%%%%%%%%%%%%%%%%%%%%%%%%%%%%%%%
\subsection{\textbf{The Gauss Quadratic Form}}\label{sect-Gauss-quad-form}
%%%%%%%%%%%%%%%%%%%%%%%%%%%%%%%%%%%%%%%%%%%%%%%%%%%%%%%%%%%%%%%%%%%%%%%%%%%%%%%%%%%%%%%%%%%%%%%%%%%%%%%%%%%%%%%%%%%%%%%%%
The \textit{unit normal bundle} of $M$ is the set of all unit vectors that are normal to $M$: 
\[N^1M=\{(p,\nbf):p\in M,\, \nbf\in N_pM\, \mbox{ and }\, \|\nbf\|=1\}\,.\] 
The \textit{Gauss map} of a submanifold $M$ in $\R^{n}$ is a map from the unit normal bundle of $M$ 
to the unit hypersphere $\sph^{n-1}$ of $\R^n$, 
$\G:N^1M\to\sph^{n-1}$. It translates the unit normal vector $\nbf\in N_pM$, associated to $(p,\nbf)\in N^1M$,  
to the origin, giving a vector of $\sph^{n-1}$, 
\[\G:(p,\nbf)\mapsto\nbf\,.\]

We shall consider a surface $M$ in $\R^{n=2+\ell}$. Notice that the fibre of the natural projection $N^1M\to M$, 
over a point $p\in M$, is the sphere $\sph_p^{\ell-1}$ of unit vectors normal to $M$ at $p$, 
and the restriction of the Gauss map to the fibre over $p$ is the translation 
of the sphere $\sph_p^{\ell-1}$ into $\sph^{n-1}$. 
Thus, the differential along the tangent space to the fibre $\sph^{\ell-1}_p$ is the identity, 
and hence \textit{the differentiation is relevant only along the directions orthogonal to the fibre}. 
Hence the matrix of the differential has the form 
\[
[d_{(p,\nbf)}\Gamma]=\begin{pmatrix}
 [d_{(p,\nbf)}^\perp\Gamma]  & \Obf_{2\times\ell} \\ 
 \Obf_{\ell\times 2}  & \Idbf_{\ell-1}
\end{pmatrix}\,.\]

Therefore, the determinant of the Jacobian matrix of the Gauss map is equal to the determinant of 
its $2\times 2$ sub-matrix $[d_{(p,\nbf)}^\perp\Gamma]$.
% \[
% [d_{(p,\nbf)}\Gamma]=\begin{pmatrix}
%  \langle \Abf\,, \, \nbf\rangle  &  \langle \Bbf\,, \, \nbf\rangle & \Obf_{1\times \ell-1} \\ 
%   \langle \Bbf\,, \, \nbf\rangle & \langle \Cbf\,, \, \nbf\rangle  & \Obf_{1\times \ell-1} \\
%   \Obf_{\ell-1\times 1} & \Obf_{\ell-1\times 1} & \Idbf_{\ell-1}
% \end{pmatrix}\,,
%\]
If $M$ has local quadratic map \eqref{eq:LQM} at $p$ and 
$\nbf=\nu_1\nbf_1+\ldots+\nu_\ell\nbf_\ell$ (with $\nu_1^2+\ldots+\nu_\ell^2=1$), then a direct computation shows that 
\[\left[d_{(p,\nbf)}^\perp\Gamma\right]=\nu_1\begin{pmatrix}
 a_1 & b_1\\ 
 b_1 & c_1  
\end{pmatrix}+\ldots+\nu_\ell\begin{pmatrix}
 a_\ell & b_\ell\\ 
 b_\ell & c_\ell  
\end{pmatrix}\,, \,\mbox{ that is,}\]

\begin{equation}\label{eq:[dG(p,n)]}
  \left[d_{(p,\nbf)}^\perp\Gamma\right]=\begin{pmatrix}
 \langle \Abf\,, \, \nbf\rangle & \langle \Bbf\,, \, \nbf\rangle  \\ 
 \langle \Bbf\,, \, \nbf\rangle & \langle \Cbf\,, \, \nbf\rangle  
\end{pmatrix}\,.
\end{equation}
The eigenvalues of $[d_{(p,\nbf)}^\perp\Gamma]$ are the \textit{principal curvatures of $M$ in the 
direction of the normal vector $\nbf$}: they are the usual principal curvatures at $p$ of the orthogonal projection 
of $M$ on the $3$-space spanned by $\ebf_1, \ebf_2, \nbf$. Thus the Gauss curvature $K^{\nbf}$ of that projected surface 
is the determinant of $[d_{(p,\nbf)}^\perp\Gamma]$. 
\smallskip

%%%%%%%%%%%%%%%%%%%%%%%%%%%%%%%%%%%%%%%%%%%%%%%%%%%%%%%%%%%%%%%%%%%%%%%%%%%%%%%%%%%%%%%%%%%%%%%%%%%%%%%%%%%%%%%%%%%
%%%%%%%%%%%%%%%%%%%%%%%%%%%%%%%%%%%%%%%%%%%%  DEFINITION  %%%%%%%%%%%%%%%%%%%%%%%%%%%%%%%%%%%%%%%%%%%%%%%%%%%%%%%%%
\noindent
\textbf{\textit{\small Gauss Quadratic Form}}. 
For each $p$ in $M$ the determinant of the matrix $[d_{(p,\nbf)}^\perp\Gamma]$ is the quadratic form of the 
variable $q\in N_pM$
\begin{equation}\label{eq:CQF(q)}
\mathcal{G}_p:N_p M\to\R, \qquad  \mathcal{G}_p(q)=\frac{1}{2}\det\begin{pmatrix}
 \langle \Abf\,, \, q\rangle & \langle \Bbf\,, \, q\rangle  \\ 
 \langle \Bbf\,, \, q\rangle & \langle \Cbf\,, \, q\rangle  
\end{pmatrix}\,,
\end{equation}
evaluated at $q=\nbf$. We call the form \eqref{eq:CQF(q)} \textit{Gauss quadratic form} of $M$ at $p$. 
It provides the Gauss curvature $K^{\nbf}$ 
in the direction of $\nbf$, that is, $2\mathcal{G}_p(\nbf)=K^{\nbf}$. We write $[\mathcal{G}_p]$ 
for its Hessian matrix and $\flecha{\mathcal{G}_p}:N_pM\to N_pM$ for its associated symmetric linear map. 
\smallskip

Observe that $\mathcal{G}_p$ is determined by the local quadratic map $\f_p$ of $M$ at $p$. 

\begin{remark*}
The Gauss curvature at a point of a surface in $\R^3$ is a real number $K$ that 
should be considered as the quadratic form $2\mathcal{G}_p(q)=Kq^2$ on the normal line $N_pM=\R$, 
but restricted to the unit normal vectors $+1$ and $-1$: $2\mathcal{G}_p(\pm 1)=K(\pm 1)^2=K$. 
\end{remark*}

%%%%%%%%%%%%%%%%%%%%%%%%%%%%%%%%%%%%%%%%%%%%%%%%%%%%%%%%%%%%%%%%%%%%%%%%%%%%%%%%%%%%%%%%%%%%%%%%%%%%%%%%%%%%%%%%%%%%%%%
\subsection{\textbf{Matrix of the Gauss Quadratic Form and Local Invariants of $M$}}
%%%%%%%%%%%%%%%%%%%%%%%%%%%%%%%%%%%%%%%%%%%%%%%%%%%%%%%%%%%%%%%%%%%%%%%%%%%%%%%%%%%%%%%%%%%%%%%%%%%%%%%%%%%%%%%%%%%%%%%
We shall identify the vector space of quadratic forms $Q=\frac{1}{2}(as^2+2bst+ct^2)$ 
with $\R^3=\{(a,b,c)\}$, endowed with the pseudo-scalar product 
\begin{equation}\label{eq:psi-scalar-prod}
  \langle Q_i, Q_j \rangle_\psi:=(a_ic_j+a_jc_i)/2-b_ib_j\,.
\end{equation}

Consider a surface $M$ in $\R^4$ (or in $\R^5$) with local quadratic map \eqref{eq:LQM} at $p$. 

%%%%%%%%%%%%%%%%%%%%%%%%%%%%%%%%%%%%%%%%%%%%%%%%%%%%%%%%%%%%%%%%%%%%%%%%%%%%%%%%%%%%%%%%%%%%%%%%%%%%%%%%%%%%%%%%%%%%
%%%%%%%%%%%%%%%%%%%%%%%%%%%%%%%%%%%%%%%%%%%%  PROPOSITION  %%%%%%%%%%%%%%%%%%%%%%%%%%%%%%%%%%%%%%%%%%%%%%%%%%%%%%%%%
%%%%%%%%%%%%%%%%%%%%%%%%%%%%%%%%%%%%%%%%%%%%%%%%%%%%%%%%%%%%%%%%%%%%%%%%%%%%%%%%%%%%%%%%%%%%%%%%%%%%%%%%%%%%%%%%%%%%
\begin{proposition}
The matrix of the Gauss quadratic form $\mathcal{G}_p$ is the Gram matrix of the vectors 
$Q_1$, $Q_2$ $($or $Q_1$, $Q_2$, $Q_3$ for $M$ in $\R^5$$)$, for the pseudo-scalar product $\langle\cdot\,, \cdot\rangle_\psi:$
\begin{equation}\label{eq:Gp=[matrix]}
  [\mathcal{G}_p]=
\begin{pmatrix}
  k_1 & k_{12}\\ 
  k_{12} & k_2
\end{pmatrix}
\quad \mbox{for } M\subset\R^4\,, \quad 
[\mathcal{G}_p]=
\begin{pmatrix}
  k_1    & k_{12} & k_{13} \\ 
  k_{12} & k_2    & k_{23} \\
  k_{13} & k_{23} & k_3 
\end{pmatrix}
\quad \mbox{for } M\subset\R^5\,, 
\end{equation}
where $k_{ij}=\langle Q_i, Q_j \rangle_\psi$\, and \,$k_i:=k_{ii}=\langle Q_i, Q_i \rangle_\psi$. 
\end{proposition}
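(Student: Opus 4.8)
The plan is to compute both sides of the claimed identity directly and observe they coincide. The Gauss quadratic form is defined in \eqref{eq:CQF(q)} as a function of $q\in N_pM$, and its Hessian matrix $[\mathcal{G}_p]$ is taken with respect to the orthonormal frame $\nbf_1,\ldots,\nbf_\ell$. So the first step is to expand $\mathcal{G}_p$ as an explicit polynomial in the coordinates $\nu_1,\ldots,\nu_\ell$ of $q=\nu_1\nbf_1+\cdots+\nu_\ell\nbf_\ell$. Using the bilinearity of the inner product together with $\langle\Abf,q\rangle=\sum_i a_i\nu_i$, $\langle\Bbf,q\rangle=\sum_i b_i\nu_i$, $\langle\Cbf,q\rangle=\sum_i c_i\nu_i$, the determinant in \eqref{eq:CQF(q)} becomes $\langle\Abf,q\rangle\langle\Cbf,q\rangle-\langle\Bbf,q\rangle^2$, so that
\[
2\mathcal{G}_p(q)=\Bigl(\textstyle\sum_i a_i\nu_i\Bigr)\Bigl(\sum_j c_j\nu_j\Bigr)-\Bigl(\sum_i b_i\nu_i\Bigr)\Bigl(\sum_j b_j\nu_j\Bigr)
=\sum_{i,j}\bigl(a_ic_j-b_ib_j\bigr)\nu_i\nu_j.
\]

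The second step is to symmetrize. Since $\nu_i\nu_j=\nu_j\nu_i$, only the symmetric part of the coefficient $a_ic_j-b_ib_j$ survives, giving
\[
2\mathcal{G}_p(q)=\sum_{i,j}\tfrac{1}{2}\bigl(a_ic_j+a_jc_i-2b_ib_j\bigr)\nu_i\nu_j.
\]
Comparing with the general shape $2\mathcal{G}_p(q)=\sum_{i,j}[\mathcal{G}_p]_{ij}\,\nu_i\nu_j$ (recall $\mathcal{G}_p$ itself carries the factor $\frac12$, so $2\mathcal{G}_p$ has Hessian matrix $[\mathcal{G}_p]$), I read off the $(i,j)$ entry as $\tfrac12(a_ic_j+a_jc_i)-b_ib_j$. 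The third step is simply to recognize that this is exactly the pseudo-scalar product $\langle Q_i,Q_j\rangle_\psi$ defined in \eqref{eq:psi-scalar-prod}, hence $[\mathcal{G}_p]_{ij}=k_{ij}$ and in particular the diagonal entry is $k_i=a_ic_i-b_i^2$, which matches the $k_1,k_2$ of Little's invariants recalled in the introduction. This proves both the $\R^4$ ($\ell=2$) and $\R^5$ ($\ell=3$) cases at once, since the derivation is uniform in $\ell$.

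There is essentially no serious obstacle here; the one point that deserves care is bookkeeping of the factors of $\tfrac12$. The form $\mathcal{G}_p$ in \eqref{eq:CQF(q)} already contains a $\tfrac12$, and the convention that a quadratic form $\tfrac12\sum M_{ij}\nu_i\nu_j$ has Hessian matrix $M$ must be applied consistently so that $2\mathcal{G}_p(q)=\langle q,[\mathcal{G}_p]q\rangle$ rather than twice that. I would verify this by checking the pure diagonal term: setting $q=\nbf_i$ gives $2\mathcal{G}_p(\nbf_i)=a_ic_i-b_i^2$, which is the Gauss curvature $K^{\nbf_i}=k_i$ in the direction $\nbf_i$ (consistent with $2\mathcal{G}_p(\nbf)=K^{\nbf}$ as stated after \eqref{eq:CQF(q)}), confirming $[\mathcal{G}_p]_{ii}=k_i$. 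The off-diagonal entries then follow from the polarization identity, completing the identification of $[\mathcal{G}_p]$ with the Gram matrix of the $Q_j$ for $\langle\cdot,\cdot\rangle_\psi$.
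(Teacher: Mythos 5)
Your proof is correct and is exactly the ``direct calculation'' that the paper's one-line proof invokes: expand the determinant in \eqref{eq:CQF(q)} in the coordinates $\nu_i$, symmetrize the coefficient of $\nu_i\nu_j$ to $\tfrac12(a_ic_j+a_jc_i)-b_ib_j$, and recognize this as $\langle Q_i,Q_j\rangle_\psi$. Your care with the factor of $\tfrac12$ and the diagonal sanity check $2\mathcal{G}_p(\nbf_i)=k_i$ are both consistent with the paper's conventions.
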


\begin{proof}
One gets the quadratic form $\mathcal{G}_p$ and its matrix from \eqref{eq:CQF(q)} by direct calculation. 
\end{proof}

%%%%%%%%%%%%%%%%%%%%%%%%%%%%%%%%%%%%%%%%%%%%%%%%%%%%%%%%%%%%%%%%%%%%%%%%%%%%%%%%%%%%%%%%%%%%%%%%%%%%%%%%%%%%%%%%
\subsubsection{\textbf{Local Invariants of Surfaces in $\R^4$ and in $\R^5$}} 
%%%%%%%%%%%%%%%%%%%%%%%%%%%%%%%%%%%%%%%%%%%%%%%%%%%%%%%%%%%%%%%%%%%%%%%%%%%%%%%%%%%%%%%%%%%%%%%%%%%%%%%%%%%%%%%%
%%%%%%%%%%%%%%%%%%%%%%%%%%%%%%%%%%%%%%%%%%%%%%%%%%%%%%%%%%%%%%%%%%%%%%%%%%%%%%%%%%%%%%%%%%%%%%%%%%%%%%%%%%%%%%%%%%%%
%%%%%%%%%%%%%%%%%%%%%%%%%%%%%%%%%%%%%%%%%%%%  PROPOSITION  %%%%%%%%%%%%%%%%%%%%%%%%%%%%%%%%%%%%%%%%%%%%%%%%%%%%%%%%%
%%%%%%%%%%%%%%%%%%%%%%%%%%%%%%%%%%%%%%%%%%%%%%%%%%%%%%%%%%%%%%%%%%%%%%%%%%%%%%%%%%%%%%%%%%%%%%%%%%%%%%%%%%%%%%%%%%%%
\begin{proposition}[On surfaces in $\R^4$]\label{prop:inv-char_polynomial}
For $p\in M$ in $\R^4$ the local invariants $\Delta$ and $K$ are the coefficients of the characteristic polynomial 
of the Gauss quadratic form $\mathcal{G}_p$$:$
\[P_{\mathcal{G}_p}(\l)=\l^2-K\l+\Delta\,.\] 
\end{proposition}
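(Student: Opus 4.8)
The plan is to exploit the Gram-matrix description of $[\mathcal{G}_p]$ furnished by the previous Proposition and simply read off the characteristic polynomial of a $2\times 2$ symmetric matrix. For $M\subset\R^4$ one has
\[[\mathcal{G}_p]=\begin{pmatrix} k_1 & k_{12}\\ k_{12} & k_2\end{pmatrix},\]
so that
\[P_{\mathcal{G}_p}(\l)=\l^2-\Tr[\mathcal{G}_p]\,\l+\det[\mathcal{G}_p].\]
Thus the whole proposition reduces to verifying the two identities $\Tr[\mathcal{G}_p]=K$ and $\det[\mathcal{G}_p]=\Delta$, each obtained by unfolding the pseudo-scalar product $\langle\cdot\,,\cdot\rangle_\psi$.

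The trace is immediate: since $k_i=\langle Q_i,Q_i\rangle_\psi=a_ic_i-b_i^2$, we get $\Tr[\mathcal{G}_p]=k_1+k_2=(a_1c_1-b_1^2)+(a_2c_2-b_2^2)$, which is precisely the Gaussian curvature $K=k_1+k_2$ recalled in the Introduction; no computation is needed beyond expanding one diagonal entry.

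For the determinant I would expand $\det[\mathcal{G}_p]=k_1k_2-k_{12}^2$ using $k_{12}=\langle Q_1,Q_2\rangle_\psi=\tfrac12(a_1c_2+a_2c_1)-b_1b_2$, and then compare with the definition $\Delta=(a_1c_1-b_1^2)(a_2c_2-b_2^2)-\tfrac14\big((a_1c_2+a_2c_1)-2b_1b_2\big)^2$ (the stated expression $\tfrac14$ times $4(\cdots)$ minus a square, with the $\tfrac14$ distributed). Since $2k_{12}=(a_1c_2+a_2c_1)-2b_1b_2$, the square $k_{12}^2$ carries exactly the factor $\tfrac14$ appearing in $\Delta$, and the two expressions coincide termwise, giving $\det[\mathcal{G}_p]=\Delta$.

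There is no genuine obstacle once the preceding Proposition is granted; the only point demanding care is the bookkeeping of the normalising factor $\tfrac12$ in $k_{12}$ (hence $\tfrac14$ in $k_{12}^2$), which must match the $\tfrac14$ in the definition of $\Delta$. Modulo this, the proposition is simply the identification of the trace and determinant of the Gram matrix $[\mathcal{G}_p]$ with the two classical invariants $K$ and $\Delta$.
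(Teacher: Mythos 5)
Your proposal is correct and follows essentially the same route as the paper: both identify $\Tr[\mathcal{G}_p]=k_1+k_2=K$ and $\det[\mathcal{G}_p]=k_1k_2-k_{12}^2=\Delta$ from the Gram-matrix form of $[\mathcal{G}_p]$, and your careful tracking of the factor $\tfrac12$ in $k_{12}$ against the $\tfrac14$ in Little's definition of $\Delta$ is exactly the ``direct verification'' the paper leaves implicit.
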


\noindent
\textit{Proof}.\, 
The intrinsic Gauss curvature $k_1+k_2$ is, indeed, the trace of $[\mathcal{G}_p]$ - see \eqref{eq:Gp=[matrix]}$:$
\[
\Tr[\mathcal{G}_p]=k_1+k_2=K \quad \text{(Gauss curvature)\,,}
\]
and one directly verifies that the invariant $\Delta$ (of \protect\cite{Little}) is equal to the determinant of $[\mathcal{G}_p]$:
\[
\det[\mathcal{G}_p]= \,k_1k_2-k_{12}^2\, =\Delta \quad \text{(determinant invariant)}\,.
\eqno{\square}\]

Since the Gauss map and its differential are geometrical objects (i.e. they are independent of our choice of orthonormal basis), 
the Gauss quadratic form $\mathcal{G}_p$ is a well defined local invariant of $M$ at $p$. This implies the 
 
%%%%%%%%%%%%%%%%%%%%%%%%%%%%%%%%%%%%%%%%%%%%%%%%%%%%%%%%%%%%%%%%%%%%%%%%%%%%%%%%%%%%%%%%%%%%%%%%%%%%%%%%%%%%%%%%%%%%
%%%%%%%%%%%%%%%%%%%%%%%%%%%%%%%%%%%%%%%%%%%%  PROPOSITION  %%%%%%%%%%%%%%%%%%%%%%%%%%%%%%%%%%%%%%%%%%%%%%%%%%%%%%%%%
%%%%%%%%%%%%%%%%%%%%%%%%%%%%%%%%%%%%%%%%%%%%%%%%%%%%%%%%%%%%%%%%%%%%%%%%%%%%%%%%%%%%%%%%%%%%%%%%%%%%%%%%%%%%%%%%%%%%
\begin{proposition}[On surfaces in $\R^5$]\label{prop:inv-char_polynomial5}
For a point $p$ of a surface $M$ in $\R^5$ the coefficients of the characteristic polynomial of the Gauss 
quadratic form $\mathcal{G}_p$, 
\[P_{\mathcal{G}_p}(\l)=-\l^3+K\l^2-\mathcal{A}\l+\Delta\,,\] 
are three independent local invariants of $M$ at $p$: 
\begin{equation}
K=\Tr[\mathcal{G}_p]=k_1+k_2+k_3 \quad (\text{Gauss curvature})\,,
\end{equation}
\begin{equation}\label{eq:invariant-A}
\mathcal{A}=(k_1k_2-k_{12}^2)+(k_2k_3-k_{23}^2)+(k_3k_1-k_{31}^2)
\end{equation}
\begin{equation}
\Delta=\det[\mathcal{G}_p] \quad (\text{determinant invariant})\,.
\end{equation}
\end{proposition}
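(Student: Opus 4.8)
The plan is to treat the statement as two separate claims — that $K,\mathcal{A},\Delta$ are \emph{local invariants} and that they are \emph{independent} — and to dispatch each in turn, using the description \eqref{eq:Gp=[matrix]} of $[\mathcal{G}_p]$ as a Gram matrix (the Proposition above) as the only computational input. The conceptual core, already prepared in the sentence preceding the statement, is that $\mathcal{G}_p$ is an intrinsic quadratic form on the Euclidean space $N_pM$: by $2\mathcal{G}_p(\nbf)=K^{\nbf}$ its value on a unit normal $\nbf$ is the Gauss curvature of the orthogonal projection of $M$ onto the $3$-space $\langle\ebf_1,\ebf_2,\nbf\rangle$, a number defined without reference to any frame. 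Hence the self-adjoint operator $\flecha{\mathcal{G}_p}:N_pM\to N_pM$ is a geometric object, and the whole invariance argument reduces to the elementary fact that the characteristic polynomial of a self-adjoint operator on a Euclidean space is basis-free.

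Concretely, I would spell this out as follows. Changing the orthonormal normal frame $\{\nbf_1,\nbf_2,\nbf_3\}$ amounts to acting by some $O\in O(3)$, under which the Hessian matrix transforms by congruence $[\mathcal{G}_p]\mapsto O^{\top}[\mathcal{G}_p]O$; because $O^{\top}=O^{-1}$ this congruence is simultaneously a conjugation, so $\det([\mathcal{G}_p]-\l\,\Idbf)$ is unchanged, and with it every coefficient. I would add the parallel remark for the tangent frame: a rotation or reflection $R\in O(2)$ of $T_pM$ acts on each symmetric $2\times2$ matrix $M_{Q_j}$ by $M_{Q_j}\mapsto R^{\top}M_{Q_j}R$, and since $\langle\cdot\,,\cdot\rangle_\psi$ is the polarisation of the determinant form while $\det(R^{\top}M R)=(\det R)^2\det M=\det M$, every entry $k_{ij}=\langle Q_i,Q_j\rangle_\psi$ is left fixed — so $[\mathcal{G}_p]$ itself, not merely its spectrum, is unaffected by tangent changes. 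Identifying the coefficients is then a one-line expansion: for the $3\times3$ matrix in \eqref{eq:Gp=[matrix]} one has $\det([\mathcal{G}_p]-\l\,\Idbf)=-\l^3+(\Tr)\l^2-(\text{sum of principal }2\times2\text{ minors})\l+\det$, and reading off $\Tr[\mathcal{G}_p]=k_1+k_2+k_3=K$, the three minors $k_1k_2-k_{12}^2$, $k_2k_3-k_{23}^2$, $k_3k_1-k_{31}^2$ summing to $\mathcal{A}$ of \eqref{eq:invariant-A} (with $k_{31}=k_{13}$), and $\det[\mathcal{G}_p]=\Delta$, reproduces the stated $P_{\mathcal{G}_p}(\l)$.

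For the word \emph{independent} I would argue that $K,\mathcal{A},\Delta$ are, up to sign, the elementary symmetric functions $e_1,e_2,e_3$ of the eigenvalues $\mu_1,\mu_2,\mu_3$ of $\flecha{\mathcal{G}_p}$ (the principal focal curvatures), and that these are functionally independent wherever the $\mu_i$ are distinct, since $\partial(e_1,e_2,e_3)/\partial(\mu_1,\mu_2,\mu_3)$ is a Vandermonde determinant $\pm\prod_{i<j}(\mu_i-\mu_j)\neq0$. What remains — and is the only genuinely delicate point — is to check that such triples of eigenvalues are actually realised by quadratic maps, so that no universal relation among $K,\mathcal{A},\Delta$ is forced. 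Here care is needed because $\langle\cdot\,,\cdot\rangle_\psi$ has Lorentzian signature $(1,2)$, so by Sylvester's law the Gram matrix $[\mathcal{G}_p]$ of three independent $Q_j$ inherits that signature and its eigenvalue-triples are constrained to one positive and two negative values. I would conclude by noting that this constrained region is nevertheless $3$-dimensional — one may prescribe $\langle\cdot\,,\cdot\rangle_\psi$-orthogonal $Q_1,Q_2,Q_3$ with any admissible triple of self-products — so the constraint is an open condition rather than an algebraic relation, and the three invariants remain independent. Verifying this realisability cleanly is the main obstacle; the invariance and the coefficient computation are then immediate.
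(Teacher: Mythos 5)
Your proof is correct, and its first two parts coincide in substance with what the paper does: the paper's entire justification is the one sentence preceding the statement (``since the Gauss map and its differential are geometrical objects\ldots\ $\mathcal{G}_p$ is a well defined local invariant''), together with the reading-off of trace, second coefficient and determinant exactly as in your expansion; your congruence-equals-conjugation remark for $O\in O(3)$ and the observation that tangent rotations fix each $k_{ij}=\langle Q_i,Q_j\rangle_\psi$ (because $\langle\cdot,\cdot\rangle_\psi$ polarises $\det$ and $\det(R^{\top}M_QR)=\det M_Q$) merely make that sentence precise. Where you genuinely go beyond the paper is the word \emph{independent}: the paper never argues this, and only later expresses $K,\mathcal{A},\Delta$ as elementary symmetric functions of the principal focal curvatures. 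Your Vandermonde argument plus realisability is the right way to close that gap, and the step you flag as ``the main obstacle'' is in fact routine: in the coordinates $u=(a+c)/2$, $v=(a-c)/2$, $w=b$ one has $\|Q\|_\psi^2=u^2-v^2-w^2$, so $\langle\cdot,\cdot\rangle_\psi$ has signature $(1,2)$ and admits a pseudo-orthogonal basis $e_+,e_-^1,e_-^2$ with $\|e_+\|_\psi^2=1$, $\|e_-^i\|_\psi^2=-1$; setting $Q_3=\sqrt{\mu_3}\,e_+$, $Q_i=\sqrt{-\mu_i}\,e_-^i$ realises any prescribed triple $\mu_1\le\mu_2<0<\mu_3$ as the spectrum of $[\mathcal{G}_p]$, so the image of $(\mu_1,\mu_2,\mu_3)\mapsto(K,\mathcal{A},\Delta)$ is open in $\R^3$ and no relation can hold. (That the signature constraint forces two negative eigenvalues is exactly the paper's Proposition~\ref{prop:K1<K2<0<K3}, so your Sylvester remark is consistent with the text.) In short: same route as the paper for invariance, a welcome and essentially complete supplement for independence.
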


% \begin{remark*}
% The intrinsic Gauss curvature $K=k_1+k_2+k_3$ is known. 
% \end{remark*}

For a surface in $\R^5$ there is an invariant related to $\Delta$, determined by the vectors that take part in the 
parametrisation \eqref{eq:parametrisation-E_p} of the indicatrix ellipse,  $\frac{1}{2}(\Abf-\Cbf)$, $\Bbf$ and $\Hbf$: 
\medskip

\noindent
\textbf{\textit{\small Torsion}}. We call \textit{torsion} of $M$ at $p$ the oriented volume of the parallelepiped formed by 
the vectors $\frac{1}{2}(\Abf-\Cbf)$, $\Bbf$ and $\Hbf$: 
\begin{equation}\label{eq:def-torsion}
\tau:=\left\langle\, \textstyle\frac{1}{2}(\Abf-\Cbf)\times\Bbf\,,\, \Hbf\, \right\rangle\,. 
\end{equation}

The reader can check (and below we prove it) that $\tau^2=\Delta$. Thus $\Delta\geq 0$. 

%%%%%%%%%%%%%%%%%%%%%%%%%%%%%%%%%%%%%%%%%%%%%%%%%%%%%%%%%%%%%%%%%%%%%%%%%%%%%%%%%%%%%%%%%%%%%%%%%%%%%%%%%%%%%%%%%%%%%%%%%%%
\subsection{\textbf{Principal Normal Directions and Principal Focal Curvatures}}
%%%%%%%%%%%%%%%%%%%%%%%%%%%%%%%%%%%%%%%%%%%%%%%%%%%%%%%%%%%%%%%%%%%%%%%%%%%%%%%%%%%%%%%%%%%%%%%%%%%%%%%%%%%%%%%%%%%%%%%%%%%
%%%%%%%%%%%%%%%%%%%%%%%%%%%%%%%%%%%%%%%%%%%%%%%%%%%%%%%%%%%%%%%%%%%%%%%%%%%%%%%%%%%%%%%%%%%%%%%%%%%%%%%%%%%%%%%%%%%
%%%%%%%%%%%%%%%%%%%%%%%%%%%%%%%%%%%%%%%%%%%%  DEFINITION  %%%%%%%%%%%%%%%%%%%%%%%%%%%%%%%%%%%%%%%%%%%%%%%%%%%%%%%%%
At most points of a generic surface there is a well defined characteristic normal frame: 
\smallskip

\noindent
\textbf{\textit{\small Principal Normal Directions}}. For $M$ in $\R^4$ or in $\R^5$, 
we call \textit{principal normal directions} of $M$ at $p$ 
the eigendirections of the Gauss quadratic form $\mathcal{G}_p:N_pM\to\R$. 
\smallskip

\noindent
\textbf{\textit{\small Principal Basis}}. A basis of the normal space $N_pM$ is said to be 
\textit{principal} if it is formed by unit eigenvectors of the Gauss quadratic form $\mathcal{G}_p$.
\smallskip

Take a principal basis of $N_pM$, $\nbf_1$, $\nbf_2$, (or $\nbf_1$, $\nbf_2$, $\nbf_3$ for $M$ 
in $\R^5$) and take a normal vector $q=q_1\nbf_1+q_2\nbf_2$ (or $q=q_1\nbf_1+q_2\nbf_2+q_3\nbf_3$). 
The Gauss quadratic form $\mathcal{G}_p$ is expressed in terms of its eigenvalues $\mathcal{K}_1$, $\mathcal{K}_2$ 
(or $\mathcal{K}_1$, $\mathcal{K}_2$, $\mathcal{K}_3$) as:
\[
\mathcal{G}_p(q)=\frac{1}{2}\left(\,\mathcal{K}_1q_1^2+\mathcal{K}_2q_2^2\,\right) \qquad \mbox{for $M$ in } \R^4. 
\]
\[
\mathcal{G}_p(q)=\frac{1}{2}\left(\,\mathcal{K}_1q_1^2+\mathcal{K}_2q_2^2+\mathcal{K}_3q_3^2\,\right) \qquad \mbox{for $M$ in } \R^5. 
\]

%%%%%%%%%%%%%%%%%%%%%%%%%%%%%%%%%%%%%%%%%%%%%%%%%%%%%%%%%%%%%%%%%%%%%%%%%%%%%%%%%%%%%%%%%%%%%%%%%%%%%%%%%%%%%%%%%%%
%%%%%%%%%%%%%%%%%%%%%%%%%%%%%%%%%%%%%%%%%%%%  DEFINITION  %%%%%%%%%%%%%%%%%%%%%%%%%%%%%%%%%%%%%%%%%%%%%%%%%%%%%%%%%
\noindent
\textbf{\textit{\small Principal Focal Curvatures}}. We call \textit{principal focal curvatures} of $M$ at $p$ the eigenvalues 
$\mathcal{K}_1$, $\mathcal{K}_2$ (or $\mathcal{K}_1$, $\mathcal{K}_2$, $\mathcal{K}_3$) of the 
Gauss quadratic form $\mathcal{G}_p$. 

\begin{proposition}
For a smooth surface $M$ in $\R^4$ $($or in $\R^5$$)$ the principal focal curvatures are independent local invariants.
\end{proposition}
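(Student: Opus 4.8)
The plan is to deduce both assertions — invariance under frame changes, and functional independence — from properties of the Gauss quadratic form $\mathcal{G}_p$ that are already established. For invariance, I would start from the observation made just before Proposition~\ref{prop:inv-char_polynomial5}, namely that $\mathcal{G}_p:N_pM\to\R$ is a well-defined geometric object, independent of the choice of orthonormal frame. A passage from one principal basis of $N_pM$ to another is realised by an orthogonal matrix $O$, under which the Hessian matrix transforms by $[\mathcal{G}_p]\mapsto O^\top[\mathcal{G}_p]\,O$; because the bases are orthonormal, the inner-product matrix is the identity, so this is precisely the matrix of the associated symmetric operator $\flecha{\mathcal{G}_p}$ conjugated by $O$. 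Since conjugation by an invertible matrix leaves the characteristic polynomial — and hence the spectrum — unchanged, the principal focal curvatures $\mathcal{K}_1,\mathcal{K}_2$ (respectively $\mathcal{K}_1,\mathcal{K}_2,\mathcal{K}_3$) are local invariants of $M$ at $p$.

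For independence I would route the argument through the elementary symmetric functions of the eigenvalues. By definition the principal focal curvatures are the roots of $P_{\mathcal{G}_p}$, so their elementary symmetric functions are exactly the coefficients of that characteristic polynomial: in $\R^4$ these are $K=\mathcal{K}_1+\mathcal{K}_2$ and $\Delta=\mathcal{K}_1\mathcal{K}_2$ (Proposition~\ref{prop:inv-char_polynomial}), and in $\R^5$ they are $K$, $\mathcal{A}$ and $\Delta$ (Proposition~\ref{prop:inv-char_polynomial5}), already shown there to be independent invariants. Conversely the curvatures are recovered from these coefficients as the roots of $P_{\mathcal{G}_p}$. The map sending the eigenvalues to their elementary symmetric functions has Jacobian equal, up to sign, to the Vandermonde determinant $\prod_{i<j}(\mathcal{K}_i-\mathcal{K}_j)$, which is nonzero precisely where the eigenvalues are pairwise distinct. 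Hence on the set where the spectrum of $\flecha{\mathcal{G}_p}$ is simple, the principal focal curvatures and the symmetric invariants determine one another by a local diffeomorphism, and the former inherit the functional independence of the latter.

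The single delicate point is this genericity caveat: where two principal focal curvatures coincide, the Vandermonde Jacobian vanishes and the individual eigenvalues cease to be smooth functions of the jet of $M$, branching continuously through the collision like the roots of a polynomial. I would therefore phrase the conclusion as holding on the open dense set where $\flecha{\mathcal{G}_p}$ has simple spectrum; there the principal focal curvatures are smooth, orthogonally invariant, and functionally independent, which is exactly the content of the proposition. No genuine computation is required: the matrix formula \eqref{eq:Gp=[matrix]} already displays the dependence on the second-order data, and the spectral argument above supplies everything else.
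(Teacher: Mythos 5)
Your proposal is correct and follows essentially the same route as the paper: the paper's proof consists precisely of observing that $K,\Delta$ (resp. $K,\mathcal{A},\Delta$) are the elementary symmetric functions of the principal focal curvatures, and your argument deduces independence from that same identification, with the invariance itself resting on the frame-independence of $\mathcal{G}_p$ noted just before Proposition 2.2.3. The extra details you supply (orthogonal conjugation preserving the spectrum, the Vandermonde Jacobian and the simple-spectrum caveat) are sound refinements that the paper leaves implicit.
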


\noindent
\textit{Proof}.  
The invariants $K$, $\Delta$ for $M\subset\R^4$ or $K$, $\mathcal{A}$ and $\Delta$ 
for $M\subset\R^5$ are expressed as the elementary symmetric functions of 
the principal focal curvatures: 
\begin{equation}\label{eq:Delta=K1K1-K=K1+K2}
\R^4: \qquad \qquad \Delta=\K_1\K_2\,, \qquad K=\K_1+\K_2\,.\qquad 
\end{equation}
\[
\R^5: \qquad  \Delta=\K_1\K_2\K_3\,, \quad \mathcal{A}=\K_1\K_2+\K_2\K_3+\K_3\K_1\,, \quad K=\K_1+\K_2+\K_3\,. \eqno{\square}
\]

\begin{remark*}
The principal focal curvatures $\K_1$, $\K_2$ at $p\in M\subset\R^4$ (or $\mathcal{K}_1$, $\mathcal{K}_2$, $\mathcal{K}_3$ 
at $p\in M\subset\R^5$) are the critical values of the restriction to the unit circle $\sph^1_p$ of $N_pM$ 
of the Gauss quadratic form $\mathcal{G}_p:\nbf\mapsto K^{\nbf}$ (respectively, to the unit sphere $\sph^2_p$ of $N_pM$). 
\end{remark*}

%%%%%%%%%%%%%%%%%%%%%%%%%%%%%%%%%%%%%%%%%%%%%%%%%%%%%%%%%%%%%%%%%%%%%%%%%%%%%%%%%%%%%%%%%%%%%%%%%%%%%%%%%%%%%%%%%%%
\subsection{\textbf{The Paired Quadratic Form $\mathcal{G}_p^*$ of the Gauss Quadratic Form $\mathcal{G}_p$}}
%%%%%%%%%%%%%%%%%%%%%%%%%%%%%%%%%%%%%%%%%%%%%%%%%%%%%%%%%%%%%%%%%%%%%%%%%%%%%%%%%%%%%%%%%%%%%%%%%%%%%%%%%%%%%%%%%%%
The Legendre transform of a quadratic form $\mathcal{G}$ on $\R^\ell$ is another quadratic form on $\R^\ell$ 
that we note $\mathcal{G}^*$ and that we call the \textit{paired quadratic form} of $\mathcal{G}$. 
Its matrix is the inverse of the matrix of $\mathcal{G}$, that is, $[\mathcal{G}^*]=[\mathcal{G}]^{-1}$, 
and hence its associated linear map is the inverse of the linear map $\flecha{\mathcal{G}}$ associated to $\mathcal{G}$, 
that is $\flecha{\mathcal{G}}^*=\flecha{\mathcal{G}}^{-1}$. Moreover $(\mathcal{G}^*)^*=\mathcal{G}$. 
The symbol $\mathcal{G}^*$ and the name ``paired'' for the Legendre transform are explained in \S \ref{section:Paired Quadratic Form}. 
%(see Proposition \ref{prop:G^v=G^*}). 

In sections \ref{section-C-S} and \ref{Section-relation-inequalities} we shall use the paired quadratic 
form $\mathcal{G}_p^*$ of the Gauss quadratic form $\mathcal{G}_p$, 
its matrix $[\mathcal{G}_p^*]=[\mathcal{G}_p]^{-1}$ and its associated linear map 
$\flecha{\mathcal{G}_p}^*=\flecha{\mathcal{G}_p}^{-1}$.

%%%%%%%%%%%%%%%%%%%%%%%%%%%%%%%%%%%%%%%%%%%%%%%%%%%%%%%%%%%%%%%%%%%%%%%%%%%%%%%%%%%%%%%%%%%%%%%%%%%%%%%%%%%%%%%%%%%
%%%%%%%%%%%%%%%%%%%%%%%%%%%%%%%%%%%%%%%%%%%%%%%%%%%%%%%%%%%%%%%%%%%%%%%%%%%%%%%%%%%%%%%%%%%%%%%%%%%%%%%%%%%%%%%%%%%
\section{Optical Caustic and Duality Between the Local Caustic and the Indicatrix Ellipse}\label{section-C-S}
%%%%%%%%%%%%%%%%%%%%%%%%%%%%%%%%%%%%%%%%%%%%%%%%%%%%%%%%%%%%%%%%%%%%%%%%%%%%%%%%%%%%%%%%%%%%%%%%%%%%%%%%%%%%%%%%%%% 
%%%%%%%%%%%%%%%%%%%%%%%%%%%%%%%%%%%%%%%%%%%%%%%%%%%%%%%%%%%%%%%%%%%%%%%%%%%%%%%%%%%%%%%%%%%%%%%%%%%%%%%%%%%%%%%%%%%

%%%%%%%%%%%%%%%%%%%%%%%%%%%%%%%%%%%%%%%%%%%%%%%%%%%%%%%%%%%%%%%%%%%%%%%%%%%%%%%%%%%%%%%
\subsection{\textbf{Focal Set and Optical Caustic}\label{sect-FocalSet-OpticalCaustic}}
%%%%%%%%%%%%%%%%%%%%%%%%%%%%%%%%%%%%%%%%%%%%%%%%%%%%%%%%%%%%%%%%%%%%%%%%%%%%%%%%%%%%%%%
\noindent
\textit{\textbf{\small Osculating Hyperspheres and Focal Centres}}. 
Let $\g:U\subset\R^2\to\R^{2+\ell}$ be a local parametrisation of a surface $M$, with $\g(0,0)=p$, 
and $S_{q,R}$ be the hypersphere, centred at $q$, given by the zeroes of the function $g^q(x)=\|x-q\|^2-R^2$. 
Suppose the hypersphere $S_{q,R}$ is tangent to $M$ at $p$. We say that $S_{q,R}$ is an 
\textit{osculating hypersphere} of $M$ at $p$ if the quadratic part of the function 
$g^q\comp\g:U\subset\R^2\to\R$ is degenerate, 
that is, if the Hessian matrix of $g^q\comp\g$ at $(0,0)$ has zero determinant. The centre $q$ of a  
hypersphere $S_{q,R}$, which is osculating to $M$ at $p$, is called a \textit{focal centre} of $M$ at $p$. 
\medskip

\noindent
\textit{\textbf{\small Focal Set}}. The \textit{focal set} of a surface $M$ in $\R^{2+\ell}$ is the set of all 
focal centres of $M$. 
\smallskip

\noindent
\textit{\textbf{\small Local Focal Set}}. The \textit{local focal set} at $p$, noted $\mathcal{F}_p$, 
is the set of focal centres of $M$ at $p$. 
\smallskip

Each focal centre $q$ of $M$ at $p$ is written as $q=p+\a\nbf$, where $\nbf$ is a 
unit normal vector and $|\a|$ is the radius of the osculating hypersphere of $M$ at $p$, centred at $q$. 
\medskip

We are also interested in the hyperspheres of infinite radius (hyperplanes), thus we shall consider the 
local focal set $\mathcal{F}_p$ as a subset of the projective completion $N_pM\sqcup\RP^{\ell-1}_\infty$ of $N_pM$.  
(It is $N_pM\sqcup\RP^1_\infty$ for $M$ in $\R^4$ and $N_pM\sqcup\RP^2_\infty$ for $M$ in $\R^5$.) 

%%%%%%%%%%%%%%%%%%%%%%%%%%%%%%%%%%%%%%%%%%%%%%%%%%%%%%%%%%%%%%%%%%%%%%%%%%%%%%%%%%%%%%%%%%%%%%%%%%%%%%%%%%%%%%%%%%%%
%%%%%%%%%%%%%%%%%%%%%%%%%%%%%%%%%%%%%%%%%%%%  PROPOSITION  %%%%%%%%%%%%%%%%%%%%%%%%%%%%%%%%%%%%%%%%%%%%%%%%%%%%%%%%%
%%%%%%%%%%%%%%%%%%%%%%%%%%%%%%%%%%%%%%%%%%%%%%%%%%%%%%%%%%%%%%%%%%%%%%%%%%%%%%%%%%%%%%%%%%%%%%%%%%%%%%%%%%%%%%%%%%%%
\begin{proposition}\label{prop:focal_set-equation}
The focal set of $M\subset\R^{2+\ell}$ is a union of degree two algebraic varieties. 
Namely, the local focal set of $M$ at $p$, $\mathcal{F}_p\subset N_pM\sqcup\RP^{\ell-1}_\infty$, is a degree two hypersurface. 
If $M$ is locally given in Monge form \eqref{eq:simple-Monge-form}, then, in affine coordinates, $\mathcal{F}_p$ is given by 
\begin{equation}\label{eq:focal_set-equation}
\det\begin{pmatrix}
 \langle \Abf\,, \, q\rangle -1 &  \langle \Bbf\,, \, q\rangle \\ 
  \langle \Bbf\,, \, q\rangle & \langle \Cbf\,, \, q\rangle -1
\end{pmatrix} =0\,. 
\end{equation} 
\end{proposition}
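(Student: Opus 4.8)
The plan is to unwind the definition of osculating hypersphere directly in Monge coordinates and to read off the degeneracy condition as the vanishing of a determinant. Fix a centre $q\in N_pM$ and consider the composite $h:=g^q\comp\g$ with $g^q(x)=\|x-q\|^2-R^2$. A sphere is tangent to $M$ at $p=0$ precisely when its centre lies on the normal space (so that the radius $q-p$ is orthogonal to $T_pM$) and $R=\|q\|$; this is why it suffices to let $q$ range over $N_pM$. Writing the Monge form as $\g(s,t)=s\ebf_1+t\ebf_2+\sum_j f_j(s,t)\,\nbf_j$ with $f_j=Q_j+\mathrm{h.o.t.}$ and $q=\sum_j q_j\nbf_j$, orthonormality of $\{\ebf_1,\ebf_2,\nbf_1,\dots,\nbf_\ell\}$ makes the tangent and normal parts decouple, so that $h(s,t)=s^2+t^2+\sum_j(f_j-q_j)^2-R^2$.

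First I would confirm the tangency normalisation from this expression: the constant term $\|q\|^2-R^2$ vanishes iff $R=\|q\|$, and the linear term vanishes automatically because $q\perp\ebf_1,\ebf_2$. Next I would extract the $2$-jet. Since each $(f_j-q_j)^2=q_j^2-2q_jQ_j+(\text{deg}\ge 4)$ contributes to the $2$-jet only through $-2q_jQ_j$, and since $\sum_j q_jQ_j=\tfrac12\big(s^2\langle\Abf,q\rangle+2st\langle\Bbf,q\rangle+t^2\langle\Cbf,q\rangle\big)$, the quadratic part of $h$ equals $s^2(1-\langle\Abf,q\rangle)-2st\,\langle\Bbf,q\rangle+t^2(1-\langle\Cbf,q\rangle)$. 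Thus the Hessian of $h$ at the origin is twice the matrix with rows $(1-\langle\Abf,q\rangle,\,-\langle\Bbf,q\rangle)$ and $(-\langle\Bbf,q\rangle,\,1-\langle\Cbf,q\rangle)$. By definition $q$ is a focal centre iff this Hessian is degenerate; since negating a $2\times2$ matrix leaves its determinant unchanged, the condition is exactly \eqref{eq:focal_set-equation}.

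Finally I would treat the algebraic and projective statements. Each entry $\langle\Abf,q\rangle$, $\langle\Bbf,q\rangle$, $\langle\Cbf,q\rangle$ is linear in $q$, so \eqref{eq:focal_set-equation} is a degree-two polynomial in $q$ and $\mathcal{F}_p$ is a degree-two hypersurface in the chart $N_pM$; the global focal set is the union of these over $p\in M$. To account for the points at infinity of $N_pM\sqcup\RP^{\ell-1}_\infty$, I would model a sphere of infinite radius by a tangent hyperplane $\{\langle x,\nbf\rangle=0\}$, $\nbf\in N_pM$, and repeat the computation: the $2$-jet of $\langle\g,\nbf\rangle$ has Hessian with entries $\langle\Abf,\nbf\rangle,\langle\Bbf,\nbf\rangle,\langle\Cbf,\nbf\rangle$, whose degeneracy reads $\mathcal{G}_p(\nbf)=0$. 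This is precisely the leading (degree-two) part of \eqref{eq:focal_set-equation}, so the points at infinity of $\mathcal{F}_p$ are exactly the asymptotic/binormal directions, and the projective closure of \eqref{eq:focal_set-equation} is a genuine quadric.

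The only point that genuinely needs care is verifying that the higher-order terms of the Monge form do not perturb the $2$-jet of $h$. This is clean precisely because, in Monge form, the h.o.t. occur only in the normal components $f_j$ and therefore enter $\|\g-q\|^2$ through the squares $(f_j-q_j)^2$, contributing only in degree $\ge 4$; no tangent--normal cross terms arise to spoil the quadratic part.
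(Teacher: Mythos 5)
Your argument is correct and follows essentially the same route as the paper: both compute the $2$-jet of $g^q\comp\g$ in Monge coordinates, observe that the higher-order normal terms only contribute in degree $\ge 4$, and read off the degeneracy condition as the vanishing of the determinant \eqref{eq:focal_set-equation}. Your closing paragraph on the points at infinity (infinite-radius spheres as tangent hyperplanes, giving $\mathcal{G}_p(\nbf)=0$ as the leading part of the quadric) is a welcome supplement that the paper leaves implicit when it passes to the projective completion $N_pM\sqcup\RP^{\ell-1}_\infty$.
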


\noindent
\textit{Proof}.  
Clearly, the centre of a hypersphere tangent to $M$ at a point $p\in M$ belongs to the 
normal space to $M$ at $p$. Conversely, every point of the normal space $N_pM$ is the centre of one hypersphere tangent to $M$ 
at $p$. Thus if $M$ is locally given in Monge form \eqref{eq:simple-Monge-form}, a point of $\R^{2+\ell}$ is the centre 
of a hypersphere tangent to $M$ at $p$ (the origin) iff that point is of the form $(0,0;q_1,\ldots, q_\ell)$.  
The equation of such hypersphere $S_q$ is
\[x_1^2+x_2^2+(y_1-q_1)^2+\ldots+(y_{\ell}-q_{\ell})^2-(q_1^2+\ldots+q_{\ell}^2)=0\,, \quad \mbox{ that is,} \]
\[x_1^2+x_2^2+y_1^2+\ldots+y_\ell^2-2y_1q_1-\ldots-2y_{\ell}q_{\ell}=0\,.\]

So writing 
\[g^q(x;y)=x_1^2+x_2^2+y_1^2+\ldots+y_\ell^2-2y_1q_1-\ldots-2y_{\ell}q_{\ell}\,,\] 
the hypersphere $S_q$ is osculating to $M$ at $p$ (the origin) iff the 
quadratic part of the function $g^q\comp\g$, in the variables $s$, $t$, is degenerate. 
That quadratic part, 
\[s^2+t^2-(Q_1(s,t)q_1+\ldots+Q_\ell(s,t) q_\ell)\,, \mbox{ is equal to }\]
\[(1-a_1q_1-\ldots-a_\ell q_\ell)s^2-2(b_1q_1+\ldots+b_\ell q_\ell)st+(1-c_1q_1-\ldots-c_\ell q_\ell)t^2\,,\]
that is, 
\begin{equation}\label{eq:focal-set}
(1-\langle \Abf\,,\,q\rangle)s^2-2\langle \Bbf\,,\,q\rangle st+(1-\langle \Cbf\,,\,q\rangle)t^2\,,
\end{equation}
which is degenerate if and only if 
\[\det\begin{pmatrix}
 \langle \Abf\,, \, q\rangle -1 &  \langle \Bbf\,, \, q\rangle \\ 
  \langle \Bbf\,, \, q\rangle & \langle \Cbf\,, \, q\rangle -1
\end{pmatrix} =0\,. \eqno{\square}\]
\medskip

% Proposition \ref{prop:focal_set-equation} has the following generalisation (whose proof is similar \protect\cite{Uribe_k-dim_submnfd-Rn}): 
% \begin{theorem}
%  The focal set of a $k$-dimensional submanifold $M\subset\R^{k+\ell}$ is a union of 
%  degree $k$ algebraic varieties.  
% \end{theorem}
%
\noindent
\textit{\textbf{\small Caustic of a Family of Functions}}. 
The \textit{caustic of a family of functions} depending smoothly on parameters 
consists of the parameter values for which the corresponding function
has a \textit{non-Morse critical point} (at which the Hessian determinant vanishes). 
\medskip

\noindent
\textit{\textbf{\small Optical Caustic or Evolute}}. 
Given a (local) parametrisation $\g:\R^k\to \R^n$ of a $k$-submanifold $M$,  
for each $q\in\R^n$ we take its ``squared distance function'' to $\g(x)$
\begin{equation}\label{eq:squared-distance}
  F^q(x)=\frac{1}{2}\left\langle\, q-\g(x)\,,\,q-\g(x) \,\right\rangle\,.
\end{equation}  
We get a family of functions $\{F^q:\R^k\to\R\}$ parametrised by the points $q$ of $\R^n$. The caustic of this family
is called the \textit{optical caustic} (or \textit{evolute}) \textit{of the submanifold} $M$.  

%%%%%%%%%%%%%%%%%%%%%%%%%%%%%%%%%%%%%%%%%%%%%%%%%%%%%%%%%%%%%%%%%%%%%%%%%%%%%%%%%%%%%%%%%%%%%%%%%%%%%%%%%%%%%%%%%%%%
%%%%%%%%%%%%%%%%%%%%%%%%%%%%%%%%%%%%%%%%%%%%  PROPOSITION  %%%%%%%%%%%%%%%%%%%%%%%%%%%%%%%%%%%%%%%%%%%%%%%%%%%%%%%%%
%%%%%%%%%%%%%%%%%%%%%%%%%%%%%%%%%%%%%%%%%%%%%%%%%%%%%%%%%%%%%%%%%%%%%%%%%%%%%%%%%%%%%%%%%%%%%%%%%%%%%%%%%%%%%%%%%%%%
\begin{proposition}\label{prop:caustic=focal-set}
The optical caustic $\mathcal{C}$ and the focal set $\mathcal{F}$ of a surface $M$ in $\R^{2+\ell}$ coincide. 
In particular, the local caustic $\mathcal{C}_p:=\mathcal{C}\cap N_pM$ is given by the equation 
\begin{equation}\label{eq:focal-quadric}
  (\,\langle \Abf\,, \,q \rangle-1\,)(\,\langle \Cbf\,, \,q \rangle-1\,)-\langle \Bbf\,, \,q \rangle^2=0\,, \quad q\in N_pM. 
\end{equation}
\end{proposition}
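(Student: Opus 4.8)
The plan is to identify the two families of functions that define $\mathcal{C}$ and $\mathcal{F}$, and then read off the local equation directly from Proposition~\ref{prop:focal_set-equation}. The key observation is that, for a fixed centre $q$, the squared-distance function $F^q$ of \eqref{eq:squared-distance} and the function $g^q\comp\g$ used to define the focal set differ only by a multiplicative factor and an additive constant. Indeed, if $S_{q,R}$ is tangent to $M$ at the foot point $\g(u_0)$, then the tangency constraint fixes $R=\|q-\g(u_0)\|$, and hence $g^q(\g(u))=\|\g(u)-q\|^2-R^2=2F^q(u)-R^2$. Since $R^2$ is constant in $u$, the two functions have the same critical points and, at each such point, Hessians differing by the factor $2$; in particular one Hessian is degenerate iff the other is.

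First I would verify the critical-point/tangency correspondence. Differentiating \eqref{eq:squared-distance} gives $d_{u_0}F^q=-\langle q-\g(u_0),\,d_{u_0}\g\rangle$, so $u_0$ is a critical point of $F^q$ exactly when $q-\g(u_0)\perp T_{\g(u_0)}M$, i.e. when the sphere centred at $q$ through $\g(u_0)$ is tangent to $M$ there. Thus the parameter values $q$ for which $F^q$ has a critical point are precisely the centres of spheres tangent to $M$, and among these the caustic selects those where the Hessian of $F^q$ degenerates. By the previous paragraph this is exactly the osculating (non-Morse) condition defining the focal centres, so $\mathcal{C}=\mathcal{F}$. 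Observe that a critical point of $F^q$ already forces $q-\g(u_0)$ to be normal, so even though the family $\{F^q\}$ is parametrised by all of $\R^n$, the caustic automatically lies in the union of the normal spaces, which is why no information is lost in passing to $N_pM$.

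It then remains to restrict to a single normal space. Intersecting with $N_pM$ gives $\mathcal{C}_p=\mathcal{C}\cap N_pM=\mathcal{F}\cap N_pM=\mathcal{F}_p$, the local focal set at $p$, which by Proposition~\ref{prop:focal_set-equation} is cut out by the vanishing of the determinant in \eqref{eq:focal_set-equation}. Expanding that $2\times 2$ determinant yields $(\langle\Abf,q\rangle-1)(\langle\Cbf,q\rangle-1)-\langle\Bbf,q\rangle^2=0$, which is precisely \eqref{eq:focal-quadric}. The only point demanding care — the main obstacle, such as it is — is the bookkeeping between the two families: one must check that the tangency constraint fixing $R$ in the focal-set picture is exactly what makes $g^q\comp\g$ and $F^q$ agree up to the harmless constant, so that the non-Morse locus of the distance family coincides with the degenerate-quadratic-part locus of the sphere family. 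Once that identification is secured, everything else is the routine determinant expansion already supplied by Proposition~\ref{prop:focal_set-equation}.
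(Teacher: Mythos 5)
Your proof is correct. The overall skeleton matches the paper's: you first show that the critical points of $F^q$ are exactly the parameters where $q-\g(u_0)$ is normal (hence where a tangent sphere centred at $q$ exists), and then you match the two degeneracy conditions and read off the equation from Proposition~\ref{prop:focal_set-equation}. Where you diverge is in how the second step is handled. The paper recomputes the Hessian of $F^q$ at the origin from the Monge form, via the identities $-F^q_{ss}=\langle\g_{ss},q-\g\rangle-\langle\g_s,\g_s\rangle$ and its companions, and observes that the resulting determinant condition is literally equation \eqref{eq:focal_set-equation}. You instead establish the single identity $g^q\comp\g=2F^q-R^2$ (valid once the tangency constraint fixes $R=\|q-\g(u_0)\|$, so that $R^2$ is constant in the surface parameter), from which equality of critical points and proportionality of Hessians are immediate, and then import the determinant computation wholesale from Proposition~\ref{prop:focal_set-equation} rather than redoing it. Your route is slightly more economical and makes the conceptual point — that the two families of functions are affinely related fibrewise, so their non-Morse loci must coincide — explicit, at the cost of having to justify carefully (as you do) that the constant $R^2$ really is constant in $u$ for fixed $q$ and fixed foot point; the paper's route is a direct verification that needs no such bookkeeping but obscures why the coincidence was inevitable. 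Both are complete.
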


\begin{proof}  
Given a point $q\in\R^n$, the function $F^q$ has a critical point at $x\in\R^k$ if its partial 
derivatives vanish: 
\[\langle\,\g_{x_i}(x)\,,\,q-\g(x)\,\rangle=0, \ \ (i=1,\ldots,k)\,. \]
It means that the vector $q-\g(x)$ is orthogonal to $\g$ at $p=\g(x)$. Given $p=\g(x)$, 
the normal space $N_pM$ consists of the points $q\in\R^n$ such that $F^q$ has a critical point at $x$. 

Suppose a surface $M$ is locally given in Monge form \eqref{eq:simple-Monge-form} at a point $p\in M$, 
that is, $\g(s,t)=s\ebf_1+t\ebf_2+\frac{1}{2}(s^2\Abf+2st\Bbf+t^2\Cbf)+{\rm h.o.t.}(s,t)$. 
We get the equalities 
\begin{equation}\label{eq:trivial-equalities}
\g_s(0,0)=\ebf_1; \quad \g_t(0,0)=\ebf_2; \quad \g_{ss}(0,0)=\Abf; \quad \g_{st}(0,0)=\Bbf; \quad \g_{tt}(0,0)=\Cbf.
\end{equation} 
Using \eqref{eq:trivial-equalities} together with the following three relations 
\[-F^q_{ss}=\langle \g_{ss},\,q-\g\rangle-\langle \g_s,\g_s\rangle; \quad
-F^q_{st}=\langle \g_{st},q-\g\rangle-\langle \g_s,\g_t\rangle; \quad
-F^q_{tt}=\langle \g_{tt},q-\g\rangle-\langle \g_t,\g_t\rangle\,, 
\]
we get that the Hessian matrix of $F^q$ at $(s,t)=(0,0)$ is degenerate, that is  
\[\det\begin{pmatrix}
 F^q_{ss} & F^q_{st} \\ 
 F^q_{st} & F^q_{tt}
\end{pmatrix} =0\,, 
\qquad 
\mbox{iff } 
\qquad 
\det\begin{pmatrix}
 \langle \Abf\,, \, q\rangle -1 &  \langle \Bbf\,, \, q\rangle \\ 
  \langle \Bbf\,, \, q\rangle & \langle \Cbf\,, \, q\rangle -1
\end{pmatrix} =0\,.\]
It coincides with equation \eqref{eq:focal_set-equation} of $\mathcal{F}_p$ in Proposition\,\ref{prop:focal_set-equation}.
\end{proof}

Proposition \ref{prop:focal_set-equation} has a natural extension to $k$-dimensional submanifolds in $\R^{k+\ell}$ : 

%%%%%%%%%%%%%%%%%%%%%%%%%%%%%%%%%%%%%%%%%%%%%%%%%%%%%%%%%%%%%%%%%%%%%%%%%%%%%%%%%%%%%%%%%%%%%%%%%%%%%%%%%%%%%%%%%%%%
%%%%%%%%%%%%%%%%%%%%%%%%%%%%%%%%%%%%%%%%%%%%%   THEOREM   %%%%%%%%%%%%%%%%%%%%%%%%%%%%%%%%%%%%%%%%%%%%%%%%%%%%%%%%%%
%%%%%%%%%%%%%%%%%%%%%%%%%%%%%%%%%%%%%%%%%%%%%%%%%%%%%%%%%%%%%%%%%%%%%%%%%%%%%%%%%%%%%%%%%%%%%%%%%%%%%%%%%%%%%%%%%%%%
\begin{theorem}[\protect\cite{Uribe_k-dim_submnfd-Rn}]\label{th:local-caustic-algebraic}
  The optical caustic $\mathcal{C}$ of a $k$-dimensional smooth submanifold $M$ in $\R^{k+\ell}$ is a union of 
   degree $k$ algebraic varieties. Namely, for each point $p\in M$, the local caustic $\mathcal{C}_p:=\mathcal{C}\cap N_pM$ 
  is a degree $k$ hypersurface in $N_pM$.
\end{theorem}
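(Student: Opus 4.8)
The plan is to carry out, for arbitrary $k$ and $\ell$, the same Hessian computation that gave Propositions \ref{prop:focal_set-equation} and \ref{prop:caustic=focal-set} in the surface case, and then simply to read off the degree of the resulting determinantal equation. First I would fix a point $p\in M$ together with an adapted orthonormal frame, so that $M$ is locally the graph of a map $\R^k\to\R^\ell$ with Monge parametrisation $\gamma:U\subset\R^k\to\R^{k+\ell}$, $\gamma(0)=p$, whose linear part at $0$ is the inclusion $T_pM\hookrightarrow\R^{k+\ell}$ and whose second-order part is encoded by the symmetric family of normal vectors $\boldsymbol{h}_{ij}:=\gamma_{x_ix_j}(0)\in N_pM$. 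These $\boldsymbol{h}_{ij}$ play the role of $\Abf,\Bbf,\Cbf$ in \eqref{eq:simple-Monge-form} when $k=2$.

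Next, exactly as in the proof of Proposition \ref{prop:caustic=focal-set}, I would use the squared-distance family $F^q$ of \eqref{eq:squared-distance}. The vanishing of the first derivatives forces $q-\gamma(0)$ to be normal to $M$, i.e. $q\in N_pM$, so the local caustic sits inside $N_pM$; the caustic condition is then the degeneracy of the Hessian of $F^q$ at the critical point $0$. Using the relation $-F^q_{x_ix_j}=\langle\gamma_{x_ix_j},q-\gamma\rangle-\langle\gamma_{x_i},\gamma_{x_j}\rangle$ (higher-order terms of $\gamma$ do not enter, since at $0$ only $\gamma_{x_ix_j}(0)=\boldsymbol{h}_{ij}$ and $\gamma_{x_i}(0)=\ebf_i$ appear), and evaluating at $0$ with $\gamma(0)=p=0$ and $q\in N_pM$, the Hessian matrix becomes
\[
[\mathrm{Hess}_0\, F^q]=\Idbf_k-\big[\langle\boldsymbol{h}_{ij},q\rangle\big]_{1\le i,j\le k}\,,
\]
a $k\times k$ symmetric matrix whose entries are affine functions of $q\in N_pM$. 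Hence the local caustic $\mathcal{C}_p$ is the hypersurface
\[
\det\big(\Idbf_k-[\langle\boldsymbol{h}_{ij},q\rangle]\big)=0\,,\qquad q\in N_pM\,,
\]
which for $k=2$ is precisely equation \eqref{eq:focal_set-equation}.

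Finally I would read off the degree. Expanding the determinant of a $k\times k$ matrix whose entries are affine in $q$ yields a polynomial in the $\ell$ coordinates of $q$ of degree at most $k$, whose top-degree part is the homogeneous degree-$k$ form $(-1)^k\det[\langle\boldsymbol{h}_{ij},q\rangle]$. So $\mathcal{C}_p$ is a degree $k$ hypersurface in $N_pM$, and since a non-Morse critical point of $F^q$ at a point $x$ means exactly $q\in\mathcal{C}_{\gamma(x)}$, the whole optical caustic is the union $\mathcal{C}=\bigcup_{p\in M}\mathcal{C}_p$ of such degree $k$ varieties. To make the degree statement uniform, and to account for the osculating hyperplanes (the focal centres at infinity), I would pass to the projective completion $N_pM\sqcup\RP^{\ell-1}_\infty$ and homogenise the determinantal equation, where it defines a genuine degree $k$ projective hypersurface.

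The main obstacle I anticipate is this last point: controlling the degree precisely. One must verify that the leading form $\det[\langle\boldsymbol{h}_{ij},q\rangle]$ is not identically zero (which can fail for degenerate second fundamental forms) and decide whether ``degree $k$'' is asserted generically, scheme-theoretically, or in the projective completion. Arranging this cleanly—so that the homogenised equation has degree exactly $k$ and its intersection with $\RP^{\ell-1}_\infty$ correctly records the hyperplane focal centres—is the delicate part; the remainder is the routine multilinear determinant computation already performed for $k=2$.
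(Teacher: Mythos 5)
Your proposal is correct and follows exactly the paper's route: the paper's own proof is the one-line observation that the Hessian of $F^q$ is a $k\times k$ matrix with entries affine in $q$, and you have simply written out that computation (generalising the $k=2$ case of Propositions \ref{prop:focal_set-equation} and \ref{prop:caustic=focal-set}) together with the passage to the projective completion. Your closing caveat about the leading form $\det[\langle\boldsymbol{h}_{ij},q\rangle]$ possibly vanishing is a reasonable point the paper does not address, but it does not change the argument.
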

\begin{proof}
One uses the fact that the Hessian matrix of $F^q$ is of size $k\times k$ and that its entries are 
affine functions of $q$. 
\end{proof}

Although the optical caustic $\mathcal{C}$ of a smooth submanifold $M^k$ in $\R^{k+\ell}$ is 
the union of %a $2$-parameter family of $(\ell-1)$-dimensional 
the degree $k$ subvarieties $\mathcal{C}_p$, as a rule $\mathcal{C}$ is not an algebraic hypersurface. 
\medskip

%%%%%%%%%%%%%%%%%%%%%%%%%%%%%%%%%%%%%%%%%%%%%%%%%%%%%%%%%%%%%%%%%%%%%%%%%%%%%%%%%%%%%%%%%%%%%%%%%%%%%%%%%%%%%%%%%%%
%%%%%%%%%%%%%%%%%%%%%%%%%%%%%%%%%%%%%%%%%%%%  DEFINITION  %%%%%%%%%%%%%%%%%%%%%%%%%%%%%%%%%%%%%%%%%%%%%%%%%%%%%%%%%
\noindent
\textbf{\textit{\small Sphere-Contact Direction}}.
%According to Theorem\,\ref{th:focal_quadric-indicatrix}-(\textit{\textbf{b}}, \textit{\textbf{c}}), 
%given $\nbf\in N_pM$ with $\|\nbf\|=1$, the normal line $\{\a\nbf:\l\in\R\sqcup\infty\}$ cuts the local 
%caustic $\mathcal{C}_p$ at two points, counting multiplicities. 
If $q=p+\a\nbf\in\mathcal{C}_p$, then $1/\a=:\mu$ is an eigenvalue of $\left[d^\perp_{p,\nbf}\G\right]$ 
whose associated eigenspace is the kernel of $[\mathrm{Hess} F^q]$ at $p$ for $F^q$ given in \eqref{eq:squared-distance}. 
The higher contact of $M$ with the osculating hypersphere $S_{q,\a}$ at $p$ takes place along this eigendirection, 
which we call a \textit{sphere-contact direction of $M$ at $p$ for the focal centre} $q$. 
\smallskip

\noindent
\textbf{\textit{\small Normal Map and Optical Caustic}}. The metric $\langle \cdot\,,\,\cdot\rangle_p$ on a Riemannian manifold $V$ 
associates to each tangent vector $v\in T_pV$ the covector $\langle v\,,\,\cdot\rangle_p\in T^*_pV$. 

The \textit{normal map} of a submanifold $M$ in $\R^n$, $N^*M\to\R^n$ sends each ``normal covector''  
$\langle v\,,\,\cdot\rangle_p$ (where $v\in N_pM$) to ``the end-point of the normal vector $v$''\,:  
\[
\langle v\,,\,\cdot\rangle_p \mapsto p+v\,.
\] 
It is a (so called) Lagrangian map. 
%The set of normal covectors $L^n=\{\langle v\,,\,\cdot\rangle: p\in\G, v\in N_p\G\}$ is a Lagrange submanifold of 
%$T^*\R^n$ and the natural projection $L\subset T^*\R^n\to\langle v\,,\,\cdot\rangle\R^n$ is a Lagrangian map 
%(all its fibres are Lagrenge submanifolds of $T^*\R^n$). 
The \textit{caustic of a Lagrangian map} is the set of its critical values. 
In the general theory of Lagrangian singularities and caustics of Lagrangian maps, initiated by V.I. Arnold (cf. \protect\cite{avg}), 
\textit{the caustic of the normal map of a submanifold $M$ in $\R^n$ coincides with the optical caustic of $M$}. 
In fact, \textit{the family \eqref{eq:squared-distance} of squared distance functions of $M$ is a generating 
family of the normal map of $M$}.

%%%%%%%%%%%%%%%%%%%%%%%%%%%%%%%%%%%%%%%%%%%%%%%%%%%%%%%%%%%%%%%%%%%%%%%%%%%%%%%%%%%%%%%%%%%%%%%%%%%%%%%%%
%%%%%%%%%%%%%%%%%%%%%%%%%%%%%%%%%%%%%%%%%%%%%%%%%%%%%%%%%%%%%%%%%%%%%%%%%%%%%%%%%%%%%%%%%%%%%%%%%%%%%%%%%
\subsection{\textbf{The Polar Dual of a subvariety}\label{sect-polar_dual-subvariety}}
%%%%%%%%%%%%%%%%%%%%%%%%%%%%%%%%%%%%%%%%%%%%%%%%%%%%%%%%%%%%%%%%%%%%%%%%%%%%%%%%%%%%%%%%%%%%%%%%%%%%%%%%%
%%%%%%%%%%%%%%%%%%%%%%%%%%%%%%%%%%%%%%%%%%%%%%%%%%%%%%%%%%%%%%%%%%%%%%%%%%%%%%%%%%%%%%%%%%%%%%%%%%%%%%%%%
We shall use the \textit{polar duality} in Euclidean space $\R^\ell$ 
with respect to the unit hypersphere of $\R^\ell$, \,$\sph^{\ell-1}:=\{x\in\R^\ell:\langle x,x\rangle=1\}$. 
\medskip
%, where an orthonormal base is chosen. 
%The unit sphere is given by the equation $x_1^2+\ldots+x_n^2=1$. 

%%%%%%%%%%%%%%%%%%%%%%%%%%%%%%%%%%%%%%%%%%%%%%%%%%%%%%%%%%%%%%%%%%%%%%%%%%%%%%%%%%%%%%%%%%%%%%%%%%%%%%%%%%%%%%%%%%%
%%%%%%%%%%%%%%%%%%%%%%%%%%%%%%%%%%%%%%%%%%%%  DEFINITION  %%%%%%%%%%%%%%%%%%%%%%%%%%%%%%%%%%%%%%%%%%%%%%%%%%%%%%%%%
\noindent
\textbf{\textit{\small Poles and Polars}}. 
Notice that the equation of a hyperplane in $\R^\ell$ can be written as 
$\langle a, x \rangle=1$ for some $a=(a_1,\ldots,a_\ell)\neq(0,\ldots,0)$ iff that hyperplane does not 
contain the origin. 
Therefore, there is a bijection between the points $a\in\R^\ell$, distinct from the origin, 
and the hyperplanes in $\R^\ell$ not containing the origin: 
\begin{equation}\label{eq:pole-polar}
 \mbox{point } a\in\R^\ell\smallsetminus\{0\}\quad \longleftrightarrow \quad \mbox{hyperplane $H_a$ of equation } \langle a,x\rangle=1\,. 
\end{equation}

The hyperplane $H_a$ is called the \textit{polar} of the point $a\in\R^\ell$ and the point $a$ is the \textit{pole} 
of the hyperplane $H_a$. This correspondence is called \textit{polar duality} with respect to $\sph^{\ell-1}$. 
Thus the ``slopes'' of the hyperplane $H_a$ are the coordinates of its pole $a$. 

%%%%%%%%%%%%%%%
\begin{example}
In Euclidean space $\R^4$, the polar of the point $a=(2,5,4,3)$ is the hyperplane given by the equation 
$2x_1+5x_2+4x_3+3x_4=1$. 
\end{example}
%%%%%%%%%%%%%

In order to take into account both, the hyperplanes through the origin and the origin itself, 
this correspondence pole\,$\leftrightarrow$\,polar is extended to the projective completion of $\R^\ell$, 
$\RP^\ell=\R^\ell\sqcup\RP_\infty^{\ell-1}$. The hyperplane at infinity $\RP_\infty^{\ell-1}$ is the polar of the centre 
of $\sph^{\ell-1}$ and the points of $\RP_\infty^{\ell-1}$ are the poles 
of the hyperplanes through the centre of $\sph^{\ell-1}$.  %the hyperplane at infinity. 
\begin{remark}\label{rem:orthogonality-polar}
By relation \eqref{eq:pole-polar}, for any $q\in\R^\ell\sqcup\RP_\infty^{\ell-1}$ the line joining the origin and $q$ 
is orthogonal to the polar hyperplane of $q$. In particular, a hyperplane $H_q$ in $\R^\ell\sqcup\RP_\infty^{\ell-1}$ 
contains the origin of $\R^\ell$ iff its pole $q$ is at infinity in the direction orthogonal to $H_q$.  
\end{remark}

\noindent
\textit{\textbf{{\small Polar Duality}}}.
This one-to-one relation between hyperplanes of $\RP^\ell$ and points of the same 
space, by means of a quadric $\mathcal{S}$ (here $\sph^{\ell-1}$), is an isomorphism called {\em polar duality}
\[\Psi_{\mathcal{S}} : (\RP^\ell)^\vee \stackrel{\simeq}{\longrightarrow}\RP^\ell\,.\]

%%%%%%%%%%%%%%%%%%%%%%%%%%%%%%%%%%%%%%%%%%%%%%%%%%%%%%%%%%%%%%%%%%%%%%%%%%%%%%%%%%%%%%%%%%%%%%%%%%%%%%%%%%%%%%%%%%%
%%%%%%%%%%%%%%%%%%%%%%%%%%%%%%%%%%%%%%%%%%%%  DEFINITION  %%%%%%%%%%%%%%%%%%%%%%%%%%%%%%%%%%%%%%%%%%%%%%%%%%%%%%%%%
\noindent
\textbf{\textit{\small Polar Dual Variety}}. The \textit{polar dual} $M^\vee$ of a subvariety $M$ in $\R^\ell$ 
(with respect to the unit sphere $\sph^{\ell-1}$ of $\R^\ell$) is the subvariety of $\R^\ell\sqcup\RP^{\ell-1}_\infty$ 
which consists of the poles of all hyperplanes tangent to $M$. 

%%%%%%%%%%%%%%%%%%%%%%%%%%%%%%%%%%%%%%%%%%%%%%%%%%%%%%%%%%%%%%%%%%%%%%%%%%%%%%%%%%%%%%%%%%%%%%%%%%%%%%%%%%%%%%%%%%%%
\subsection{\textbf{Duality Between the Local Caustic $\mathcal{C}_p$ and the Indicatrix Ellipse $\E_p$}}
%%%%%%%%%%%%%%%%%%%%%%%%%%%%%%%%%%%%%%%%%%%%%%%%%%%%%%%%%%%%%%%%%%%%%%%%%%%%%%%%%%%%%%%%%%%%%%%%%%%%%%%%%%%%%%%%%%%%
For each point $p$ of a surface $M$ in $\R^{2+\ell\geq 3}$ we consider the polar duality in the normal space 
$N_pM$ (of dimension $\ell$) with respect to its unit hypersphere $\sph^{\ell-1}\subset N_pM$. 

%%%%%%%%%%%%%%%%%%%%%%%%%%%%%%%%%%%%%%%%%%%%%%%%%%%%%%%%%%%%%%%%%%%%%%%%%%%%%%%%%%%%%%%%%%%%%%%%%%%%%%%%%%%%%%%%%%%%
%%%%%%%%%%%%%%%%%%%%%%%%%%%%%%%%%%%   MAIN DUALITY THEOREM   %%%%%%%%%%%%%%%%%%%%%%%%%%%%%%%%%%%%%%%%%%%%%%%%%%%%%%%
%%%%%%%%%%%%%%%%%%%%%%%%%%%%%%%%%%%%%%%%%%%%%%%%%%%%%%%%%%%%%%%%%%%%%%%%%%%%%%%%%%%%%%%%%%%%%%%%%%%%%%%%%%%%%%%%%%%%
\begin{theorem}[\textbf{\small Duality Theorem}]\label{th:focal_quadric-indicatrix}
Let $p$ be a point of a smooth surface $M$ in $\R^{2+\ell}$. 
\smallskip

\noindent
\textit{\textbf{a}}$)$ The polar dual of the indicatrix ellipse $\mathcal{E}_p$ of $M$ at $p$ is 
the local caustic $\mathcal{C}_p$ in the projective completion of the normal space 
$N_pM\sqcup\RP^{\ell-1}_\infty$, that is $\mathcal{E}_p^\vee=\mathcal{C}_p$.
\smallskip

\noindent
\textit{\textbf{b}}$)$ Every line normal to $M$ at $p$ cuts the local caustic $\mathcal{C}_p\subset N_pM\sqcup\RP^{\ell-1}_\infty$ at 
two real points {\em (focal centres)}, counting multiplicities. 
\smallskip

\noindent
\textit{\textbf{c}}$)$ For each unit vector $\nbf\in N_pM$, the two focal centres at which the line generated by $\nbf$ meets 
$\mathcal{C}_p$ are $\nbf/\mu_1$ and $\nbf/\mu_2$, where $\mu_1$, $\mu_2$ are the eigenvalues of the matrix 
$[d^\perp_{p,\nbf}\Gamma]$, and the corresponding sphere-contact directions are two orthogonal lines on $T_pM$. \\ 
{\rm (So to each normal line there correspond two curvature radii.)}
\end{theorem}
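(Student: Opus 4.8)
The plan is to put the caustic into one invariant form and then read off all three parts from elementary computations with conics and quadratics. First I would rewrite the defining equation \eqref{eq:focal-quadric} of $\mathcal{C}_p$ using the definition \eqref{eq:CQF(q)} of the Gauss quadratic form together with the identity $\Abf+\Cbf=2\Hbf$ for the mean curvature vector $\Hbf$. Expanding the determinant and regrouping gives
\[
2\mathcal{G}_p(q)-2\langle\Hbf,q\rangle+1=0,\qquad q\in N_pM,
\]
which is the form I use throughout.

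For part \textbf{a}), write $\psi=2\theta$ and $\boldsymbol{D}=\tfrac12(\Abf-\Cbf)$, so that \eqref{eq:parametrisation-E_p} reads $\E_p(\psi)=\Hbf+\cos\psi\,\boldsymbol{D}+\sin\psi\,\Bbf$. By the definition of the polar dual, a hyperplane $\langle a,x\rangle=1$ is tangent to the curve $\E_p$ precisely when it contains both the point $\E_p(\psi)$ and the tangent vector $\E_p'(\psi)$; its pole is $a$, so $\E_p^\vee$ is found by eliminating $\psi$ from $\langle a,\E_p(\psi)\rangle=1$ and $\langle a,\E_p'(\psi)\rangle=0$. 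Writing $h=\langle a,\Hbf\rangle$, $u=\langle a,\boldsymbol{D}\rangle$, $v=\langle a,\Bbf\rangle$, the second equation forces $(\cos\psi,\sin\psi)$ parallel to $(u,v)$, and substituting into the first yields $(1-h)^2=u^2+v^2$ after squaring. Since $\langle a,\Abf\rangle\langle a,\Cbf\rangle=h^2-u^2$, the definition \eqref{eq:CQF(q)} gives $u^2+v^2=h^2-2\mathcal{G}_p(a)$, and this turns $(1-h)^2=u^2+v^2$ back into $2\mathcal{G}_p(a)-2\langle\Hbf,a\rangle+1=0$, the equation of $\mathcal{C}_p$. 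Hence $\E_p^\vee=\mathcal{C}_p$. The same elimination is valid for every $\ell$, since a hyperplane tangent to the \emph{curve} $\E_p$ is exactly one containing its tangent line, i.e.\ satisfying the two conditions above.

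For parts \textbf{b}) and \textbf{c}), I restrict the caustic to a normal line $q=t\nbf$ with $\nbf$ a unit vector. As $\mathcal{G}_p$ is a quadratic form, $2\mathcal{G}_p(\nbf)=K^{\nbf}=\mu_1\mu_2$ and $\langle\Hbf,\nbf\rangle=\tfrac12\Tr[d^\perp_{p,\nbf}\G]=\tfrac12(\mu_1+\mu_2)$, where $\mu_1,\mu_2$ are the eigenvalues of the symmetric matrix \eqref{eq:[dG(p,n)]}. The caustic equation then becomes
\[
\mu_1\mu_2\,t^2-(\mu_1+\mu_2)\,t+1=(\mu_1 t-1)(\mu_2 t-1)=0,
\]
whose roots $t=1/\mu_1,\,1/\mu_2$ are real because the matrix is symmetric; these are the two focal centres $\nbf/\mu_1,\,\nbf/\mu_2$ of \textbf{b}) and \textbf{c}). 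Finally, by the Sphere-Contact Direction paragraph the contact direction for $\nbf/\mu_i$ is the $\mu_i$-eigenvector of $[d^\perp_{p,\nbf}\G]$, and eigenvectors of a symmetric matrix belonging to distinct eigenvalues are orthogonal, which gives the orthogonality of the two sphere-contact directions.

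The step I expect to require most care is the bookkeeping at infinity in $N_pM\sqcup\RP^{\ell-1}_\infty$: a vanishing eigenvalue $\mu_i=0$ pushes a focal centre to infinity and corresponds, on the dual side, to a tangent hyperplane of $\E_p$ through the origin (whose pole lies at infinity). One must check that the degenerate root in \textbf{b}) and the squaring step in \textbf{a}) are matched correctly by the projective extension of the pole--polar correspondence, and that the degenerate locus $(u,v)=(0,0)$, where the second pole condition holds for all $\psi$, introduces no spurious component of $\E_p^\vee$.
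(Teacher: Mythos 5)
Your proposal is correct and follows essentially the same route as the paper: part \textbf{a}) is the same elimination of the parameter from the two tangency conditions $\langle a,\E_p\rangle=1$, $\langle a,\E_p'\rangle=0$ (the paper sums the squares of exactly these two relations), and parts \textbf{b}), \textbf{c}) restrict the caustic equation to the line $\R\nbf$ and recognize the resulting quadratic as the characteristic polynomial of the symmetric matrix $[d^\perp_{p,\nbf}\G]$. Your only cosmetic deviation is rewriting the caustic as $2\mathcal{G}_p(q)-2\langle\Hbf,q\rangle+1=0$ from the outset, a form the paper derives later in Theorem~\ref{th:equation_local_caustic}; the squaring step you flag is harmless since both sign choices correspond to genuine tangent hyperplanes of $\E_p$.
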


\noindent
\underline{\textit{Proof of} \textit{\textbf{a}}}.
Consider the parametrisation \eqref{eq:parametrisation-E_p} of the indicatrix ellipse $\E_p:\RP^1\to N_pM$, 
\[ 
\E_p(\theta)=\frac{1}{2}(\Abf+\Cbf)+\frac{1}{2}(\Abf-\Cbf)\cos 2\theta+\Bbf\sin 2\theta\,.
\]

Any hyperplane tangent to $\E_p$ at $\E_p(\theta)$ contains the points $\E_p(\theta)$ and $\E_p(\theta)+\frac{1}{2}\E_p'(\theta)$. 
Writing the equation of such hyperplane in the form $\langle X, q \rangle=1$, 
where $q$ is its pole, we get, from these two points, the following two equations (on the pole $q$):
\begin{equation}\label{prod-e}
  \bigg\langle \frac{1}{2}(\Abf+\Cbf)+\frac{1}{2}(\Abf-\Cbf)\cos 2\theta+B\sin 2\theta\,, \,q \bigg\rangle=1\, ,
\end{equation}

\begin{equation}\label{prod-e'}
  \bigg\langle \frac{1}{2}(\Abf+\Cbf)+\frac{1}{2}(\Abf-\Cbf)(\cos 2\theta-\sin 2\theta)+\Bbf(\sin 2\theta+\cos 2\theta)\,, \,q \bigg\rangle=1\, .
\end{equation}

We take equation\,\eqref{prod-e} minus equation\,\eqref{prod-e'} to get 
\begin{equation}\label{e-e'}
  \sin 2\theta\bigg\langle \frac{1}{2}(\Abf-\Cbf)\,, \,q\bigg\rangle-\cos 2\theta\bigg\langle \Bbf\,, \,q \bigg\rangle=0
\end{equation}
and we write equation\,\eqref{prod-e} in the convenient form 
\begin{equation}\label{prod-e-bis}
  \cos 2\theta\bigg\langle \frac{1}{2}(\Abf-\Cbf)\,, \,q\bigg\rangle+
  \sin 2\theta\bigg\langle \Bbf\,, \,q \bigg\rangle=1-\bigg\langle\frac{1}{2}(\Abf+\Cbf)\,, \,q \bigg\rangle\, . 
\end{equation}
  Summing up the squares of equations\,\eqref{e-e'} and \eqref{prod-e-bis} we get 
\[
    \bigg\langle \frac{1}{2}(\Abf-\Cbf)\,, \,q\bigg\rangle^2+\bigg\langle \Bbf\,, \,q \bigg\rangle^2
    =1-2\bigg\langle\frac{1}{2}(\Abf+\Cbf)\,, \,q \bigg\rangle+\bigg\langle\frac{1}{2}(\Abf+\Cbf)\,, \,q \bigg\rangle^2\,, 
\]
  which is equivalent to the equation of the local caustic $\mathcal{C}_p$\,:
\[
  (\,\langle \Abf\,, \,q \rangle-1\,)(\,\langle \Cbf\,, \,q \rangle-1\,)-\langle \Bbf\,, \,q \rangle^2=0\,. \eqno{\square}
\]

\begin{proof}[\underline{Proof of \textit{\textbf{b}} and \textit{\textbf{c}}}]
Consider the equation of the local caustic $\mathcal{C}_p$ in $N_pM$,
\[
\det
\begin{pmatrix}
 \langle \Abf\,, \, q\rangle -1 &  \langle \Bbf\,, \, q\rangle \\ 
  \langle \Bbf\,, \, q\rangle & \langle \Cbf\,, \, q\rangle -1
\end{pmatrix} =0, \qquad q\in N_pM\,. 
\]
Take a unit normal vector $\nbf\in N_pM$ and the line it generates 
$\ell^{\nbf}=\{\a\nbf:\a\in\R\}$. The points of intersection of this line with 
the local caustic $\mathcal{C}_p$ are given by the values of $\a$ satisfying 
\[
\det
\begin{pmatrix}
 \langle \Abf\,, \, \a\nbf \rangle -1 &  \langle \Bbf\,, \, \a\nbf\rangle \\ 
  \langle \Bbf\,, \, \a\nbf\rangle & \langle \Cbf\,, \, \a\nbf\rangle -1
\end{pmatrix} =0\,. 
\]
Writing $\mu=1/\a$ ($\a\neq 0$) and dividing the above equation by $\a^2$ we get 
\begin{equation}\label{equation-eigenvalues}
\det
\begin{pmatrix}
 \langle A\,, \, \nbf \rangle -\mu &  \langle B\,, \, \nbf\rangle \\ 
  \langle B\,, \, \nbf\rangle & \langle C\,, \, \nbf\rangle -\mu
\end{pmatrix} =0\,. 
\end{equation}

The numbers $\mu$ that satisfy \eqref{equation-eigenvalues} are the two real eigenvalues of the symmetric matrix 
{\small $
\begin{pmatrix}
 \langle A\,, \, \nbf \rangle &  \langle B\,, \, \nbf\rangle \\ 
  \langle B\,, \, \nbf\rangle & \langle C\,, \, \nbf\rangle 
\end{pmatrix}$}$\,= \left[d_{p,\nbf}^\perp\G\right]$. 
Thus the line $\ell^{\nbf}$ intersects $\mathcal{C}_p$ at two real points, 
counting multiplicities. Distinct eigenvalues correspond to orthogonal eigendirections. 
\end{proof}

\noindent
\textbf{\small Duality Theorem Generalisation}. An extension of Theorem \ref{th:focal_quadric-indicatrix} for $k$-dimensional 
submanifolds $M$ in $\R^{k+\ell}$, in which the indicatrix ellipse $\E_p$ is replaced with the critical value set of the 
local quadratic map $\f_p:T_pM\to N_pM$, will appear in \protect\cite{Uribe_k-dim_submnfd-Rn}.

%%%%%%%%%%%%%%%%%%%%%%%%%%%%%%%%%%%%%%%%%%%%%%%%%%%%%%%%%%%%%%%%%%%%%%%%%%%%%%%%%%%%%%%%%%%%%%%%%%%%%%%%%%%%%%%%%%%%%%%%%
\subsection{\textbf{Classification of Points of a Surface in $\R^4$ ($\R^5$) by its Local Caustic}}\label{sect-Class-Points-R5}
%%%%%%%%%%%%%%%%%%%%%%%%%%%%%%%%%%%%%%%%%%%%%%%%%%%%%%%%%%%%%%%%%%%%%%%%%%%%%%%%%%%%%%%%%%%%%%%%%%%%%%%%%%%%%%%%%%%%%%%%%
The theorems of this subsection follow naturally from our Duality Theorem \ref{th:focal_quadric-indicatrix}. 
\smallskip

%%%%%%%%%%%%%%%%%%%%%%%%%%%%%%%%%%%%%%%%%%%%%%%%%%%%%%%%%%%%%%%%%%%%%%%%%%%%%%%%%%%%%%%%%%%%%%%%%%%%%%%%%%%%%%%%%%%
%%%%%%%%%%%%%%%%%%%%%%%%%%%%%%%%%%%%%%%%%%%%  DEFINITION  %%%%%%%%%%%%%%%%%%%%%%%%%%%%%%%%%%%%%%%%%%%%%%%%%%%%%%%%%
\noindent
\textbf{\textit{\small Umbilical Focal Centre}}.  %and Umbilical Osculating Hypersphere}}. 
Let $p$ be a point 
of $M$ in $\R^n$.  A focal centre $q$ of $M$ at $p$ is said to be \textit{umbilical} iff the line determined by  
$p$ and $q$ intersects $\mathcal{C}_p$ at $q$ with multiplicity $2$, that is iff $1/\|q-p\|$ is a double eigenvalue of 
the matrix $\left[d^\perp_{p,\nbf}\Gamma\right]=\,${\small $\begin{pmatrix}
 \langle A\,, \, \nbf \rangle &  \langle B\,, \, \nbf\rangle \\ 
  \langle B\,, \, \nbf\rangle & \langle C\,, \, \nbf\rangle 
\end{pmatrix}$, where $\nbf=\frac{q-p}{\|q-p\|}$} or (as it is defined in \protect\cite{Costa_Moraes_R-F}) iff 
the squared distance function $F^q$, given in \eqref{eq:squared-distance}, has a corank $2$ singularity at $p$.

%%%%%%%%%%%%%%%%%%%%%%%%%%%%%%%%%%%%%%%%%%%%%%%%%%%%%%%%%%%%%%%%%%%%%%%%%%%%%%%%%%%%%%%%%%%%%%%%%%%%%%%%%%%%%%%%%%%%
%%%%%%%%%%%%%%%%%%%%%%%%%%%%%%%%%%%%%%%%%%%%    THEOREM    %%%%%%%%%%%%%%%%%%%%%%%%%%%%%%%%%%%%%%%%%%%%%%%%%%%%%%%%%
%%%%%%%%%%%%%%%%%%%%%%%%%%%%%%%%%%%%%%%%%%%%%%%%%%%%%%%%%%%%%%%%%%%%%%%%%%%%%%%%%%%%%%%%%%%%%%%%%%%%%%%%%%%%%%%%%%%%
\begin{theorem}\label{th:local-caustic-R4}
Let $M$ be a smooth surface in $\R^4$ and $\E_p$ be its indicatrix ellipse at $p$. 
\smallskip

\noindent
\textit{\textbf{A}}. 
The point $p$ {\rm (the origin in $N_pM$)} lies on a convex component of $N_pM\smallsetminus \mathcal{C}_p$;
\smallskip

\noindent
\textit{\textbf{B}}. If the ellipse $\E_p$ is non degenerate, then the local caustic $\mathcal{C}_p$ is a non degenerate conic. 
In particular {\rm (see Fig.\,\ref{fig:local-causticA}):}
\smallskip

\noindent
$(e)$ $p$ is elliptic iff $\mathcal{C}_p$ is an ellipse; 
\smallskip

\noindent
$(\mathring{e})$ $p$ elliptic with $\E_p$ centred at $p$ {\rm (i.e. $\|\Hbf\|=0$)} iff $p$ is the 
centre of the ellipse $\mathcal{C}_p$; %{\rm (noted $\|\Rbf\|$)}; 
\smallskip

\noindent
$(h)$ p is hyperbolic iff $\mathcal{C}_p$ is a hyperbola; 
\smallskip

\noindent
$(p)$ $p$ is parabolic iff $\mathcal{C}_p$ is a parabola; 
\smallskip

\noindent
\textit{\textbf{C}}. If $\E_p$ is a segment, then the local caustic $\mathcal{C}_p$ is the union 
of the polar lines of the end-points of $\E_p$. These lines intersect at the pole of 
the line that contains $\E_p$, which is the unique umbilical focal centre {\rm (may be at infinity)}. 
In particular {\rm (see Fig.\,\ref{fig:local-causticB}):}
\smallskip

\noindent
$(su)$ $p$ is (hyperbolic) semi-umbilic iff the intersection point of the lines forming $\mathcal{C}_p$ 
{\rm (the unique umbilical focal centre)} is not at infinity - those lines are not parallel in $N_pM$; 
\smallskip

\noindent
$(ri)$ $p$ is real inflection iff the lines of $\mathcal{C}_p$ are orthogonal to $\E_p$, and $p$ is between them; 
\smallskip

\noindent
$(\mathring{ri})$ $p$ is real inflection and is the centre of $\E_p$ iff $p$ is equidistant to the lines 
of $\mathcal{C}_p$; 
\smallskip

\noindent
$(fi)$ $p$ is flat inflection iff one of the lines forming $\mathcal{C}_p$ is the line at infinity; 
\smallskip

\noindent
$(ii)$ $p$ is imaginary inflection iff $p$ is not between the lines of $\mathcal{C}_p$ $($both orthogonal to $\E_p$$)$;
\medskip

\noindent
\textit{\textbf{D}}. If $\E_p$ reduces to a point $q$, then $\mathcal{C}_p$ is the polar $\ell_q$ of $q$ counted twice. In particular\,$:$ 
\smallskip 

\noindent
$(u)$ $p$ is umbilic ($q\neq p$) iff the double line $\mathcal{C}_p=2\times\ell_q$ is not the line 
at infinity {\rm (Fig.\,\ref{fig:local-causticB})}. 
\smallskip 

\noindent
$(fu)$ $p$ is flat umbilic, i.e. $q=p$ iff \, $\mathcal{C}_p=2\times\RP^1_\infty$\, is the line at infinity counted twice. 
\end{theorem}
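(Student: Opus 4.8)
The plan is to deduce the entire classification from the Duality Theorem \ref{th:focal_quadric-indicatrix}, which identifies $\mathcal{C}_p$ with the polar dual $\E_p^\vee$ of the indicatrix with respect to the unit circle $\sph^1\subset N_pM$ centred at $p$ (the origin). The organising principle is a single dictionary between the position of $p$ relative to $\E_p$ and the projective type of $\mathcal{C}_p$. First I would record that, by the pole--polar correspondence \eqref{eq:pole-polar}, the points at infinity of $\mathcal{C}_p=\E_p^\vee$ are exactly the poles of the tangent lines to $\E_p$ that pass through $p$: indeed the polar of $p$ is the line at infinity $\RP^1_\infty$, so a tangent line of $\E_p$ has its pole at infinity iff it passes through $p$. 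Thus the number of real points of $\mathcal{C}_p\cap\RP^1_\infty$ equals the number of real tangent lines to $\E_p$ from $p$.

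For Part \textit{\textbf{B}} I would use that, when $\E_p$ is a nondegenerate ellipse, the number of real tangents from $p$ is $0$, $2$ coincident, or $2$ distinct according as $p$ lies inside, on, or outside $\E_p$, i.e.\ according as $p$ is elliptic, parabolic, or hyperbolic. By the dictionary this means $\mathcal{C}_p$ misses $\RP^1_\infty$, is tangent to it, or meets it in two real points, giving an ellipse, a parabola, or a hyperbola; this settles $(e)$, $(p)$, $(h)$. Nondegeneracy of $\mathcal{C}_p$ follows because polar duality sends a smooth conic to a smooth conic (the dual matrix is the adjugate, invertible when the matrix is). For $(\mathring e)$ I would complete the square in \eqref{eq:focal-quadric}, writing the caustic as $2\mathcal{G}_p(q)-2\langle\Hbf,q\rangle+1=0$ with $\Hbf=\tfrac12(\Abf+\Cbf)$ the centre of $\E_p$ by \eqref{eq:parametrisation-E_p}; the linear term vanishes iff $\Hbf=0$, i.e.\ iff $p$ is the centre of $\mathcal{C}_p$.

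Parts \textit{\textbf{C}} and \textit{\textbf{D}} concern the degenerations of $\E_p$, and here I would again argue by duality. If $\E_p$ collapses to a segment with endpoints $P_1,P_2$, then every line through an endpoint is a (limiting) tangent, so its poles sweep out the entire polar line of that endpoint; hence $\E_p^\vee=\ell_{P_1}\cup\ell_{P_2}$, the two lines meeting at the pole of the line $L$ carrying the segment, which is the unique umbilical focal centre. The fine subcases are then read off geometrically: by Remark \ref{rem:orthogonality-polar}, $\ell_{P_i}$ is orthogonal to the radius $pP_i$, and the pole of $L$ lies at infinity iff $L$ passes through $p$. This separates semiumbilic ($L\not\ni p$, finite intersection, nonparallel lines) from the inflections ($L\ni p$, parallel lines orthogonal to the radial segment $\E_p$), and among inflections it distinguishes real, imaginary and flat types by whether $p$ lies between, outside, or at an endpoint of the segment (the last giving $\ell_{P_1}=\RP^1_\infty$). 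If $\E_p$ reduces to a point $q$, every line through $q$ is tangent, so $\E_p^\vee$ is the polar $\ell_q$ counted twice, finite iff $q\ne p$ (case $(u)$) and equal to $\RP^1_\infty$ doubled iff $q=p$ (case $(fu)$).

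For Part \textit{\textbf{A}} I would note first that $p\notin\mathcal{C}_p$, since $q=0$ turns the left side of \eqref{eq:focal-quadric} into $1\ne 0$; hence $p$ lies in one component of $N_pM\smallsetminus\mathcal{C}_p$. In the elliptic case this is cleanest from convex duality: $p$ is interior to the convex region bounded by $\E_p$, and the polar dual of a convex body containing the centre of duality is again a convex body containing it, so $\mathcal{C}_p$ bounds a convex region containing $p$. In the remaining cases I would reuse $g(q)=2\mathcal{G}_p(q)-2\langle\Hbf,q\rangle+1$ and combine $g(0)=1>0$ with the definiteness type of $\mathcal{G}_p$ to place the origin in the convex component. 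I expect the main obstacle to be twofold: matching each fine subcase of \textit{\textbf{C}} to the precise configuration of the two polar lines (parallelism, orthogonality via Remark \ref{rem:orthogonality-polar}, and the betweenness of $p$) without gaps, and handling Part \textit{\textbf{A}} uniformly in the hyperbolic and parabolic cases, where the natural bookkeeping uses the value $g(\Rbf)=1-\langle\Hbf,\Rbf\rangle$ at the centre $\Rbf=\flecha{\mathcal{G}_p}^{-1}\Hbf$, a quantity controlled by $N^2/\Delta$ and established only later; keeping the argument self-contained forces the direct sign analysis here.
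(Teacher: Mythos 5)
Your proposal is correct and follows essentially the same route as the paper, which proves this theorem only by remarking that it follows from the Duality Theorem \ref{th:focal_quadric-indicatrix} and ``elementary duality ideas'' modelled on the proof of Theorem \ref{th:C_p=cone}; your pole-at-infinity dictionary (real tangents from $p$ to $\E_p$ correspond to real points of $\mathcal{C}_p\cap\RP^1_\infty$) is precisely the intended argument for Part \textit{\textbf{B}}, and your treatment of the degenerate cases in Parts \textit{\textbf{C}} and \textit{\textbf{D}} matches the paper's. The only place you work harder than necessary is Part \textit{\textbf{A}}: the paper obtains the analogous statement (Theorem \ref{th:C_p=cone}$c$) as an immediate consequence of Theorem \ref{th:focal_quadric-indicatrix}\textit{\textbf{b}} --- since every line through $p$ meets $\mathcal{C}_p$ in two real points counting multiplicities, $p$ must lie in the interior (convex) component for each conic type --- which disposes of the case-by-case sign analysis you flag as the main obstacle.
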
  
\begin{figure}[ht] %< préférence de placement h , t , b or p >
\centering
\includegraphics[scale=0.15]{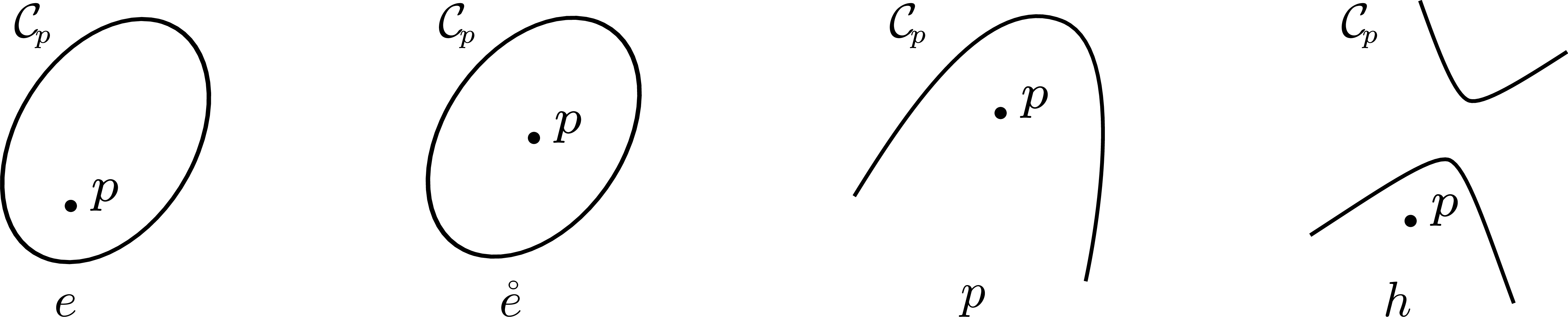}
\caption{\small The local caustic $\mathcal{C}_p$ when the indicatrix ellipse $\E_p$ is non degenerate.}
\label{fig:local-causticA}
\end{figure}
\begin{figure}[ht] %< préférence de placement h , t , b or p >
\centering
\includegraphics[scale=0.15]{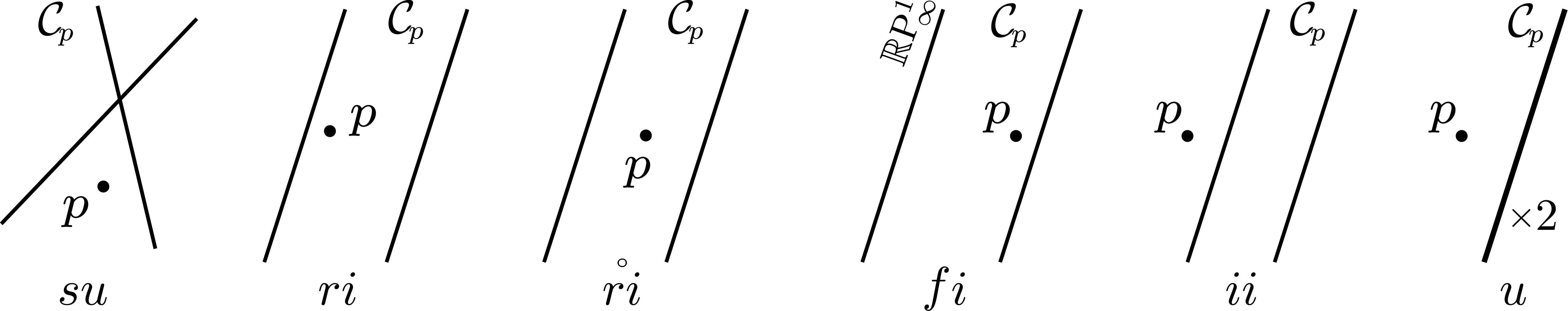}
\caption{\small Local caustic $\mathcal{C}_p$: ($su$) to ($ii$) if $\E_p$ is a segment - ($u$) if $\E_p$ is a point $q\neq p$.}
\label{fig:local-causticB}
\end{figure}

\begin{remark*}
For surfaces in $\R^4$ Theorem\,\ref{th:focal_quadric-indicatrix}-\textit{\textbf{a}} implies that the local caustic coincides with 
the ``characteristic curve'' defined in \protect\cite{Kommerell} (see Kommerell Theorem in \protect\cite{Kommerell,Goncalves}). 
\end{remark*} 

%%%%%%%%%%%%%%%%%%%%%%%%%%%%%%%%%%%%%%%%%%%%%%%%%%%%%%%%%%%%%%%%%%%%%%%%%%%%%%%%%%%%%%%%%%%%%%%%%%%%%%%%%%%%%%%%%%%%%%

For $M$ in $\R^5$ Th.\,4.2 in \protect\cite{Costa_Moraes_R-F} stated the basic properties of the local caustic $\mathcal{C}_p$. 
Our reformulation of that theorem (Theorems \ref{th:C_p=cone} and \ref{prop:caustic-cone}) adds new results, 
shows its duality nature and gives the explicit relation between the points of $\E_p$ and the lines in $\mathcal{C}_p$\,: 

%%%%%%%%%%%%%%%%%%%%%%%%%%%%%%%%%%%%%%%%%%%%%%%%%%%%%%%%%%%%%%%%%%%%%%%%%%%%%%%%%%%%%%%%%%%%%%%%%%%%%%%%%%%%%%%%%%%%%%
%%%%%%%%%%%%%%%%%%%%%%%%%%%%%%%%%%%%%%%%%%%%%    THEOREM    %%%%%%%%%%%%%%%%%%%%%%%%%%%%%%%%%%%%%%%%%%%%%%%%%%%%%%%%%%
%%%%%%%%%%%%%%%%%%%%%%%%%%%%%%%%%%%%%%%%%%%%%%%%%%%%%%%%%%%%%%%%%%%%%%%%%%%%%%%%%%%%%%%%%%%%%%%%%%%%%%%%%%%%%%%%%%%%%%
\begin{theorem}\label{th:C_p=cone}
Given a point $p$ of $M$ in $\R^5$ whose ellipse $\E_p$ is non degenerate, 
the plane that contains $\E_p$ is noted $\Pi_\E$. 
The following holds {\rm (see Fig.\,\ref{fig:Cp-lines-Ep-points}):}
\smallskip

\noindent
$a)$ The poles of the planes tangent to $\E_p$ at $\E_p(\theta)$ form a line 
in $\mathcal{C}_p$. We note it $\l_\theta^\vee$. The sphere-contact direction at $p$  for all points $q$ in $\l_\theta^\vee$ is the line $\ell_\theta$ directed by $\pm\ubf_\theta=\pm(\cos\theta\ebf_1+\sin\theta\ebf_2)$; 
\smallskip

\noindent
$b)$ The local caustic $\mathcal{C}_p$ is a non degenerate quadratic cone, in $N_pM\sqcup\RP^2_\infty$. 
Its vertex $\Rbf$ {\rm (may be at infinity)} is the pole of the plane $\Pi_\E$ and is also the 
unique umbilical focal centre of $M$ at $p$;
\smallskip

\noindent
$c)$ The point $p$ {\rm (the origin in $N_pM$)} lies on a convex component of $N_pM\smallsetminus \mathcal{C}_p$;
\medskip

\noindent
Write $\Pi_{\theta,\theta+\frac{\pi}{2}}$ for the plane that contains the lines $\l_\theta^\vee$ and 
$\l_{\theta+\frac{\pi}{2}}^\vee$ of $\mathcal{C}_p$ {\rm (formed by the poles of the planes tangent to $\E_p$ 
at its opposite points $\E_p(\theta)$ and $\E_p(\theta+\pi/2)$)}. %We have$:$ 
\smallskip

\noindent
$d)$ The plane $\Pi_{\theta,\theta+\frac{\pi}{2}}$ contains $p$ $($the origin of $N_pM$$)$ and contains 
the vertex $\Rbf$;  
\smallskip

\noindent
$e)$ For any unit vector $\nbf\in\Pi_{\theta,\theta+\frac{\pi}{2}}$ the two focal centres of $M$ on the 
line $\{\a\nbf: \a\in\R\}$, noted $q^{\nbf}_{\theta}$ and $q^{\nbf}_{\theta+\frac{\pi}{2}}$, 
have as respective sphere-contact directions $\ell_{\theta}$ and $\ell_{\theta+\pi/2}$, and
%$:$ the line $\ell_{\theta}$ is the sphere-contact direction of all focal centres on the line formed by the 
%poles of the tangent planes to $\E_p$ at $\hat{\E_p}(\theta)$. 
their polars are the planes orthogonal to $\nbf$ which are tangent to $\E_p$, at $\E_p(\theta)$ and 
$\E_p(\theta+\pi/2)$. 
\end{theorem}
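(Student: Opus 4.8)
The plan is to work entirely inside the $3$-dimensional normal space $N_pM$, projectivized to $N_pM\sqcup\RP^2_\infty=\RP^3$, and to exploit the identification $\mathcal{C}_p=\E_p^\vee$ furnished by the Duality Theorem \ref{th:focal_quadric-indicatrix}\textit{\textbf{a}}. Since $\E_p$ is a curve lying in the affine plane $\Pi_\E$, a plane of $\R^3$ is tangent to $\E_p$ at $\E_p(\theta)$ precisely when it contains the tangent line $T_\theta$ to $\E_p$ at that point. For fixed $\theta$ these planes form a pencil (they rotate about $T_\theta$), and under polar duality the poles of a pencil of planes sharing a common axis sweep out a line, the polar line of $T_\theta$; this line is $\l_\theta^\vee$, and by the identification it lies in $\mathcal{C}_p$. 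To finish part \textit{\textbf{a}} I would pin down the sphere-contact direction: for a pole $q\in\l_\theta^\vee$ the degenerate direction $(s:t)$ of the quadratic form \eqref{eq:focal-set} is the kernel of $[d^\perp_{p,\nbf}\Gamma]$, and the two tangency conditions $\langle q,\E_p(\theta)\rangle=1$, $\langle q,\E_p'(\theta)\rangle=0$ are, after using the identity $\E_p(\theta)=\kbf(\ubf_\theta)$ underlying \eqref{eq:parametrisation-E_p}, equivalent to saying that $\ubf_\theta=\cos\theta\,\ebf_1+\sin\theta\,\ebf_2$ spans that kernel. Hence every $q\in\l_\theta^\vee$ has sphere-contact direction $\ell_\theta$, which is the assertion of \textit{\textbf{a}}.

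For part \textit{\textbf{b}}, the key observation is that the plane $\Pi_\E$ itself contains every tangent line $T_\theta$, so $\Pi_\E$ is one of the planes tangent to $\E_p$ at $\E_p(\theta)$ for \emph{every} $\theta$. Therefore its pole $\Rbf$ lies on $\l_\theta^\vee$ for all $\theta$, so all the rulings $\l_\theta^\vee$ pass through the single point $\Rbf$: the surface $\mathcal{C}_p$ is a cone with vertex $\Rbf$. Non-degeneracy follows because $\E_p$ is a genuine (smooth, non-singular) conic, whose projective dual is again a non-degenerate conic, so the cone over it is non-degenerate; equivalently, the quadratic part of \eqref{eq:focal-quadric} is $2\mathcal{G}_p$ with $\det[\mathcal{G}_p]=\Delta\neq0$. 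To see $\Rbf$ is the unique umbilical focal centre I would use Theorem \ref{th:focal_quadric-indicatrix}\textit{\textbf{c}}: a unit direction $\nbf$ is umbilical iff $[d^\perp_{p,\nbf}\Gamma]$ is scalar, that is $\langle\frac12(\Abf-\Cbf),\nbf\rangle=\langle\Bbf,\nbf\rangle=0$; since $\frac12(\Abf-\Cbf)$ and $\Bbf$ span the plane parallel to $\Pi_\E$ for a non-degenerate $\E_p$, this forces $\nbf\perp\Pi_\E$, which by Remark \ref{rem:orthogonality-polar} is exactly the direction of the pole $\Rbf$.

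Part \textit{\textbf{c}} I would obtain by signature analysis. Evaluating \eqref{eq:focal-quadric} at the origin yields the value $1>0$, so $p\notin\mathcal{C}_p$; translating the equation to the vertex $\Rbf$ turns it into the homogeneous form $2\mathcal{G}_p$, whose value at the position of the origin equals this same positive number. By \textit{\textbf{b}} the form is non-degenerate, and since $\Delta=\tau^2\geq0$ we get $\det[\mathcal{G}_p]=\Delta>0$; for a real cone this forces the Lorentzian signature $(+,-,-)$, whose positive region consists of two convex nappes, and $p$, where the form is positive, lies in one of them.

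Parts \textit{\textbf{d}} and \textit{\textbf{e}} are then pure incidence geometry on the cone. For \textit{\textbf{d}}, pick any $q_1\in\l_\theta^\vee$ and set $\nbf=q_1/\|q_1\|$; by Theorem \ref{th:focal_quadric-indicatrix}\textit{\textbf{c}} the normal line $\{\a\nbf\}$ meets $\mathcal{C}_p$ at $q_1$ and at a second focal centre $q_2$ whose sphere-contact direction is orthogonal to $\ell_\theta$, hence equal to $\ell_{\theta+\pi/2}$, so $q_2\in\l_{\theta+\frac\pi2}^\vee$; the origin lies on the line $q_1q_2\subset\Pi_{\theta,\theta+\frac\pi2}$, and $\Rbf\in\Pi_{\theta,\theta+\frac\pi2}$ because both rulings pass through the vertex. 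For \textit{\textbf{e}}, a plane through the vertex of a non-degenerate quadric cone meets it in two rulings; since $\Pi_{\theta,\theta+\frac\pi2}$ already contains the two distinct rulings $\l_\theta^\vee$ and $\l_{\theta+\frac\pi2}^\vee$, any line through the origin in this plane meets $\mathcal{C}_p$ only on these two rulings, giving sphere-contact directions $\ell_\theta$ and $\ell_{\theta+\pi/2}$ by \textit{\textbf{a}}; finally, the polar of a focal centre $q=\a\nbf$ is orthogonal to $\nbf$ by Remark \ref{rem:orthogonality-polar} and is tangent to $\E_p$ at the corresponding point by \textit{\textbf{a}}. The main obstacle is the computation in part \textit{\textbf{a}} identifying the degenerate direction $\ubf_\theta$ with the tangency point $\E_p(\theta)$; once that dictionary between sphere-contact directions and points of $\E_p$ is in place, everything else is projective duality and the already-proven Theorem \ref{th:focal_quadric-indicatrix}.
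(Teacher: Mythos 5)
Your overall strategy --- reading everything off the Duality Theorem \ref{th:focal_quadric-indicatrix} together with elementary pole--polar incidence --- is exactly the paper's, and parts \textit{a}, \textit{d} and \textit{e} are argued correctly. Your kernel computation in \textit{a} and your identification of the unique umbilical focal centre in \textit{b} actually supply details the paper leaves implicit, and your route to \textit{d} (via the orthogonality of the two sphere-contact directions from Theorem \ref{th:focal_quadric-indicatrix}\textit{c}) is a legitimate alternative to the paper's argument, which instead observes that the parallel tangents $\l_\theta$, $\l_{\theta+\pi/2}$ meet at a point at infinity whose polar plane must pass through the origin.

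The genuine problem is in part \textit{c}, and it also infects the ``equivalently'' clause of your non-degeneracy argument in \textit{b}. The hypothesis is only that $\E_p$ is non-degenerate; this includes the flat points, where $\Pi_\E$ contains $p$, so $\tau=0$, hence $\Delta=\tau^2=0$ and $\det[\mathcal{G}_p]=0$. At such a point the vertex $\Rbf$ is at infinity, $\mathcal{C}_p$ is a cylinder, there is no affine translation ``to the vertex'', and the inference ``the form is non-degenerate, and since $\Delta=\tau^2\geq 0$ we get $\det[\mathcal{G}_p]=\Delta>0$'' fails: non-degeneracy of $\mathcal{C}_p$ as a projective quadric of $N_pM\sqcup\RP^2_\infty$ (rank $3$ in homogeneous coordinates) does not force the affine quadratic part $\mathcal{G}_p$ to be non-degenerate. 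Consequently your signature argument establishes \textit{c} only when $\tau\neq 0$, and the flat-elliptic, flat-hyperbolic and flat-parabolic cases (where $\mathcal{C}_p$ is an elliptic, hyperbolic or parabolic cylinder) are left uncovered. The paper's proof of \textit{c} avoids any case split: by Theorem \ref{th:focal_quadric-indicatrix}\textit{b} every line through $p$ meets $\mathcal{C}_p$ in two real points (counting multiplicities and points at infinity), i.e.\ $p$ is an interior point of the real quadric $\mathcal{C}_p$, and the interior points of a real quadric constitute convex components of the affine complement. Replacing your part \textit{c} by that one-line argument, and keeping only the ``dual of a non-degenerate conic'' justification of non-degeneracy in \textit{b}, closes the gap.
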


\begin{proof}
$a)$ 
The line $\ell_\theta$ directed by $\pm\ubf_\theta=\pm(\cos\theta\ebf_1+\sin\theta\ebf_2)\in T_pM$ is sent  
to the point $\E_p(\theta)$ in $\E_p$. 
The planes tangent to $\E_p$ at $\E_p(\theta)$ contain the line tangent to $\E_p$ at 
$\E_p(\theta)$, noted $\l_\theta$; the poles of these planes form a (projective) line 
$\l_\theta^\vee\subset\mathcal{C}_p$. Thus the sphere-contact direction at $p$ of all these 
poles (focal centres) is $\ell_\theta$. 
\smallskip

\noindent
$b)$ 
By item $a$, $\mathcal{C}_p$ is the union of the (projective) lines $\l_\theta^\vee$, $\theta\in[0,\pi]$. 
%Since the plane $\Pi_\E$ contains all lines tangent to $\E_p$, 
%its pole $\Rbf$ belongs to all lines $\l_\theta^\vee$. Hence 
Thus, $\mathcal{C}_p$ is a cone whose vertex $\Rbf$ is the pole of the plane $\Pi_\E$. 
In general, the dual of an ellipse $\mathcal{E}$ in $\RP^3$ is a cone whose vertex corresponds 
to the plane that contains $\mathcal{E}$. 
\smallskip

\noindent
$c)$ 
The point $p$ lies on a convex component of $N_pM\smallsetminus \mathcal{C}_p$ 
by Theorem\,\ref{th:focal_quadric-indicatrix}\textit{\textbf{b}}. 
\smallskip 

\noindent
$d)$ Given two lines $\ell_\theta$, $\ell_{\theta+\frac{\pi}{2}}$ in $T_pM$, their images 
by the map $\E_p$ are opposite points in $\E_p$ - their respective tangent lines 
$\l_\theta$, $\l_{\theta+\pi/2}$ are parallel, implying that the plane $\Pi_{\theta,\theta+\frac{\pi}{2}}$ contains $p$.
The plane $\Pi_{\theta,\theta+\frac{\pi}{2}}$ contains $\Rbf$ because the dual lines $\l_\theta^\vee$ and $\l_{\theta+\frac{\pi}{2}}^\vee$ (formed by the poles of 
the planes that contain the tangent lines $\l_\theta$ and $\l_{\theta+\frac{\pi}{2}}$) contain $\Rbf$. 
\medskip

\noindent
$e)$ By item $a$, given a vector $\nbf\in\Pi_{\theta,\theta+\frac{\pi}{2}}$, the points at which 
the line $\{\a\nbf:\a\in\R\}$ cuts $\mathcal{C}_p$, $q^{\nbf}_{\theta}$ and $q^{\nbf}_{\theta+\frac{\pi}{2}}$,  
lie on the lines $\l_\theta^\vee$ and $\l_{\theta+\frac{\pi}{2}}^\vee$ of  $\mathcal{C}_p$. Thus,  
their respective sphere-contact directions are $\ell_\theta$ and $\ell_{\theta+\frac{\pi}{2}}$. 
Their polars are orthogonal to $\nbf$ because the polar of a point 
(with respect to a sphere) is orthogonal to the vector determined by that point. 
\end{proof}

\begin{figure}[ht] %< préférence de placement h , t , b or p >
\centering
\includegraphics[scale=0.13]{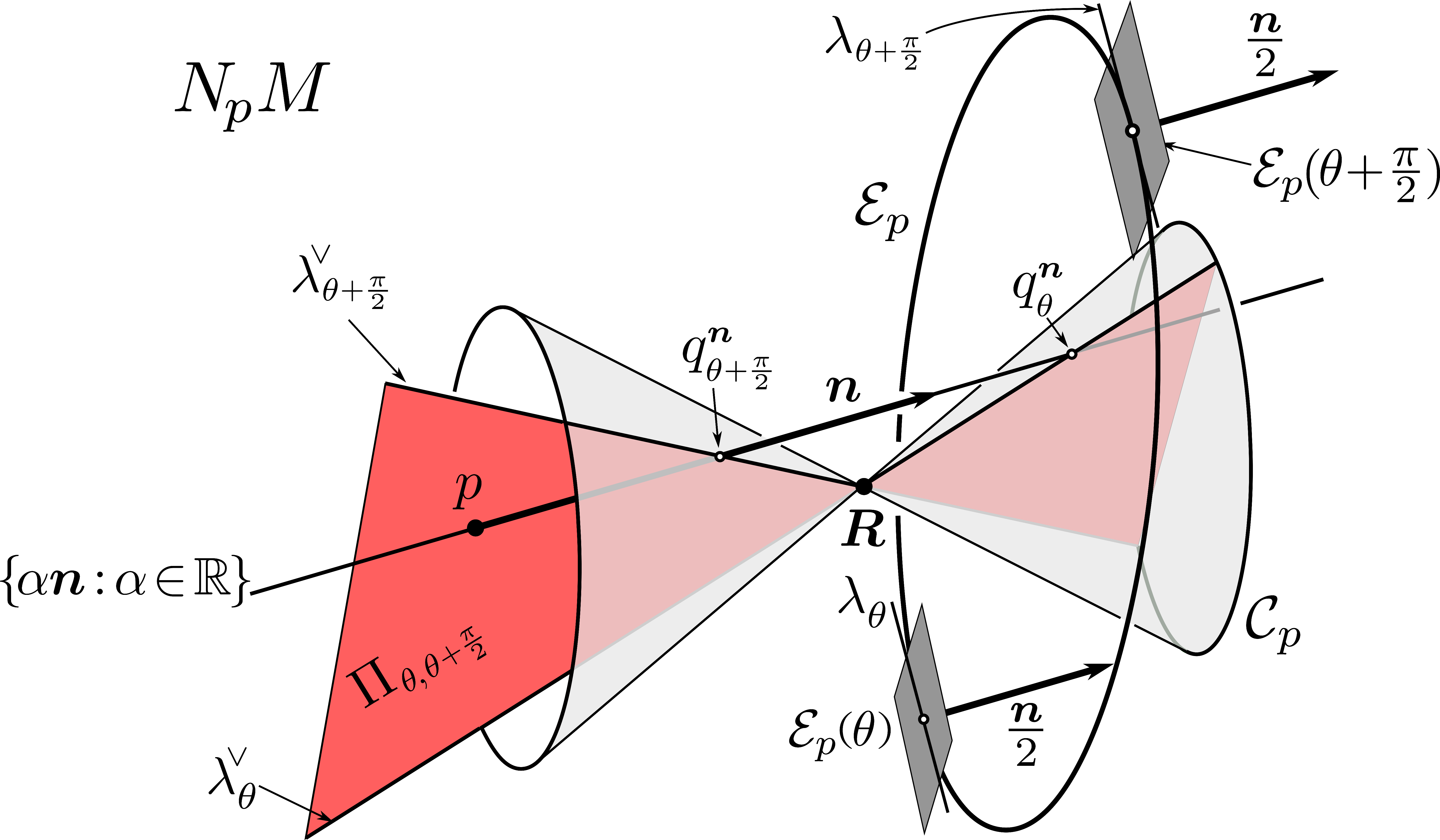}
\caption{\small The focal centres at which the line $\{\a\nbf:\a\in\R\}$ cuts $\mathcal{C}_p$ are the poles of 
the tangent planes to $\E_p$ orthogonal to $\nbf$, and belong to the lines $\l^\vee_\theta$, $\l^\vee_{\theta+\pi/2}$ 
dual to the tangents $\l_\theta$, $\l_{\theta+\pi/2}$.}
\label{fig:Cp-lines-Ep-points}
\end{figure}

%%%%%%%%%%%%%%%%%%%%%%%%%%%%%%%%%%%%%%%%%%%%%%%%%%%%%%%%%%%%%%%%%%%%%%%%%%%%%%%%%%%%%%%%%%%%%%%%%%%%%%%%%%%%%%%%%%%%
%%%%%%%%%%%%%%%%%%%%%%%%%%%%%%%%%%%%%%%%%%%%  PROPOSITION  %%%%%%%%%%%%%%%%%%%%%%%%%%%%%%%%%%%%%%%%%%%%%%%%%%%%%%%%%
%%%%%%%%%%%%%%%%%%%%%%%%%%%%%%%%%%%%%%%%%%%%%%%%%%%%%%%%%%%%%%%%%%%%%%%%%%%%%%%%%%%%%%%%%%%%%%%%%%%%%%%%%%%%%%%%%%%%
\begin{proposition}\label{prop:R=N/2tau}
Let $p$ be a point of a surface $M$ in $\R^5$ at which $\tau\neq 0$.
%and consider the vector $\nubf:=\dfrac{(\Abf-\Cbf)}{2}\times \Bbf$ in the normal space $N_pM$. 
The umbilical focal centre $\Rbf$ {\rm (the vertex of the cone $\mathcal{C}_p$)} has the explicit expression\,$:$
\begin{equation}\label{eq:expression R}
\Rbf=\frac{1}{2\tau}(\Abf-\Cbf)\times\Bbf\,.  
\end{equation}
\end{proposition}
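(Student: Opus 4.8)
The plan is to deduce the formula directly from item~$b)$ of Theorem~\ref{th:C_p=cone}, which identifies $\Rbf$ as the pole---with respect to the unit sphere $\sph^{2}\subset N_pM$---of the affine plane $\Pi_\E$ containing the indicatrix ellipse $\E_p$. By the pole--polar correspondence \eqref{eq:pole-polar}, this means that $\Pi_\E$ is precisely the set $\{x\in N_pM:\langle\Rbf,x\rangle=1\}$. The whole argument then reduces to reading off $\Pi_\E$ from the parametrisation of $\E_p$ and imposing this defining property on $\Rbf$.

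First I would extract the geometric data of $\Pi_\E$ from the parametrisation \eqref{eq:parametrisation-E_p}: the ellipse $\E_p$ has centre $\Hbf=\frac12(\Abf+\Cbf)$ and lies in the plane directed by the two vectors $\frac12(\Abf-\Cbf)$ and $\Bbf$, so that $\Pi_\E=\{\Hbf+\lambda\,\frac12(\Abf-\Cbf)+\mu\Bbf:\lambda,\mu\in\R\}$. Since the pole $\Rbf$ must annihilate every direction of its polar plane (the line from the origin to $\Rbf$ is orthogonal to $\Pi_\E$, cf.\ Remark~\ref{rem:orthogonality-polar}), I obtain the two orthogonality relations $\langle\Rbf,\tfrac12(\Abf-\Cbf)\rangle=0$ and $\langle\Rbf,\Bbf\rangle=0$. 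Consequently $\Rbf$ is collinear with the vector product, so $\Rbf=\kappa\cdot\frac12(\Abf-\Cbf)\times\Bbf$ for some scalar $\kappa$; here the hypothesis $\tau\neq0$ guarantees that $\frac12(\Abf-\Cbf)$ and $\Bbf$ are linearly independent, hence this product is nonzero and $\Rbf$ is a genuine finite point (not at infinity).

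Next I would fix the scalar $\kappa$ using the one remaining condition: the centre $\Hbf$ belongs to $\Pi_\E$, which by the defining equation of $\Pi_\E$ reads $\langle\Rbf,\Hbf\rangle=1$. Substituting $\Rbf=\kappa\cdot\frac12(\Abf-\Cbf)\times\Bbf$ and recognising the resulting triple product as the torsion via its definition \eqref{eq:def-torsion}, I get $\kappa\,\tau=1$, i.e.\ $\kappa=1/\tau$. This yields $\Rbf=\frac{1}{2\tau}(\Abf-\Cbf)\times\Bbf$, as claimed.

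There is no serious obstacle here: once Theorem~\ref{th:C_p=cone} is in hand, the argument is a short piece of linear algebra. The only points requiring care are bookkeeping ones---correctly isolating the centre and the two spanning directions of $\Pi_\E$ from \eqref{eq:parametrisation-E_p}, keeping the factor $\frac12$ inside the vector product so that it combines with $1/\tau$ to produce $\frac1{2\tau}$, and observing that $\tau\neq0$ is exactly the non-degeneracy hypothesis ensuring both that the spanning vectors are independent and that $\Pi_\E$ misses the origin, so that $\Rbf$ is indeed finite.
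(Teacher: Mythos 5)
Your proof is correct and follows essentially the same route as the paper: both identify $\Rbf$ as the pole of the plane $\Pi_\E$, take $(\Abf-\Cbf)\times\Bbf$ as its normal direction, and fix the scalar via $\langle\Rbf,\Hbf\rangle=1$ together with the definition \eqref{eq:def-torsion} of $\tau$. The only cosmetic difference is that the paper writes the pole directly as $v/\langle v,\Hbf\rangle$ rather than introducing an undetermined constant $\kappa$.
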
 
%%%%%% P R O O F >>>>>>>>>>>>>>>>>>>>>>>>>>>>>>>>>>>>>>>>>>>>>>>>>>>>>>>>>>>>>>>>>>>>>>>>>>>>>>>>>>>>>>>>>>>>>>>>>>
\begin{proof}
The equation of the polar plane of $\Rbf$ (the plane that contains the ellipse $\E_p$) is 
\begin{equation}\label{equation:plane-ellipse}
  \langle v\,,\, q-\Hbf\rangle=0, 
\end{equation}
where $v$ is any vector orthogonal to that plane and $\Hbf=(\Abf+\Cbf)/2$. 
Equation\,\eqref{equation:plane-ellipse} is equivalent to 
\begin{equation*}
  \langle\, v/\langle v,\Hbf\rangle\,,\, q\,\rangle=1\,.
\end{equation*}
Therefore, the pole of that plane with respect to the unit sphere is $\Rbf=v/\langle v,\Hbf\rangle$. 

Taking $v=\Nbf:=(\Abf-\Cbf)\times\Bbf$ as normal vector, we get from \eqref{eq:def-torsion} the equalities
\[
\left\langle v\,,\, \Hbf \right\rangle=\left\langle\, (\Abf-\Cbf)\times\Bbf\,,\, \Hbf\,\right\rangle=2\tau\,. 
\]
We conclude that \, $\Rbf\,=\,\Nbf/2\tau$. 
\end{proof}
%%%%%%<<<<<<<<<<<<<<<<<<<<<<<<<<<<<<<<<<<<<<<<<<<<<<<<<<<<<<<<<<<<<<<<<<<<<<<<<<<<<<<<<<<<<<<<<<<<<<<<< P R O O F %%

\noindent
\textit{\textbf{\small Pseudo-Elliptic, -Hyperbolic, -Parabolic Points in $\R^5$}}. 
A point $p\in M$ in $\R^5$ where $\E_p$ is non-degenerate is said to be \textit{pseudo-elliptic} 
(\textit{-hyperbolic}, \textit{-parabolic}) if the plane $\Pi_\E\subset N_pM$ 
which contains $\E_p$ cuts $\mathcal{C}_p$ along an ellipse (resp. hyperbola, parabola). If the plane 
$\Pi_\E$ contains $p$, then $p$ is said to be \textit{flat-elliptic} 
(resp. \textit{-hyperbolic}, \textit{-parabolic}). 

\begin{note*}
\textit{The vertex $\Rbf$ of $\mathcal{C}_p$ lies on the plane $\Pi_\E$ iff $\|\Rbf\|=1$} (because 
$\langle q, \Rbf\rangle=1$ is the equation of $\Pi_\E$). In that a case, the ellipse $\mathcal{C}_p\cap\Pi_\E$ 
(resp. hyperbola, parabola) degenerates into a point (resp. two transverse lines, two coinciding lines).
\end{note*}

\noindent
\textit{\textbf{We note $\Sigma_p^*$ the cone {\rm(with vertex $p$)} in $N_pM$ based on the ellipse}} $\E_p$. 
Its equation in $N_pM$ is $\mathcal{G}_p^*(q)=0$ (this follows from Theorem\,\ref{th:equationsSigma_pSigma_p^*} 
proved below). 
%($M\subset\R^5$).

%%%%%%%%%%%%%%%%%%%%%%%%%%%%%%%%%%%%%%%%%%%%%%%%%%%%%%%%%%%%%%%%%%%%%%%%%%%%%%%%%%%%%%%%%%%%%%%%%%%%%%%%%%%%%%%%%%%%
%%%%%%%%%%%%%%%%%%%%%%%%%%%%%%%%%%%%%%%%%%%%    THEOREM    %%%%%%%%%%%%%%%%%%%%%%%%%%%%%%%%%%%%%%%%%%%%%%%%%%%%%%%%%
%%%%%%%%%%%%%%%%%%%%%%%%%%%%%%%%%%%%%%%%%%%%%%%%%%%%%%%%%%%%%%%%%%%%%%%%%%%%%%%%%%%%%%%%%%%%%%%%%%%%%%%%%%%%%%%%%%%%
\begin{theorem}\label{prop:caustic-cone}
Let $M$ be a surface in $\R^5$ and $\E_p$ be its indicatrix ellipse at $p$. 
\smallskip

\noindent
\textit{\textbf{A}}. If $\E_p$ is non-degenerate, then 
in the projective completion $N_pM\sqcup\RP^2_\infty$ we have  {\rm (Fig.\,\ref{fig:cone-quadrics}):} 
\smallskip

%%%%%%%%%%%%%%%%%%%%%%%%%%%%%%%%
\noindent
\textit{\textbf{a}}$_\psi$. The plane $\Pi_\E$ does not contain $p$ iff the vertex $\Rbf$ of the cone $\mathcal{C}_p$ 
is a point of $N_pM$ {\rm (i.e., $\Rbf$ is not at infinity)}. Moreover, both for $\tau>0$ and for $\tau<0$, we have three cases$:$
\smallskip

\noindent
$(\psi e)$ $p$ is pseudo-elliptic \,iff\, $\mathcal{G}^*_p(\Rbf)>0$ \,iff\, $\Rbf$ lies inside of the cone $\Sigma_p^*$; 
\smallskip

\noindent
$(\psi h)$ $p$ is pseudo-hyperbolic \,iff\, $\mathcal{G}^*_p(\Rbf)<0$ \,iff\, $\Rbf$ lies outside of the cone $\Sigma_p^*$; 
\smallskip

\noindent
$(\psi p)$ $p$ is pseudo-parabolic \,iff\, $\mathcal{G}^*_p(\Rbf)=0$ \,iff\, $\Rbf$ lies on the cone $\Sigma_p^*$. 
\smallskip

%%%%%%%%%%%%%%%%%%%%%%%%%%%%%%%%
\noindent
\textit{\textbf{a}}$_\f$. The plane $\Pi_\E$ contains $p$ iff $\Rbf$ lies in $\RP^2_\infty$ $($representing 
the direction orthogonal to $\Pi_\E$$)$ iff $\mathcal{C}_p$ is a cylinder in $N_pM$ orthogonal 
to $\Pi_\E$. Here, $\tau=0$ and moreover 
\smallskip

\noindent
$(\f e)$ $p$ is flat-elliptic \,iff\, $p$ lies inside $\E_p$ \,iff\, $\mathcal{C}_p$ is an elliptic cylinder; 
\smallskip 

\noindent
$(\f h)$ $p$ is flat-hyperbolic \,iff\, $p$ lies outside $\E_p$ \,iff\, $\mathcal{C}_p$ is a hyperbolic cylinder; 
\smallskip

\noindent
$(\f p)$ $p$ is flat-parabolic \,iff\, $p$ lies on $\E_p$ \,iff\, $\mathcal{C}_p$ is a parabolic cylinder; 
\medskip

\noindent
\textit{\textbf{B}}. If $\E_p$ is a segment, then the local caustic $\mathcal{C}_p$ is the union of the polar planes 
$($in $N_pM\sqcup\RP^2_\infty$$)$ of the end-points of $\E_p$. Their intersection line consists of umbilical foci: 
the poles of all planes that contain $\E_p$. In $N_pM$, we have four cases {\rm (see Fig.\,\ref{fig:planes-deg_E_p}):} 
\smallskip

\noindent
$(su)$ $p$ is semi-umbilic iff the planes forming $\mathcal{C}_p$ intersect at a line (of umbilical focal centres) 
-- its direction is orthogonal to the plane determined by $p$ and the segment $\E_p$; 
\smallskip

\noindent
$(ri)$ $p$ is real inflection iff the planes forming $\mathcal{C}_p$ are parallel, and $p$ is between them; 
\smallskip

% \noindent
% $(\mathring{ri})$ $p$ is real inflection and $p$ is the centre of $\E_p$ iff $p$ is equidistant to the parallel lines 
% forming $\mathcal{C}_p$; 
% \smallskip

\noindent
$(\f i)$ $p$ is flat inflection iff one of the planes forming $\mathcal{C}_p$ is the plane at infinity; 
\smallskip

\noindent
$(ii)$ $p$ is imaginary inflection iff the planes of $\mathcal{C}_p$ are parallel and $p$ is not between them;
\medskip

\noindent
\textit{\textbf{C}}. If $\E_p$ is just a point $q$, then $\mathcal{C}_p$ is the polar plane 
$\mathcal{P}_q$ of $q$ counted twice. Two cases$:$ 
\smallskip 

\noindent
$(u)$ $p$ is umbilic ($q\neq p$) iff the double plane $\mathcal{C}_p=2\times\mathcal{P}_q$ is not at infinity. 
\smallskip 

\noindent
$(\f u)$ $p$ is flat umbilic, i.e. $q=p$ iff \, $\mathcal{C}_p=2\times\RP^2_\infty$ \, is twice the plane at infinity. 
\end{theorem}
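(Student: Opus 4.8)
The plan is to deduce everything from the cone/cylinder description of $\mathcal{C}_p$ in Theorem\,\ref{th:C_p=cone} together with the Duality Theorem\,\ref{th:focal_quadric-indicatrix}, reading off each sub-case from the position of the vertex $\Rbf$ and from the sign of the paired form $\mathcal{G}_p^*$ on it. First I would rewrite the caustic equation \eqref{eq:focal-quadric} as $2\mathcal{G}_p(q)-2\langle\Hbf,q\rangle+1=0$, using $\Abf=\Hbf+\frac12(\Abf-\Cbf)$ and $\Cbf=\Hbf-\frac12(\Abf-\Cbf)$; thus $\mathcal{G}_p$ is the homogeneous leading part of $\mathcal{C}_p$ and the asymptotic directions of $\mathcal{C}_p$ form the cone $\Sigma_p:\mathcal{G}_p(q)=0$. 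By Theorem\,\ref{th:C_p=cone}\textit{\textbf{b}} the plane $\Pi_\E$ is the polar $\langle\Rbf,q\rangle=1$ of $\Rbf$, so its direction subspace is the Euclidean orthogonal complement $\Rbf^\perp$.

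For part \textit{\textbf{A}} I would split on whether $\Rbf$ is finite. By Remark\,\ref{rem:orthogonality-polar} the pole $\Rbf$ of $\Pi_\E$ is at infinity iff $\Pi_\E$ passes through the origin $p$, which is case \textit{\textbf{a}}$_\f$; there $\mathcal{C}_p$ is a cone with vertex at infinity orthogonal to $\Pi_\E$, that is a cylinder, and by \eqref{eq:expression R} this is exactly $\tau=0$. Otherwise $\Rbf$ is finite (case \textit{\textbf{a}}$_\psi$, where $\Delta=\tau^2>0$). In either case the affine type of the section $\mathcal{C}_p\cap\Pi_\E$ is governed by $\Sigma_p\cap\Rbf^\perp$: the section is an ellipse, a hyperbola, or a parabola according as the restriction $\mathcal{G}_p|_{\Rbf^\perp}$ is definite, indefinite, or degenerate, since the points at infinity of the section are precisely the real lines of $\Sigma_p$ lying in the direction $\Rbf^\perp$.

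The crux is to convert the sign of the discriminant of $\mathcal{G}_p|_{\Rbf^\perp}$ into the sign of $\mathcal{G}_p^*(\Rbf)$. Choosing orthonormal coordinates with first axis along $\Rbf$, the determinant of the Hessian of $\mathcal{G}_p|_{\Rbf^\perp}$ is the top-left $2\times2$ cofactor of $[\mathcal{G}_p]$, namely $\det[\mathcal{G}_p]\cdot([\mathcal{G}_p]^{-1})_{11}$, which is a positive multiple of $\Delta\,\mathcal{G}_p^*(\Rbf)$ (recall $[\mathcal{G}_p^*]=[\mathcal{G}_p]^{-1}$). In case \textit{\textbf{a}}$_\psi$ one has $\Delta>0$, so this sign equals the sign of $\mathcal{G}_p^*(\Rbf)$, yielding $(\psi e)$, $(\psi h)$, $(\psi p)$. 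The identification with ``inside/outside/on $\Sigma_p^*$'' then follows from $\Sigma_p^*:\mathcal{G}_p^*(q)=0$ (Theorem\,\ref{th:equationsSigma_pSigma_p^*}): since this cone really contains the ellipse $\E_p$, the form $\mathcal{G}_p^*$ is indefinite with a single distinguished sign, and its interior nappe-region (the one meeting $\Pi_\E$ inside $\E_p$) is exactly $\{\mathcal{G}_p^*>0\}$; this also shows the trichotomy is independent of the sign of $\tau$. For \textit{\textbf{a}}$_\f$ the form $\mathcal{G}_p$ is degenerate and $\Sigma_p^*$ collapses, so I would argue directly that the cross-section of the cylinder is the in-plane polar dual of $\E_p$ with respect to the unit circle of $\Pi_\E$ centred at $p$, hence an ellipse, hyperbola, or parabola according as $p$ lies inside, outside, or on $\E_p$, giving $(\f e)$, $(\f h)$, $(\f p)$. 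I expect the main obstacle to be precisely this step: pinning down the inside/outside sign conventions for the cone $\Sigma_p^*$ and matching them uniformly to the section type for both $\tau>0$ and $\tau<0$.

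For parts \textit{\textbf{B}} and \textit{\textbf{C}}, where $\E_p$ is degenerate, I would instead compute $\mathcal{C}_p=\E_p^\vee$ directly from \eqref{eq:focal-quadric} via Theorem\,\ref{th:focal_quadric-indicatrix}\textit{\textbf{a}}. Writing $\Abf=\Hbf+D$, $\Cbf=\Hbf-D$ with $D=\frac12(\Abf-\Cbf)$, the leading part is $2\mathcal{G}_p(q)=\langle\Hbf,q\rangle^2-\langle D,q\rangle^2-\langle\Bbf,q\rangle^2$. When $\E_p$ is a segment, $D$ and $\Bbf$ are parallel, so $\langle D,q\rangle^2+\langle\Bbf,q\rangle^2=\langle v,q\rangle^2$ with $v$ the vector from the centre $\Hbf$ to an endpoint; then \eqref{eq:focal-quadric} factors as $(\langle P_+,q\rangle-1)(\langle P_-,q\rangle-1)=0$ where $P_\pm=\Hbf\pm v$ are the endpoints, so $\mathcal{C}_p$ is the union of the two polar planes of $P_\pm$, meeting along the poles of the planes through $\E_p$ (the umbilical foci). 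The four sub-cases $(su)$, $(ri)$, $(\f i)$, $(ii)$ then correspond, by polar duality, to the four positions of the segment recalled in \S\ref{sect-Indic-ellipse&point-class} (non-radial; radial with $p$ interior; radial with $p$ an endpoint, so one polar is the plane at infinity by Remark\,\ref{rem:orthogonality-polar}; radial with $p$ exterior). Finally, when $\E_p$ reduces to a single point, that point is its centre $\Hbf$ and $D=\Bbf=0$, so \eqref{eq:focal-quadric} becomes the perfect square $(\langle\Hbf,q\rangle-1)^2=0$, i.e. the polar plane $\mathcal{P}_{\Hbf}$ counted twice; it is finite iff $\Hbf\neq p$ (case $(u)$) and is twice the plane at infinity iff $\Hbf=p$ (case $(\f u)$, again by Remark\,\ref{rem:orthogonality-polar}).
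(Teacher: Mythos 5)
Your proposal is correct and follows essentially the same route as the paper: the crux of \textbf{a}$_\psi$ is, in both cases, that the discriminant of $\mathcal{G}_p$ restricted to the direction plane of $\Pi_\E$ is a positive multiple of $\Delta\,\mathcal{G}_p^*(\Rbf)$ with $\Delta=\tau^2>0$, you obtaining it via the cofactor identity $\det\bigl(\mathcal{G}_p|_{\Rbf^\perp}\bigr)=\det[\mathcal{G}_p]\cdot([\mathcal{G}_p]^{-1})_{11}$ where the paper computes the restriction explicitly in a principal basis. Your direct factorization of the caustic equation into $(\langle\Hbf+v,q\rangle-1)(\langle\Hbf-v,q\rangle-1)=0$ for parts \textbf{B} and \textbf{C} is a clean verification of what the paper only sketches as ``the same line of elementary duality ideas.''
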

\begin{figure}[ht] %< préférence de placement h , t , b or p >
\centering
\includegraphics[scale=0.13]{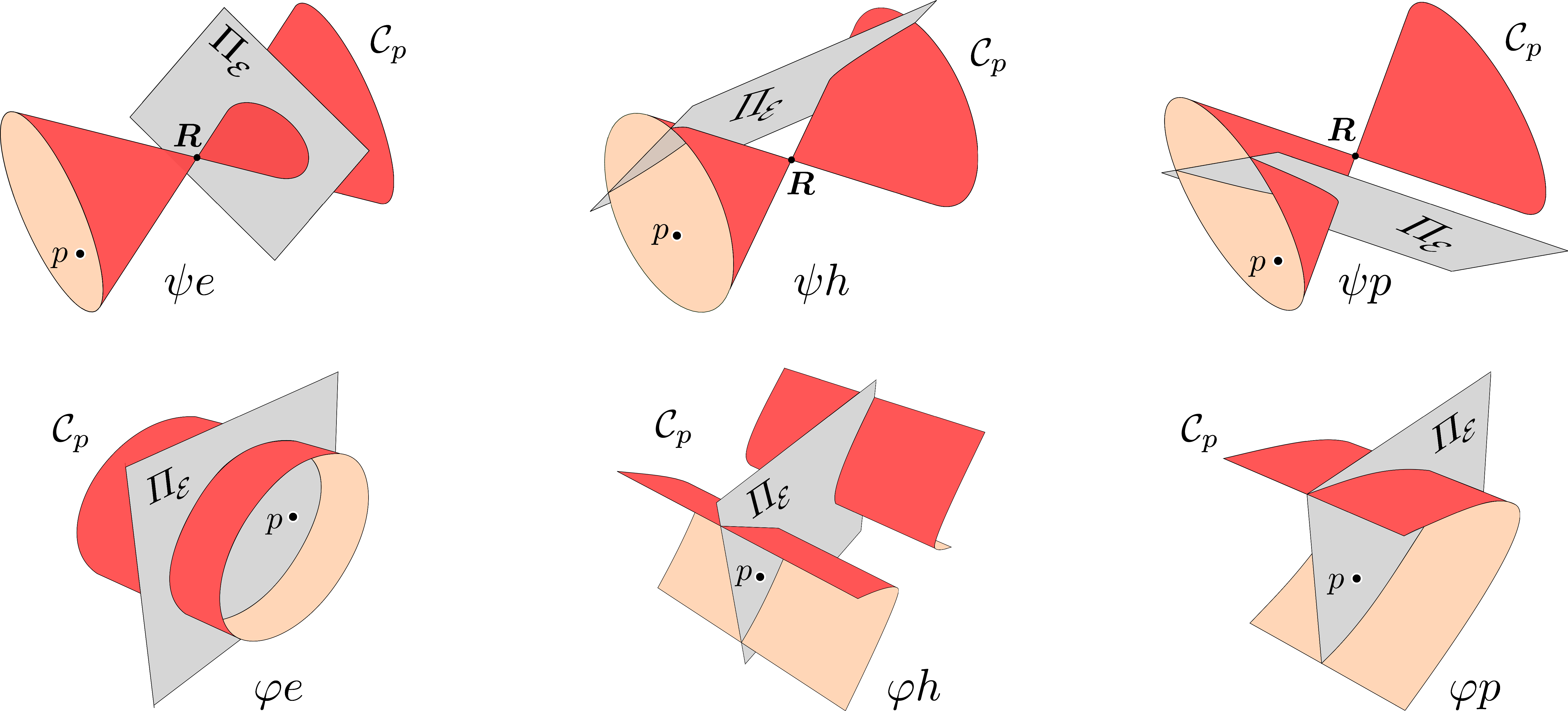}
\caption{\small Local caustic $\mathcal{C}_p$ and plane $\Pi_\E$ when the ellipse $\E_p$ is non degenerate, for $M$ in $\R^5$.}
\label{fig:cone-quadrics}
\end{figure}
% \begin{figure}[ht] %< préférence de placement h , t , b or p >
% \centering
% \includegraphics[scale=0.14]{cone-quadrics}
% \caption{\small The local caustic $\mathcal{C}_p$ when the indicatrix ellipse $\E_p$ is non degenerate, for $M$ in $\R^5$.}
% \label{fig:cone-quadrics}
% \end{figure}
\begin{figure}[ht] %< préférence de placement h , t , b or p >
\centering
\includegraphics[scale=0.14]{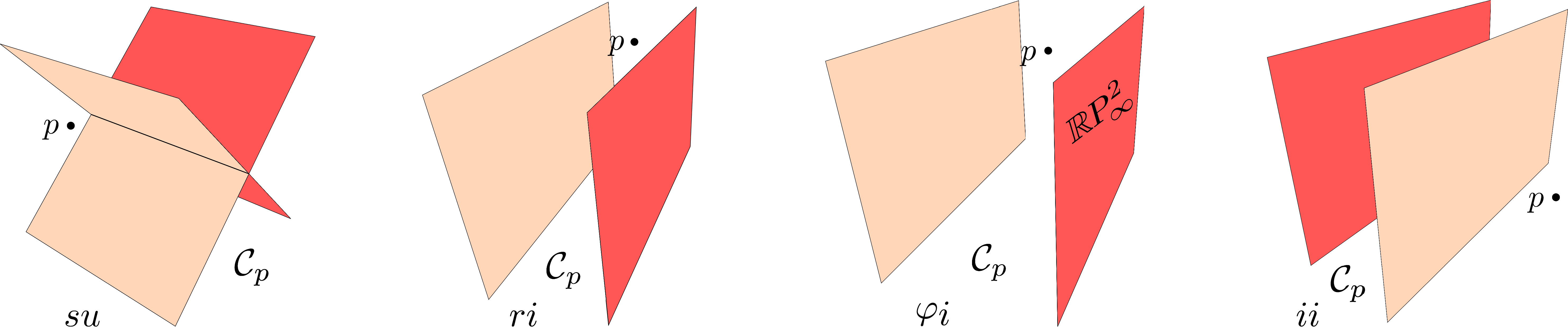}
\caption{\small The local caustic $\mathcal{C}_p$ when the indicatrix ellipse $\E_p$ is a segment, for $M$ in $\R^5$.}
\label{fig:planes-deg_E_p}
\end{figure}
%%%%%% P R O O F >>>>>>>>>>>>>>>>>>>>>>>>>>>>>>>>>>>>>>>>>>>>>>>>>>>>>>>>>>>>>>>>>>>>>>>>>>>>>>>>>>>>>>>>>>>>>>>>>>
\begin{proof}[On the proofs of Theorems {\rm \ref{th:local-caustic-R4}} and {\rm \ref{prop:caustic-cone}}]
Theorems \ref{th:local-caustic-R4} and \ref{prop:caustic-cone} are mostly analogous: 
in Th.\,\ref{th:local-caustic-R4} one takes the poles of the lines tangent to $\E_p\subset N_pM\approx\R^2$ 
and in Th.\,\ref{prop:caustic-cone} the poles of the planes tangent to $\E_p\subset N_pM\approx\R^3$. 
Their proofs, based on Theorem\,\ref{th:focal_quadric-indicatrix} and on the classification of points by 
the indicatrix ellipse, follow the same line of elementary duality ideas, as in the proof of 
Theorem\,\ref{th:C_p=cone}. The only difference is in item \textit{\textbf{a}}$_\psi$: 
\smallskip

\noindent
\underline{\textit{Proof of} \textit{\textbf{a}}$_\psi$}. The plane $\Pi_\E$ -of equation $\langle q,\Rbf\rangle=1$- 
cuts the cone $\mathcal{C}_p$ -of equation $\mathcal{G}_p(q-\Rbf)=0$- 
along an ellipse (hyperbola, parabola)  iff the intersection of the parallel vectorial plane 
$\flecha{\Pi}_\E$ -of equation $\langle q,\Rbf\rangle=0$- with the cone $\Sigma_p$ -of equation $\mathcal{G}_p(q)=0$- 
consists of the origin (resp. two transverse lines, a double line). This happens iff the restriction of the 
quadratic form $\mathcal{G}_p$ to the plane $\flecha{\Pi}_\E$ is definite (resp. undefinite, degenerate). 
In terms of $q_1$, $q_2$, the restriction of $\mathcal{G}_p$ to $\flecha{\Pi}_\E$ is given by 
\[{\mathcal{G}_p}_{|_{\flecha{\Pi}_\E}}(q_1,q_2)=
\left(\mathcal{K}_1\rho_3+\mathcal{K}_3\rho_1^2/\rho_3\right)q_1^2+
2\left(\mathcal{K}_3\rho_1\rho_2/\rho_3\right)q_1q_2+ 
\left(\mathcal{K}_2\rho_3+\mathcal{K}_3\rho_2^2/\rho_3\right)q_2^2\,.\]

Thus, the determinant of ${\mathcal{G}_p}_{|_{\flecha{\Pi}_\E}}$ (which does not depend on $q_1$, $q_2$) is equal to 
\begin{align*}
\hspace{2cm} \det\left({\mathcal{G}_p}_{|_{\flecha{\Pi}_\E}}\right) & =
\left(\mathcal{K}_1\rho_3+\mathcal{K}_3\rho_1^2/\rho_3\right)\left(\mathcal{K}_2\rho_3+\mathcal{K}_3\rho_2^2/\rho_3\right)
-\left(\mathcal{K}_3\rho_1\rho_2/\rho_3\right)^2 \\
 &=\Delta\left(\rho_1^2/\mathcal{K}_1+\rho_2^2/\mathcal{K}_2+\rho_3^2/\mathcal{K}_3\right)
 =\Delta\,\mathcal{G}_p^*(\Rbf)\,.
\end{align*}

Hence the point $p$ is pseudo-elliptic iff $\mathcal{G}_p^*(\Rbf)>0$, 
pseudo-hyperbolic iff $\mathcal{G}_p^*(\Rbf)<0$ and pseudo-parabolic iff $\mathcal{G}_p^*(\Rbf)=0$. \ 
The last equivalences in the subitems of \textit{\textbf{a}}$_\psi$ follow because the equation of 
the cone $\Sigma_p^*$ 
(based on $\E_p$) is $\mathcal{G}_p^*(q)=0$. 
\smallskip

\noindent
\underline{\textit{Proof of} \textit{\textbf{a}}$_\varphi$}. Since at a flat point the plane $\Pi_\E$ 
contains $p$ (the origin of $N_pM$), the vertex of $\mathcal{C}_p$ (which is the pole of $\Pi_\E$) lies at 
infinity in $N_pM\sqcup\RP^2_\infty$. Thus, in $N_pM$, $\mathcal{C}_p$ is a quadratic cylinder, whose type 
is determined by restricting the duality to $\Pi_\E$.  
\end{proof}
%%%%%%<<<<<<<<<<<<<<<<<<<<<<<<<<<<<<<<<<<<<<<<<<<<<<<<<<<<<<<<<<<<<<<<<<<<<<<<<<<<<<<<<<<<<<<<<<<<<<<<< P R O O F %%

Notice that in Th.\,4.2 of \protect\cite{Costa_Moraes_R-F}, the analogue of item \textit{\textbf{a}}$_\f$$\,(\f e)$ 
of our Theorem\,\ref{prop:caustic-cone} states that $\mathcal{C}_p=\emptyset$, 
%while, by Th.\,\ref{th:focal_quadric-indicatrix}, 
but $\mathcal{C}_p$ is an elliptic cylinder -- Fig.\,\ref{fig:cone-quadrics} ($\f e$) -- see the following example.  

%%%%%%%%%%%%%%%%%%%%%%%%%%%%%%%%%%%%%%%%%%%%%   EXAMPLE   %%%%%%%%%%%%%%%%%%%%%%%%%%%%%%%%%%%%%%%%%%%%%%%%%%%%%%%%%
%%%%%%%%%%%%%%%%%%%%%%%%%%%%%%%%%%%%%%%%%%%%%%%%%%%%%%%%%%%%%%%%%%%%%%%%%%%%%%%%%%%%%%%%%%%%%%%%%%%%%%%%%%%%%%%%%%%
\begin{example*}[Local caustic at a flat elliptic point]
Let $M$ be a surface in $\R^5$ whose local quadratic map at $p$ (the origin) is given by the following three quadratic forms 
\[Q_1=\textstyle\frac{1}{2}(\a s^2+2\beta st-\a t^2)\,,\quad Q_2=\frac{1}{2}(as^2+2bst-at^2)\,,\quad Q_3=\frac{1}{2}(es^2+2fst-et^2)\,.\]
We have that the mean curvature vector $\Hbf$ is the zero vector because 
\[\Abf=\begin{pmatrix}
 \a \\ 
  a \\
  e
\end{pmatrix}, \ 
\Bbf=\begin{pmatrix}
  \beta \\ 
   b \\
   f
\end{pmatrix}, \ 
\Cbf=\begin{pmatrix}
 -\a \\ 
  -a \\
  -e
\end{pmatrix}=-\Abf\,.\]  
Thus $p$ is flat elliptic (the plane $\Pi_\E$ contains $p$). The equation of the local caustic, 
\[(\langle \Abf\,,\,q\rangle-1)(\langle \Cbf\,,\,q\rangle-1)-\langle \Bbf\,,\,q\rangle^2=0 \,,\]
reduces, in this case, to 
\[\langle \Abf\,,\,q\rangle^2+\langle \Bbf\,,\,q\rangle^2=1\,,\]
which is the equation of an elliptic cylinder -- Fig.\,\ref{fig:cone-quadrics} ($\f e$). 
\end{example*}

%%%%%%%%%%%%%%%%%%%%%%%%%%%%%%%%%%%%%  DEFINITION Mi  %%%%%%%%%%%%%%%%%%%%%%%%%%%%%%%%%%%%%%%%%%%%%%%%%%%%%%%%%%%%%%
\noindent
\textit{\textbf{$\boldsymbol{M_i}$ subsets}}. 
Given a smooth surface $M$ in $\R^5$, consider the following subsets of $M$: 
\[M_i=\{p\in M:\dim(\mathrm{span}(\textstyle{\frac{1}{2}}(\Abf-\Cbf), \Bbf, \Hbf))=i\}\,.\]

%%%%%%%%%%%%%%%%%%%%%%%%%%%%%%%%%%%%%%%%%%%    REMARK    %%%%%%%%%%%%%%%%%%%%%%%%%%%%%%%%%%%%%%%%%%%%%%%%%%%%%%%%%%%%%%%%
\begin{remark}
If $M\subset \R^5$ is in general position, the vectors $\frac{1}{2}(\Abf-\Cbf)(p)$, $\Bbf(p)$ and $\Hbf(p)$ are coplanar only at the flat points in $M$, which form the \textit{flat curve} $\Phi$, given by the equation 
\[\tau(p):=\det\left[\textstyle{\frac{1}{2}}(\Abf-\Cbf)(p)\,\, \Bbf(p)\,\, \Hbf(p)\right]=0\,.\] 
Moving along the flat curve $\Phi$, two of these three vectors may be 
colinear at isolated points, but there is no point at which the three vectors are colinear.  This leads to the 
\end{remark}

%%%%%%%%%%%%%%%%%%%%%%%%%%%%%%%%%%%%%%%%%%%%%%%%%%%%%%%%%%%%%%%%%%%%%%%%%%%%%%%%%%%%%%%%%%%%%%%%%%%%%%%%%%%%%%%%%%%%
%%%%%%%%%%%%%%%%%%%%%%%%%%%%%%%%%%%%%%%%%%%%  PROPOSITION  %%%%%%%%%%%%%%%%%%%%%%%%%%%%%%%%%%%%%%%%%%%%%%%%%%%%%%%%%
%%%%%%%%%%%%%%%%%%%%%%%%%%%%%%%%%%%%%%%%%%%%%%%%%%%%%%%%%%%%%%%%%%%%%%%%%%%%%%%%%%%%%%%%%%%%%%%%%%%%%%%%%%%%%%%%%%%%
\begin{proposition}[\protect\cite{M-RM-R2003}]
If $M$ is a generic embedded surface in $\R^5$, then $M=M_3\cup M_2$.
\end{proposition}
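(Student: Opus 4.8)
The plan is to identify the locus $M_1\cup M_0$ with membership in a determinantal variety and then to make it empty by a dimension count via Thom transversality. First I would assemble the three normal vectors into the columns of the $3\times 3$ matrix
\[
V(p)=\Big[\ \tfrac12(\Abf-\Cbf)\ \ \ \Bbf\ \ \ \Hbf\ \Big],\qquad N_pM\cong\R^3,
\]
so that $\dim\mathrm{span}\big(\tfrac12(\Abf-\Cbf),\Bbf,\Hbf\big)=\mathrm{rank}\,V(p)$ and hence $M_i=\{p:\mathrm{rank}\,V(p)=i\}$. The subspace in question is the smallest linear subspace of $N_pM$ containing the indicatrix ellipse $\E_p$ together with the origin $p$, which shows at once that $\mathrm{rank}\,V(p)$ is a genuine geometric invariant of the $2$-jet, independent of the chosen orthonormal frame $\{\ebf_1,\ebf_2,\nbf_1,\nbf_2,\nbf_3\}$. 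The locus to be excluded is $M_1\cup M_0=\{p:\mathrm{rank}\,V(p)\le 1\}$.

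Next I would compute codimensions. In the nine-dimensional space of $3\times 3$ matrices the determinantal variety $\{\mathrm{rank}\le r\}$ has codimension $(3-r)^2$. For $r=2$ this reproduces the flat curve: $\{\mathrm{rank}\le 2\}=\{\det V=0\}=\{\tau=0\}$ has codimension $1$ and cuts out the curve $\Phi$ of the Remark. For $r=1$ the codimension jumps to $(3-1)^2=4$. The variety $\{\mathrm{rank}\le 1\}$ is singular, but it is a cone admitting a Whitney stratification whose top stratum $\{\mathrm{rank}=1\}$ has codimension $4$ and whose only deeper stratum is the vertex $\{0\}$ of codimension $9$.

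I would then apply Thom jet-transversality. After the Monge normalisation the second-fundamental-form coefficients $a_j,b_j,c_j$ of the local quadratic map \eqref{eq:LQM} are fibre coordinates on the second-order jet bundle of an embedding $\gamma:M\to\R^5$; since the assignment $(a_j,b_j,c_j)\mapsto V$ is a linear isomorphism of $\R^9$, the invariance noted above lets the condition $\mathrm{rank}\,V\le 1$ descend to a well-defined stratified subvariety $\Sigma$ of the geometric $2$-jet bundle, with the same stratum codimensions $4$ and $9$. For a generic embedding the $2$-jet extension $j^2\gamma:M\to J^2$ is transverse to every stratum of $\Sigma$. Since $\dim M=2$ is strictly smaller than $4$ and than $9$, transversality to each stratum forces an empty intersection, whence $(j^2\gamma)^{-1}(\Sigma)=\varnothing$; that is, $\mathrm{rank}\,V(p)\ge 2$ at every $p\in M$. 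Therefore $M_1\cup M_0=\varnothing$ and $M=M_3\cup M_2$.

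The delicate point is not the arithmetic but the passage from the chart-dependent model $\R^9$ to the geometric jet bundle: one must verify that $\{\mathrm{rank}\,V\le 1\}$ is invariant under the full action of frame changes (both the $O(2)$ rotating $\ebf_1,\ebf_2$ and the $O(3)$ rotating $\nbf_1,\nbf_2,\nbf_3$), so that $\Sigma$ is a bona fide Whitney-stratified subvariety and transversality may be applied stratum by stratum. The geometric description of the span as the linear hull of $\E_p$ and $p$ settles this invariance. Granting it, the statement reduces to the inequality $(3-1)^2=4>2=\dim M$, which is exactly what forbids rank-$\le 1$ points on a generic surface in $\R^5$.
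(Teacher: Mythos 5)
Your proof is correct and is essentially the argument the paper relies on: the paper gives no proof beyond the preceding Remark (deferring to \protect\cite{M-RM-R2003}), and that Remark is exactly the codimension count you formalise --- $\{\tau=0\}$ has codimension $1$ and cuts out the flat curve $\Phi$, while the rank-$\leq 1$ determinantal locus has codimension $(3-1)^2=4>2=\dim M$ and is therefore missed by the $2$-jet extension of a generic embedding. Your attention to the frame-invariance of the rank condition (identifying the span with the linear hull of $\E_p$ in $N_pM$) is precisely what legitimises applying Thom transversality stratum by stratum.
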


\noindent
\textbf{\textit{Seven Types of Points of a Generic Surface in $\R^5$}} (in terms of the local quadratic map $\f_p$). 
A surface $M\subset\R^5$ in general position has open domains $\psi E$ of pseudo-elliptic points and $\psi H$ of 
pseudo-hyperbolic points, separated by a curve $\psi P$ of pseudo-parabolic points - their union is $M_3$. 
The flat curve $\Phi=M_2$, on which $\tau=0$, is formed of open intervals 
$\varphi E$ of flat-elliptic points and $\varphi H$ of flat-hyperbolic points, separated by isolated flat-parabolic 
points $\varphi P$ (where $\psi P$ intersects $\Phi$). In the intervals of flat-hyperbolic points, 
one can have isolated semi-umbilic points ($su$)- Fig.\,\ref{fig:points-surf-R5}. 

\begin{figure}[ht] %< préférence de placement h , t , b or p >
\centering
\includegraphics[scale=0.27]{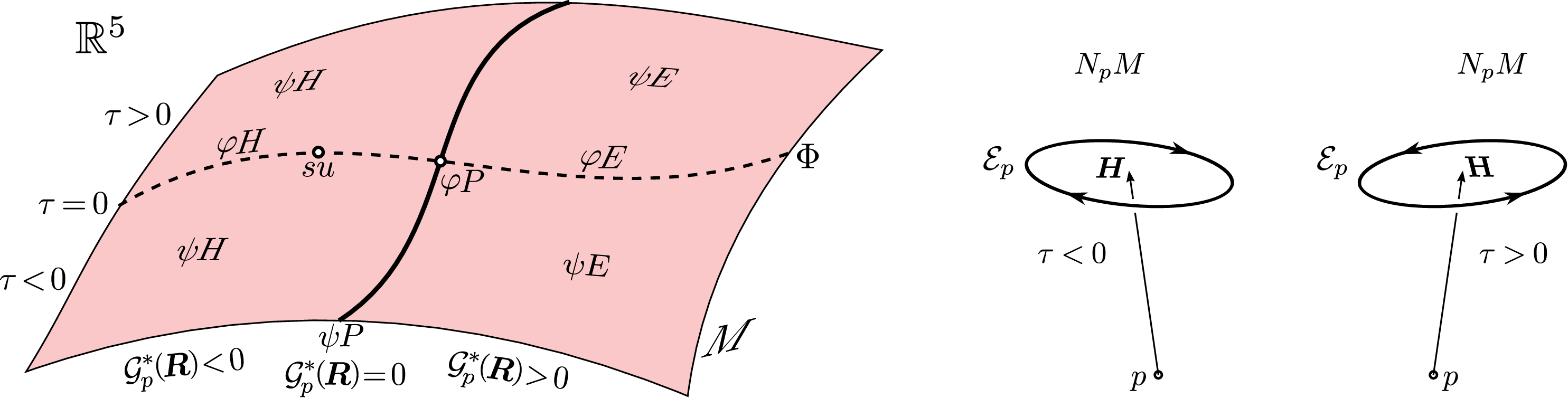}
\caption{\small The seven types of points determined by the local quadratic map $\f_p$ on a generic surface in $\R^5$. 
The sign of $\tau$ indicates the orientation of the parametrisation of $\E_p$ with respect to $\Hbf$, 
according to the left- or right-hand rule. 
Counting the sign of $\tau$ one gets ten types of points.}
\label{fig:points-surf-R5}
\end{figure}   

%%%%%%%%%%%%%%%%%%%%%%%%%%%%%%%%%%%%%%%%%%%%%%%%%%%%%%%%%%%%%%%%%%%%%%%%%%%%%%%%%%%%%%%%%%%%%%%%%%%%%%%%%%%%%%%%%%%%
%%%%%%%%%%%%%%%%%%%%%%%%%%%%%%%%%%%%%%%%%%%%  PROPOSITION  %%%%%%%%%%%%%%%%%%%%%%%%%%%%%%%%%%%%%%%%%%%%%%%%%%%%%%%%%
%%%%%%%%%%%%%%%%%%%%%%%%%%%%%%%%%%%%%%%%%%%%%%%%%%%%%%%%%%%%%%%%%%%%%%%%%%%%%%%%%%%%%%%%%%%%%%%%%%%%%%%%%%%%%%%%%%%%
\begin{proposition}
Let $M$ be a generic embedded surface in $\R^5$. If $p\in M$ is not semi-umbilic, then in the direction 
orthogonal to the plane $\Pi_\E$, $\nbf:=\Rbf/\|\Rbf\|$, the Monge form is umbilical and the Gauss curvature equals 
$\|\Rbf\|^{-2}$,  $K^{\nbf}=1/\|\Rbf\|^2$. Namely, 
\begin{equation}\label{eq:<fi_p,n>}
  \left\langle \, \f_p(s,t)\,,\,\Rbf/\|\Rbf\|\, \right\rangle=\textstyle{\frac{1}{2}}\left(\frac{1}{\|\Rbf\|}\right)(s^2+t^2)\,.
\end{equation}
\end{proposition}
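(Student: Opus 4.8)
The plan is to show that the direction $\nbf=\Rbf/\|\Rbf\|$, being normal to the plane $\Pi_\E$, kills the off-center part of the indicatrix, so that the second fundamental form in this direction reduces to a scalar multiple of $s^2+t^2$; then to identify that scalar with $1/\|\Rbf\|$ using the plane equation of $\Pi_\E$.

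First I would invoke the explicit expression $\Rbf=\frac{1}{2\tau}(\Abf-\Cbf)\times\Bbf$ from Proposition~\ref{prop:R=N/2tau}, which is legitimate since a generic non-semi-umbilic point lies in $M_3$ (off the flat curve $\Phi$), where $\tau\neq 0$, so that $\Rbf$ is a finite nonzero vector and $\nbf$ is well defined. Because $\Rbf$ is, up to the scalar $1/(2\tau)$, the cross product $(\Abf-\Cbf)\times\Bbf$, it is orthogonal to both $\Abf-\Cbf$ and $\Bbf$ — precisely the two vectors that (together with the center $\Hbf$) span $\E_p$ and its plane $\Pi_\E$. This yields the two relations $\langle \Bbf,\nbf\rangle=0$ and $\langle \Abf,\nbf\rangle=\langle \Cbf,\nbf\rangle$.

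Next I would substitute these into the local quadratic map $\f_p(s,t)=\frac12(s^2\Abf+2st\Bbf+t^2\Cbf)$ from \eqref{eq:LQM} and pair with $\nbf$: the cross term $2st\langle\Bbf,\nbf\rangle$ drops out and the $s^2$, $t^2$ coefficients coincide, giving $\langle \f_p(s,t),\nbf\rangle=\frac12\langle\Abf,\nbf\rangle(s^2+t^2)$, which is exactly the assertion that the Monge height function in direction $\nbf$ is umbilical. Equivalently, in the matrix $[d^\perp_{p,\nbf}\G]$ of \eqref{eq:[dG(p,n)]} the off-diagonal entry vanishes and the two diagonal entries agree, so $K^{\nbf}=\det[d^\perp_{p,\nbf}\G]=\langle\Abf,\nbf\rangle^2$. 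To evaluate the scalar $\langle\Abf,\nbf\rangle$ I would use that $\Pi_\E$ has equation $\langle q,\Rbf\rangle=1$ and that its center $\Hbf=\frac12(\Abf+\Cbf)$ lies on $\Pi_\E$, hence $\langle\Hbf,\Rbf\rangle=1$ (the relation already established inside the proof of Proposition~\ref{prop:R=N/2tau}). Combining $\langle\Abf,\Rbf\rangle=\langle\Cbf,\Rbf\rangle$ with $\frac12(\langle\Abf,\Rbf\rangle+\langle\Cbf,\Rbf\rangle)=1$ forces $\langle\Abf,\Rbf\rangle=1$, so $\langle\Abf,\nbf\rangle=\langle\Abf,\Rbf\rangle/\|\Rbf\|=1/\|\Rbf\|$. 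Plugging back gives both the displayed identity \eqref{eq:<fi_p,n>} and $K^{\nbf}=1/\|\Rbf\|^2$.

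The computation is essentially routine; the only genuine subtlety is making sure $\nbf$ is honestly well defined. This is where the hypothesis enters: excluding semi-umbilic points rules out the degenerate case in which $\E_p$ collapses to a segment, so that $\Abf-\Cbf$ and $\Bbf$ become colinear and the plane $\Pi_\E$ (hence its normal) is ambiguous; combined with genericity ($p\in M_3$, where $\tau\neq 0$) this guarantees that $\Rbf$ is a finite nonzero vector, so dividing by $\|\Rbf\|$ and by $\tau$ is justified and the argument goes through unobstructed.
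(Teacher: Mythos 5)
Your proof is correct and follows essentially the same route as the paper: both arguments rest on the fact that $\Rbf$ is a multiple of $(\Abf-\Cbf)\times\Bbf$, hence orthogonal to $\Abf-\Cbf$ and $\Bbf$, so that only the umbilical part of $\f_p$ survives the pairing with $\nbf$. The only cosmetic difference is that you evaluate the remaining scalar via $\langle\Hbf,\Rbf\rangle=1$ while the paper substitutes the torsion definition \eqref{eq:def-torsion} directly into $\bigl\langle\Hbf,\tfrac{(\Abf-\Cbf)\times\Bbf}{2\tau\|\Rbf\|}\bigr\rangle$ — these are the same computation.
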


\begin{proof} 
Writing $\f_p$ as $\f_p(s,t)=\frac{1}{2}\left(\frac{1}{2}(\Abf-\Cbf)(s^2-t^2)+2\Bbf st+\Hbf(s^2+t^2)\right)$, 
and using \eqref{eq:expression R} and \eqref{eq:def-torsion}, the quadratic part of $M$ at $p$ in the direction 
orthogonal to $\Pi_\E$ is: 
\begin{align*}
\f_p^{\nbf}(s,t):=\left\langle \f_p(s,t), \frac{(\Abf-\Cbf)\times\Bbf}{\|(\Abf-\Cbf)\times\Bbf\|}\right\rangle & =
{\frac{1}{2}}\left\langle \Hbf,\frac{(\Abf-\Cbf)\times\Bbf}{2\tau\|\Rbf\|}\right\rangle(s^2+t^2) \\
 & =\frac{1}{2}\left(\frac{1}{\|\Rbf\|}\right)(s^2+t^2)\,,
\end{align*}
which shows that the direction orthogonal to $\Pi_\E$ is umbilical and $K^{\nbf}=\|\Rbf\|^{-2}$. 
\end{proof}

\noindent 
\textbf{\textit{Flattening Hyperplane}}. At the flat points of $M$ in $\R^5$, the umbilical focal centre is at infinity, 
being the point of $\RP^2_{\infty}$ which represents the direction orthogonal to $\Pi_\E$ -  in this direction, by \eqref{eq:<fi_p,n>}, the quadratic part of $M$ vanishes and the surface $M$ is flat-umbilic: 
the \textit{flattening hyperplane} $\mathrm{span}(T_pM\cup\Pi_\E)$ has higher contact with $M$ along all tangent directions. 
Moreover, this flatteining hyperplane has over-osculation with $M$ along the 
\textit{asymptotic lines}: on which the cubic part of $M$ (in the direction orthogonal to $\Pi_\E$), $\mathrm{Cub}_p^{\nbf}(s,t)$, vanishes. 
Then, the curve $\Phi$ contains isolated points at which $M$ has two asymptotic lines (one of them double). 
These points separate $\Phi$ into open intervals of two types: the points have three asymptotic lines or the points have one asymptotic line. 
So, the asymptotic lines are not detected by the local quadratic map.  
 
\begin{remark*}
The local caustic $\mathcal{C}_p$ of a surface in $\R^{n>5}$ is also described by  
Theorems\,\ref{th:C_p=cone} and \ref{prop:caustic-cone} (Figs.\,\ref{fig:cone-quadrics} and \ref{fig:planes-deg_E_p}),
 but one has to ``multiply'' orthogonally by the 
complementary normal subspace to the space spanned by $\Abf$, $\Bbf$, $\Cbf$. 
\end{remark*}

\section{Relations between $\E_p$, $\Hbf$, $\mathcal{C}_p$, $\Rbf$ and the Local Invariants}\label{Section-relation-inequalities}
%%%%%%%%%%%%%%%%%%%%%%%%%%%%%%%%%%%%%%%%%%%%%%%%%%%%%%%%%%%%%%%%%%%%%%%%%%%%%%%%%%%%%%%%%%%%%%%%%%%%%%%%%%%%
%%%%%%%%%%%%%%%%%%%%%%%%%%%%%%%%%%%%%%%%%%%%%%%%%%%%%%%%%%%%%%%%%%%%%%%%%%%%%%%%%%%%%%%%%%%%%%%%%%%%%%%%%%%%

%%%%%%%%%%%%%%%%%%%%%%%%%%%%%%%%%%%%%%%%%%%%%%%%%%%%%%%%%%%%%%%%%%%%%%%%%%%%%%%%%%%%%%%%%%%%%%%%%%%%%%%%%%%%%%%%%%%%%%%%%%%
%%%%%%%%%%%%%%%%%%%%%%%%%%%%%%%%%%%%%%%%%%%%%%%%%%%%%%%%%%%%%%%%%%%%%%%%%%%%%%%%%%%%%%%%%%%%%%%%%%%%%%%%%%%%%%%%%%%%%%%%%%%
\subsection{\textbf{The Relation Between $\Hbf$ and $\Rbf$ by the Gauss Quadratic Form}}
%%%%%%%%%%%%%%%%%%%%%%%%%%%%%%%%%%%%%%%%%%%%%%%%%%%%%%%%%%%%%%%%%%%%%%%%%%%%%%%%%%%%%%%%%%%%%%%%%%%%%%%%%%%%%%%%%%%%%%%%%%%
%%%%%%%%%%%%%%%%%%%%%%%%%%%%%%%%%%%%%%%%%%%%%%%%%%%%%%%%%%%%%%%%%%%%%%%%%%%%%%%%%%%%%%%%%%%%%%%%%%%%%%%%%%%%%%%%%%%%%%%%%%%

The centre $\Hbf$ of the indicatrix ellipse $\E_p$ and the centre $\Rbf$ of the local caustic $\C_p$
are related by the symmetric linear map $\flecha{\mathcal{G}_p}:N_pM\to N_pM$ associated to $\mathcal{G}_p$\,:  

%%%%%%%%%%%%%%%%%%%%%%%%%%%%%%%%%%%%%%%%%%%%%%%%%%%%%%%%%%%%%%%%%%%%%%%%%%%%%%%%%%%%%%%%%%%%%%%%%%%%%%%%%%%%%%%%%%%%
%%%%%%%%%%%%%%%%%%%%%%%%%%%%%%%%%%%%%%%%%%%%  PROPOSITION  %%%%%%%%%%%%%%%%%%%%%%%%%%%%%%%%%%%%%%%%%%%%%%%%%%%%%%%%%
%%%%%%%%%%%%%%%%%%%%%%%%%%%%%%%%%%%%%%%%%%%%%%%%%%%%%%%%%%%%%%%%%%%%%%%%%%%%%%%%%%%%%%%%%%%%%%%%%%%%%%%%%%%%%%%%%%%%
\begin{proposition}\label{proposition:H=QR}
Let $p$ be a point of a smooth surface in $\R^4$ $($or in $\R^5$$)$ with $\Delta\neq 0$. 
\smallskip

\noindent
$(a)$ The symmetric linear map $\flecha{\mathcal{G}_p}$ sends the centre $\Rbf$ of the conic $\mathcal{C}_p$ 
(or the vertex $\Rbf$ of the cone $\C_p$) to the mean curvature vector $\Hbf$$:$ 
\begin{equation}\label{eq:H=G_pR}
  \Hbf=\flecha{\mathcal{G}_p}\Rbf\,. 
\end{equation}

\noindent
$(b)$ In the coordinates associated to a principal basis on $N_pM$, the centre $\Rbf$ of the conic $\mathcal{C}_p$ 
$($or the vertex $\Rbf$ of the cone $\mathcal{C}_p$$)$ is given in terms of the principal focal curvatures by 
\begin{equation}\label{eq:relation_for_R}
\Rbf=\left(\frac{h_1}{\mathcal{K}_1}, \frac{h_2}{\mathcal{K}_2}\right)
\quad \mbox{for } M\subset\R^4\,, 
\end{equation}
\begin{equation}\label{eq:relation_for_R-R^5}
\Rbf=\left(\frac{h_1}{\mathcal{K}_1}, \frac{h_2}{\mathcal{K}_2}, \frac{h_3}{\mathcal{K}_3}\right) 
\quad \mbox{for } M\subset\R^5\,. 
\end{equation}
\end{proposition}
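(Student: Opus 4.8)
The plan is to rewrite the defining equation \eqref{eq:focal-quadric} of the local caustic $\mathcal{C}_p$ in a form that displays $\mathcal{G}_p$ and $\Hbf$ explicitly, and then to read off the centre (or vertex) $\Rbf$ as the point where the gradient of the defining polynomial vanishes. First I would expand the left-hand side of \eqref{eq:focal-quadric}. Writing $A(q)=\langle \Abf,q\rangle$, $B(q)=\langle \Bbf,q\rangle$, $C(q)=\langle \Cbf,q\rangle$ (each linear in $q$), the equation $(A-1)(C-1)-B^2=0$ becomes $(AC-B^2)-(A+C)+1=0$. By the definition \eqref{eq:CQF(q)} of the Gauss quadratic form, $AC-B^2=2\mathcal{G}_p(q)$, and since $\Hbf=(\Abf+\Cbf)/2$ we have $A+C=\langle \Abf+\Cbf,q\rangle=2\langle \Hbf,q\rangle$. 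Hence the local caustic is the single quadric
\[
2\mathcal{G}_p(q)-2\langle \Hbf,q\rangle+1=0, \qquad q\in N_pM,
\]
valid verbatim for $M$ in $\R^4$ and in $\R^5$ (only the dimension of $q$ changes).

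Next, writing $\mathcal{G}_p(q)=\tfrac{1}{2}\langle \flecha{\mathcal{G}_p}q,q\rangle$, the centre $\Rbf$ of this quadric is the unique point at which the gradient of the defining function vanishes; that gradient is $2\flecha{\mathcal{G}_p}q-2\Hbf$, so $\Rbf$ is characterised by $\flecha{\mathcal{G}_p}\Rbf=\Hbf$, which is \eqref{eq:H=G_pR} and proves (a). Equivalently, substituting $q=\Rbf+w$ makes the term linear in $w$ equal to $2\langle \flecha{\mathcal{G}_p}\Rbf-\Hbf,w\rangle$, which vanishes for all $w$ precisely when $\flecha{\mathcal{G}_p}\Rbf=\Hbf$, confirming that this point is the centre of symmetry (for $M$ in $\R^5$ the quadric is a cone and this point is its vertex, by Theorem \ref{th:C_p=cone}). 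The hypothesis $\Delta\neq 0$ is exactly what guarantees that $\flecha{\mathcal{G}_p}$ is invertible — since $\Delta=\det[\mathcal{G}_p]$ by Propositions \ref{prop:inv-char_polynomial} and \ref{prop:inv-char_polynomial5} — so that $\Rbf=\flecha{\mathcal{G}_p}^{-1}\Hbf$ exists and is a finite point of $N_pM$.

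For part (b), I would pass to a principal basis, in which $[\mathcal{G}_p]$ is diagonal with entries the principal focal curvatures $\mathcal{K}_i$, while $\Hbf$ has components $(h_1,h_2)$ (or $(h_1,h_2,h_3)$). The relation $\flecha{\mathcal{G}_p}\Rbf=\Hbf$ then reads componentwise as $\mathcal{K}_iR_i=h_i$, and since $\Delta=\prod_i\mathcal{K}_i\neq 0$ by \eqref{eq:Delta=K1K1-K=K1+K2} each $\mathcal{K}_i\neq 0$, giving $R_i=h_i/\mathcal{K}_i$ as in \eqref{eq:relation_for_R} and \eqref{eq:relation_for_R-R^5}. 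I expect no genuine obstacle here; the only non-routine step is the opening algebraic identification of \eqref{eq:focal-quadric} with the quadric $2\mathcal{G}_p(q)-2\langle \Hbf,q\rangle+1=0$, after which both (a) and (b) are immediate.
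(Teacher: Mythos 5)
Your proof is correct and follows essentially the same route as the paper: both identify $\Rbf$ as the critical point of the defining quadratic polynomial of $\mathcal{C}_p$ and deduce $\flecha{\mathcal{G}_p}\Rbf=\Hbf$, then diagonalise in a principal basis for part $(b)$. The only difference is presentational — you first rewrite the equation intrinsically as $2\mathcal{G}_p(q)-2\langle\Hbf,q\rangle+1=0$ (a rewriting the paper carries out separately in Theorem~\ref{th:equation_local_caustic}) and differentiate abstractly, whereas the paper computes the partial derivatives in coordinates and recognises the Gram-matrix entries $k_{ij}$.
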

%%%%%% P R O O F >>>>>>>>>>>>>>>>>>>>>>>>>>>>>>>>>>>>>>>>>>>>>>>>>>>>>>>>>>>>>>>>>>>>>>>>>>>>>>>>>>>>>>>>>>>>>>>>>>
\begin{proof}
$(a)$ Both, in the $\R^4$ and $\R^5$ cases, the centre $\Rbf$ of the local caustic is the critical point of the quadratic polynomial 
\begin{equation}\label{eq:quadric equation}
  g(q)=(\,\langle \Abf\,, \,q \rangle-1\,)(\,\langle \Cbf\,, \,q \rangle-1\,)-\langle \Bbf\,, \,q \rangle^2\,.
\end{equation}

In order to find this critical point, in the case of surfaces in $\R^4$, we write 
\[
q=\begin{pmatrix}
  u \\ 
  v
\end{pmatrix}, \ 
\Abf=\begin{pmatrix}
 a_1 \\ 
 a_2
\end{pmatrix}, \ 
\Bbf=\begin{pmatrix}
  b_1 \\ 
  b_2
\end{pmatrix}, \ 
\Cbf=\begin{pmatrix}
  c_1 \\ 
  c_2
\end{pmatrix}, \  
h_1=\frac{a_1+c_1}{2} \
\text{ and } \ 
h_2=\frac{a_2+c_2}{2}\,.
\]

Thus the equality $\D_ug(q)=0$ is written as  
\[
a_1(\,\langle \Cbf\,, \,q \rangle-1\,)+c_1(\langle \Abf\,, \,q \rangle-1\,)-2b_1\langle \Bbf\,, \,q \rangle=0\,,
\]
which is equivalent to each of the following equations\,:
\[
a_1\langle \Cbf\,, \,q \rangle+c_1\langle \Abf\,, \,q \rangle-2b_1\langle \Bbf\,, \,q \rangle=a_1+c_1\,,
\]
\[
(a_1c_1-b_1^2)u+\left(\frac{a_1c_2+a_2c_1}{2}-b_1b_2\right)v=h_1\,,
\]
\[
k_1u+k_{12}v=h_1\,, \quad \mbox{( see \eqref{eq:psi-scalar-prod} )}
\]

Similarly, the equality $\D_vg(q)=0$ is equivalent to the equation \, $k_{12}u+k_2v=h_2$.

The last two equations, which determine the critical point 
$\Rbf=
\begin{pmatrix}
  \rho_1 \\ 
  \rho_2
\end{pmatrix}$ of \eqref{eq:quadric equation}, are written as 
\[
\begin{pmatrix}
  k_1 & k_{12}\\ 
  k_{12} & k_2
\end{pmatrix}
\begin{pmatrix}
  \rho_1 \\ 
  \rho_2
\end{pmatrix}
=
\begin{pmatrix}
  h_1 \\ 
  h_2
\end{pmatrix}\,.
\]

In the same way, for a surface in $\R^5$ we get the equality 
\[
\begin{pmatrix}
  k_1    & k_{12} & k_{13} \\ 
  k_{12} & k_2    & k_{23} \\
  k_{13} & k_{23} & k_3 
\end{pmatrix}
\begin{pmatrix}
  \rho_1 \\ 
  \rho_2 \\
  \rho_3
\end{pmatrix}
=
\begin{pmatrix}
  h_1 \\ 
  h_2 \\
  h_3
\end{pmatrix}\,.
\]
Equality \eqref{eq:H=G_pR} is proved. 
\smallskip

\noindent
$(b)$ Taking a principal basis, in both cases ($\R^4$ and $\R^5$), we get a diagonal matrix whose 
entries are the principal focal curvatures (all non zero because $\Delta\neq 0$): 
\[[\mathcal{G}_p]=
\begin{pmatrix}
  \K_1 & 0\\ 
     0 & \K_2
\end{pmatrix}
\quad \mbox{for } M\subset\R^4\,, \quad 
[\mathcal{G}_p]=
\begin{pmatrix}
  \K_1 &  0   & 0 \\ 
   0   & \K_2 & 0 \\
   0   &  0   & \K_3 
\end{pmatrix}
\quad \mbox{for } M\subset\R^5\,, \]
Thus $[\mathcal{G}_p]$ is inversible and then equality \eqref{eq:H=G_pR} implies expressions 
\eqref{eq:relation_for_R} and \eqref{eq:relation_for_R-R^5}. 
\end{proof} 
%%%%%%<<<<<<<<<<<<<<<<<<<<<<<<<<<<<<<<<<<<<<<<<<<<<<<<<<<<<<<<<<<<<<<<<<<<<<<<<<<<<<<<<<<<<<<<<<<<<<<<< P R O O F %%

%%%%%%%%%%%%%%%%%%%%%%%%%%%%%%%%%%%%%%%%%%%%%%%%%%%%%%%%%%%%%%%%%%%%%%%%%%%%%%%%%%%%%%%%%%%%%%%%%%%%%%%%%%%%%%%%%%%%
%%%%%%%%%%%%%%%%%%%%%%%%%%%%%%%%%%%%%%%%%%%%  PROPOSITION  %%%%%%%%%%%%%%%%%%%%%%%%%%%%%%%%%%%%%%%%%%%%%%%%%%%%%%%%%
%%%%%%%%%%%%%%%%%%%%%%%%%%%%%%%%%%%%%%%%%%%%%%%%%%%%%%%%%%%%%%%%%%%%%%%%%%%%%%%%%%%%%%%%%%%%%%%%%%%%%%%%%%%%%%%%%%%%
\begin{proposition}\label{prop:HR=1}
At a point $p$ of a smooth surface $M$ in $\R^5$ with $\Delta\neq 0$, the umbilical focal centre $\Rbf$ 
$($the vertex of $\mathcal{C}_p$$)$ and the mean curvature vector $\Hbf$ satisfy the relation
\begin{equation}\label{eq:RH=1}
  \langle\,\Rbf\,,\,\Hbf\,\rangle=1\,. 
\end{equation}
\end{proposition}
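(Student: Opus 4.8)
The plan is to read the answer directly off the defining equation \eqref{eq:focal-quadric} of the local caustic. First I would expand its left-hand side, writing $g(q)=(\langle\Abf,q\rangle-1)(\langle\Cbf,q\rangle-1)-\langle\Bbf,q\rangle^2$ and sorting by degree in $q$. The quadratic part is exactly $\langle\Abf,q\rangle\langle\Cbf,q\rangle-\langle\Bbf,q\rangle^2=2\mathcal{G}_p(q)$ by \eqref{eq:CQF(q)}, the linear part is $-\langle\Abf+\Cbf,q\rangle=-2\langle\Hbf,q\rangle$ since $\Hbf=(\Abf+\Cbf)/2$, and the constant term is $1$. Hence $g(q)=2\mathcal{G}_p(q)-2\langle\Hbf,q\rangle+1$ and $\mathcal{C}_p=\{g=0\}$. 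This is the decomposition that makes the rest mechanical.

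The vertex $\Rbf$ is the critical point of $g$, so $\flecha{\mathcal{G}_p}\Rbf=\Hbf$ by Proposition \ref{proposition:H=QR}; this is exactly where the hypothesis $\Delta=\det[\mathcal{G}_p]\neq0$ is used, since it guarantees that $\flecha{\mathcal{G}_p}$ is invertible and that $\Rbf$ is a genuine finite point of $N_pM$. The key step is then to evaluate $g$ at $\Rbf$: using $2\mathcal{G}_p(\Rbf)=\langle\Rbf,\flecha{\mathcal{G}_p}\Rbf\rangle=\langle\Rbf,\Hbf\rangle$ I get $g(\Rbf)=\langle\Rbf,\Hbf\rangle-2\langle\Hbf,\Rbf\rangle+1=1-\langle\Rbf,\Hbf\rangle$. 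Now by Theorem \ref{th:C_p=cone} the caustic $\mathcal{C}_p$ of a surface in $\R^5$ is a genuine quadratic cone with vertex $\Rbf$, and the vertex of a quadratic cone lies on the cone; therefore $g(\Rbf)=0$, which yields $\langle\Rbf,\Hbf\rangle=1$.

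As a one-line cross-check I would also note the direct computation: by Proposition \ref{prop:R=N/2tau} one has $\Rbf=\frac{1}{2\tau}(\Abf-\Cbf)\times\Bbf$ (legitimate because $\Delta\neq0$ forces $\tau\neq0$ via $\tau^2=\Delta$), so $\langle\Rbf,\Hbf\rangle=\frac{1}{2\tau}\langle(\Abf-\Cbf)\times\Bbf,\Hbf\rangle=\frac{1}{2\tau}\cdot2\tau=1$ by the definition \eqref{eq:def-torsion} of $\tau$. I would present the first argument as the conceptual proof and keep the second as a verification. The only point deserving care — and really the whole content of the statement — is that in $\R^5$ the vertex lies on the cone $\mathcal{C}_p$, in sharp contrast with the $\R^4$ case, where $\mathcal{C}_p$ is a conic whose centre is \emph{off} the curve and the same computation gives $g(\Rbf)=1-\langle\Rbf,\Hbf\rangle=N^2/4\Delta$, i.e. $\langle\Rbf,\Hbf\rangle=1-N^2/4\Delta$ rather than $1$. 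Thus the crux is invoking the cone structure from Theorem \ref{th:C_p=cone}; everything else is the elementary expansion above.
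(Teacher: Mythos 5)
Your proof is correct, but it takes a different route from the paper's. The paper's argument is a one-line application of duality: by Theorem~\ref{prop:caustic-cone} the polar plane of the vertex $\Rbf$ is the plane $\Pi_\E$ containing the indicatrix ellipse, whose equation is $\langle\Rbf,q\rangle=1$; since that plane contains the centre $\Hbf$ of $\E_p$, the relation $\langle\Rbf,\Hbf\rangle=1$ is immediate. You instead expand the caustic equation as $g(q)=2\mathcal{G}_p(q)-2\langle\Hbf,q\rangle+1$ (the same decomposition the paper uses later, in Theorem~\ref{th:equation_local_caustic}), combine it with $\Hbf=\flecha{\mathcal{G}_p}\Rbf$ from Proposition~\ref{proposition:H=QR}, and close the argument with the observation that the vertex of a nonempty quadratic cone lies on the cone, so $g(\Rbf)=1-\langle\Rbf,\Hbf\rangle=0$. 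That is valid: $\Delta\neq 0$ forces $\tau\neq 0$, hence $\E_p$ is non-degenerate and Theorem~\ref{th:C_p=cone}\,$b$ applies, and the cone is nonempty by Theorem~\ref{th:focal_quadric-indicatrix}\,\textit{\textbf{b}}. What your version buys is a uniform explanation of the $\R^4$/$\R^5$ dichotomy: the value $g(\Rbf)$ measures the failure of the centre to lie on the caustic, which vanishes exactly when $\mathcal{C}_p$ is a cone, and equals $N^2/4\Delta$ for the conic case of Proposition~\ref{th:<R,H>=1-N^2/4D} --- though as stated that last identification presupposes the $\R^4$ result rather than rederiving it. Your one-line cross-check via Proposition~\ref{prop:R=N/2tau} and the definition \eqref{eq:def-torsion} of $\tau$ is essentially the paper's duality proof unwound in coordinates, and by itself would already suffice as a complete proof.
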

%%%%%% P R O O F >>>>>>>>>>>>>>>>>>>>>>>>>>>>>>>>>>>>>>>>>>>>>>>>>>>>>>>>>>>>>>>>>>>>>>>>>>>>>>>>>>>>>>>>>>>>>>>>>>
\begin{proof}
The equation of the polar plane of the umbilical focal centre $\Rbf$ (the vertex of the cone $\mathcal{C}_p$) 
is $\langle\,\Rbf\,,\,q\,\rangle=1$. By Theorem\,\ref{prop:caustic-cone}, this plane contains 
the indicatrix ellipse and, hence, contains its centre $\Hbf$. 
Thus $\Hbf$ satisfies the relation $\langle\,\Rbf\,,\,\Hbf\,\rangle=1$. 
\end{proof}
%%%%%%<<<<<<<<<<<<<<<<<<<<<<<<<<<<<<<<<<<<<<<<<<<<<<<<<<<<<<<<<<<<<<<<<<<<<<<<<<<<<<<<<<<<<<<<<<<<<<<<< P R O O F %%

For a surface in $\R^4$ relation\,\eqref{eq:RH=1}, $\langle\,\Rbf\,,\,\Hbf\,\rangle=1$, holds only if $N=0$. Namely,  

%%%%%%%%%%%%%%%%%%%%%%%%%%%%%%%%%%%%%%%%%%%%%%%%%%%%%%%%%%%%%%%%%%%%%%%%%%%%%%%%%%%%%%%%%%%%%%%%%%%%%%%%%%%%%%%%%%%%
%%%%%%%%%%%%%%%%%%%%%%%%%%%%%%%%%%%%%%%%%%%%  PROPOSITION  %%%%%%%%%%%%%%%%%%%%%%%%%%%%%%%%%%%%%%%%%%%%%%%%%%%%%%%%%
%%%%%%%%%%%%%%%%%%%%%%%%%%%%%%%%%%%%%%%%%%%%%%%%%%%%%%%%%%%%%%%%%%%%%%%%%%%%%%%%%%%%%%%%%%%%%%%%%%%%%%%%%%%%%%%%%%%%
\begin{proposition}\label{th:<R,H>=1-N^2/4D}
If $\Delta\neq 0$ at a point $p$ of a smooth surface $M$ in $\R^4$, then the 
centre $\Rbf$ of the local caustic $\mathcal{C}_p$ and the mean curvature vector $\Hbf$ satisfy the relation 
\begin{equation}\label{eq:<R,H>=1-N^2/4D}
  \langle\,\Rbf\,,\,\Hbf\,\rangle=1-\frac{N^2}{4\Delta}\,. 
\end{equation}
\end{proposition}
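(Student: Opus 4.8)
The plan is to reduce the statement to one scalar identity and then settle that identity, preferably through the pseudo-scalar product of \eqref{eq:psi-scalar-prod}. First I would invoke Proposition~\ref{proposition:H=QR}: since $\Delta\neq 0$ the symmetric map $\flecha{\mathcal{G}_p}$ is invertible and $\Hbf=\flecha{\mathcal{G}_p}\Rbf$, so $\Rbf=\flecha{\mathcal{G}_p}^{-1}\Hbf$ and hence $\langle\Rbf,\Hbf\rangle=\langle\flecha{\mathcal{G}_p}^{-1}\Hbf,\Hbf\rangle$. Writing $\Hbf=(h_1,h_2)$ with $h_i=(a_i+c_i)/2$ and using the matrix $[\mathcal{G}_p]=\left(\begin{smallmatrix}k_1&k_{12}\\k_{12}&k_2\end{smallmatrix}\right)$ of determinant $\Delta$ from \eqref{eq:Gp=[matrix]}, whose inverse is $\tfrac1\Delta\left(\begin{smallmatrix}k_2&-k_{12}\\-k_{12}&k_1\end{smallmatrix}\right)$, this becomes
\[
\langle\Rbf,\Hbf\rangle=\frac{1}{\Delta}\bigl(k_2h_1^2-2k_{12}h_1h_2+k_1h_2^2\bigr).
\]
Thus the whole proposition is equivalent to the polynomial identity $k_2h_1^2-2k_{12}h_1h_2+k_1h_2^2=\Delta-\tfrac14N^2$, and everything preceding it is bookkeeping.

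There are two ways to close the gap. The brute-force route is to substitute $h_i=(a_i+c_i)/2$, $k_i=a_ic_i-b_i^2$, $k_{12}=(a_1c_2+a_2c_1)/2-b_1b_2$, $\Delta=k_1k_2-k_{12}^2$ and $N=(a_1-c_1)b_2-(a_2-c_2)b_1$, and check that the two sides agree as polynomials in the $a_i,b_i,c_i$; this is routine but unenlightening. The route I prefer uses the pseudo-scalar product. Setting $Q_0=\tfrac12(s^2+t^2)$, i.e. the vector $(1,0,1)$, one has $\langle Q_0,Q_0\rangle_\psi=1$ and $h_i=\langle Q_i,Q_0\rangle_\psi$, so the displayed right-hand side is exactly $\langle[\mathcal{G}_p]^{-1}\Hbf,\Hbf\rangle$ with $\Hbf=(\langle Q_0,Q_1\rangle_\psi,\langle Q_0,Q_2\rangle_\psi)$. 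Since $[\mathcal{G}_p]$ is the Gram matrix of $Q_1,Q_2$, this number is precisely the squared $\psi$-length of the $\psi$-orthogonal projection $P_VQ_0$ of $Q_0$ onto the plane $V=\mathrm{span}(Q_1,Q_2)$, which is nondegenerate because its Gram determinant is $\Delta\neq 0$.

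I would then finish by Pythagoras in $V$: decomposing $Q_0=P_VQ_0+P_{V^\perp}Q_0$ and using $V\perp V^\perp$ gives
\[
\langle\Rbf,\Hbf\rangle=\langle P_VQ_0,P_VQ_0\rangle_\psi=\langle Q_0,Q_0\rangle_\psi-\langle P_{V^\perp}Q_0,P_{V^\perp}Q_0\rangle_\psi=1-\langle P_{V^\perp}Q_0,P_{V^\perp}Q_0\rangle_\psi.
\]
The line $V^\perp$ is spanned by the Poisson bracket $\{Q_1,Q_2\}$, which a one-line computation shows to be $\psi$-orthogonal to both $Q_1$ and $Q_2$; two further short computations give $\langle Q_0,\{Q_1,Q_2\}\rangle_\psi=N$ and $\langle\{Q_1,Q_2\},\{Q_1,Q_2\}\rangle_\psi=4\Delta$. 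Hence $\langle P_{V^\perp}Q_0,P_{V^\perp}Q_0\rangle_\psi=N^2/(4\Delta)$ and the relation \eqref{eq:<R,H>=1-N^2/4D} follows.

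The only real content is the scalar identity; in the brute-force route the obstacle is simply managing the algebra without error, whereas in the conceptual route the (mild) obstacle is that it leans on the Poisson-bracket-versus-pseudo-vector-product facts developed only in \S\ref{Section-Pseudo-Euclidean}, so one must either prove the three small orthogonality and length computations in place or forward-reference that section. I expect the conceptual route to be both shorter and more explanatory, since it exhibits the correction term $N^2/4\Delta$ as exactly the $\psi$-length of the component of $Q_0$ outside $\mathrm{span}(Q_1,Q_2)$, i.e. the precise measure of how the equality $\langle\Rbf,\Hbf\rangle=1$ valid in $\R^5$ (Proposition~\ref{prop:HR=1}) fails in $\R^4$.
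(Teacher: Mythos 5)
Your argument is correct, and it takes a genuinely different route from the paper's. The paper also begins with Proposition~\ref{proposition:H=QR} to reduce everything to evaluating $\langle[\mathcal{G}_p]^{-1}\Hbf,\Hbf\rangle$, but then it carries out exactly the brute-force computation you set aside: it expands $\frac{1}{4\Delta}\bigl(k_2(a_1+c_1)^2-2k_{12}(a_1+c_1)(a_2+c_2)+k_1(a_2+c_2)^2\bigr)$ and rearranges the polynomial in the $a_i,b_i,c_i$ by hand, using an ad hoc identity, until $1-N^2/4\Delta$ appears. Your reinterpretation of $\langle[\mathcal{G}_p]^{-1}\Hbf,\Hbf\rangle$ as $\|P_VW\|_\psi^2$ for $W=\frac12(s^2+t^2)$ and $V=\mathrm{span}(Q_1,Q_2)$ (legitimate since the Gram determinant of $V$ is $\Delta\neq0$, so $V$ is $\psi$-nondegenerate and $\R^3=V\oplus V^\perp$), followed by Pythagoras and the identification of $V^\perp$ with the span of $\{Q_1,Q_2\}$, is cleaner and more explanatory: it exhibits $N^2/4\Delta$ as the squared $\psi$-length of the component of $W$ outside $V$, and makes Proposition~\ref{prop:HR=1} (the $\R^5$ case, where $V$ is all of $\mathfrak{sl}(2,\R)$ and the correction term vanishes) an immediate corollary rather than a separate computation. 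The cost is the forward reference to \S\ref{Section-Pseudo-Euclidean}, which you correctly flag; all three facts you need there (orthogonality of the bracket to $Q_1,Q_2$, and the two scalar evaluations) are indeed proved in Proposition~\ref{th:bracket-orthogonality} and Theorem~\ref{th:features-pallelogram}, with no circularity.

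One bookkeeping slip: with the paper's normalisation $\{f,g\}=\frac12(f_sg_t-f_tg_s)$, the two evaluations are $\langle W,\{Q_1,Q_2\}\rangle_\psi=N/2$ (Theorem~\ref{th:features-pallelogram}$(c)$ states $N=2\langle\{Q_1,Q_2\},W\rangle_\psi$) and $\|\{Q_1,Q_2\}\|_\psi^2=\Delta$ (Proposition~\ref{th:bracket-orthogonality}$(b)$), not $N$ and $4\Delta$ as you wrote; your values correspond to the bracket without the factor $\frac12$. Since the quantity you actually use, $\langle W,\xi\rangle_\psi^2/\|\xi\|_\psi^2$ for $\xi$ spanning $V^\perp$, is invariant under rescaling $\xi$, the discrepancy cancels and the final identity $\|P_{V^\perp}W\|_\psi^2=N^2/4\Delta$ is unaffected; just make the two intermediate values consistent with whichever convention you adopt.
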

%%%%%% P R O O F >>>>>>>>>>>>>>>>>>>>>>>>>>>>>>>>>>>>>>>>>>>>>>>>>>>>>>>>>>>>>>>>>>>>>>>>>>>>>>>>>>>>>>>>>>>>>>>>>>
\begin{proof}
By Proposition\,\ref{proposition:H=QR}\,$(a)$, we have 
$\langle\,\Rbf\,,\,\Hbf\,\rangle=\langle\,\flecha{\mathcal{G}_p}^*\,\Hbf\,,\,\Hbf\,\rangle=\mathcal{G}_p^*(\Hbf)$. 
Thus 
\[\langle\,\Rbf\,,\,\Hbf\,\rangle=\frac{1}{4\Delta}\left(k_2(a_1+c_1)^2-2k_{12}(a_1+c_1)(a_2+c_2)+k_1(a_2+c_2)^2\right)\,.\]
Opening the parenthesis and rearranging the terms we arrive easily to the equality 
\[\langle\Rbf\,,\Hbf\rangle=
\frac{1}{4\Delta}\left(4\Delta+k_2(a_1^2+2b_1^2+c_1^2)-2k_{12}(a_1a_2+2b_1b_2+c_1c_2)+k_1(a_2^2+2b_2^2+c_2^2)\right).\]
Using $k_1=a_1c_1-b_1^2$, $k_2=a_2c_2-b_2^2$ and $k_{12}=\langle Q_1,Q_2\rangle_\psi$ together with the identity 
\[a_2c_2(a_1^2+c_1^2)+a_1c_1(a_2^2+c_2^2)-(a_1c_2+a_2c_1)(a_1a_2+c_1c_1)=0\,,\] 
we obtain 
\[\langle\Rbf\,,\Hbf\rangle=
\frac{1}{4\Delta}\left(4\Delta-\left(b_2^2(a_1-c_1)^2-2b_1b_2(a_1-c_1)(a_2-c_2)+b_1^2(a_2-c_2)^2\right)\right)\,,\]
that is, \, $\langle\,\Rbf\,,\,\Hbf\,\rangle=1-N^2/4\Delta\,.$
\end{proof}
\subsection{\textbf{The Locus of the Centres $\Hbf$ and $\Rbf$ in Terms of the Local Invariants}}
%%##########################################################################################################
At a point $p$ of a surface in $\R^4$ ($\R^5$) at which $\Delta\neq 0$, with given local invariants 
$\K_1$, $\K_2$, $N$  (resp. $\K_1$, $\K_2$, $\K_3$), Theorem\,\ref{H-positions} (resp. Th.\,\ref{th:hyperbolods-R-and-H}) 
restricts the centres $\Hbf$ and $\Rbf$ to lie in two respective conics (quadrics) $\mathcal{H}$ 
and $\mathcal{R}$ determined by the given invariants.  

%%%%%%%%%%%%%%%%%%%%%%%%%%%%%%%%%%%%%%%%%%%%%%%%%%%%%%%%%%%%%%%%%%%%%%%%%%%%%%%%%%%%%%%%%%%%%%%%%%%%
%%%%%%%%%%%%%%%%%%%%%%%%%%%%%%%%%%%%%%%%  THEOREM  %%%%%%%%%%%%%%%%%%%%%%%%%%%%%%%%%%%%%%%%%%%%%%%%%
%%%%%%%%%%%%%%%%%%%%%%%%%%%%%%%%%%%%%%%%%%%%%%%%%%%%%%%%%%%%%%%%%%%%%%%%%%%%%%%%%%%%%%%%%%%%%%%%%%%%
\begin{theorem}\label{H-positions}
If a smooth surface in $\R^4$ has invariants $N$, $\K_1$, $\K_2$ $(\K_1\K_2\neq 0)$ at $p$, then
the mean curvature vector $\Hbf=(h_1, h_2)$ and the centre $\Rbf=(\rho_1, \rho_2)$ of the local 
caustic $\mathcal{C}_p$ lie on the conic curves of $N_pM$ given by the respective equations
\begin{equation}\label{eq:Gp(H)=1-N2/4D}
 2\mathcal{G}_p^*(q)=1-\frac{N^2}{4\Delta} \qquad \mbox{and} \qquad  2\mathcal{G}_p(q)=1-\frac{N^2}{4\Delta}\,,\qquad\quad
\end{equation}
which in a principal basis have the respective forms 
\begin{equation}\label{equation:MC-vector}
(\mathcal{H}):\,\,\,\frac{q_1^2}{\K_1}+\frac{q_2^2}{\K_2}=1-\frac{N^2}{4\Delta} \qquad \mbox{and} \qquad 
 (\mathcal{R}):\,\,\,\K_1q_1^2+\K_2q_2^2=1-\frac{N^2}{4\Delta}\,.\quad
\end{equation}
They are ellipses for $p$ elliptic and hyperbolas for $p$ hyperbolic {\rm (Fig.\ref{fig:conics-of-H-R})}. 
\end{theorem}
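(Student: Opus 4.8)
The plan is to read both equations in \eqref{eq:Gp(H)=1-N2/4D} directly off the single scalar relation $\langle\Rbf,\Hbf\rangle=1-N^2/4\Delta$ of Proposition~\ref{th:<R,H>=1-N^2/4D}, by rewriting that inner product in two ways: as a value of the Gauss quadratic form at $\Rbf$ and as a value of its paired form at $\Hbf$. The only algebraic inputs are the Hessian identity $\langle\flecha{\mathcal{G}_p}q,q\rangle=2\mathcal{G}_p(q)$ (the matrix $[\mathcal{G}_p]$ represents $\flecha{\mathcal{G}_p}$) and the relation $\Hbf=\flecha{\mathcal{G}_p}\Rbf$ of Proposition~\ref{proposition:H=QR}(a), which is invertible here because $\Delta\neq0$.

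First I would obtain the equation for $\Rbf$. Substituting $\Hbf=\flecha{\mathcal{G}_p}\Rbf$ into the scalar relation gives
\[
1-\frac{N^2}{4\Delta}=\langle\Rbf,\Hbf\rangle=\langle\Rbf,\flecha{\mathcal{G}_p}\Rbf\rangle=2\mathcal{G}_p(\Rbf),
\]
which is exactly $2\mathcal{G}_p(q)=1-N^2/4\Delta$ evaluated at $q=\Rbf$. Next, since $\flecha{\mathcal{G}_p}$ is invertible, $\Rbf=\flecha{\mathcal{G}_p}^{-1}\Hbf=\flecha{\mathcal{G}_p}^*\Hbf$; feeding this into the same relation yields
\[
1-\frac{N^2}{4\Delta}=\langle\Rbf,\Hbf\rangle=\langle\flecha{\mathcal{G}_p}^*\Hbf,\Hbf\rangle=2\mathcal{G}_p^*(\Hbf),
\]
i.e. $2\mathcal{G}_p^*(q)=1-N^2/4\Delta$ at $q=\Hbf$. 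This places $\Rbf$ and $\Hbf$ on the two conics of \eqref{eq:Gp(H)=1-N2/4D}.

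To pass to the explicit forms \eqref{equation:MC-vector} I would choose a principal basis of $N_pM$, in which $[\mathcal{G}_p]=\mathrm{diag}(\K_1,\K_2)$ and hence $[\mathcal{G}_p^*]=[\mathcal{G}_p]^{-1}=\mathrm{diag}(1/\K_1,1/\K_2)$; the quadratic forms then read $2\mathcal{G}_p(q)=\K_1q_1^2+\K_2q_2^2$ and $2\mathcal{G}_p^*(q)=q_1^2/\K_1+q_2^2/\K_2$, producing $(\mathcal{R})$ and $(\mathcal{H})$ respectively. For the ellipse/hyperbola dichotomy I would use $\Delta=\det[\mathcal{G}_p]=\K_1\K_2$ together with \eqref{eq:Delta>0-elliptic}: the affine type of each conic is governed by the sign of the product of its leading coefficients, namely $\K_1\K_2=\Delta$ for $(\mathcal{R})$ and $1/(\K_1\K_2)=1/\Delta$ for $(\mathcal{H})$, both having the sign of $\Delta$. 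Thus $\Delta>0$ (the elliptic case) yields ellipses and $\Delta<0$ (the hyperbolic case) yields hyperbolas.

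The computation itself is immediate, so there is no real obstacle; the two points deserving care are the bookkeeping of the factor $2$ relating the quadratic forms $\mathcal{G}_p,\mathcal{G}_p^*$ to the bilinear pairings $\langle\flecha{\mathcal{G}_p}\,\cdot\,,\cdot\rangle,\langle\flecha{\mathcal{G}_p}^*\,\cdot\,,\cdot\rangle$, and the observation that the nonvanishing of the right-hand side $1-N^2/4\Delta$ is what makes each conic nondegenerate, so that its type is indeed read off from $\mathrm{sign}(\Delta)$.
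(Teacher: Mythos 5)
Your proposal is correct and follows essentially the same route as the paper: it combines $\Hbf=\flecha{\mathcal{G}_p}\Rbf$ (Proposition~\ref{proposition:H=QR}) with $\langle\Rbf,\Hbf\rangle=1-N^2/4\Delta$ (Proposition~\ref{th:<R,H>=1-N^2/4D}) to obtain $2\mathcal{G}_p(\Rbf)=2\mathcal{G}_p^*(\Hbf)=1-N^2/4\Delta$, then diagonalises in a principal basis and reads off the conic type from the sign of $\Delta=\K_1\K_2$. Your closing remark about the nonvanishing of $1-N^2/4\Delta$ being needed for nondegeneracy is a sensible extra precaution that the paper leaves implicit.
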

%%%%%% P R O O F >>>>>>>>>>>>>>>>>>>>>>>>>>>>>>>>>>>>>>>>>>>>>>>>>>>>>>>>>>>>>>>>>>>>>>>>>>>>>>>>>>>>>>>>>>>>>>>>>>
\begin{proof} 
By Proposition\,\ref{proposition:H=QR}, $\Hbf=\flecha{\mathcal{G}_p}\Rbf$. 
The fact that $\flecha{\mathcal{G}_p}^*=\flecha{\mathcal{G}_p}^{-1}$, imply the relation $\Rbf=\flecha{\mathcal{G}_p}^*\Hbf$. 
It follows that $\langle \Hbf, \Rbf\rangle=2\mathcal{G}_p^*(\Hbf)$ and $\langle \Hbf, \Rbf\rangle=2\mathcal{G}_p(\Rbf)$, 
which together with the equality $\langle \Hbf, \Rbf\rangle=1-N^2/4\Delta$ (of Proposition\,\ref{th:<R,H>=1-N^2/4D}) lead to 
the relations 
\[
 2\mathcal{G}_p^*(\Hbf)=1-N^2/4\Delta \qquad \mbox{and} \qquad  2\mathcal{G}_p(\Rbf)=1-N^2/4\Delta\,.\qquad\quad
\]

In a principal basis, the quadratic forms $\mathcal{G}_p$ and $\mathcal{G}_p^*$ are diagonal. Being $\K_1$, $\K_2$ 
and $\K_1^{-1}$, $\K_2^{-1}$ their respective sets of eigenvalues, equalities \eqref{eq:Gp(H)=1-N2/4D} are written as 
\[
h_1^2/\K_1+h_2^2/\K_2=1-N^2/4\Delta\,, \qquad\mbox{and}\qquad \K_1\rho_1^2+\K_2\rho_2^2=1-N^2/4\Delta\,. 
\]
The type of the conic - ellipse or hyperbola - is obtained from \eqref{eq:Delta=K1K1-K=K1+K2} and \eqref{eq:Delta>0-elliptic}. 
\end{proof}
%%%%%%<<<<<<<<<<<<<<<<<<<<<<<<<<<<<<<<<<<<<<<<<<<<<<<<<<<<<<<<<<<<<<<<<<<<<<<<<<<<<<<<<<<<<<<<<<<<<<<<< P R O O F %%

\begin{figure}[ht] %< préférence de placement h , t , b or p >
\centering
\includegraphics[scale=0.3]{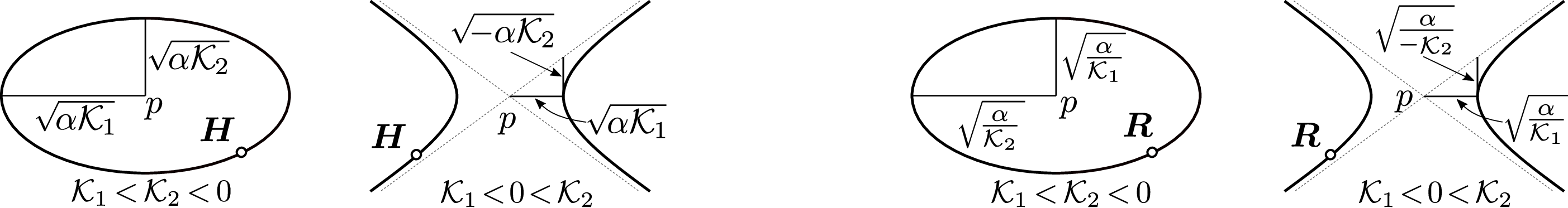}
\caption{\small The locus of $\Hbf$ and $\Rbf$ on the conics $\mathcal{H}$ and $\mathcal{R}$ 
given by \eqref{equation:MC-vector}. 
Here $\a:=1-N^2/4\Delta$.}
\label{fig:conics-of-H-R}
\end{figure}

%%%%%%%%%%%%%%%%%%%%%%%%%%%%%%%%%%%%%%%%%%%%%%%%%%%%%%%%%%%%%%%%%%%%%%%%%%%%%%%%%%%%%%%%%%%%%%%%%%%%
%%%%%%%%%%%%%%%%%%%%%%%%%%%%%%%%%%%%%%%%  THEOREM  %%%%%%%%%%%%%%%%%%%%%%%%%%%%%%%%%%%%%%%%%%%%%%%%%
%%%%%%%%%%%%%%%%%%%%%%%%%%%%%%%%%%%%%%%%%%%%%%%%%%%%%%%%%%%%%%%%%%%%%%%%%%%%%%%%%%%%%%%%%%%%%%%%%%%%
\begin{theorem}\label{th:hyperbolods-R-and-H}
If a surface $M$ in $\R^5$ has invariants $\K_1$, $\K_2$, $\K_3$ $(\K_1\K_2\K_3\neq 0)$ at $p$, then
$\Hbf$ and $\Rbf$ lie on the respective %two-sheet 
hyperboloids $\mathcal{H}$ and $\mathcal{R}$ of $N_pM$ 
{\rm (Fig.\,\ref{fig:hyperboloids-R-H})} of equations $2\mathcal{G}_p^*(q)=1\,$ and $\,2\mathcal{G}_p(q)=1$, 
which in a principal basis have the respective forms 
\begin{equation}\label{eq:hyperboloids-H-R}
(\mathcal{H}):\,\,\, \frac{q_1^2}{\K_1}+\frac{q_2^2}{\K_2}+\frac{q_3^2}{\K_3}=1 \quad \mbox{and}
 \quad (\mathcal{R}):\,\,\, \K_1q_1^2+\K_2q_2^2+\K_3q_3^2=1\,.
\end{equation}
\end{theorem}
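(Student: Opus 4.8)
The plan is to reproduce, in the $\R^5$ setting, the argument used for the $\R^4$ case in Theorem~\ref{H-positions}, but now feeding in the sharper relation $\langle\,\Rbf\,,\,\Hbf\,\rangle=1$ of Proposition~\ref{prop:HR=1} (which holds in $\R^5$) in place of the $\R^4$ value $1-N^2/4\Delta$. Because the hypothesis $\K_1\K_2\K_3\neq 0$ forces $\Delta=\K_1\K_2\K_3\neq 0$, the symmetric linear map $\flecha{\mathcal{G}_p}$ is invertible and the whole computation stays nondegenerate (in particular we are away from the flat curve $\Phi$).

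First I would record the two reciprocal relations tying the two centres together. Proposition~\ref{proposition:H=QR}(a) gives $\Hbf=\flecha{\mathcal{G}_p}\Rbf$; since the paired form satisfies $\flecha{\mathcal{G}_p}^*=\flecha{\mathcal{G}_p}^{-1}$ by its defining property, this inverts to $\Rbf=\flecha{\mathcal{G}_p}^*\Hbf$. The crux is then to evaluate the single scalar $\langle\,\Hbf\,,\,\Rbf\,\rangle$ in two complementary ways. Inserting $\Rbf=\flecha{\mathcal{G}_p}^*\Hbf$ yields $\langle\,\Hbf\,,\,\Rbf\,\rangle=\langle\,\Hbf\,,\,\flecha{\mathcal{G}_p}^*\Hbf\,\rangle=2\mathcal{G}_p^*(\Hbf)$, whereas inserting $\Hbf=\flecha{\mathcal{G}_p}\Rbf$ yields $\langle\,\Hbf\,,\,\Rbf\,\rangle=\langle\,\flecha{\mathcal{G}_p}\Rbf\,,\,\Rbf\,\rangle=2\mathcal{G}_p(\Rbf)$, using that a quadratic form and its associated symmetric map are related by $\mathcal{G}_p(q)=\tfrac12\langle\,q\,,\,\flecha{\mathcal{G}_p}q\,\rangle$.

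Combining these two evaluations with $\langle\,\Hbf\,,\,\Rbf\,\rangle=1$ from Proposition~\ref{prop:HR=1} gives the coordinate-free equations $2\mathcal{G}_p^*(\Hbf)=1$ and $2\mathcal{G}_p(\Rbf)=1$, which are precisely the defining equations of the quadrics $\mathcal{H}$ and $\mathcal{R}$. To finish, I would pass to a principal basis, where $[\mathcal{G}_p]=\mathrm{diag}(\K_1,\K_2,\K_3)$ and hence $[\mathcal{G}_p^*]=\mathrm{diag}(\K_1^{-1},\K_2^{-1},\K_3^{-1})$; the two equations then read off immediately as the explicit hyperboloids in \eqref{eq:hyperboloids-H-R}.

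I do not expect a genuine obstacle, since every ingredient (the map identity, the inverse relation for the paired form, and the pairing value $1$) is already established earlier. The one delicate point is the factor-of-two bookkeeping between the quadratic form, its bilinear polarisation, and the associated symmetric linear map: it is exactly this bookkeeping that makes both right-hand sides come out equal to $1$ rather than to $\tfrac12$ or $2$, and I would cross-check it against the identical manipulation in the proof of Theorem~\ref{H-positions}. A secondary point worth a sentence is that, unlike in $\R^4$, no correction term $N^2/4\Delta$ appears here; this reflects the fact that in $\R^5$ the plane $\Pi_\E$ carrying $\E_p$ is exactly the polar plane $\langle\,\Rbf\,,\,q\,\rangle=1$ of the vertex $\Rbf$, so that $\Hbf$, being the centre of $\E_p$, lies on it and yields $\langle\,\Rbf\,,\,\Hbf\,\rangle=1$ without remainder.
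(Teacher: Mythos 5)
Your proposal is correct and follows essentially the same route as the paper: the paper's proof likewise reduces to the two identities $\langle\Hbf,\Rbf\rangle=2\mathcal{G}_p^*(\Hbf)=2\mathcal{G}_p(\Rbf)$ obtained from $\Hbf=\flecha{\mathcal{G}_p}\Rbf$ and $\Rbf=\flecha{\mathcal{G}_p}^*\Hbf$, combined with $\langle\Hbf,\Rbf\rangle=1$ from Proposition~\ref{prop:HR=1}, and then diagonalises in a principal basis. Your factor-of-two bookkeeping via $\mathcal{G}_p(q)=\tfrac12\langle q,\flecha{\mathcal{G}_p}q\rangle$ is exactly right.
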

%%%%%% P R O O F >>>>>>>>>>>>>>>>>>>>>>>>>>>>>>>>>>>>>>>>>>>>>>>>>>>>>>>>>>>>>>>>>>>>>>>>>>>>>>>>>>>>>>>>>>>>>>>>>>
\begin{proof}
The proof that $\Hbf$ and $\Rbf$ satisfy the respective relations %$2\mathcal{G}_p^*(\Hbf)=1\,$ and $\,2\mathcal{G}_p(\Rbf)=1$, 
\begin{equation}\label{eq:G(H)=1}
2\mathcal{G}^*_p(\Hbf)=1 \qquad \mbox{and} \qquad 2\mathcal{G}_p(\Rbf)=1\,,
\end{equation}
is similar to the proof of relations \eqref{eq:Gp(H)=1-N2/4D}, 
but one uses the equality $\langle \Hbf, \Rbf\rangle=1$. 

In a principal basis, $\mathcal{G}_p^*$ and $\mathcal{G}_p$ are diagonal. Being $\K_1^{-1}$, $\K_2^{-1}$, $\K_3^{-1}$
and $\K_1$, $\K_2$, $\K_3$ their respective sets of eigenvalues, 
equalities \eqref{eq:G(H)=1} are written as 
\begin{equation}\label{eq:H&R_K_1,k_2,k_3}
h_1^2/\K_1+h_2^2/\K_2+h_3^2/\K_3=1 \qquad\mbox{and}\qquad  \K_1\rho_1^2+\K_2\rho_2^2+\K_3\rho_3^2=1\,.
\end{equation}
By Proposition\,\ref{prop:K1<K2<0<K3} below, both $\mathcal{H}$ and $\mathcal{R}$ are two-sheet hyperboloids of $N_pM$. 
\end{proof}
%%%%%%<<<<<<<<<<<<<<<<<<<<<<<<<<<<<<<<<<<<<<<<<<<<<<<<<<<<<<<<<<<<<<<<<<<<<<<<<<<<<<<<<<<<<<<<<<<<<<<<< P R O O F %%
%%<<<<<<<<<<<<<<<<<<<<<<<<<<<<<<<<<<<<<<  F I G U R E  <<<<<<<<<<<<<<<<<<<<<<<<<<<<<<<<<<<<<<<<<<<<<<<<<
\begin{figure}[ht] %< préférence de placement h , t , b or p >
\centering
\includegraphics[scale=0.6]{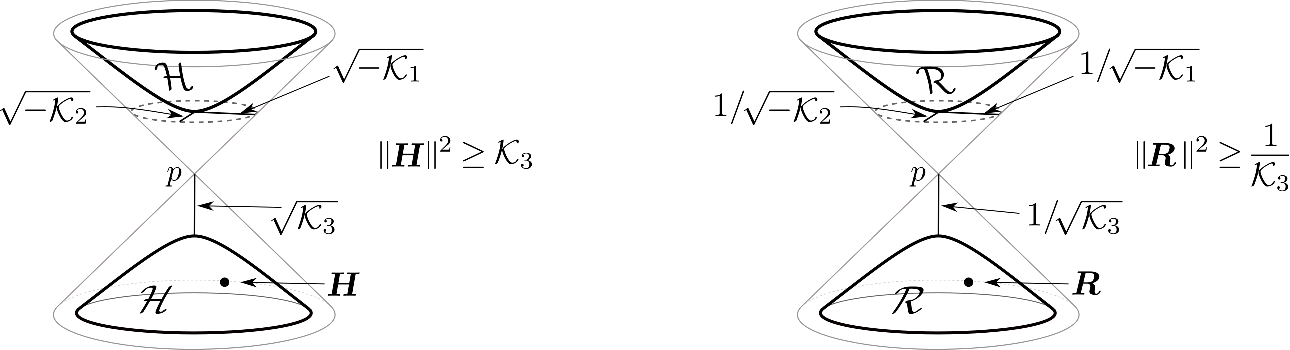}
\caption{\small The locus of $\Hbf$ and $\Rbf$ on the respective hyperboloids $\mathcal{H}$ and $\mathcal{R}$ 
given by \eqref{eq:hyperboloids-H-R}.}
\label{fig:hyperboloids-R-H}
\end{figure}

The hyperboloids of Theorem\,\ref{th:hyperbolods-R-and-H} (Fig.\ref{fig:hyperboloids-R-H}) are polar dual to each other in $N_pM$: 
%%%%%%%%%%%%%%%%%%%%%%%%%%%%%%%%%%%%%%%%%%%%%%%%%%%%%%%%%%%%%%%%%%%%%%%%%%%%%%%%%%%%%%%%%%%%%%%%%%%%
%%%%%%%%%%%%%%%%%%%%%%%%%%%%%%%%%%%%%%%%  THEOREM  %%%%%%%%%%%%%%%%%%%%%%%%%%%%%%%%%%%%%%%%%%%%%%%%%
%%%%%%%%%%%%%%%%%%%%%%%%%%%%%%%%%%%%%%%%%%%%%%%%%%%%%%%%%%%%%%%%%%%%%%%%%%%%%%%%%%%%%%%%%%%%%%%%%%%%
\begin{theorem}\label{th:polar-duality-of-H-and-R} 
Under the conditions of Theorem\,{\rm \ref{th:hyperbolods-R-and-H}}, 
the hyperboloids $\mathcal{H}$ and $\mathcal{R}$ {\rm (where $\Hbf$ and $\Rbf$ lie respectively)} are polar dual 
to each other in $N_pM$. Moreover, $\Rbf$ is the pole of the tangent plane $T_{\Hbf}\mathcal{H}$, 
and $\Hbf$ is the pole of the tangent plane $T_{\Rbf}\mathcal{R}$. 
\end{theorem}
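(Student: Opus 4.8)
The plan is to reduce the whole statement to a single elementary computation: under polar duality with respect to the unit sphere $\sph^2\subset N_pM$, a central quadric whose defining symmetric matrix is $S$ is carried to the central quadric whose matrix is $S^{-1}$. First I would record this fact. A point $q_0$ on the quadric $\mathcal{Q}_S:\langle Sq,q\rangle=1$ has tangent plane $\langle Sq_0,q\rangle=1$ (by the usual polarization, since $\nabla\langle Sq,q\rangle=2Sq$ and $\langle Sq_0,q_0\rangle=1$), and by the pole--polar correspondence \eqref{eq:pole-polar} the pole of this plane is the point $Sq_0$. Writing $a=Sq_0$ and substituting $q_0=S^{-1}a$ into the equation of $\mathcal{Q}_S$ gives $\langle S^{-1}a,a\rangle=1$; hence the polar dual of $\mathcal{Q}_S$ is exactly $\mathcal{Q}_{S^{-1}}$.

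Next I would identify the two matrices. By Theorem \ref{th:hyperbolods-R-and-H} the hyperboloid $\mathcal{R}$ is $2\mathcal{G}_p(q)=\langle[\mathcal{G}_p]q,q\rangle=1$, so its matrix is $[\mathcal{G}_p]$, while $\mathcal{H}$ is $2\mathcal{G}_p^*(q)=\langle[\mathcal{G}_p^*]q,q\rangle=1$ with matrix $[\mathcal{G}_p^*]=[\mathcal{G}_p]^{-1}$ (§\ref{sect-GaussMap-GQF}). Since these matrices are inverse to one another, the preliminary fact immediately yields $\mathcal{H}^\vee=\mathcal{R}$ and $\mathcal{R}^\vee=\mathcal{H}$ in $N_pM\sqcup\RP^2_\infty$, which is the first assertion.

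For the pole-of-tangent statements I would track a single point through the construction. The vector $\Hbf$ lies on $\mathcal{H}$ (Theorem \ref{th:hyperbolods-R-and-H}), so the tangent plane $T_{\Hbf}\mathcal{H}$ is $\langle[\mathcal{G}_p]^{-1}\Hbf,q\rangle=1$ and its pole is $[\mathcal{G}_p]^{-1}\Hbf=\flecha{\mathcal{G}_p}^{-1}\Hbf$. By Proposition \ref{proposition:H=QR} we have $\Hbf=\flecha{\mathcal{G}_p}\Rbf$, hence $\flecha{\mathcal{G}_p}^{-1}\Hbf=\Rbf$, so $\Rbf$ is the pole of $T_{\Hbf}\mathcal{H}$. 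Symmetrically, $\Rbf\in\mathcal{R}$ gives $T_{\Rbf}\mathcal{R}:\langle[\mathcal{G}_p]\Rbf,q\rangle=1$, whose pole is $[\mathcal{G}_p]\Rbf=\flecha{\mathcal{G}_p}\Rbf=\Hbf$.

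The only point requiring care is the projective bookkeeping: being two-sheet hyperboloids, $\mathcal{H}$ and $\mathcal{R}$ admit tangent planes through the origin of $N_pM$, whose poles lie on $\RP^2_\infty$, so the duality $\Psi_{\sph^2}$ must be read on the projective completion $N_pM\sqcup\RP^2_\infty$ as in §\ref{sect-polar_dual-subvariety}. The underlying algebra is unchanged, and I expect no genuine obstacle beyond keeping the central-quadric-to-inverse-matrix computation and the invocation of $\Hbf=\flecha{\mathcal{G}_p}\Rbf$ correctly aligned.
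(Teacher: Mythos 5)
Your proposal is correct and follows essentially the same route as the paper: compute the tangent plane to one hyperboloid at a point, read off its pole via \eqref{eq:pole-polar}, and identify that pole with the corresponding point of the other hyperboloid using $\Hbf=\flecha{\mathcal{G}_p}\Rbf$. The only cosmetic difference is that you phrase the computation with the matrices $[\mathcal{G}_p]$ and $[\mathcal{G}_p]^{-1}$ directly, whereas the paper carries it out in a principal (diagonalizing) basis.
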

%%%%%% P R O O F >>>>>>>>>>>>>>>>>>>>>>>>>>>>>>>>>>>>>>>>>>>>>>>>>>>>>>>>>>>>>>>>>>>>>>>>>>>>>>>>>>>>>>>>>>>>>>>>>>
\begin{proof}
Take a principal basis of $N_pM$ and take a point $\Hbf=(h_1,h_2,h_3)$ of $\mathcal{H}$. 
We shall prove that $\Rbf=(h_1/\K_1, h_2/\K_2, h_3/\K_3)$ is the pole 
of the tangent plane to $\mathcal{H}$ at $\Hbf$. 

The equation of $\mathcal{H}$ is written as $F(q)=1$, where $F(q):=q_1^2/\K_1+q_2^2/\K_2+q_3^2/\K_3$,  
and the equation of the tangent plane to $\mathcal{H}$ at $\Hbf$ is $d_{\Hbf}F(q-\Hbf)=0$, that is 
\[\frac{h_1}{\K_1}(q_1-h_1)+\frac{h_2}{\K_2}(q_2-h_2)+\frac{h_3}{\K_3}(q_3-h_3)=0\,.\]

Using that $F(\Hbf)=1$, the last equation can be rewritten as 
\[\frac{h_1}{\K_1}q_1+\frac{h_2}{\K_2}q_2+\frac{h_3}{\K_3}q_3=1\,.\]

This means that the pole of $T_{\Hbf}\mathcal{H}$ is the point $(h_1/\K_1, h_2/\K_2, h_3/\K_3)=\Rbf$.

One proves similarly that $\Hbf$ is the pole of the tangent plane to $\mathcal{R}$ at $\Rbf$. 
\end{proof}
%%%%%%<<<<<<<<<<<<<<<<<<<<<<<<<<<<<<<<<<<<<<<<<<<<<<<<<<<<<<<<<<<<<<<<<<<<<<<<<<<<<<<<<<<<<<<<<<<<<<<<< P R O O F %%

%%%%%%%%%%%%%%%%%%%%%%%%%%%%%%%%%%%%%%%%%%%%%%%%%%%%%%%%%%%%%%%%%%%%%%%%%%%%%%%%%%%%%%%%%%%%%%%%%%%%%%%%%%%%%%%%%%%%%%%%%
\subsection{\textbf{The Local Caustic $\mathcal{C}_p$ as Level Set of the Gauss Quadratic Form $\mathcal{G}_p$}}
%%%%%%%%%%%%%%%%%%%%%%%%%%%%%%%%%%%%%%%%%%%%%%%%%%%%%%%%%%%%%%%%%%%%%%%%%%%%%%%%%%%%%%%%%%%%%%%%%%%%%%%%%%%%%%%%%%%%%%%%%
We give the equation of the local caustic in terms of the Gauss quadratic form $\mathcal{G}_p$.

%%%%%%%%%%%%%%%%%%%%%%%%%%%%%%%%%%%%%%%%%%%%%%%%%%%%%%%%%%%%%%%%%%%%%%%%%%%%%%%%%%%%%%%%%%%%%%%%%%%%%%%%%%%%%%%%%%%%
%%%%%%%%%%%%%%%%%%%%%%%%%%%%%%%%%%%%%%%%%%%%    THEOREM    %%%%%%%%%%%%%%%%%%%%%%%%%%%%%%%%%%%%%%%%%%%%%%%%%%%%%%%%%
%%%%%%%%%%%%%%%%%%%%%%%%%%%%%%%%%%%%%%%%%%%%%%%%%%%%%%%%%%%%%%%%%%%%%%%%%%%%%%%%%%%%%%%%%%%%%%%%%%%%%%%%%%%%%%%%%%%%
\begin{theorem}\label{th:equation_local_caustic}
Let $\Rbf$ be the centre of the local caustic $\mathcal{C}_p$ at a point $p$ of a surface in $\R^4$ $($or in $\R^5$$)$ 
with $\Delta\neq 0$. The conic $\mathcal{C}_p$ 
$($or the cone $\mathcal{C}_p$$)$ is given as a level set of $\mathcal{G}_p:$
\begin{equation}\label{eq:Gp(q-R)=-N2/4D}
  2\mathcal{G}_p(q-\Rbf)=-\frac{N^2}{4\Delta}   \qquad \mbox{ equation of the conic $\mathcal{C}_p$ for $M$ in $\R^4$}\,,
\end{equation}
\begin{equation}
  \mathcal{G}_p(q-\Rbf)=0  \qquad\qquad \mbox{\,equation of the cone $\mathcal{C}_p$ for $M$ in $\R^5$}\,.
\end{equation}
\end{theorem}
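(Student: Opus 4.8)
The plan is to start from the explicit equation of the local caustic furnished by Proposition~\ref{prop:caustic=focal-set} and to recognise it, after completing the square, as a level set of $\mathcal{G}_p$ recentred at $\Rbf$. Write
\[
g(q)=(\langle \Abf, q\rangle-1)(\langle \Cbf, q\rangle-1)-\langle \Bbf, q\rangle^2\,,
\]
so that $\mathcal{C}_p=\{g(q)=0\}$ in $N_pM$ in both the $\R^4$ and $\R^5$ cases. Expanding the product and using the definition \eqref{eq:CQF(q)} in the form $2\mathcal{G}_p(q)=\langle\Abf,q\rangle\langle\Cbf,q\rangle-\langle\Bbf,q\rangle^2$, together with $\Hbf=(\Abf+\Cbf)/2$, I would first rewrite
\[
g(q)=2\mathcal{G}_p(q)-2\langle\Hbf,q\rangle+1\,.
\]

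Next I would complete the square inside $\mathcal{G}_p$. Let $[\mathcal{G}_p]$ be the Hessian matrix and $\flecha{\mathcal{G}_p}$ its associated symmetric linear map, so that $2\mathcal{G}_p(q)=\langle\flecha{\mathcal{G}_p}q,q\rangle$. Expanding gives
\[
2\mathcal{G}_p(q-\Rbf)=2\mathcal{G}_p(q)-2\langle\flecha{\mathcal{G}_p}\Rbf,q\rangle+\langle\flecha{\mathcal{G}_p}\Rbf,\Rbf\rangle\,.
\]
The decisive input is Proposition~\ref{proposition:H=QR}$(a)$, valid since $\Delta\neq 0$ makes $\flecha{\mathcal{G}_p}$ invertible, which states $\Hbf=\flecha{\mathcal{G}_p}\Rbf$. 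Substituting this into both the cross term and the constant term turns the previous display into $2\mathcal{G}_p(q-\Rbf)=2\mathcal{G}_p(q)-2\langle\Hbf,q\rangle+\langle\Hbf,\Rbf\rangle$; comparing with the expression for $g(q)$, the quadratic and linear parts cancel and I obtain the identity
\[
g(q)=2\mathcal{G}_p(q-\Rbf)+\bigl(1-\langle\Hbf,\Rbf\rangle\bigr)\,.
\]

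Finally I would evaluate $\langle\Hbf,\Rbf\rangle$ case by case. For $M$ in $\R^4$, Proposition~\ref{th:<R,H>=1-N^2/4D} gives $\langle\Rbf,\Hbf\rangle=1-N^2/4\Delta$, whence $g(q)=0$ is equivalent to $2\mathcal{G}_p(q-\Rbf)=-N^2/4\Delta$; for $M$ in $\R^5$, Proposition~\ref{prop:HR=1} gives $\langle\Rbf,\Hbf\rangle=1$, whence $g(q)=0$ is equivalent to $\mathcal{G}_p(q-\Rbf)=0$. These are exactly the two displayed equations. I do not expect a serious obstacle: the argument is a single completion of the square, and the only points requiring care are the normalising factors of $\tfrac12$ relating $\mathcal{G}_p$, its Hessian matrix $[\mathcal{G}_p]$ and the map $\flecha{\mathcal{G}_p}$, and the observation that $\Rbf$ being the centre (resp. vertex) of $\mathcal{C}_p$ is precisely what kills the linear term after recentring --- a fact already encoded in the relation $\Hbf=\flecha{\mathcal{G}_p}\Rbf$ and in the two scalar-product identities for $\langle\Hbf,\Rbf\rangle$.
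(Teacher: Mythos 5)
Your proposal is correct and follows essentially the same route as the paper: both rewrite the caustic equation as $2\mathcal{G}_p(q)-2\langle\Hbf,q\rangle+1=0$, complete the square using $\Hbf=\flecha{\mathcal{G}_p}\Rbf$ from Proposition~\ref{proposition:H=QR}, and then evaluate the constant via the scalar product $\langle\Rbf,\Hbf\rangle$ (which equals $2\mathcal{G}_p(\Rbf)$, the form the paper cites). The bookkeeping of the factors of $\tfrac12$ is right, so there is nothing to fix.
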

%%%%%% P R O O F >>>>>>>>>>>>>>>>>>>>>>>>>>>>>>>>>>>>>>>>>>>>>>>>>>>>>>>>>>>>>>>>>>>>>>>>>>>>>>>>>>>>>>>>>>>>>>>>>>
\begin{proof}
Opening the parenthesis of equation \eqref{eq:focal-quadric}, using expression \eqref{eq:CQF(q)} for $\mathcal{G}_p(q)$ 
and that $(\Abf+\Cbf)/2=\Hbf$, the equation of the local caustic, for $M$ in $\R^4$ or in $\R^5$, is written as 
\[2\mathcal{G}_p(q)-2\langle\Hbf,q\rangle+1=0\,.\]
Using that $\Hbf=\overrightarrow{\mathcal{G}_p}\Rbf$ (Prop.\,\ref{proposition:H=QR}) and that the linear operator $\overrightarrow{\mathcal{G}_p}$ 
is symmetric, we get 
\begin{equation}\label{eq:equation-caustic-inter}
2\mathcal{G}_p(q)-2\left\langle\overrightarrow{\mathcal{G}_p}\,q,\Rbf\right\rangle+1=0\,.
\end{equation}

For $M$ in $\R^4$ we replace in \eqref{eq:equation-caustic-inter} the equality $1=2\mathcal{G}_p(\Rbf)+N^2/4\Delta$, 
from \eqref{eq:Gp(H)=1-N2/4D}, to get 
\[2\left(\mathcal{G}_p(q)-2\frac{1}{2}\left\langle\overrightarrow{\mathcal{G}_p}\,q,\Rbf\right\rangle+\mathcal{G}_p(\Rbf)\right)+\frac{N^2}{4\Delta}=0\,,\] 
that is, the equation of the local caustic at $p$ is $2\mathcal{G}_p(q-\Rbf)=-N^2/4\Delta$.
\medskip

For $M$ in $\R^5$ we use in \eqref{eq:equation-caustic-inter} that $1=2\mathcal{G}_p(\Rbf)$, from \eqref{eq:G(H)=1}, to get
\,$\mathcal{G}_p(q-\Rbf)=0$. 
\end{proof}
%%%%%%<<<<<<<<<<<<<<<<<<<<<<<<<<<<<<<<<<<<<<<<<<<<<<<<<<<<<<<<<<<<<<<<<<<<<<<<<<<<<<<<<<<<<<<<<<<<<<<<< P R O O F %%

%%%%%%%%%%%%%%%%%%%%%%%%%%%%%%%%%%%%%%%%%%%%%%%%%%%%%%%%%%%%%%%%%%%%%%%%%%%%%%%%%%%%%%%%%%%%%%%%%%%%%%%%%%%%%%%%%%%%
%%%%%%%%%%%%%%%%%%%%%%%%%%%%%%%%%%%%%%%%%%%%  PROPOSITION  %%%%%%%%%%%%%%%%%%%%%%%%%%%%%%%%%%%%%%%%%%%%%%%%%%%%%%%%%
%%%%%%%%%%%%%%%%%%%%%%%%%%%%%%%%%%%%%%%%%%%%%%%%%%%%%%%%%%%%%%%%%%%%%%%%%%%%%%%%%%%%%%%%%%%%%%%%%%%%%%%%%%%%%%%%%%%%
% \begin{proposition}\label{prop:semi-axes-Cp}
% At a point $p$ of a surface in $\R^4$ with $\Delta\neq 0$, the principal semi-axes of the conic $\mathcal{C}_p$ 
% {\rm (the local caustic)} are\, $\frac{|N|}{2\left|\K_1\sqrt{|\K_2|}\right|}$ and $\frac{|N|}{2\left|\K_2\sqrt{|\K_1|}\right|}$.
% \end{proposition}
% %%%%%% P R O O F >>>>>>>>>>>>>>>>>>>>>>>>>>>>>>>>>>>>>>>>>>>>>>>>>>>>>>>>>>>>>>>>>>>>>>>>>>>>>>>>>>>>>>>>>>>>>>>>>>
% \begin{proof}
% We write \eqref{eq:Gp(q-R)=-N2/4D} in a principal basis: $\K_1(q_1-\rho_1)^2+\K_2(q_2-\rho_2)^2=-N^2/4\Delta$. 
% \end{proof}
%%%%%%<<<<<<<<<<<<<<<<<<<<<<<<<<<<<<<<<<<<<<<<<<<<<<<<<<<<<<<<<<<<<<<<<<<<<<<<<<<<<<<<<<<<<<<<<<<<<<<<< P R O O F %%

%%%%%%%%%%%%%%%%%%%%%%%%%%%%%%%%%%%%%%%%%%%%%%%%%%%%%%%%%%%%%%%%%%%%%%%%%%%%%%%%%%%%%%%%%%%%%%%%%%%%%%%%%%%%%%%%%%%%%%%%%
\subsection{\textbf{Local Caustic related to Degenerate and Binormal Vectors}}
%%%%%%%%%%%%%%%%%%%%%%%%%%%%%%%%%%%%%%%%%%%%%%%%%%%%%%%%%%%%%%%%%%%%%%%%%%%%%%%%%%%%%%%%%%%%%%%%%%%%%%%%%%%%%%%%%%%%%%%%%

%%%%%%%%%%%%%%%%%%%%%%%%%%%%%%%%%%%%%%%%%%%%%%%%%%%%%%%%%%%%%%%%%%%%%%%%%%%%%%%%%%%%%%%%%%%%%%%%%%%%%%%%%%%%%%%%%%%
%%%%%%%%%%%%%%%%%%%%%%%%%%%%%%%%%%%%%%%%%%%%  DEFINITION  %%%%%%%%%%%%%%%%%%%%%%%%%%%%%%%%%%%%%%%%%%%%%%%%%%%%%%%%%
\noindent
\textbf{\textit{\small Degenerate Vectors, Binormal Vectors and Asymptotic Directions}}. 
The eigenvalue $\mu=0$ of $\left[d^\perp_{p,\nbf}\G\right]$ corresponds to a focal centre at $\RP^{\ell-1}_\infty$ in the 
direction of $\nbf$ and to a higher contact of $M$ with the hyperplane normal to $\nbf$ at $p$. 
In this case, the multiples of $\nbf$ ($\a\nbf$ with $\a\neq 0$)  are called \textit{degenerate normal vectors}. % (a hypersphere of infinite radius). 

\begin{proposition}\label{prop:Cone_degenerate_normals}
	Given $p\in M$ in $\R^{2+\ell}$, the set $\Sigma_p$ of degenerate normal vectors of $M$ at $p$ is 
	the cone in $N_pM$ along which the Gauss quadratic form $\mathcal{G}_p$ vanishes:
	\begin{equation}
	\quad \mathcal{G}_p(q)=0 \qquad  \mbox{\rm (equation of $\Sigma_p$)}
	\end{equation}
\end{proposition}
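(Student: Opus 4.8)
The plan is to read the statement off directly from the definition of a degenerate normal vector together with the determinant formula for $\mathcal{G}_p$. First I would recall that, by the definition given just above, a normal vector $\nbf$ is degenerate precisely when $\mu=0$ is an eigenvalue of the symmetric $2\times 2$ matrix $[d^\perp_{p,\nbf}\Gamma]$ displayed in \eqref{eq:[dG(p,n)]}. Its eigenvalues are the principal curvatures $\mu_1,\mu_2$ of $M$ at $p$ in the direction $\nbf$, so their product is the determinant; and by the very definition \eqref{eq:CQF(q)} of the Gauss quadratic form this determinant is twice $\mathcal{G}_p(\nbf)$, i.e. $\mu_1\mu_2=\det[d^\perp_{p,\nbf}\Gamma]=2\mathcal{G}_p(\nbf)$.

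Next I would invoke the elementary fact that a $2\times 2$ matrix has $0$ as an eigenvalue if and only if its determinant vanishes. Combining this with the identity just obtained, the vector $\nbf$ is degenerate if and only if $\mu_1\mu_2=0$, that is if and only if $\mathcal{G}_p(\nbf)=0$. This already identifies the set of degenerate unit normal vectors with the intersection of the zero locus of $\mathcal{G}_p$ with the unit sphere $\sph_p^{\ell-1}\subset N_pM$.

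Finally I would promote this from unit vectors to the full cone $\Sigma_p$. Since the entries $\langle\Abf,q\rangle$, $\langle\Bbf,q\rangle$, $\langle\Cbf,q\rangle$ of the matrix in \eqref{eq:[dG(p,n)]} are linear in $q$, one has $[d^\perp_{p,\alpha\nbf}\Gamma]=\alpha\,[d^\perp_{p,\nbf}\Gamma]$ and hence $\mathcal{G}_p(\alpha\nbf)=\alpha^2\,\mathcal{G}_p(\nbf)$; thus the condition $\mathcal{G}_p(q)=0$ is invariant under scaling, which is exactly the statement that its zero locus is a cone (as is automatic for any quadratic form). Therefore $\Sigma_p=\{q\in N_pM:\mathcal{G}_p(q)=0\}$, as claimed.

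As for the main obstacle, there is essentially none: the argument is a direct unwinding of the definitions. The only point requiring care is the bookkeeping that links the defining eigenvalue condition $\mu=0$ to the vanishing of $\det[d^\perp_{p,\nbf}\Gamma]$, and then to $\mathcal{G}_p$ via \eqref{eq:CQF(q)}, together with the trivial scaling remark that upgrades the unit-vector statement to the whole cone.
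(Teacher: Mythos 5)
Your proposal is correct and follows essentially the same route as the paper: the paper's one-line proof likewise observes that the product of the eigenvalues of $[d^\perp_{p,\nbf}\Gamma]$ equals $K^{\nbf}=2\,\mathcal{G}_p(\nbf)$, so $\mu=0$ occurs as an eigenvalue exactly on the zero locus of $\mathcal{G}_p$. Your extra remark on homogeneity, upgrading the unit-vector statement to the whole cone, is left implicit in the paper but is a harmless and correct addition.
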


\begin{proof}
	The product of the eigenvalues of $\left[d^\perp_{p,\nbf}\G\right]$ is equal to $K^{\nbf}=2\,\mathcal{G}_p(\nbf)$. 
\end{proof}

%%%%%%%%%%%%%%%%%%%%%%%%%%%%%%%%%%%%%%%%%%%%%%%%%%%%%%%%%%%%%%%%%%%%%%%%%%%%%%%%%%%%%%%%%%%%%%%%%%%%%%%%%%%%%%%%%%%%
%%%%%%%%%%%%%%%%%%%%%%%%%%%%%%%%%%%%%%%%%%%%   COROLLARY   %%%%%%%%%%%%%%%%%%%%%%%%%%%%%%%%%%%%%%%%%%%%%%%%%%%%%%%%%
%%%%%%%%%%%%%%%%%%%%%%%%%%%%%%%%%%%%%%%%%%%%%%%%%%%%%%%%%%%%%%%%%%%%%%%%%%%%%%%%%%%%%%%%%%%%%%%%%%%%%%%%%%%%%%%%%%%%
\begin{corollary}
	Given $p\in M$ in $\R^5$, the local caustic $\mathcal{C}_p$ is the cone parallel to the cone $\Sigma_p$ of 
	degenerate normal vectors, by translation along the vector $\Rbf:$ $\mathcal{C}_p=\Sigma_p+\Rbf$. 
\end{corollary}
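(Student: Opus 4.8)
The plan is to read off both cones from equations that have already been established and to observe that they differ only by the single translation $\Rbf$. By Proposition \ref{prop:Cone_degenerate_normals}, the cone $\Sigma_p$ of degenerate normal vectors is the zero locus of the Gauss quadratic form, so $\Sigma_p=\{q\in N_pM:\mathcal{G}_p(q)=0\}$. On the other hand, Theorem \ref{th:equation_local_caustic} gives, for $M$ in $\R^5$, the equation of the local caustic as $\mathcal{G}_p(q-\Rbf)=0$, so that $\mathcal{C}_p=\{q\in N_pM:\mathcal{G}_p(q-\Rbf)=0\}$.

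The key step is then the elementary chain of equivalences
\[
q\in\mathcal{C}_p\iff \mathcal{G}_p(q-\Rbf)=0 \iff (q-\Rbf)\in\Sigma_p \iff q\in\Sigma_p+\Rbf,
\]
where the middle equivalence is simply the defining equation of $\Sigma_p$ applied to the vector $q-\Rbf$. This yields $\mathcal{C}_p=\Sigma_p+\Rbf$, as claimed. Geometrically the statement just records that the two quadratic cones share the same asymptotic directions, namely those along which $\mathcal{G}_p$ vanishes, while $\Sigma_p$ has its vertex at the origin $p$ and $\mathcal{C}_p$ has its vertex translated to $\Rbf$.

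There is essentially no obstacle here, since all the real content lies in Theorem \ref{th:equation_local_caustic} and Proposition \ref{prop:Cone_degenerate_normals}. The only point to keep in mind is the standing hypothesis $\Delta\neq 0$ under which Theorem \ref{th:equation_local_caustic} was proved: it guarantees that the vertex $\Rbf$ is a genuine point of $N_pM$, so that the translation $\Sigma_p+\Rbf$ makes affine sense. When $\Delta=0$ the vertex escapes to $\RP^2_\infty$ (the flat case $\Pi_\E\ni p$ of Theorem \ref{prop:caustic-cone}, item \textbf{a}$_\f$), and the identity must then be read projectively rather than as a literal translation in $N_pM$.
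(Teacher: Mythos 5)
Your proof is correct and follows exactly the paper's route: the paper also deduces the corollary in one line from the equation $\mathcal{G}_p(q-\Rbf)=0$ of Theorem\,\ref{th:equation_local_caustic} combined with the equation $\mathcal{G}_p(q)=0$ of $\Sigma_p$ from Proposition\,\ref{prop:Cone_degenerate_normals}. Your added remark about the standing hypothesis $\Delta\neq 0$ (so that $\Rbf$ is a genuine point of $N_pM$) is a sensible clarification but does not change the argument.
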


\begin{proof}
	It follows from the equation of $\mathcal{C}_p$, \,$\mathcal{G}_p(q-\Rbf)=0$,\, in Theorem\,\ref{th:equation_local_caustic}.  
\end{proof}

If $p\in M\subset\R^4$ has a focal centre at $\RP^1_\infty$, the contact direction in $T_pM$ is said to be \textit{asymptotic} 
and the degenerate vectors are also called \textit{binormal vectors}. 

If $p\in M_3\subset\R^5$ the focal centres at $\RP^2_\infty$ form an ellipse ($\mathcal{C}_p\cap\RP^2_\infty$). 
Thus there is ``an ellipse of tangent hyperplanes'' with greater contact than usual - each direction in $T_pM$ 
is the contact direction of $M$ with one such hyperplane. Moving along the ellipse $\mathcal{C}_p\cap\RP^2_\infty$ 
one can find isolated focal centres whose associated tangent hyperplane is over-osculating - their normal 
(degenerate) vectors are called \textit{binormal} and the higher contact direction is called 
 \textit{asymptotic}. The local quadratic map $\f_p$ cannot detect the asymptotic directions at $p$ - 
 one needs the terms of degree $3$. %, $4$, etc., depending on the considered order of contact. 
In \protect\cite{M-RM-R2003}, it was shown that \textit{if $p\in M_3$, there is at least $1$ and at most $5$ 
 asymptotic directions}. The local configurations of their integral curves, called \textit{asymptotic curves}, 
 were studied in \protect\cite{RM-R-T}. 
\medskip

%%%%%%%%%%%%%%%%%%%%%%%%%%%%%%%%%%%%%%%%%%%%%%%%%%%%%%%%%%%%%%%%%%%%%%%%%%%%%%%%%%%%%%%%%%%%%%%%%%%%%%%%%%%%%%%%%%%%
%%%%%%%%%%%%%%%%%%%%%%%%%%%%%%%%%%%%%%%%%%%%  PROPOSITION  %%%%%%%%%%%%%%%%%%%%%%%%%%%%%%%%%%%%%%%%%%%%%%%%%%%%%%%%%
%%%%%%%%%%%%%%%%%%%%%%%%%%%%%%%%%%%%%%%%%%%%%%%%%%%%%%%%%%%%%%%%%%%%%%%%%%%%%%%%%%%%%%%%%%%%%%%%%%%%%%%%%%%%%%%%%%%%
% \begin{theorem}
% Every line normal to $M$ at $p$ cuts the local caustic $\mathcal{C}_p$ at two points 
% whose distances to $p$ are the inverses of the eigenvalues of the matrix $[d_{p,\nbf}^\perp\G]$ and whose  
% respective principal directions are two orthogonal lines on the tangent plane $T_pM$. \\
% {\rm (So to each normal line there correspond two curvature radii with their respective orthogonal 
% ``principal'' directions on the tangent plane.)}
% \end{theorem}

\noindent
\textbf{\textit{\small Caustic of the Gauss Map}}. 
In the setting of Lagrangian singularities, 
the Gauss map of a submanifold $M$ in $\R^n$ defines a Lagrangian map from the manifold of all oriented 
lines normal to $M$, into the unit sphere $\sph^{n-1}$ - the image of an oriented line is 
its unit orienting vector. 
For a surface $M$ in $\R^4$ the caustic of its Gauss map, called \textit{binormal surface}, is the surface in $\sph^3$ formed by the 
unit binormal vectors of $M$. Its local singularities are those of caustics in $3$-space: cuspidal edges, swallowtails and 
$D_4$ singularities. They were described by Deibrelbis in \cite{Dreibelbis-BiNSurf} using the family of ``hight functions''. 
For $M$ in $\R^5$ the caustic of its Gauss map is the $3$-variety in $\sph^4$  
consisting of the unit degenerate normal vectors of $M$. 
For a locally parametrised surface $M$, $\g:\R^2\to M\subset\R^n$ its family of ``hight functions'' 
$\{h_v(x)=\langle v,\g(x)\rangle:v\in\sph^{n-1}\}$, widely used in \protect\cite{IRFRT}, is a generating family 
of the (Lagrangian) Gauss map of $M$ - whence its efficacity! 
\medskip

\noindent
\textbf{\small Asymptotes of the local caustic and asymptotic vectors in $T_pM$}.
At a hyperbolic point $p\in M$ in $\R^4$ the local caustic $\mathcal{C}_p$ is, by Th.\,\ref{th:focal_quadric-indicatrix}, 
a hyperbola (see Th.\,\ref{th:local-caustic-R4}). The two points of $\mathcal{C}_p$ at $\RP^1_\infty$, 
say $q_a$, $q_b$, are focal centres associated to two asymptotic vectors $\ubf_a$, $\ubf_b$. Thus, their polar 
lines $\ell_{q_a}$, $\ell_{q_b}$ contain $p$ and, by Th.\,\ref{th:focal_quadric-indicatrix}, are tangent to $\E_p$ 
at the two points $a=\E_p(\pm\ubf_a)$, $b=\E_p(\pm\ubf_b)$. 
We get the following proposition (Fig.\ref{fig:asymptotic-tangency-pts}):

\begin{figure}[ht] %< préférence de placement h , t , b or p >
\centering
\includegraphics[scale=0.45]{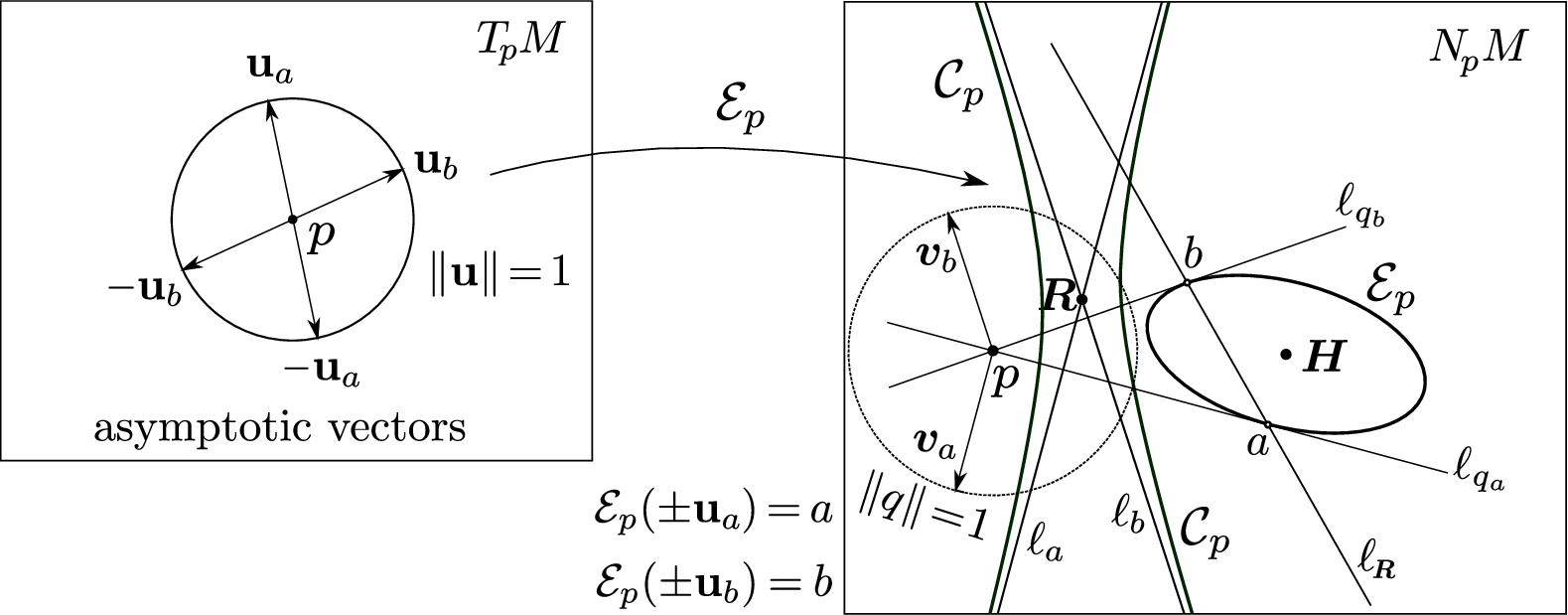}
\caption{\small The asymptotes $\ell_a$, $\ell_b$ of the local caustic $\mathcal{C}_p$ directed by the binormal vectors $\boldsymbol{v}_a$, $\boldsymbol{v}_b$.}
\label{fig:asymptotic-tangency-pts}
\end{figure}

\begin{proposition}
At a hyperbolic point $p\in M$ in $\R^4$, the points $a$, $b$ in $\E_p$ such that the tangent line to 
$\E_p$ contains $p$ are the poles of the asymptotes of the hyperbola $\mathcal{C}_p$ {\rm (noted $\ell_a$, $\ell_b$)}. 
The centre $\Rbf$ of $\mathcal{C}_p$ is the pole of the line that joins $a$ and $b$ {\rm (noted $\ell_{\Rbf}$)}. 
\end{proposition}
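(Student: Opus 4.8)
The plan is to argue entirely by projective polar duality in the completion $N_pM\sqcup\RP^1_\infty$, using the Duality Theorem \ref{th:focal_quadric-indicatrix}, which identifies $\mathcal{C}_p=\E_p^\vee$ and, by involutivity of the polar dual, also $\E_p=\mathcal{C}_p^\vee$, so that $\E_p$ and $\mathcal{C}_p$ are mutually dual conics. Since $p$ is hyperbolic, Theorem \ref{th:local-caustic-R4} guarantees that $\mathcal{C}_p$ is a genuine hyperbola; hence it meets the line at infinity $\RP^1_\infty$ in two real points $q_a,q_b$. As recalled just above the statement, the polars $\ell_{q_a},\ell_{q_b}$ pass through $p$ (their poles $q_a,q_b$ are at infinity, cf. Remark \ref{rem:orthogonality-polar}) and are tangent to $\E_p$ at $a$ and $b$. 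The key projective observation is that the asymptotes $\ell_a,\ell_b$ of the hyperbola $\mathcal{C}_p$ are precisely the tangent lines to the conic $\mathcal{C}_p$ at its two points at infinity $q_a$ and $q_b$.

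First I would identify the poles of the asymptotes. Because $a\in\E_p=\mathcal{C}_p^\vee$, the point $a$ is the pole of some tangent line to $\mathcal{C}_p$, i.e. the polar of $a$ is tangent to $\mathcal{C}_p$. On the other hand $a$ lies on $\ell_{q_a}$, the polar of $q_a$, so by the reciprocity of the pole--polar relation (\S\ref{sect-polar_dual-subvariety}), namely $X\in\mathrm{polar}(Y)\Leftrightarrow Y\in\mathrm{polar}(X)$, the polar of $a$ passes through $q_a$. A line tangent to $\mathcal{C}_p$ and passing through the point $q_a\in\mathcal{C}_p$ can only be the tangent to $\mathcal{C}_p$ at $q_a$, i.e. the asymptote $\ell_a$. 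Hence the polar of $a$ is $\ell_a$, which says that $a$ is the pole of $\ell_a$; the identical argument with $q_b,b$ shows $b$ is the pole of $\ell_b$. This proves the first assertion.

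For the second assertion I would use that the centre $\Rbf$ of the hyperbola $\mathcal{C}_p$ is the intersection of its two asymptotes, $\Rbf=\ell_a\cap\ell_b$. Since $\Rbf\in\ell_a$ and $\Rbf\in\ell_b$, reciprocity again gives that the poles $a$ and $b$ of $\ell_a,\ell_b$ both lie on the polar of $\Rbf$, and two points determine that polar as the line $\overline{ab}$. Thus the polar of $\Rbf$ is $\ell_{\Rbf}=\overline{ab}$, equivalently $\Rbf$ is the pole of the line joining $a$ and $b$, as claimed.

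The whole argument is purely incidence-geometric, so the step I expect to carry the real content is the projective bookkeeping: recognising the asymptotes as the tangents to $\mathcal{C}_p$ at its points at infinity, and keeping straight which polars pass through $p$ (those of $q_a,q_b$) versus which pass through $\Rbf$ (after reciprocity). Once the Duality Theorem has made $\E_p$ and $\mathcal{C}_p$ mutually dual, no computation remains; one only applies pole--polar reciprocity together with the uniqueness of the tangent to a conic through a point lying on it. If a reader prefers an explicit verification, the same facts can be checked directly from the parametrisation \eqref{eq:parametrisation-E_p} of $\E_p$ and the equation \eqref{eq:focal-quadric} of $\mathcal{C}_p$, but the duality route makes the statement transparent.
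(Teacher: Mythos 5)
Your proof is correct and follows essentially the same route as the paper: the paper derives the proposition from the Duality Theorem \ref{th:focal_quadric-indicatrix} by identifying the two points of $\mathcal{C}_p$ at $\RP^1_\infty$ as the poles of the tangents to $\E_p$ through $p$, exactly as you do. Your write-up merely makes explicit the pole--polar reciprocity and the uniqueness of the tangent to a conic at one of its own points, steps the paper leaves implicit in the paragraph preceding the proposition.
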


%%%%%%%%%%%%%%%%%%%%%%%%%%%%%%%%%%%%%%%%%%%%%%%%%%%%%%%%%%%%%%%%%%%%%%%%%%%%%%%%%%%%%%%%%%%%%%%%%%%%%%%%%%%%%%%%%%%%%%%%%
%%%%%%%%%%%%%%%%%%%%%%%%%%%%%%%%%%%%%%%%%%%%%%%%%%%%%%%%%%%%%%%%%%%%%%%%%%%%%%%%%%%%%%%%%%%%%%%%%%%%%%%%%%%%%%%%%%%%%%%%%
\section{Pseudo-Euclidean Geometry of Quadratic Forms on $\R^2$ and Local Invariants of Surfaces in 
$\R^4$ and in $\R^5$}\label{Section-Pseudo-Euclidean}
%%%%%%%%%%%%%%%%%%%%%%%%%%%%%%%%%%%%%%%%%%%%%%%%%%%%%%%%%%%%%%%%%%%%%%%%%%%%%%%%%%%%%%%%%%%%%%%%%%%%%%%%%%%%%%%%%%%%%%%%%
%%%%%%%%%%%%%%%%%%%%%%%%%%%%%%%%%%%%%%%%%%%%%%%%%%%%%%%%%%%%%%%%%%%%%%%%%%%%%%%%%%%%%%%%%%%%%%%%%%%%%%%%%%%%%%%%%%%%%%%%%

The local geometry and local invariants of a surface in $\R^4$ or in $\R^5$ are determined by the geometric features 
of the parallelogram (or parallelepiped) formed by $Q_1$, $Q_2$ (or $Q_1$, $Q_2$, $Q_3$) in the pseudo-Euclidean 
vector space of quadratic forms. 

%%%%%%%%%%%%%%%%%%%%%%%%%%%%%%%%%%%%%%%%%%%%%%%%%%%%%%%%%%%%%%%%%%%%%%%%%%%%%%%%%%%%%%%%%%%%%%%%%%%%%%%%%%%%%%%%
\subsection{\textbf{Pseudo-Euclidean Space of Quadratic Forms on $\R^2$}}\label{sect-pseudi-euclidian_space-quad}
%%%%%%%%%%%%%%%%%%%%%%%%%%%%%%%%%%%%%%%%%%%%%%%%%%%%%%%%%%%%%%%%%%%%%%%%%%%%%%%%%%%%%%%%%%%%%%%%%%%%%%%%%%%%%%%%

The vector space of quadratic forms on the plane $\R^2$,  
$\{(a,b,c)\}=\R^3$, has a natural pseudo-Euclidean structure given 
by the equation $ac-b^2=0$ of the cone of degenerate forms. Namely, the pseudo-scalar product 
of two forms $Q_1, Q_2$ is defined by 
\begin{equation}
  \langle Q_1,Q_2 \rangle_\psi=\frac{a_1c_2+a_2c_1}{2}-b_1b_2\,.
\end{equation}

\noindent
\textit{\textbf{\small Pseudo-Orthogonality}}. 
Two quadratic forms $Q_1$, $Q_2$ on $\R^2$ are said to be 
\textit{pseudo-orthogonal} if their pseudo-scalar product is zero, %and only if 
$\langle Q_1,Q_2 \rangle_\psi=0$. 
\medskip

In this pseudo Euclidean space, the square of the pseudo-norm of a quadratic form $Q$, 
$\|Q\|_\psi^2:=\langle Q,Q \rangle_\psi$,  %=ac-b^2$, 
can take any real value. Namely, we have the evident 

\begin{proposition}\label{prop:sign-||Q||}
Given a non zero quadratic form $Q$ on $\R^2$, the following holds$:$
\medskip

\noindent
\textit{$\|Q\|_\psi^2$ is positive iff $Q$ is in the interior of $\mathcal{C}$} 
(i.e. in a convex component of $\R^3\smallsetminus\mathcal{C}$); 
\medskip

\noindent
\textit{$\|Q\|_\psi^2$ is negative iff $Q$ is in the exterior of $\mathcal{C}$} 
(i.e. in the non convex part of $\R^3\smallsetminus\mathcal{C}$); 
\medskip

\noindent
\textit{$\|Q\|_\psi^2$ is zero iff $Q$ lies on $\mathcal{C}$.}
\end{proposition}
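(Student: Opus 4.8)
The plan is to notice that the whole statement collapses to a single scalar identity. Setting $Q_1=Q_2=Q=\tfrac{1}{2}(as^2+2bst+ct^2)$ in the definition of the pseudo-scalar product gives at once
\[
\|Q\|_\psi^2=\langle Q,Q\rangle_\psi=\frac{ac+ac}{2}-b^2=ac-b^2,
\]
so the squared pseudo-norm of $Q$ is exactly the discriminant $ac-b^2$ whose vanishing defines the cone $\mathcal{C}$. The third bullet is then immediate: $\|Q\|_\psi^2=0$ iff $ac-b^2=0$ iff $Q\in\mathcal{C}$.

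For the two sign bullets I would diagonalise $ac-b^2$ by the linear change $u=(a+c)/2$, $v=(a-c)/2$, $w=b$ (so that $a=u+v$, $c=u-v$, $b=w$), under which
\[
\|Q\|_\psi^2=u^2-v^2-w^2,
\]
the standard Minkowski form of signature $(+,-,-)$, and $\mathcal{C}$ becomes the circular double cone $u^2=v^2+w^2$ about the $u$-axis. Since $\|Q\|_\psi^2$ is continuous and vanishes precisely on $\mathcal{C}$, it keeps a constant sign on each connected component of $\R^3\smallsetminus\mathcal{C}$, so it suffices to describe those components and read off the sign on each.

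Finally I would identify the components in these coordinates. The region $\{\|Q\|_\psi^2>0\}=\{u^2>v^2+w^2\}$ is the union of the two nappes $u>\sqrt{v^2+w^2}$ and $u<-\sqrt{v^2+w^2}$; the first is the strict epigraph of the convex function $(v,w)\mapsto\sqrt{v^2+w^2}$ and hence convex, and the second is its image under $u\mapsto-u$, also convex. These are therefore exactly the two convex components forming the interior of $\mathcal{C}$. The complementary region $\{\|Q\|_\psi^2<0\}=\{u^2<v^2+w^2\}$ is connected (it deformation retracts onto the circle $u=0$, $v^2+w^2=1$) and manifestly non-convex (the segment from $(u,v,w)=(0,1,0)$ to $(0,-1,0)$ passes through the vertex of $\mathcal{C}$), so it is the exterior. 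This matches the three bullets. There is essentially no obstacle here: as the author remarks, the statement is evident once $\|Q\|_\psi^2=ac-b^2$ is observed, and the only point deserving a sentence is the convexity of the two nappes against the connectedness and non-convexity of the exterior, i.e. the familiar geometry of the light cone in $(+,-,-)$ Minkowski space.
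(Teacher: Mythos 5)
Your proof is correct, and it takes the only natural route: the paper itself offers no proof (it labels the proposition ``evident''), relying implicitly on exactly the observation you make, namely that $\|Q\|_\psi^2=ac-b^2$ is the defining discriminant of the cone $\mathcal{C}$. Your diagonalisation to the Minkowski form $u^2-v^2-w^2$ and the identification of the two convex nappes versus the connected non-convex exterior supplies precisely the details the paper leaves to the reader.
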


\noindent
\textit{\textbf{\small Poisson bracket}}. 
The \textit{Poisson bracket} of two functions on the  (symplectic) plane is defined as 
\begin{equation}\label{eq:Poisson-btacket}
  \{f,g\}=\textstyle{\frac{1}{2}}(f_sg_t-f_tg_s)\,.
\end{equation}

\noindent
\textit{\textbf{\small Lie Algebra $\mathfrak{sl}(2,\R)$}}. 
The Poisson bracket of two quadratic forms is again a quadratic form. 
With this bracket the vector space $\R^3$ of quadratic forms is a Lie algebra 
isomorphic to $\mathfrak{sl}(2,\R)$: the \textit{Lie bracket} of two vectors $Q_1, Q_2$ 
is the Poisson bracket of their associated quadratic functions $Q_1$, $Q_2$. 

\noindent
We use the same notation for a quadratic form and for its representative vector in $\R^3$. 
\medskip

Computing the Poisson bracket of two quadratic forms $Q_1=\frac{1}{2}(a_1s^2+2b_1st+c_1t^2)$ and 
$Q_2=\frac{1}{2}(a_2s^2+2b_2st+c_2t^2)$, using \eqref{eq:Poisson-btacket}, one gets
\begin{equation}\label{eq:Poisson-bracket-form}
  \{Q_1,Q_2\}=\frac{1}{2}\left((a_1b_2-a_2b_1)s^2+2\left(\frac{a_1c_2-a_2c_1}{2}\right)st+(b_1c_2-b_2c_1)t^2\right)\,.
\end{equation}

%%%%%%%%%%%%%%%%%%%%%%%%%%%%%%%%%%%%%%%%%%%%%%%%%%%%%%%%%%%%%%%%%%%%%%%%%%%%%%%%%%%%%%%%%%%%%%%%%%%%%%%%%%%%%%%%
\subsection{\textbf{Poisson Bracket {\sc\textbf{vs}}  Vector Product}}\label{subsect-PoissonBracket-VectorProduct}
%%%%%%%%%%%%%%%%%%%%%%%%%%%%%%%%%%%%%%%%%%%%%%%%%%%%%%%%%%%%%%%%%%%%%%%%%%%%%%%%%%%%%%%%%%%%%%%%%%%%%%%%%%%%%%%%
In Euclidean vector space $\R^3$, the vector product $v_1\times v_2$ is orthogonal to $v_1$ and $v_2$, and 
its squared norm is equal to the squared area of the parallelogram formed by $v_1$ and $v_2$, and is also the determinant of the Gram matrix of $v_1$ and $v_2$ for the inner product $\langle\,,\,\rangle$: 
$\|v_1\times v_2\|^2=\langle v_1,v_1\rangle\langle v_2,v_2\rangle-\langle v_1,v_2\rangle^2$. Moreover, the squared volume 
of the parallelepiped formed by three vectors $v_1,v_2,v_3$, is the determinant of the Gram matrix of $v_1$, $v_2$, $v_3$ (for $\langle\,,\,\rangle$):   
$\langle v_1, v_2\times v_3\rangle^2=\det (\langle v_i,v_j\rangle)$ 
\smallskip

%%%%%%%%%%%%%%%%%%%%%%%%%%%%%%%%%%% THEOREM BRACKET-ORTHOGONALITY %%%%%%%%%%%%%%%%%%%%%%%%%%%%%%%%%%%%%%%%%%%

The pseudo-Euclidean structure on the space $\R^3$ of quadratic forms 
is intimately related to its Lie algebra structure. 
%The following proposition shows that 
The Poisson bracket in the pseudo-Euclidean space plays  
a r\^ole analogue to that of the vector product in Euclidean space. Namely,  
%%%%%%%%%%%%%%%%%%%%%%%%%%%%%%%%%%%%%%%%%%%%%%%%%%%%%%%%%%%%%%%%%%%%%%%%%%%%%%%%%%%%%%%%%%%%%%%%%%%%%%%%%%%%%%%%%%%%
%%%%%%%%%%%%%%%%%%%%%%%%%%%%%%%       PROPOSITION       %%%%%%%%%%%%%%%%%%%%%%%%%%%%%%%%%%%%%%%%%%%%%%%%%%%%%%%%%%%%
%%%%%%%%%%%%%%%%%%%%%%%%%%%%%%%%%%%%%%%%%%%%%%%%%%%%%%%%%%%%%%%%%%%%%%%%%%%%%%%%%%%%%%%%%%%%%%%%%%%%%%%%%%%%%%%%%%%%
\begin{proposition}\label{th:bracket-orthogonality}
$(a)$ {\rm \protect\cite{Arnold_Lobachevsky_altitudes} }
The Poisson bracket $\{Q_1, Q_2\}$ of two quadratic forms is a quadratic form which is 
  pseudo-orthogonal to the plane generated by $Q_1$ and $Q_2:$
\begin{equation}
  \langle\,\{ Q_1, Q_2\}, Q_1\,\rangle_\psi=0 \ \ \ \mbox{ and } \ \ \
  \langle\,\{Q_1,Q_2\}, Q_2\,\rangle_\psi=0\,.
\end{equation}

\noindent
$(b)$ The squared pseudo-norm of the Poisson bracket of two quadratic forms $Q_1$, $Q_2$ equals  
the squared pseudo-area of the parallelogram formed by $Q_1$ and $Q_2$, which is given by the determinant of 
the Gram matrix of $Q_1$, $Q_2$ for $\langle \,\, ,\,\rangle_\psi:$  
\[\|\{Q_1, Q_2\}\|_{\psi}^2=
\|Q_1\|_{\psi}^2\|Q_2\|_{\psi}^2-\langle Q_1,Q_2\rangle_{\psi}^2=
\det\begin{pmatrix}
  \langle Q_1,Q_1\rangle_\psi & \langle Q_1,Q_2\rangle_\psi \\ 
  \langle Q_2,Q_1\rangle_\psi & \langle Q_2,Q_2\rangle_\psi 
\end{pmatrix}\,.\] 

\noindent
$(c)$ The squared pseudo-volume of the parallelepiped formed by three quadratic forms $Q_1,Q_2,Q_3$, is the determinant of 
the Gram matrix of $Q_1$, $Q_2$, $Q_3$ for $\langle\,\,,\,\rangle_\psi:$
\[\langle\, Q_1, \{Q_2, Q_3\}\,\rangle_\psi^2=\det\begin{pmatrix}
  \langle Q_1,Q_1\rangle_\psi & \langle Q_1,Q_2\rangle_\psi & \langle Q_1,Q_3\rangle_\psi \\ 
  \langle Q_2,Q_1\rangle_\psi & \langle Q_2,Q_2\rangle_\psi & \langle Q_2,Q_3\rangle_\psi \\
  \langle Q_3,Q_1\rangle_\psi & \langle Q_3,Q_2\rangle_\psi & \langle Q_3,Q_3\rangle_\psi 
\end{pmatrix}\,.\] 
\end{proposition}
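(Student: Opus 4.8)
The plan is to reduce all three identities to standard vector-algebra identities in the coordinate space $\R^3=\{(a,b,c)\}$, by exhibiting the Poisson bracket as the ordinary vector product twisted by the matrix of $\langle\cdot,\cdot\rangle_\psi$. Writing $Q=\frac12(as^2+2bst+ct^2)$ as the column $(a,b,c)^{T}$, we have $\langle Q_i,Q_j\rangle_\psi=Q_i^{\,T}G\,Q_j$ with
\[
G=\begin{pmatrix} 0 & 0 & \tfrac12\\ 0 & -1 & 0\\ \tfrac12 & 0 & 0\end{pmatrix},\qquad \det G=\tfrac14 .
\]
The heart of the argument — and the one step requiring a genuine (if short) computation — is to read off from the explicit formula \eqref{eq:Poisson-bracket-form} the structural identity
\[
\{Q_1,Q_2\}=\tfrac12\,G^{-1}\bigl(Q_1\times Q_2\bigr),
\]
where $\times$ is the usual vector product on $\R^3=\{(a,b,c)\}$. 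First I would verify it directly: writing $Q_1\times Q_2=(E_1,E_2,E_3)$ for the ordinary vector product, the map $\tfrac12 G^{-1}$ sends $(E_1,E_2,E_3)\mapsto(E_3,-\tfrac12 E_2,E_1)$, and this triple is exactly the vector of coefficients of $\{Q_1,Q_2\}$ displayed in \eqref{eq:Poisson-bracket-form}. I expect this matching of constants to be the main obstacle; everything afterwards is formal, and the constants close up only because $\det G=\tfrac14=(\tfrac12)^2$, which is the shadow of the $\mathfrak{sl}(2,\R)$/Killing-form structure behind the pseudo-scalar product.

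Granting this identity, part $(a)$ follows at once: using that $G$ is symmetric (so $(G^{-1})^{T}=G^{-1}$ and $(G^{-1})^{T}G=I$),
\[
\langle\{Q_1,Q_2\},Q_1\rangle_\psi=\tfrac12\,(Q_1\times Q_2)^{T}(G^{-1})^{T}G\,Q_1=\tfrac12\,(Q_1\times Q_2)\cdot Q_1=0,
\]
and identically with $Q_2$. Equivalently, since $\langle\cdot,\cdot\rangle_\psi$ is proportional to the Killing form, $(a)$ is just its $\mathrm{ad}$-invariance, $\langle[X,Y],X\rangle=-\langle Y,[X,X]\rangle=0$, which is how the cited result is usually phrased.

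For part $(b)$ I would substitute the bracket formula and use $G^{-1}=(\det G)^{-1}\,\mathrm{adj}\,G=4\,\mathrm{adj}\,G$ to obtain
\[
\|\{Q_1,Q_2\}\|_\psi^{2}=\tfrac14\,(Q_1\times Q_2)^{T}G^{-1}(Q_1\times Q_2)=(Q_1\times Q_2)^{T}\,\mathrm{adj}(G)\,(Q_1\times Q_2).
\]
Then I would invoke the adjugate identity $\mathrm{adj}(G)(u\times v)=(Gu)\times(Gv)$, valid for symmetric $G$, followed by the Lagrange/Binet--Cauchy identity $(u\times v)\cdot(w\times z)=(u\cdot w)(v\cdot z)-(u\cdot z)(v\cdot w)$ with $w=GQ_1$ and $z=GQ_2$; this turns the right-hand side into $\langle Q_1,Q_1\rangle_\psi\langle Q_2,Q_2\rangle_\psi-\langle Q_1,Q_2\rangle_\psi^{2}$, the asserted Gram determinant.

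Part $(c)$ collapses even faster. Since $G\,G^{-1}=I$,
\[
\langle Q_1,\{Q_2,Q_3\}\rangle_\psi=\tfrac12\,Q_1^{T}G\,G^{-1}(Q_2\times Q_3)=\tfrac12\,\det[\,Q_1\ Q_2\ Q_3\,],
\]
the ordinary scalar triple product. Squaring and comparing with the Gram determinant $\det\bigl(Q_i^{T}G\,Q_j\bigr)=\det(M^{T}GM)=(\det M)^{2}\det G=\tfrac14(\det M)^{2}$, where $M=[\,Q_1\ Q_2\ Q_3\,]$, yields $\langle Q_1,\{Q_2,Q_3\}\rangle_\psi^{2}=\det\bigl(\langle Q_i,Q_j\rangle_\psi\bigr)$, as required. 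Here again the factors $\tfrac12$ from the bracket and $\tfrac14$ from $\det G$ conspire exactly, confirming that the only real content is the structural identity of the first paragraph.
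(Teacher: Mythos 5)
Your proof is correct, but it takes a genuinely different route from the paper's. The paper proves $(a)$ and $(b)$ by direct expansion of the coefficient formula \eqref{eq:Poisson-bracket-form} (for $(b)$ it adds and subtracts $b_1^2b_2^2$ and regroups), and for $(c)$ it merely declares the result to be ``a direct and simple (but long and uninteresting) calculation'' without writing it out. You instead isolate the single structural identity $\{Q_1,Q_2\}=\tfrac12\,G^{-1}(Q_1\times Q_2)$, where $G$ is the matrix of $\langle\cdot,\cdot\rangle_\psi$ with $\det G=\tfrac14$ --- I checked the coefficient matching against \eqref{eq:Poisson-bracket-form} and it is exact --- and then deduce all three parts formally from standard facts: orthogonality of $u\times v$ to $u,v$ for $(a)$, the cofactor identity $\mathrm{adj}(G)(u\times v)=(Gu)\times(Gv)$ (valid since $G$ is symmetric) plus the Lagrange identity for $(b)$, and multiplicativity of determinants in $\det(M^TGM)=(\det M)^2\det G$ for $(c)$. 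What your approach buys is twofold: it actually supplies the proof of $(c)$ that the paper omits, and it explains \emph{why} the constants close up (the $\tfrac12$ in the bracket against $\det G=\tfrac14$), making transparent the analogy ``Poisson bracket $=$ pseudo-Euclidean vector product'' that the paper only asserts informally in \S\ref{subsect-PoissonBracket-VectorProduct}. The cost is the need to quote the adjugate/cross-product identity, which the paper's brute-force computation avoids; both arguments are complete and elementary.
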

%%%%%% P R O O F >>>>>>>>>>>>>>>>>>>>>>>>>>>>>>>>>>>>>>>>>>>>>>>>>>>>>>>>>>>>>>>>>>>>>>>>>>>>>>>>>>>>>>>>>>>>>>>>>>
\begin{proof}
$(a)$ From \eqref{eq:Poisson-bracket-form} we get 
\[\langle\, Q_1, \{Q_1, Q_2\}\,\rangle_\psi=\frac{a_1(b_1c_2-b_2c_1)+(a_1b_2-a_2b_1)c_1}{2}-b_1\left(\frac{a_1c_2-a_2c_1}{2}\right)=0\,.\]
$(b)$ Using \eqref{eq:Poisson-bracket-form}, the direct computation of the squared pseudo-norm gives 
\begin{align*}
\|\{Q_1,Q_2\}\|_{\psi}^2 &= (a_1b_2-a_2b_1)(b_1c_2-b_2c_1)-(a_1c_2-a_2c_1)^2/4\\
                  &= (a_1c_2+a_2c_1)b_1b_2)b_1b_2-a_1c_1b_2^2-a_2c_2b_1^2-(a_1c_2+a_2c_1)^2/4+a_1c_1a_2c_2\,.
\end{align*}
Adding $b_1^2b_2^2-b_1^2b_2^2$ to the last expression and rearranging the terms one easily gets
\begin{align*}
\hspace{2cm}\|\{Q_1,Q_2\}\|_{\psi}^2  & =(a_1c_1-b_1^2)(a_2c_2-b_2^2)-\left( \frac{(a_1c_2+a_2c_1)}{2}-b_1b_2\right)^2\\
                &= \|Q_1\|_{\psi}^2\|Q_2\|_{\psi}^2-\langle Q_1,Q_2\rangle_{\psi}^2\,. %\hspace{5.3cm} \square
\end{align*}
$(c)$ Our proof is a direct and simple (but long and uninteresting) calculation. 
\end{proof} 
%%%%%%<<<<<<<<<<<<<<<<<<<<<<<<<<<<<<<<<<<<<<<<<<<<<<<<<<<<<<<<<<<<<<<<<<<<<<<<<<<<<<<<<<<<<<<<<<<<<<<<< P R O O F %%

%%%%%%%%%%%%%%%%%%%%%%%%%%%%%%%%%%%%%%%%%%%%%%%%%%%%%%%%%%%%%%%%%%%%%%%%%%%%%%%%%%%%%%%
\subsection{\textbf{From pseudo-orthogonality in $\R^3$ to polar duality in $\RP^2$}}
%%%%%%%%%%%%%%%%%%%%%%%%%%%%%%%%%%%%%%%%%%%%%%%%%%%%%%%%%%%%%%%%%%%%%%%%%%%%%%%%%%%%%%%
\noindent
\textit{\textbf{\small Pseudo-orthogonal plane}}. 
Given a vectorial line $\lbf$ in the space of quadratic forms and a vector $Q_0=(a_0,b_0,c_0)$ generating 
$\lbf$, the kernel of the linear form $\langle Q_0 \,,\,\cdot\,\rangle_\psi:\R^3\to\R$ is the plane which is 
\textit{pseudo-orthogonal} to the line $\lbf$ (Fig.\,\ref{fig:conjugacy-by-cone}): 
\[H_{\lbf}=\{Q\in\R^3:\langle\,Q_0\,,\,Q\,\rangle_\psi=0\}\,.\]
%%>>>>>>>>>>>>>>>>>>>>>>>>>>>>>>>>>>>>>>>>>>>>>>>>>>>>>>>>>>>>>>>>>>>>>>>>>>>>>>>>>>>>>>>>>>>>>>>>>>>>>>
%%<<<<<<<<<<<<<<<<<<<<<<<<<<<<<<<<<<<<<<  F I G U R E  <<<<<<<<<<<<<<<<<<<<<<<<<<<<<<<<<<<<<<<<<<<<<<<<<
\begin{figure}[ht]
\centerline{\psfig{figure=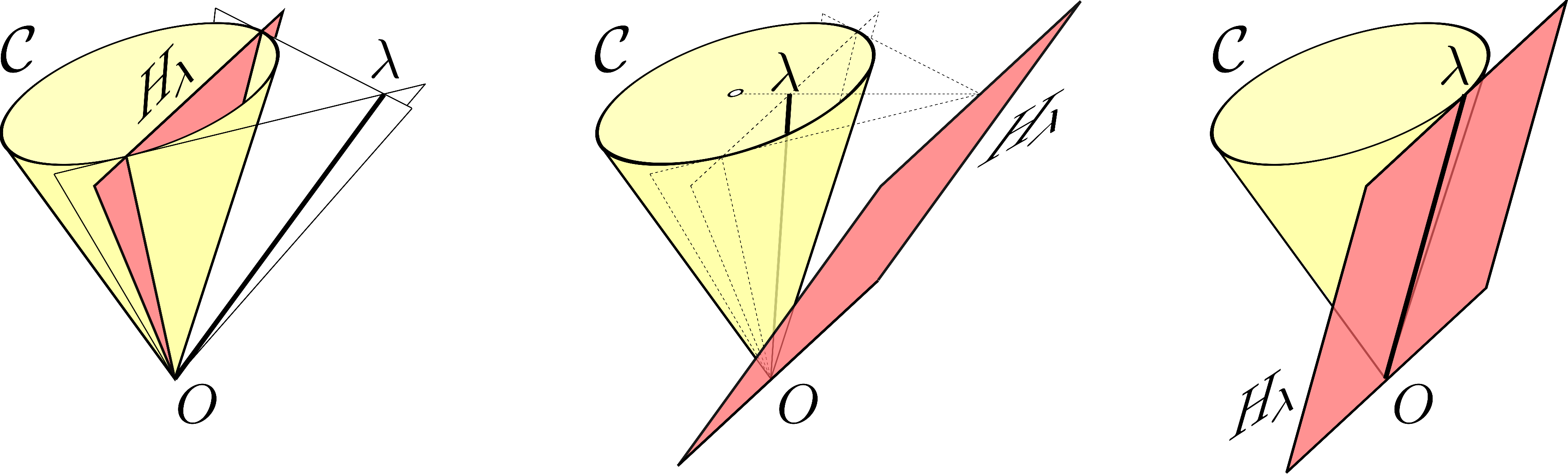,height=3.6cm}}
\caption{\small A vectorial line $\l$, its pseudo-orthogonal plane $H_\l$ and the cone $\langle Q,Q\rangle_\psi=0$.}
\label{fig:conjugacy-by-cone}
\end{figure}

% {\footnotesize
% \noindent
% {\bf Geometric construction of $H_\l$}. 
% Each plane $\Pi$ in $\R^3$ containing $\l$ intersects the cone $\mathcal{C}$ along two
% lines $\mu$, $\nu$, which may be real, coincident or complex conjugate. 
% In the three cases, there is a real line $\l'$ in $\Pi$ (called {\em harmonic conjugate} of $\l$) such
% that the cross ratio $[\mu, \nu, \l, \l']$ equals $-1$. {\em The plane $H_\l$ is the union of 
% all such harmonic conjugate lines $\l'$}, taken over all planes $\Pi$ containing $\l$ 
% (Fig.\ref{fig:conjugacy-by-cone}). 
% %In particular, if the line $\l$ lies on the cone $\hat{\mathcal{C}}$ then 
% %$H_\l$ is the plane tangent to $\hat{\mathcal{C}}$ along $\l$ \,--\, . 
% }\smallskip

The projective plane $\RP^2=(\R^3\smallsetminus 0)/ (\R\smallsetminus 0)$ is the space of
vectorial lines in $\R^3$. Its points are non-zero vectors in $\R^3$ considered up to a
non-zero constant factor - so we write $\RP^2=\P(\R^3)$. 
Under projectivisation, the vectorial planes in $\R^3$ become lines in $\RP^2$, 
the vectorial lines in $\R^3$ become points and our cone $\mathcal{C}$ becomes a
conic $\hat{\mathcal{C}}$ in $\RP^2$.

Projectivising the Lie algebra of quadratic forms on $\R^2$, one considers the
points of the projective plane $\RP^2=\P(\mathfrak{sl}(2,\R))$ as \guillemotleft forms\guillemotright 
(\protect\cite{Arnold_Lobachevsky_altitudes}): 
a \guillemotleft \textit{form}\guillemotright\, $\,\hat{Q}=[a:b:c]\,$ is a quadratic form 
$Q=\frac{1}{2}(as^2+2bst+ct^2)$ considered up to a nonzero constant factor. 

The conic $\hat{\mathcal{C}}$, obtained from the discriminant cone $\mathcal{C}$,
defines a polar duality (\S\,\ref{sect-polar_dual-subvariety}).
\medskip

%%%%%%%%%%%%%%%%%%%%%%%%%%%%%%%% DEFINITION %%%%%%%%%%%%%%%%%%%%%%%%%%%%%%%%%%%%%%%%%%%%%%%%%%%%%%%%%%
\noindent 
\textit{\textbf{\small Polar Duality}}. We can prove (using the cross-ratio invariance) 
that \textit{a point $P$ is the pole of a line $\ell_P$ in $\RP^2$ with respect to the conic 
$\hat{\mathcal{C}}=\P(\,{\mathcal{C}}\,)$ $($and  $\ell_P$ is the {\em polar} of $P$$)$ iff 
their respective vectorial line and vectorial plane in $\R^3$ are pseudo-orthogonal} (Fig.\,\ref{fig:circle-pole-polar}). 
%%>>>>>>>>>>>>>>>>>>>>>>>>>>>>>>>>>>>>>>>>>>>>>>>>>>>>>>>>>>>>>>>>>>>>>>>>>>>>>>>>>>>>>>>>>>>>>>>>>>>>>>
%%<<<<<<<<<<<<<<<<<<<<<<<<<<<<<<<<<<<<<<  F I G U R E  <<<<<<<<<<<<<<<<<<<<<<<<<<<<<<<<<<<<<<<<<<<<<<<<<
\begin{figure}[ht]
\centerline{\psfig{figure=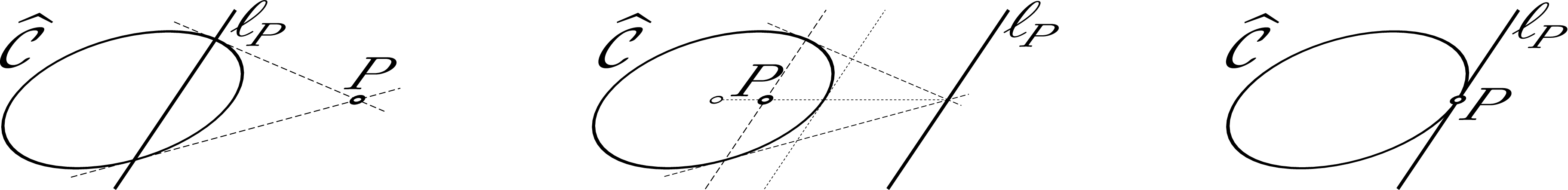,height=1.5cm}}
\caption{\small Duality \,pole$\,\leftrightarrow\,$polar\, ($P\,\leftrightarrow\,\ell_P$) 
defined by the conic $\hat{\mathcal{C}}=\P(\,{\mathcal{C}}\,)$. (Compare to Fig.\,\ref{fig:conjugacy-by-cone}.)}
\label{fig:circle-pole-polar}
\end{figure}

\noindent
\textit{\textbf{\small Conjugate Point}}. Given a point $P$ in $\RP^2$, we say that a point $P'$ {\em is 
conjugate to} $\,P\,$ with respect to $\hat{\mathcal{C}}$ iff $\,P'\,$ belongs to the polar line of $P$ (that is, if $P'\in\ell_P$). 
\smallskip

We directly get the 
%%%%%%%%%%%%%%%%%%%%%%%%%%%%%%%%%%%%%%%%%%%%%%%%%%%%%%%%%%%%%%%%%%%%%%%%%%%%%%%%%%%%%%%%%%%%%%%%%%%%%%%%%%%%%%%%%%%%
%%%%%%%%%%%%%%%%%%%%%%%%%%%%%%%       PROPOSITION       %%%%%%%%%%%%%%%%%%%%%%%%%%%%%%%%%%%%%%%%%%%%%%%%%%%%%%%%%%%%
%%%%%%%%%%%%%%%%%%%%%%%%%%%%%%%%%%%%%%%%%%%%%%%%%%%%%%%%%%%%%%%%%%%%%%%%%%%%%%%%%%%%%%%%%%%%%%%%%%%%%%%%%%%%%%%%%%%%
\begin{proposition}\label{conjugacy_psi-orthogonality}
Two \guillemotleft forms\guillemotright\, $\hat{F},\hat{H}$ in $\RP^2$ are conjugate 
iff any two representatives $F$, $H$ {\rm (in the space of quadratic forms)} are 
pseudo-orthogonal, $\langle F,H \rangle_\psi=0$.
\end{proposition}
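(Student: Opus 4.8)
The plan is to obtain the statement by directly chaining the two facts established immediately above: the description of the pseudo-orthogonal plane of a line in the space of quadratic forms, and the characterisation of polar duality with respect to the conic $\hat{\mathcal{C}}=\P(\mathcal{C})$ in terms of pseudo-orthogonality in $\R^3$. Concretely, I would read ``conjugate'' through its definition (membership in the polar line), translate that polar line into a vectorial plane via the \emph{Polar Duality} paragraph, and recognise that plane as the kernel of $\langle F,\cdot\rangle_\psi$.

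First I would fix a nonzero representative $F=(a_0,b_0,c_0)$ of the ``form'' $\hat{F}$, spanning a vectorial line $\lbf\subset\R^3$. By the definition of the pseudo-orthogonal plane, $H_{\lbf}=\{Q\in\R^3:\langle F,Q\rangle_\psi=0\}$ is precisely the kernel of the linear functional $\langle F,\cdot\rangle_\psi$, i.e. the vectorial plane pseudo-orthogonal to $\lbf$. Next I would invoke the \emph{Polar Duality} characterisation stated just above: the polar line $\ell_{\hat{F}}$ of the point $\hat{F}=\P(\lbf)$ with respect to $\hat{\mathcal{C}}$ is the projectivisation $\P(H_{\lbf})$ of $H_{\lbf}$.

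Then, by the definition of conjugacy, $\hat{H}$ is conjugate to $\hat{F}$ iff $\hat{H}\in\ell_{\hat{F}}=\P(H_{\lbf})$. Passing back from $\RP^2$ to $\R^3$, this is equivalent to any representative $H$ of $\hat{H}$ lying in the vectorial plane $H_{\lbf}$, that is, to $\langle F,H\rangle_\psi=0$. Finally, since $\langle\cdot,\cdot\rangle_\psi$ is bilinear, $\langle\lambda F,\mu H\rangle_\psi=\lambda\mu\,\langle F,H\rangle_\psi$, so this vanishing does not depend on the chosen nonzero representatives — which is exactly what the phrase ``any two representatives'' requires.

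There is essentially no obstacle, since the result is an immediate consequence of the preceding characterisation of polar duality; this is why the statement is announced as directly obtained. The only point requiring care is the bookkeeping in passing between the vectorial picture in $\R^3$ (lines and planes through the origin) and the projective picture in $\RP^2$ (points and lines), together with checking representative-independence. If one instead wished to prove the statement without appealing to the \emph{Polar Duality} paragraph, the hard part would be to re-establish that the polar duality induced by the conic $\hat{\mathcal{C}}$ agrees with pseudo-orthogonality (the content of that paragraph, provable by the cross-ratio invariance); but here that is assumed.
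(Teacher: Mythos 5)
Your proposal is correct and follows exactly the route the paper intends: the paper introduces this proposition with ``We directly get the'', relying on the same chain — definition of conjugacy via the polar line, the \emph{Polar Duality} paragraph identifying that polar line with the projectivised pseudo-orthogonal plane $H_{\lbf}=\ker\langle F,\cdot\rangle_\psi$, and bilinearity for representative-independence. Nothing is missing.
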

The following two statements are classical theorems in the p\^ole-polar theory. 

%%%%%%%%%%%%%%%%%%%%%%%%%%%%%%%%%%%%%%%%%%%%%%%%%%%%%%%%%%%%%%%%%%%%%%%%%%%%%%%%%%%%%%%%%%%%%%%%%%%%%%%%%%
%%%%%%%%%%%%%%%%%%%%%%%%%%%%%%%%%%%%%  FACT  %%%%%%%%%%%%%%%%%%%%%%%%%%%%%%%%%%%%%%%%%%%%%%%%%%%%%%%%%%%%%
%%%%%%%%%%%%%%%%%%%%%%%%%%%%%%%%%%%%%%%%%%%%%%%%%%%%%%%%%%%%%%%%%%%%%%%%%%%%%%%%%%%%%%%%%%%%%%%%%%%%%%%%%%
\begin{fact}\label{fact:conj-points}
If two distinct \guillemotleft forms\guillemotright\, $\hat{Q},\hat{Q}'$ in $\RP^2$ are conjugate, 
then at least one of them lies in the non convex component of $\RP^2\smallsetminus\hat{\mathcal{C}}$.
{\rm (Outside $\hat{\mathcal{C}}$.)} 
\end{fact}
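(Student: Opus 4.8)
The plan is to recast the statement in the Lorentzian geometry of the pseudo-scalar product and then apply a standard fact about orthogonal complements in an indefinite inner product space. First I would record the dictionary between the conic and the pseudo-norm: since $\|Q\|_\psi^2 = \langle Q, Q\rangle_\psi = ac - b^2$, Proposition~\ref{prop:sign-||Q||} tells us that a nonzero form lies inside $\hat{\mathcal{C}}$ (the convex component) exactly when its pseudo-norm is positive, on $\hat{\mathcal{C}}$ exactly when it is null, and in the non-convex component (outside) exactly when its pseudo-norm is negative. By Proposition~\ref{conjugacy_psi-orthogonality}, the hypothesis that $\hat{Q}$ and $\hat{Q}'$ are conjugate means precisely that chosen representatives $Q,Q'$ satisfy $\langle Q, Q'\rangle_\psi = 0$.

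Next I would fix the signature of $\langle\cdot,\cdot\rangle_\psi$: the linear change of variables $a = x+y$, $c = x-y$ turns $ac - b^2$ into $x^2 - y^2 - b^2$, so the form is Lorentzian of signature $(1,2)$. The geometric heart of the argument is the resulting dichotomy for pseudo-orthogonal complements. If $\langle Q, Q\rangle_\psi > 0$ ($Q$ timelike), then $Q^{\perp_\psi}$ is a negative-definite plane, so every nonzero $Q'$ with $\langle Q, Q'\rangle_\psi = 0$ has $\langle Q', Q'\rangle_\psi < 0$. If instead $\langle Q, Q\rangle_\psi = 0$ with $Q \neq 0$ ($Q$ null), then the restriction of the form to the tangent plane $Q^{\perp_\psi}$ is negative semidefinite with radical exactly the line $\R Q$, so every $Q'$ pseudo-orthogonal to $Q$ and not proportional to $Q$ again satisfies $\langle Q', Q'\rangle_\psi < 0$.

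Granting this, the Fact follows at once: given distinct conjugate forms $\hat{Q},\hat{Q}'$, if $\hat{Q}$ is already outside $\hat{\mathcal{C}}$ we are done; otherwise $\langle Q, Q\rangle_\psi \geq 0$, and since $Q'$ is pseudo-orthogonal to $Q$ and, by distinctness of the projective points, not proportional to it, the dichotomy forces $\langle Q', Q'\rangle_\psi < 0$, that is, $\hat{Q}'$ lies outside $\hat{\mathcal{C}}$. The step I expect to need genuine care -- the main obstacle -- is the null case: I must verify that the radical of the form restricted to $Q^{\perp_\psi}$ for a null $Q$ is exactly $\R Q$ and not larger. This follows from nondegeneracy of $\langle\cdot,\cdot\rangle_\psi$, which gives $(Q^{\perp_\psi})^{\perp_\psi} = \R Q$, together with $Q \in Q^{\perp_\psi}$; it is precisely here that the hypothesis $\hat{Q} \neq \hat{Q}'$ is used, to exclude the proportional case. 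As a sanity check, the null form $Q = \frac{1}{2}s^2$ (that is, $(a,b,c) = (1,0,0)$) has pseudo-orthogonal plane $\{c = 0\}$ carrying the restricted form $-b^2$, confirming both the negative semidefiniteness and the radical $\R Q$.
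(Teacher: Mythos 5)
Your proof is correct. Note, however, that the paper does not actually prove this Fact: it states it (together with Fact \ref{fact:selfconj-triangle}) as a classical theorem of p\^ole--polar theory and moves on, so there is no argument in the text to compare yours against line by line. What you have supplied is a clean linear-algebraic substitute for the classical synthetic argument. Your route goes through the signature computation $ac-b^2 = x^2-y^2-b^2$ (so $\langle\cdot,\cdot\rangle_\psi$ has Lorentzian signature $(1,2)$) and the standard dichotomy for pseudo-orthogonal complements: the complement of a timelike vector is negative definite, and the complement of a null vector is negative semidefinite with radical exactly the null line, the latter following from nondegeneracy via $(Q^{\perp_\psi})^{\perp_\psi}=\R Q$. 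All of these steps check out, the distinctness hypothesis is used exactly where it must be (to exclude $Q'\in\R Q$ in the null case), and your worked example $Q=\frac{1}{2}s^2$ correctly confirms the radical computation. The classical projective proof would instead argue that the polar line of a point inside a nondegenerate conic does not meet the conic and hence lies entirely in the exterior region, with a separate tangency argument for points on the conic; your version buys uniformity (interior and boundary cases handled by one semidefiniteness lemma) and fits naturally with the paper's Proposition \ref{prop:sign-||Q||} and Proposition \ref{conjugacy_psi-orthogonality}, at the cost of leaving the projective geometry implicit. Either argument also makes visible why the conclusion can fail for $\hat{Q}=\hat{Q}'$ (a self-conjugate point lies on $\hat{\mathcal{C}}$, not outside it), which is consistent with the Fact's hypothesis that the two forms be distinct.
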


%%%%%%%%%%%%%%%%%%%%%%%%%%%%%%%%%%%%%%%%%%%%%%%%%%%%%%%%%%%%%%%%%%%%%%%%%%%%%%%%%%%%%%%%%%%%%%%%%%%%%%%%%%
%%%%%%%%%%%%%%%%%%%%%%%%%%%%%%%%%%%%%  FACT  %%%%%%%%%%%%%%%%%%%%%%%%%%%%%%%%%%%%%%%%%%%%%%%%%%%%%%%%%%%%%
%%%%%%%%%%%%%%%%%%%%%%%%%%%%%%%%%%%%%%%%%%%%%%%%%%%%%%%%%%%%%%%%%%%%%%%%%%%%%%%%%%%%%%%%%%%%%%%%%%%%%%%%%%
\begin{fact}\label{fact:selfconj-triangle}
If three \guillemotleft forms\guillemotright\, $\hat{Q_1},\hat{Q_2},\hat{Q_3}$ in $\RP^2$ form 
a self-conjugate triangle {\rm (the line through any two of them is the polar of the third)}, 
then two forms lie in the non convex component of $\RP^2\smallsetminus\hat{\mathcal{C}}$ 
{\rm (outside $\hat{\mathcal{C}}$)} and the third form lies in the convex one {\rm (inside $\hat{\mathcal{C}}$)}. 
\end{fact}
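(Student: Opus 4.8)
The plan is to reduce this projective statement to a purely linear-algebraic fact about the signature of the pseudo-scalar product $\langle\,,\,\rangle_\psi$, and then invoke Sylvester's law of inertia. The key observation is that a self-polar triangle is nothing but a pseudo-orthogonal basis in disguise.

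First I would reformulate the hypothesis. By Proposition \ref{conjugacy_psi-orthogonality}, the condition that the line through any two of $\hat{Q}_1,\hat{Q}_2,\hat{Q}_3$ is the polar of the third is equivalent to requiring that any two representatives $Q_i,Q_j$ (with $i\neq j$) be pseudo-orthogonal, $\langle Q_i,Q_j\rangle_\psi=0$. Thus a self-conjugate triangle corresponds exactly to a pseudo-orthogonal triple of vectors in the space of quadratic forms.

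Next I would settle non-degeneracy. No vertex can lie on $\hat{\mathcal{C}}$: if, say, $\hat{Q}_3\in\hat{\mathcal{C}}$, then its polar is the tangent to $\hat{\mathcal{C}}$ at $\hat{Q}_3$, which passes through $\hat{Q}_3$ itself; since $\hat{Q}_1,\hat{Q}_2$ lie on that polar, the three points would be collinear, contradicting that they form a triangle. Hence $\|Q_i\|_\psi^2\neq 0$ for each $i$. Pairing a relation $\sum_i\lambda_iQ_i=0$ with $Q_j$ and using pseudo-orthogonality gives $\lambda_j\|Q_j\|_\psi^2=0$, so $\lambda_j=0$; the three vectors are therefore linearly independent and form a pseudo-orthogonal basis of $\R^3$. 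The engine is then Sylvester's law of inertia: a direct computation (e.g. the substitution $a=x+y$, $c=x-y$) shows $\|Q\|_\psi^2=ac-b^2=x^2-y^2-b^2$, so $\langle\,,\,\rangle_\psi$ has signature $(1,2)$, with one positive and two negative directions. In the orthogonal basis $Q_1,Q_2,Q_3$ the Gram matrix is diagonal with entries $\|Q_i\|_\psi^2$, and by inertia the number of positive (resp. negative) diagonal entries equals the number of positive (resp. negative) signs of the signature. Hence exactly one $\|Q_i\|_\psi^2$ is positive and two are negative. Finally, Proposition \ref{prop:sign-||Q||} translates signs into position: the vertex with positive pseudo-norm lies in the convex component (inside $\hat{\mathcal{C}}$) and the other two lie in the non-convex component (outside), which is the claim.

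The only genuinely delicate point is the non-degeneracy step, i.e. guaranteeing that the three representatives form an honest orthogonal basis rather than a degenerate configuration, since everything afterward is the standard inertia argument. In particular one must rule out isotropic vertices, and this is precisely where the geometric hypothesis that the three forms constitute a triangle (are not collinear) enters; it is also consistent with Fact \ref{fact:conj-points}, of which this is the three-point refinement.
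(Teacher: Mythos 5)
Your proof is correct. Note that the paper states this Fact without proof, presenting it as a classical result of pole--polar theory, so there is no argument in the text to compare against; your reduction via Proposition~\ref{conjugacy_psi-orthogonality} to a pseudo-orthogonal basis, followed by Sylvester's law of inertia for the signature-$(1,2)$ form $ac-b^2$ and the translation back through Proposition~\ref{prop:sign-||Q||}, is a clean and complete justification. You also correctly isolate and handle the one delicate point: ruling out isotropic vertices, which is exactly where the hypothesis that the three points are non-collinear (form a genuine triangle) is used, and which then yields the linear independence needed to apply inertia.
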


%%%%%%%%%%%%%%%%%%%%%%%%%%%%%%%%%%%%%%%%%%%%%%%%%%%%%%%%%%%%%%%%%%%%%%%%%%%%%%%%%%%%%%%%%%%%%%
%\subsubsection{\textbf{Polar Duality in $\RP^2$ via the \guillemotleft Projectivised Bracket\guillemotright}}
%%%%%%%%%%%%%%%%%%%%%%%%%%%%%%%%%%%%%%%%%%%%%%%%%%%%%%%%%%%%%%%%%%%%%%%%%%%%%%%%%%%%%%%%%%%%%%

\begin{remark*}
Taking the Poisson bracket up to a constant factor, we get a 
\guillemotleft bracket\guillemotright\, operation in the projective plane of \guillemotleft forms\guillemotright. 
\textit{The  \guillemotleft bracket\guillemotright\, of two points $\hat{F}$, $\hat{H}$ of the projective plane 
$\P(\mathfrak{sl}(2,\R))$, $\{\hat{F},\hat{H}\}:=\P(\{F, H\})$, does not depend 
on the choice of the representatives $F$, $H$, and is the pole with respect to $\hat{\mathcal{C}}$ of the line 
through $\hat{F}$ and $\hat{H}$} \protect\cite{Arnold_Lobachevsky_altitudes}. 
\end{remark*}

{\footnotesize
\noindent 
\textbf{Note}. The representation of quadratic forms on $T_pM$, up to a non zero constant factor, as points 
of the projective plane has been used in \protect\cite{BT2005, Arnold_Lobachevsky_altitudes}, and the Lie algebra of quadratic forms 
$\mathfrak{sl}(2,\R)$ was used in \protect\cite{Arnold_Lobachevsky_altitudes}. 
}

\subsection{\textbf{Local Invariants of Surfaces in $\R^4$ $(\R^5)$ via the Pseudo-Euclidean geometry of Quadratic Forms}}
%%%%%%%%%%%%%%%%%%%%%%%%%%%%%%%%%%%%%%%%%%%%%%%%%%%%%%%%%%%%%%%%%%%%%%%%%%%%%%%%%%%%%%%%%%%%%%%%%%%%%%%%%%%%%%%%%%%%%%%
The following theorems describe the local invariants of a surface $M$ in $\R^4$ (or $\R^5$) at a point $p$ in terms of the 
pseudo-Euclidean properties of the quadratic forms $Q_i$ of $\f_p$.

%%%%%%%%%%%%%%%%%%%%%%%%%%%%%%%%%%%%%%%%%%%%%%%%%%%%%%%%%%%%%%%%%%%%%%%%%%%%%%%%%%%%%%%%%%%%%%%%%%%%
%%%%%%%%%%%%%%%%%%%%%%%%%%%%%%%%%%%%%%%%  THEOREM  %%%%%%%%%%%%%%%%%%%%%%%%%%%%%%%%%%%%%%%%%%%%%%%%%
%%%%%%%%%%%%%%%%%%%%%%%%%%%%%%%%%%%%%%%%%%%%%%%%%%%%%%%%%%%%%%%%%%%%%%%%%%%%%%%%%%%%%%%%%%%%%%%%%%%%
\begin{theorem} 
For every $p\in M$ in $\R^4$ $($or in $\R^5$$)$ the following items are equivalent$:$ 
\smallskip

\noindent
$(a)$ The local quadratic map of $M$ at $p$, $\f_p:T_pM\to N_pM$, 
is given in terms of a principal basis of $N_pM$ {\rm (i.e. $\nbf_1$, $\nbf_2$ or $\nbf_1$, $\nbf_2$, $\nbf_3$ 
are  unit eigenvectors of $\mathcal{G}_p$)}; 
\medskip

\noindent
$(b)$ The quadratic forms $Q_1$, $Q_2$ $($or $Q_1$, $Q_2$, $Q_3$$)$ that take part in that 
local quadratic map $\f_p$ are pseudo-orthogonal to each other. 
\medskip

\noindent
$(c)$ The projectivised \guillemotleft forms\guillemotright\ $\hat{Q}_1$, $\hat{Q}_2$ in $\RP^2$ are conjugate to each other
$($or $\hat{Q}_1$, $\hat{Q}_2$, $\hat{Q}_3$ form a selfconjugate triangle$)$. 
\medskip

\noindent
$(d)$ The squared pseudo-norms of the quadratic forms $Q_i$ are the principal focal 
curvatures of $M$ at $p:$
\[\K_1=\|Q_1\|_\psi^2\,, \, \ \K_2=\|Q_2\|_\psi^2  
\quad \left(\mbox{and also } \K_3=\|Q_3\|_\psi^2 \mbox{ for } M\in\R^5\right)\,.\] 
\end{theorem}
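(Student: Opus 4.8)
The plan is to read all four conditions as statements about the single symmetric matrix $[\mathcal{G}_p]$ in the chosen frame $\nbf_1,\ldots,\nbf_\ell$, exploiting that by \eqref{eq:Gp=[matrix]} this matrix is exactly the Gram matrix $(k_{ij})$ of the forms $Q_1,\ldots,Q_\ell$ for the pseudo-scalar product, with off-diagonal entries $k_{ij}=\langle Q_i,Q_j\rangle_\psi$ and diagonal entries $k_i=\|Q_i\|_\psi^2$. Every equivalence then becomes a property of diagonalisation of $(k_{ij})$.

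First I would establish $(a)\Leftrightarrow(b)$. By definition a basis of $N_pM$ is principal exactly when its vectors are unit eigenvectors of $\mathcal{G}_p$, i.e.\ when $[\mathcal{G}_p]$ is diagonal in that basis. Since the off-diagonal entries of $[\mathcal{G}_p]$ are the numbers $\langle Q_i,Q_j\rangle_\psi$, diagonality is equivalent to $\langle Q_i,Q_j\rangle_\psi=0$ for all $i\neq j$, which is precisely the mutual pseudo-orthogonality of the $Q_i$ asserted in $(b)$. Next, $(b)\Leftrightarrow(c)$ is immediate from Proposition~\ref{conjugacy_psi-orthogonality}: two \guillemotleft forms\guillemotright\ $\hat Q_i,\hat Q_j$ are conjugate with respect to $\hat{\mathcal{C}}$ iff their representatives satisfy $\langle Q_i,Q_j\rangle_\psi=0$. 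In the $\R^4$ case this is exactly $(b)$ for the single pair $Q_1,Q_2$; in the $\R^5$ case, mutual pseudo-orthogonality of $Q_1,Q_2,Q_3$ says that every pair $\hat Q_i,\hat Q_j$ is conjugate, which is the self-conjugate triangle condition (the line through any two vertices is the polar of the third, cf.\ Fact~\ref{fact:selfconj-triangle}).

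It remains to tie in $(d)$. One direction is routine: if $(a)/(b)$ holds then $[\mathcal{G}_p]$ is diagonal, so its diagonal entries $k_i=\|Q_i\|_\psi^2$ are its eigenvalues, namely the principal focal curvatures $\K_i$, giving $(d)$. The converse $(d)\Rightarrow(a)$ is the one step needing a genuine argument, and I expect it to be the main (though mild) obstacle, since a symmetric matrix need not be diagonal merely because some diagonal entries coincide with eigenvalues. Here I would invoke the Schur-type identity
\[
\Tr\bigl([\mathcal{G}_p]^2\bigr)=\sum_i\K_i^2=\sum_i k_i^2+2\sum_{i<j}k_{ij}^2 .
\]
Under $(d)$ the left-hand side equals $\sum_i k_i^2$, forcing $\sum_{i<j}k_{ij}^2=0$, hence every off-diagonal entry $k_{ij}=\langle Q_i,Q_j\rangle_\psi$ vanishes; this returns us to $(b)$ and closes the cycle.

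The only point to keep straight is the labelling: condition $(d)$ must be read as matching each diagonal entry to an eigenvalue, so that the multisets $\{k_i\}$ and $\{\K_i\}$ coincide and $\sum_i k_i^2=\sum_i\K_i^2$ holds; the trace identity then applies uniformly in the $2\times 2$ ($\R^4$) and $3\times 3$ ($\R^5$) cases. Since the trace $\sum_i k_i=\sum_i\K_i$ always matches automatically, $(d)$ is genuinely the extra statement that the individual squared pseudo-norms already realise the spectrum, and the computation above shows this happens precisely when the frame is principal.
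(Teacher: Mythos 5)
Your proposal is correct and follows the same overall route as the paper: both read $[\mathcal{G}_p]$ as the Gram matrix of the $Q_i$ for $\langle\cdot,\cdot\rangle_\psi$, get $(a)\Leftrightarrow(b)$ from the identification of the off-diagonal entries with $\langle Q_i,Q_j\rangle_\psi$, and get $(b)\Leftrightarrow(c)$ from Proposition~\ref{conjugacy_psi-orthogonality}. The one place where you do more than the paper is the implication $(d)\Rightarrow(b)$: the paper simply asserts that pseudo-orthogonality ``holds iff the diagonal entries $\langle Q_i,Q_i\rangle_\psi$ are the eigenvalues,'' whereas you correctly observe that for a symmetric matrix the forward direction is not automatic and supply the missing argument via $\Tr([\mathcal{G}_p]^2)=\sum_i k_i^2+2\sum_{i<j}k_{ij}^2=\sum_i\K_i^2$, which forces all $k_{ij}$ $(i\neq j)$ to vanish once the diagonal entries realise the spectrum. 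This is a genuine (if mild) gap in the paper's own proof that your trace identity closes cleanly, and your remark that only the multiset equality $\{k_i\}=\{\K_i\}$ is needed is the right way to handle the labelling in $(d)$.
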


\begin{proof}
The non diagonal entries of the matrix $[\mathcal{G}_p]$ are the pseudo-scalar products of the quadratic forms: 
$k_{ij}=\langle Q_i,Q_j \rangle_{\psi}$. Thus the Gauss quadratic form $\mathcal{G}_p$ is diagonal 
iff the quadratic forms $Q_1$, $Q_2$ $($or $Q_1$, $Q_2$, $Q_3$$)$ are pseudo-orthogonal, and this holds iff 
the diagonal entries $\langle Q_i,Q_i \rangle_{\psi}$ are the eigenvalues, $\mathcal{K}_i$, of $\mathcal{G}_p$. 
The equivalence of $(b)$ and $(c)$ follows from Proposition\,\ref{conjugacy_psi-orthogonality}. 
\end{proof}

%%%%%%%%%%%%%%%%%%%%%%%%%%%%%%%%%%%%%%%%%%%%%%%%%%%%%%%%%%%%%%%%%%%%%%%%%%%%%%%%%%%%%%%%%%%%%%%%%%%%%%%%%%%%%%%%%%%%
\subsubsection{\textbf{Local Invariants of Surfaces in $\R^4$ and Pseudo-Euclidean Geometry}}
%%%%%%%%%%%%%%%%%%%%%%%%%%%%%%%%%%%%%%%%%%%%%%%%%%%%%%%%%%%%%%%%%%%%%%%%%%%%%%%%%%%%%%%%%%%%%%%%%%%%%%%%%%%%%%%%%%%%

The pseudo-unit vector $W:=(1,0,1)$, associated to the quadratic form $W=(s^2+t^2)/2$, generates and orients the
axis of the cone $\mathcal{C}$. The trace of the quadratic form represented by the vector $Q=(a,b,c)$ 
is given by 
\begin{equation}\label{eq:Tr=2< ,W>}
  \Tr Q =2\,\langle Q, W\rangle_\psi=a+c\,.
\end{equation}
%%%%%%%%%%%%%%%%%%%%%%%%%%%%%%%%%%%%%%%%%%%%%%%%%%%%%%%%%%%%%%%%%%%%%%%%%%%%%%%%%%%%%%%%%%%%%%%%%%
%%%%%%%%%%%%%%%%%%%%%%%%%%%%%%%%%%%%%%%% THEOREM %%%%%%%%%%%%%%%%%%%%%%%%%%%%%%%%%%%%%%%%%%%%%%%%%
%%%%%%%%%%%%%%%%%%%%%%%%%%%%%%%%%%%%%%%%%%%%%%%%%%%%%%%%%%%%%%%%%%%%%%%%%%%%%%%%%%%%%%%%%%%%%%%%%%
\begin{theorem}\label{th:features-pallelogram}
If $Q_1$, $Q_2$ are the quadratic forms taking part in the local quadratic map $\f_p:T_pM\to N_pM$ 
at $p\in M$ in $\R^4$ $($in an orthonormal basis $\nbf_1$, $\nbf_2$ of $N_pM$$)$, then 
\smallskip

\noindent
$(a)$ The Gaussian curvature of $M$ at $p$ is the sum of the squared norms 
of the quadratic forms $Q_i$ {\rm (Fig.\ref{fig:parallelogramme-Q_1Q_2}):}   
\[K=\|Q_1\|_\psi^2+\|Q_2\|_\psi^2\,;\] 

\noindent
$(b)$ The determinant invariant $\Delta$ of $M$ at $p$ is equal to the squared pseudo-area of the parallelogram 
formed by $Q_1$ and $Q_2$, and it is also equal to the squared pseudo-norm of the Poisson bracket 
of $Q_1$ and $Q_2$ {\rm (Fig.\ref{fig:parallelogramme-Q_1Q_2}):} 
\[\Delta=\|Q_1\|_{\psi}^2\|Q_2\|_{\psi}^2-\langle Q_1,Q_2\rangle_{\psi}^2=\|\{Q_1,Q_2\}\|_\psi^2\,;\]

\noindent 
$(c)$ The normal curvature of $M$ equals the trace of the Poisson bracket of $Q_1$ and $Q_2$: 
\[N=\Tr \{Q_1,Q_2\}=2\,\langle\, \{Q_1,Q_2\}\,,\, W\, \rangle_\psi\,;\] 

\noindent 
$(d)$ The pseudo-orthogonal projections of $Q_1$, $Q_2$ on the oriented axis of the cone $\mathcal{C}$ 
are the components of the mean curvature vector\,$:$
\[\Hbf=\left(\langle Q_1, W\rangle_\psi, \langle Q_2, W\rangle_\psi\right)\,.\]  
\begin{figure}[ht] %< préférence de placement h , t , b or p >
\centering
\includegraphics[scale=0.145]{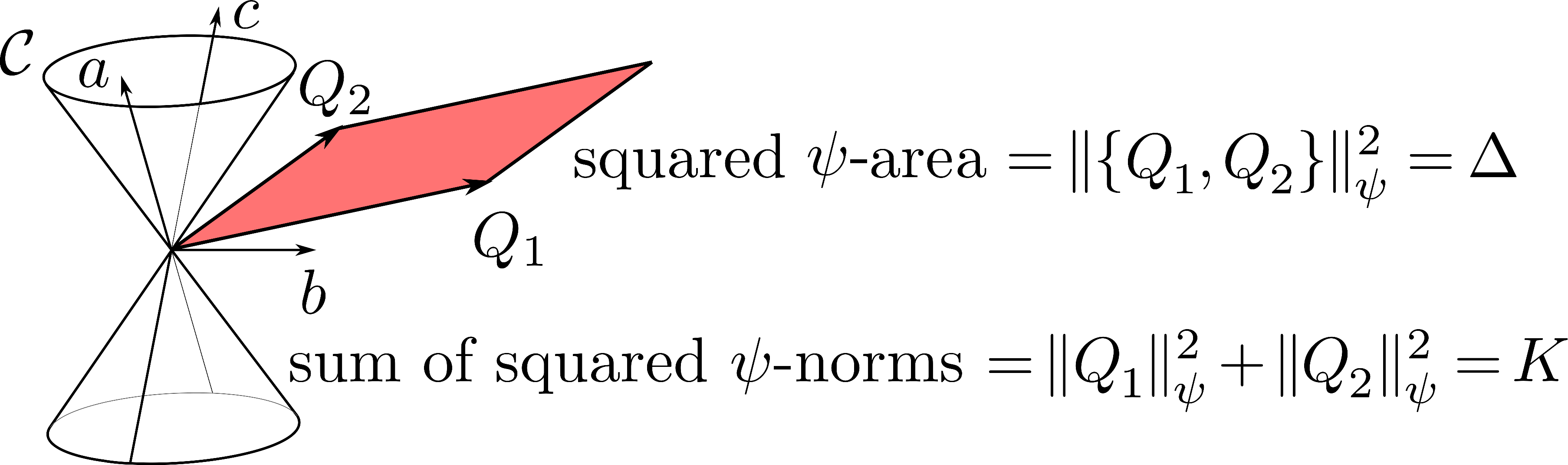}
\caption{\small Invariants $\Delta$ and $K$ for $M$
in $\R^4$ as features of the parallelogram formed by $Q_1$, $Q_2$.}
\label{fig:parallelogramme-Q_1Q_2}
\end{figure}
\end{theorem}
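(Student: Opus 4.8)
The plan is to verify each of the four identities by matching the stated pseudo-Euclidean quantity against its coordinate expression in $a_i,b_i,c_i$, drawing on the definition \eqref{eq:psi-scalar-prod} of $\langle\,\cdot\,,\,\cdot\,\rangle_\psi$, the coordinate form \eqref{eq:Poisson-bracket-form} of the Poisson bracket, and Proposition~\ref{th:bracket-orthogonality}. None of the four requires more than a short computation.

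Items $(a)$ and $(d)$ are immediate. For $(a)$, I would observe that $\|Q_i\|_\psi^2=\langle Q_i,Q_i\rangle_\psi=a_ic_i-b_i^2=k_i$, so that $\|Q_1\|_\psi^2+\|Q_2\|_\psi^2=k_1+k_2$, which is the Gaussian curvature $K=\Tr[\mathcal{G}_p]$ by Proposition~\ref{prop:inv-char_polynomial}. For $(d)$, substituting $W=(1,0,1)$ into \eqref{eq:psi-scalar-prod} gives $\langle Q_i,W\rangle_\psi=(a_i+c_i)/2=h_i$, whence $(\langle Q_1,W\rangle_\psi,\langle Q_2,W\rangle_\psi)=(h_1,h_2)=\Hbf$.

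For $(b)$, the first equality is the determinant formula $\Delta=\det[\mathcal{G}_p]=k_1k_2-k_{12}^2$ of Proposition~\ref{prop:inv-char_polynomial}, rewritten through $k_i=\|Q_i\|_\psi^2$ and $k_{12}=\langle Q_1,Q_2\rangle_\psi$; the second equality, which turns this Gram determinant into $\|\{Q_1,Q_2\}\|_\psi^2$, is exactly Proposition~\ref{th:bracket-orthogonality}$(b)$. For $(c)$, I would read off from \eqref{eq:Poisson-bracket-form} the $s^2$- and $t^2$-coefficients of $\{Q_1,Q_2\}$, namely $a_1b_2-a_2b_1$ and $b_1c_2-b_2c_1$; their sum is $\Tr\{Q_1,Q_2\}$, and a direct rearrangement of the definition $N=(a_1-c_1)b_2-(a_2-c_2)b_1$ shows it equals $(a_1b_2-a_2b_1)+(b_1c_2-b_2c_1)$, giving $N=\Tr\{Q_1,Q_2\}$. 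The final form $2\langle\{Q_1,Q_2\},W\rangle_\psi$ then follows from the identity $\Tr Q=2\langle Q,W\rangle_\psi$ applied to the quadratic form $\{Q_1,Q_2\}$. The only step involving any genuine (though very short) bookkeeping is this sign-matching in $(c)$; everything else is a one-line identification, so I anticipate no real obstacle.
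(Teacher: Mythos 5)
Your proposal is correct and follows essentially the same route as the paper: items $(a)$, $(b)$, $(d)$ are the same one-line identifications via $k_i=\|Q_i\|_\psi^2$, $k_{12}=\langle Q_1,Q_2\rangle_\psi$, Proposition~\ref{prop:inv-char_polynomial} and Proposition~\ref{th:bracket-orthogonality}$(b)$, and your item $(c)$ is exactly the paper's direct computation $\Tr\{Q_1,Q_2\}=(a_1-c_1)b_2-(a_2-c_2)b_1=N$ combined with the identity $\Tr Q=2\langle Q,W\rangle_\psi$. No gaps.
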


\begin{proof} $(a)$ We have seen $K=k_1+k_2$ for $M$ in $\R^4$, 
where $k_i=a_ic_i-b_i^2=\langle Q_i,Q_i\rangle_\psi$. %(cf. \protect\cite{Little}). 
\smallskip

\noindent
$(b)$ We already know (by Proposition\,\ref{prop:inv-char_polynomial}) that $\Delta=k_1k_2-k_{12}^2$, where 
$k_i=\langle Q_i,Q_i\rangle_\psi$ and $k_{12}=\langle Q_1,Q_2\rangle_\psi$. 
Thus, Theorem\,\ref{th:features-pallelogram}$(b)$ follows from Proposition\,\ref{th:bracket-orthogonality}$(b)$. 
\smallskip

\noindent
$(c)$ From \eqref{eq:Poisson-bracket-form} one directly computes $\Tr\{Q_1,Q_2\}=(a_1-c_1)b_2-(a_2-c_2)b_1=N$. 
\smallskip

\noindent
$(d)$ It is a short and direct calculation. 
\end{proof}

%%%%%%%%%%%%%%%%%%%%%%%%%%%%%%%%%%%%%%%%%%%%%%%%%%%%%%%%%%%%%%%%%%%%%%%%%%%%%%%%%%%%%%%%%%%%%%%%%%
%%%%%%%%%%%%%%%%%%%%%%%%%%%%%%%%%%%%%%%% THEOREM %%%%%%%%%%%%%%%%%%%%%%%%%%%%%%%%%%%%%%%%%%%%%%%%%
%%%%%%%%%%%%%%%%%%%%%%%%%%%%%%%%%%%%%%%%%%%%%%%%%%%%%%%%%%%%%%%%%%%%%%%%%%%%%%%%%%%%%%%%%%%%%%%%%%
\begin{theorem}\label{th:points-by-Q1Q2}
Consider a smooth surface $M$ in $\R^4$. Let $Q_1$, $Q_2$ be the quadratic forms of the local quadratic map 
of $M$ at a point $p$ $($in an orthonormal basis $\nbf_1$, $\nbf_2$ of $N_pM$$)$. 
\smallskip

\noindent
\textit{\textbf{A}}. If $Q_1$, $Q_2$ are not proportional and $\Pi$ is the plane they span {\rm (see Fig.\,\ref{fig:Positions-Plane12})}, then 
\smallskip

\noindent
$(e)$ $\Delta>0$  $($$p$ is elliptic$)$ iff the plane $\Pi$ intersects the cone $\mathcal{C}$ only at the origin; 
\smallskip

\noindent
$(\mathring{e})$ $4\Delta=N^2$ iff $\|\Hbf\|=0$ iff \,$\Pi$ is pseudo-orthogonal to the axis of the cone $\mathcal{C}$. 
\smallskip

\noindent
$(p)$ $\Delta=0$  $($$p$ is parabolic$)$ iff the plane $\Pi$ is tangent to the cone $\mathcal{C}$; 
\smallskip

\noindent
$(h)$ $\Delta<0$  $($$p$ is hyperbolic$)$ iff the plane $\Pi$ intersects the cone $\mathcal{C}$ along two lines; 
\smallskip

\noindent
$(h_{su})$ $N=0$  $($$p$ is semi umbilic$)$ iff \,$\Pi$ contains the axis of the cone $\mathcal{C}$; 
\medskip

\noindent
\textit{\textbf{B}}. If $Q_1$, $Q_2$ span only a line $\ell$, then $\Delta=0$ and $N=0$. And moreover {\rm (see Fig.\,\ref{fig:Positions-line})}
\smallskip

\noindent
$(ri)$ $K<0$, $\|\Hbf\|\neq0$ $($$p$ is a real inflection$)$ iff the line $\ell$ lies outside the cone $\mathcal{C}$;
\smallskip

\noindent
$(\mathring{ri})$ $K<0$, $\|\Hbf\|=0$ $($$p$ is a real inflection$)$ iff \,$\ell$ is pseudo-orthogonal to the axis of $\mathcal{C}$;
\smallskip

\noindent
$(fi)$ $K=0$, $\|\Hbf\|\neq 0$  $($$p$ is a flat inflection$)$ iff the line $\ell$ lies on the cone $\mathcal{C}$;
\smallskip

\noindent
$(ii)$ $\|\Hbf\|^2>K>0$  $($$p$ is an imaginary inflection$)$ iff the line $\ell$ lies inside the cone $\mathcal{C}$;
\smallskip

\noindent
$(u)$ $\|\Hbf\|^2=K> 0$  $($$p$ is an umbilic$)$ iff the line $\ell$ coincides with the axis of the cone $\mathcal{C}$. 
\medskip

\noindent
\textit{\textbf{C}}. If $Q_1,Q_2$ vanish identically, then $\Delta=N=K=\|\Hbf\|=0$ and $p$ is a flat umbilic. 
\end{theorem}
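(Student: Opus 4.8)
The plan is to push every invariant in the statement through the dictionary of Theorem~\ref{th:features-pallelogram}, turning the classification into elementary Lorentzian geometry of the plane $\Pi=\mathrm{span}(Q_1,Q_2)$ (resp.\ the line $\ell$) inside the space of quadratic forms. Recall $K=\|Q_1\|_\psi^2+\|Q_2\|_\psi^2$, $\Delta=\|Q_1\|_\psi^2\|Q_2\|_\psi^2-\langle Q_1,Q_2\rangle_\psi^2=\|\{Q_1,Q_2\}\|_\psi^2$, $N=2\langle\{Q_1,Q_2\},W\rangle_\psi$, and $\Hbf=(\langle Q_1,W\rangle_\psi,\langle Q_2,W\rangle_\psi)$, so that $\|\Hbf\|^2=\langle Q_1,W\rangle_\psi^2+\langle Q_2,W\rangle_\psi^2$. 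By Proposition~\ref{prop:sign-||Q||} the sign of $\|Q\|_\psi^2$ records whether $Q$ is inside, outside, or on the cone $\mathcal{C}$; the ambient form $\|\cdot\|_\psi^2$ is Lorentzian with a single timelike axis, generated by $W$ ($\|W\|_\psi^2=1$).

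For Part~\textbf{A} I would note that $\Delta$ is exactly the Gram determinant of $\|\cdot\|_\psi^2$ restricted to $\Pi$ (Theorem~\ref{th:features-pallelogram}(b)). Since the ambient signature is $(+,-,-)$, a $2$-plane can never be positive definite, so the sign of this determinant dictates the type of $\Pi$: $\Delta>0$ forces $\Pi$ spacelike, meeting $\mathcal{C}$ only at the origin; $\Delta<0$ forces $\Pi$ timelike, meeting $\mathcal{C}$ in two lines; $\Delta=0$ forces $\Pi$ degenerate, i.e.\ tangent to $\mathcal{C}$ along one line. With \eqref{eq:Delta>0-elliptic} this gives $(e)$, $(h)$ and $(p)$. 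For $(h_{su})$, the bracket $\{Q_1,Q_2\}$ is the pseudo-normal of $\Pi$ (Proposition~\ref{th:bracket-orthogonality}(a)), so for $\Delta\neq0$ one has $W\in\Pi\Leftrightarrow W\perp_\psi\{Q_1,Q_2\}\Leftrightarrow N=0$: the axis lies in $\Pi$ precisely at semi-umbilic points. For $(\mathring{e})$, $\|\Hbf\|=0$ means $W\perp_\psi Q_1$ and $W\perp_\psi Q_2$, i.e.\ $W\perp_\psi\Pi$; writing $W=W_\Pi+\bigl(\langle W,\{Q_1,Q_2\}\rangle_\psi/\Delta\bigr)\{Q_1,Q_2\}$ with $W_\Pi\in\Pi$ yields the identity $1=\|W_\Pi\|_\psi^2+N^2/4\Delta$, so $\|\Hbf\|=0\Leftrightarrow 4\Delta=N^2$ (vacuously when $\Delta<0$, cleanly when $\Delta>0$ since then $\Pi$ carries no nonzero null vector).

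For Part~\textbf{B}, proportionality of $Q_1,Q_2$ gives $\{Q_1,Q_2\}=0$, hence $\Delta=N=0$ at once. Writing $Q_i=\lambda_iQ_0$ with $Q_0$ spanning $\ell$, the sign of $K=(\lambda_1^2+\lambda_2^2)\|Q_0\|_\psi^2$ equals that of $\|Q_0\|_\psi^2$, which by Proposition~\ref{prop:sign-||Q||} locates $\ell$: $K<0$ (outside $\mathcal{C}$) is real inflection, $K=0$ (on $\mathcal{C}$) flat inflection, $K>0$ (inside $\mathcal{C}$) imaginary inflection or umbilic. Since $\|\Hbf\|^2=(\lambda_1^2+\lambda_2^2)\langle Q_0,W\rangle_\psi^2$, the split $(\mathring{ri})$ versus $(ri)$ is governed by $\langle Q_0,W\rangle_\psi=0$, i.e.\ $\ell\perp_\psi$ axis; and for $K>0$ the reverse Cauchy--Schwarz inequality for timelike vectors, $\langle Q_0,W\rangle_\psi^2\ge\|Q_0\|_\psi^2\|W\|_\psi^2$ with equality iff $Q_0\parallel W$, gives $\|\Hbf\|^2\ge K$, the equality case ($\ell=$ axis) separating the umbilic $(u)$ from the imaginary inflection $(ii)$. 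Part~\textbf{C} is immediate: $Q_1\equiv Q_2\equiv0$ kills every invariant and collapses $\E_p$ to $p$.

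The genuinely routine input is the dictionary of Theorem~\ref{th:features-pallelogram}, which I would simply cite. The step that needs the most care is the Lorentzian signature analysis in Part~\textbf{A}---ruling out positive-definite planes and handling the degenerate (tangent) case $\Delta=0$---together with checking that each algebraic condition on $\Delta$, $N$, $K$, $\Hbf$ matches the ellipse-based point types of \S\ref{sect-Indic-ellipse&point-class}. The only non-formal ingredient is the reverse Cauchy--Schwarz inequality, which is what cleanly distinguishes the umbilic case from imaginary inflection.
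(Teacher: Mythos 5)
Your proof is correct and rests on the same foundation as the paper's: both funnel every invariant through the dictionary of Theorem~\ref{th:features-pallelogram}, Proposition~\ref{prop:sign-||Q||} and the pseudo-orthogonality of $\{Q_1,Q_2\}$ to $\Pi$ (Proposition~\ref{th:bracket-orthogonality}). Where you differ is in three sub-arguments, each time replacing a coordinate computation by a signature argument. For $(e)$, $(p)$, $(h)$ you classify $\Pi$ by the signature of the restricted form (whose Gram determinant is $\Delta$), using that no $2$-plane can be positive definite in signature $(+,-,-)$; the paper instead locates the pseudo-normal $\{Q_1,Q_2\}$ relative to the cone --- two equivalent readings of the same fact. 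For $(\mathring{e})$ your orthogonal decomposition of $W$ along $\Pi\oplus\Pi^{\perp_\psi}$ gives the identity $1=\|W_\Pi\|_\psi^2+N^2/4\Delta$ directly (and incidentally recovers Proposition~\ref{th:<R,H>=1-N^2/4D} in the form $\langle\Rbf,\Hbf\rangle=\|W_\Pi\|_\psi^2$), whereas the paper cites that proposition. For $(ii)$ and $(u)$ the reverse Cauchy--Schwarz inequality for timelike vectors replaces the paper's explicit expansion in the coefficients $a_i,b_i,c_i$ and makes the dichotomy $\|\Hbf\|^2>K$ versus $\|\Hbf\|^2=K$ transparent. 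One line is worth adding to $(\mathring{e})$: your decomposition of $W$ requires $\Delta\neq 0$, so you should note that for $\Delta=0$ (with $Q_1,Q_2$ independent) all three conditions of $(\mathring{e})$ fail --- in particular $N\neq 0$ there, because a nonzero null vector is never $\psi$-orthogonal to the timelike vector $W$ --- so the equivalence holds vacuously; the paper's own proof carries the same implicit restriction.
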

\begin{figure}[ht] %< préférence de placement h , t , b or p >
\centering
\includegraphics[scale=0.14]{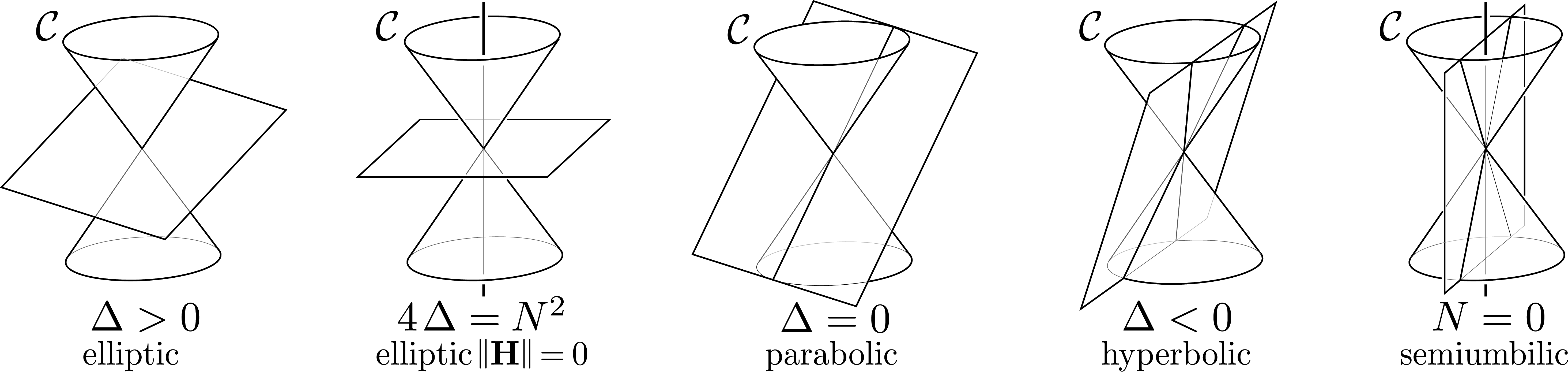}
\caption{\small The positions of the plane $\Pi(Q_1Q_2)$ and the cone $\mathcal{C}$ related to $\Delta$ and $N$.}
\label{fig:Positions-Plane12}
\end{figure}
\begin{figure}[ht] %< préférence de placement h , t , b or p >
\centering
\includegraphics[scale=0.14]{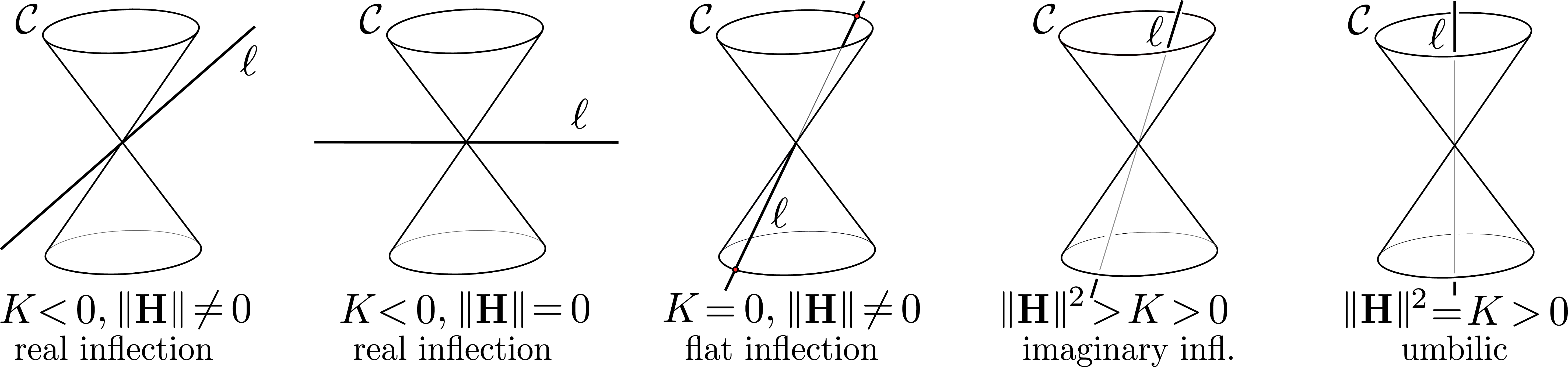}
\caption{\small The positions of the line $\ell(Q_1Q_2)$ and the cone $\mathcal{C}$ at special points where $\Delta=N=0$.}
\label{fig:Positions-line}
\end{figure}

\begin{proof}[\underline{Proof of \textit{\textbf{A}}}]
We have that the bracket $\{Q_1,Q_2\}$ is pseudo-orthogonal to the plane $\Pi$ (Proposition\,\ref{th:bracket-orthogonality}) 
and that $\Delta=\|\{Q_1,Q_2\}\|_{\psi}^2$ (Theorem\,\ref{th:features-pallelogram}). Hence the plane $\Pi$ intersects 
$\mathcal{C}$ along two lines (or is tangent to $\mathcal{C}$, or intersects $\mathcal{C}$ only at the origin) iff the vector 
$\{Q_1,Q_2\}$ lies outside (resp. lies on or lies inside) $\mathcal{C}$, and this happens, by Proposition\,\ref{prop:sign-||Q||}, 
iff $\Delta<0$ (resp. $\Delta=0$, $\Delta>0$). Items $e,p,h$ are proved. 
\smallskip

\noindent
Item $h_{su}$: By Theorem\,\ref{th:features-pallelogram}, $N=0$ iff $\Tr \{Q_1,Q_2\}=0$. 
So the vector $\{Q_1,Q_2\}$ is of the form $(\a,\beta, -\a)$. 
Then the plane $\Pi$, pseudo-orthogonal to $(\a,\beta, -\a)$, consists of the vectors $(a,b,c)$ satisfying 
the equation $\langle (\a,\beta, -\a), (a,b,c,)\rangle_\psi=0$, that is $\a(a-c)+2\beta b=0$. Therefore, $\Pi$ 
contains the axis of the cone $\mathcal{C}$ --given by the equations $a=c$, $b=0$.  
\smallskip

\noindent
Item $\mathring{e}$: Consider the unit vector $W=(1,0,1)$ which generates the axis of $\mathcal{C}$. The plane $\Pi$ is 
pseudo-orthogonal to $W$ iff both $Q_1$ and $Q_2$ are pseudo-orthogonal to $W$, that is, iff $a_1+c_1=0$ and $a_2+c_2=0$, 
which is equivalent to $\|\Hbf\|=0$ (and $\|\Rbf\|=0$ because $\Delta\neq 0$). This is equivalent, 
by equation\,\eqref{eq:<R,H>=1-N^2/4D}, to the equality $N^2=4\Delta$. 
\medskip

\noindent
\underline{\textit{Proof of \textit{\textbf{B}}}}. The proportionality of $Q_1$ and $Q_2$ implies that the Poisson bracket $\{Q_1,Q_2\}$ vanish. 
Then, by Theorem\,\ref{th:features-pallelogram},  $N=\Tr\{Q_1,Q_2\}=0$ and $\Delta=\|\{Q_1,Q_2\}\|_{\psi}^2=0$. 
\smallskip

In all items of $B$, the sign of $K$ is obtained from Theorem\,\ref{th:features-pallelogram} 
and Proposition\,\ref{prop:sign-||Q||}. 

Concerning $\|\Hbf\|$, we have seen above that $\|\Hbf\|=0$ iff the vectors $Q_1$, $Q_2$ are pseudo-orthogonal 
to $W=(1,0,1)$.  
Items $ri$, $\mathring{ri}$ and $fi$ are completely proved. 
\smallskip

\noindent
Item $ii$: If $Q_2=\l Q_1$ and $a_1\neq c_1$, we have $(a_1-c_1)^2+(a_2-c_2)^2>0$,  
which is equivalent to $(a_1+c_1)^2+(a_2+c_2)^2> 4a_1c_1+4a_2c_2$. This last inequality implies 
\[\left(\frac{a_1+c_1}{2}\right)^2+\left(\frac{a_2+c_2}{2}\right)^2> a_1c_1-b_1^2+a_2c_2-b_2^2\,,\]
that is, $\|\Hbf\|^2>K>0$.  Item $ii$ is proved. \  It remains to prove Item $u$:
\smallskip

\noindent
The equality $\|\Hbf\|^2=K$ is equivalent to $\left((a_1-c_1)/2\right)^2+\left((a_2-c_2)/2\right)^2=-b_1^2-b_2^2$, 
which holds iff $b_1=b_2=0$ and $a_1=c_1$ and $a_2=c_2$, that is, iff both $Q_1$ and $Q_2$ are proportional 
to the vector $(1,0,1)$ which generates the axis of the cone $\mathcal{C}$.   
\end{proof}

%%%%%%%%%%%%%%%%%%%%%%%%%%%%%%%%%%%%%%%%%%%%%%%%%%%%%%%%%%%%%%%%%%%%%%%%%%%%%%%%%%%%%%%%%%%%%%%%%%%%
%%%%%%%%%%%%%%%%%%%%%%%%%%%%%%%%%%%%%%%%  THEOREM  %%%%%%%%%%%%%%%%%%%%%%%%%%%%%%%%%%%%%%%%%%%%%%%%%
%%%%%%%%%%%%%%%%%%%%%%%%%%%%%%%%%%%%%%%%%%%%%%%%%%%%%%%%%%%%%%%%%%%%%%%%%%%%%%%%%%%%%%%%%%%%%%%%%%%%
\begin{proposition}\label{prop:at-least-one-negative}
Let $p$ be a point of a smooth surface in $\R^4$ at which $\Delta\neq 0$. 
At least one of the two principal focal curvatures $\K_1$, $\K_2$ at $p$ is negative. 
\end{proposition}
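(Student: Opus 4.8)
The plan is to read off the signs of the $\K_i$ directly from the pseudo-Euclidean signature of $\langle\cdot,\cdot\rangle_\psi$, using that in a principal basis the principal focal curvatures are the squared pseudo-norms $\K_i=\|Q_i\|_\psi^2$ of pseudo-orthogonal forms (items $(b)$ and $(d)$ of the equivalence theorem above). First I would record the signature of the pseudo-scalar product. Completing the square in $\langle Q,Q\rangle_\psi=ac-b^2$ via $u=(a+c)/2$, $v=(a-c)/2$, $w=b$ gives $\langle Q,Q\rangle_\psi=u^2-v^2-w^2$, so $\langle\cdot,\cdot\rangle_\psi$ has signature $(+,-,-)$ on $\R^3$. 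In particular every subspace of $\R^3$ on which $\langle\cdot,\cdot\rangle_\psi$ is positive definite has dimension at most $1$.

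Next I would choose a principal basis of $N_pM$, so that $Q_1$, $Q_2$ are pseudo-orthogonal and $\K_i=\|Q_i\|_\psi^2$. Because $\Delta=\K_1\K_2\neq 0$, neither form is pseudo-null, so neither $\K_i$ vanishes. Suppose, for contradiction, that both $\K_1>0$ and $\K_2>0$. Then on $\Pi=\mathrm{span}(Q_1,Q_2)$ every vector $\alpha Q_1+\beta Q_2$ has $\langle\cdot,\cdot\rangle_\psi$-square $\alpha^2\K_1+\beta^2\K_2$, the cross term vanishing by pseudo-orthogonality; this is strictly positive whenever $(\alpha,\beta)\neq(0,0)$. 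Thus $\Pi$ would be a $2$-dimensional positive-definite subspace, contradicting the signature computation. Hence at least one $\K_i$ is negative.

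The only genuinely interesting case is the elliptic one: when $\Delta<0$ the factorisation $\Delta=\K_1\K_2$ already forces opposite signs and the statement is immediate, whereas for $\Delta>0$ the two curvatures share a sign and the signature argument is precisely what excludes both being positive. This also dovetails with Theorem \ref{th:points-by-Q1Q2}$(e)$: at an elliptic point $\Pi$ meets $\C$ only at the origin, so the restriction of $\langle\cdot,\cdot\rangle_\psi$ to $\Pi$ is definite, and by the signature it must be negative definite, giving $\K_1<0$ and $\K_2<0$. I do not anticipate a serious obstacle here; the single point to handle with care is that this conclusion is the projectivised content of Fact \ref{fact:conj-points} (two conjugate «forms» cannot both lie inside $\hat{\mathcal{C}}$), so one could alternatively invoke that fact directly once $\hat Q_1$, $\hat Q_2$ are recognised as conjugate.
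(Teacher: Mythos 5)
Your proof is correct. Where the paper disposes of this in one line by citing Proposition \ref{prop:sign-||Q||} together with the classical pole--polar Fact \ref{fact:conj-points} (two conjugate points with respect to a conic cannot both lie inside it), you instead make the argument self-contained: you compute the signature of $\langle\cdot,\cdot\rangle_\psi$ to be $(+,-,-)$ via $ac-b^2=\left(\frac{a+c}{2}\right)^2-\left(\frac{a-c}{2}\right)^2-b^2$, and observe that a principal basis makes $Q_1$, $Q_2$ pseudo-orthogonal with $\K_i=\|Q_i\|_\psi^2$, so that $\K_1>0$ and $\K_2>0$ would produce a two-dimensional positive-definite subspace of a Lorentzian space --- impossible. (A nice bonus of your phrasing is that the positivity of $\alpha^2\K_1+\beta^2\K_2$ itself forces $Q_1$, $Q_2$ to be independent, so you never need to argue separately that $\Pi$ is a plane.) As you note yourself, this is exactly the affine content of Fact \ref{fact:conj-points}, so the two proofs rest on the same geometry; what your version buys is independence from the projective pole--polar machinery and an explicit identification of \emph{why} the cone $\mathcal{C}$ has a convex interior (the single positive direction of the signature), at the cost of redoing a computation the paper treats as classical. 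Either route is acceptable; yours is the more elementary and verifiable one.
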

\begin{proof}
It follows from Proposition\,\ref{prop:sign-||Q||} and Fact\,\ref{fact:conj-points}.   
\end{proof}

%%%%%%%%%%%%%%%%%%%%%%%%%%%%%%%%%%%%%%%%%%%%%%%%%%%%%%%%%%%%%%%%%%%%%%%%%%%%%%%%%%%%%%%%%%%%%%%%%%%%
%%%%%%%%%%%%%%%%%%%%%%%%%%%%%%%%%%%%%%  COROLLARY  %%%%%%%%%%%%%%%%%%%%%%%%%%%%%%%%%%%%%%%%%%%%%%%%%
%%%%%%%%%%%%%%%%%%%%%%%%%%%%%%%%%%%%%%%%%%%%%%%%%%%%%%%%%%%%%%%%%%%%%%%%%%%%%%%%%%%%%%%%%%%%%%%%%%%%
\begin{corollary}\label{cor:at-least-one-negative}
For $p\in M$ in $\R^4$, with $\Delta\neq 0$, at least one invariant $K$ or $\Delta$ is negative.  
\end{corollary}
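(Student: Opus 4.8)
The plan is to deduce the statement directly from Proposition~\ref{prop:at-least-one-negative} together with the expressions of $K$ and $\Delta$ as elementary symmetric functions of the principal focal curvatures, namely $\Delta=\K_1\K_2$ and $K=\K_1+\K_2$ from \eqref{eq:Delta=K1K1-K=K1+K2}. Since $\Delta\neq0$, the relation $\Delta=\K_1\K_2$ forces neither $\K_1$ nor $\K_2$ to vanish, so Proposition~\ref{prop:at-least-one-negative} guarantees that at least one of them---say $\K_1$---is strictly negative.

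From here I would split into two cases according to the sign of the remaining focal curvature $\K_2$. If $\K_2<0$ as well, then $K=\K_1+\K_2$ is a sum of two negative numbers, hence $K<0$. If instead $\K_2>0$, then $\Delta=\K_1\K_2$ is the product of a negative and a positive number, hence $\Delta<0$. In either case at least one of $K$, $\Delta$ is negative, which is exactly the claim.

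I expect no genuine obstacle here: the whole content is carried by Proposition~\ref{prop:at-least-one-negative}, and the corollary is merely the translation of ``one focal curvature is negative'' into the language of the symmetric invariants $K$ and $\Delta$. The only point that must be verified is that the hypothesis $\Delta\neq0$ excludes the degenerate sub-case $\K_2=0$---which could otherwise leave both $K$ and $\Delta$ non-negative---and this is immediate from $\Delta=\K_1\K_2$.
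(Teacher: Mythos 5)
Your argument is correct and is exactly the intended one: the paper leaves the corollary without an explicit proof, but it follows from Proposition~\ref{prop:at-least-one-negative} together with $\Delta=\K_1\K_2$, $K=\K_1+\K_2$ by precisely the case split you give (and the same reasoning reappears in the paper's proof of Theorem~\ref{Th:restrictions1}$(c)$). Your remark that $\Delta\neq 0$ rules out a vanishing focal curvature is the right point to check, and it is handled correctly.
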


%%%%%%%%%%%%%%%%%%%%%%%%%%%%%%%%%%%%%%%%%%%%%%%%%%%%%%%%%%%%%%%%%%%%%%%%%%%%%%%%%%%%%%%%%%%%%%%%%%%%
%%%%%%%%%%%%%%%%%%%%%%%%%%%%%%%%%%%%%%  COROLLARY  %%%%%%%%%%%%%%%%%%%%%%%%%%%%%%%%%%%%%%%%%%%%%%%%%
%%%%%%%%%%%%%%%%%%%%%%%%%%%%%%%%%%%%%%%%%%%%%%%%%%%%%%%%%%%%%%%%%%%%%%%%%%%%%%%%%%%%%%%%%%%%%%%%%%%%
% \begin{corollary}
% At no point $p$ of a smooth surface $M$ in $\R^4$ the Gauss quadratic form $\mathcal{G}_p$ 
% can be positive definite. 
% \end{corollary}
%
% Indeed, $\mathcal{G}_p$ is negative definite if $\Delta>0$, degenerate if $\Delta=0$ and indefinite if $\Delta<0$. 
%
%%%%%%%%%%%%%%%%%%%%%%%%%%%%%%%%%%%%%%%%%%%%%%%%%%%%%%%%%%%%%%%%%%%%%%%%%%%%%%%%%%%%%%%%%%%%%%%%%%%%%%%%%%%%%%%%%%%%
\subsubsection{\textbf{Local Invariants of Surfaces in $\R^5$ and Pseudo-Euclidean Geometry}}
%%%%%%%%%%%%%%%%%%%%%%%%%%%%%%%%%%%%%%%%%%%%%%%%%%%%%%%%%%%%%%%%%%%%%%%%%%%%%%%%%%%%%%%%%%%%%%%%%%%%%%%%%%%%%%%%%%%%

%%%%%%%%%%%%%%%%%%%%%%%%%%%%%%%%%%%%%%%%%%%%%%%%%%%%%%%%%%%%%%%%%%%%%%%%%%%%%%%%%%%%%%%%%%%%%%%%%%%%
%%%%%%%%%%%%%%%%%%%%%%%%%%%%%%%%%%%%%%%%  THEOREM  %%%%%%%%%%%%%%%%%%%%%%%%%%%%%%%%%%%%%%%%%%%%%%%%%
%%%%%%%%%%%%%%%%%%%%%%%%%%%%%%%%%%%%%%%%%%%%%%%%%%%%%%%%%%%%%%%%%%%%%%%%%%%%%%%%%%%%%%%%%%%%%%%%%%%%
\begin{theorem}[Fig.\,\ref{fig:Parallelopode-features}]\label{th:features-pallelotope}
If $Q_1$, $Q_2$, $Q_3$ are the quadratic forms of the local quadratic map of a 
surface $M$ in $\R^5$ at $p$ $($in an orthonormal basis $\nbf_1$, $\nbf_2$, $\nbf_3$ of $N_pM$$)$, then 
\smallskip

\noindent
$(a)$ The Gaussian curvature of $M$ at $p$ is the sum of the squared pseudo-norms 
of the vectors $Q_i$ {\rm (Fig.\ref{fig:Parallelopode-features}):}   
\[K=\|Q_1\|_\psi^2+\|Q_2\|_\psi^2+\|Q_3\|_\psi^2\,.\] 

\noindent
$(b)$ The invariant $\mathcal{A}$ is the sum of the squared pseudo-areas of the parallelogram faces 
$Q_1$-$Q_2$, $Q_2$-$Q_3$, $Q_3$-$Q_1$ of the parallelepiped formed by $Q_1$, $Q_2$, $Q_3$.
These squared pseudo-areas are equal to the squared pseudo-norms of the Poisson brackets of the 
respective pairs of vectors {\rm (Fig.\ref{fig:Parallelopode-features}):}
\[\mathcal{A}=\|\{Q_1,Q_2\}\|_\psi^2+\|\{Q_2,Q_3\}\|_\psi^2+\|\{Q_3,Q_1\}\|_\psi^2\,.\]

\noindent
$(c)$ The torsion of $M$ at $p$ is equal to the oriented pseudo-volume of the parallelepiped 
formed by $Q_1$, $Q_2$, $Q_3$. Moreover, the invariant $\Delta$ of $M$ at $p$ is equal to the 
squared pseudo-volume of that parallelepiped {\rm (Fig.\ref{fig:Parallelopode-features}):} 
\begin{align}
\tau & =\langle Q_1,\{Q_2,Q_3\}\rangle_\psi\,, \\
\Delta & =\det(\langle Q_i,Q_j\rangle_\psi)=\langle Q_1,\{Q_2,Q_3\}\rangle_{\psi}^2=\tau^2\,.
\end{align}

\noindent 
$(d)$ The components of the mean curvature vector $\Hbf$ are the pseudo-orthogonal projections of $Q_1$, $Q_2$, $Q_3$ 
on the oriented axis of the cone $\mathcal{C}$\,$:$
\[\Hbf=\left(\langle Q_1, W\rangle_\psi, \langle Q_2, W\rangle_\psi, \langle Q_3, W\rangle_\psi\right)\,.\]  
$(e)$ The components of the normal vector $\Nbf=(\Abf-\Cbf)\times\Bbf$ are the traces  
of the Poisson brackets $\{Q_2,Q_3\}$, $\{Q_3,Q_1\}$, $\{Q_1,Q_2\}$,
\begin{equation}\label{eq:N=(tr,tr,tr)}
  \Nbf=\left(\Tr\{Q_2,Q_3\}, \Tr\{Q_3,Q_1\}, \Tr\{Q_1,Q_2\}\right)\,.
\end{equation}
Moreover, if $\tau\neq 0$ the components of the umbilical focal centre $\Rbf$ are the pseudo-orthogonal projections of 
the brackets $\{Q_2,Q_3\}$, $\{Q_3,Q_1\}$, $\{Q_1,Q_2\}$, divided by $\tau$, on the oriented axis of the cone $\mathcal{C}$\,$:$
\begin{equation}\label{eq:R=(<,>,<,>,<,>)}
  \Rbf=\left(\left\langle \frac{\{Q_2,Q_3\}}{\tau}, W\right\rangle_\psi, \left\langle \frac{\{Q_3,Q_1\}}{\tau}, W\right\rangle_\psi, 
\left\langle \frac{\{Q_1,Q_2\}}{\tau}, W\right\rangle_\psi\right)\,.
\end{equation}
\begin{figure}[ht] %< préférence de placement h , t , b or p >
\centering
\includegraphics[scale=0.145]{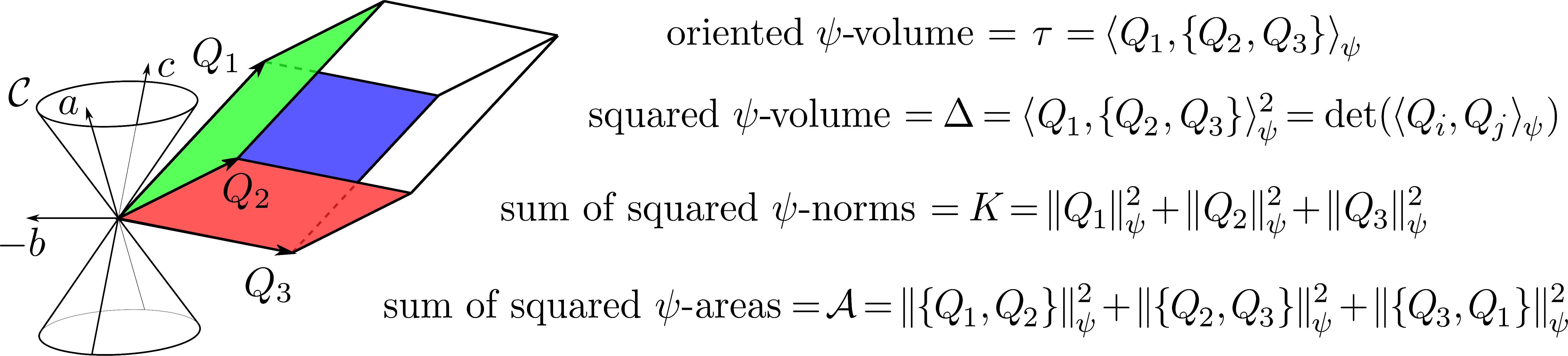}
\caption{\small The local invariants $K$, $\mathcal{A}$, $\tau$, $\Delta$ as features of the 
parallelepiped of $Q_1$-$Q_2$-$Q_3$.}
\label{fig:Parallelopode-features}
\end{figure}
\end{theorem}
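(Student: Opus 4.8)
The plan is to deduce every item from results already in place, so that the argument is essentially a dictionary relating the three coefficient readings of the local quadratic map. The three pillars are: Proposition \ref{prop:inv-char_polynomial5}, which identifies $K$, $\mathcal{A}$, $\Delta$ with the elementary symmetric functions of the Gram matrix $[\mathcal{G}_p]=(\langle Q_i,Q_j\rangle_\psi)$; Proposition \ref{th:bracket-orthogonality}, relating pseudo-norms and pseudo-volumes of Poisson brackets to Gram determinants; and the expression $\Rbf=\Nbf/2\tau$ of Proposition \ref{prop:R=N/2tau}. Items $(a)$, $(b)$ and $(d)$ then fall out at once. For $(a)$ one writes $K=k_1+k_2+k_3$ with $k_i=\langle Q_i,Q_i\rangle_\psi=\|Q_i\|_\psi^2$. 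For $(b)$ each summand of $\mathcal{A}$ in \eqref{eq:invariant-A} is $k_ik_j-k_{ij}^2=\|Q_i\|_\psi^2\|Q_j\|_\psi^2-\langle Q_i,Q_j\rangle_\psi^2$, which equals $\|\{Q_i,Q_j\}\|_\psi^2$ by Proposition \ref{th:bracket-orthogonality}$(b)$. For $(d)$ one evaluates the pseudo-scalar product against $W=(1,0,1)$, obtaining $\langle Q_i,W\rangle_\psi=(a_i+c_i)/2$, precisely the $i$-th component of $\Hbf$.

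The crux is item $(c)$, where the key is a ``row/column duality'' of the coefficient matrix
\[
J=\begin{pmatrix} a_1 & b_1 & c_1 \\ a_2 & b_2 & c_2 \\ a_3 & b_3 & c_3 \end{pmatrix},
\]
whose rows are the vectors $Q_1,Q_2,Q_3$ and whose columns are $\Abf,\Bbf,\Cbf$. First I would establish the pivotal identity
\[
\langle Q_1,\{Q_2,Q_3\}\rangle_\psi=\tfrac{1}{2}\det J,
\]
by expanding the left side with the bracket formula \eqref{eq:Poisson-bracket-form} and the definition \eqref{eq:psi-scalar-prod}, and then matching the six resulting monomials against the cofactor expansion of $\det J$ along its first row.

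Next I would compute the torsion. By \eqref{eq:def-torsion}, $\tau=\langle\frac{1}{2}(\Abf-\Cbf)\times\Bbf,\Hbf\rangle=\det[\frac{1}{2}(\Abf-\Cbf),\Bbf,\Hbf]$, the scalar triple product of three columns. Factoring $\frac12$ out of the first and third columns and expanding the resulting determinant $\det[\Abf-\Cbf,\Bbf,\Abf+\Cbf]$ by multilinearity, the two repeated-column terms vanish and the two surviving terms combine into $2\det[\Abf,\Bbf,\Cbf]$; since $\Abf,\Bbf,\Cbf$ are the columns of $J$ this gives $\tau=\frac{1}{2}\det J=\langle Q_1,\{Q_2,Q_3\}\rangle_\psi$. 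Consequently
\[
\Delta=\det(\langle Q_i,Q_j\rangle_\psi)=\langle Q_1,\{Q_2,Q_3\}\rangle_\psi^2=\tau^2
\]
follows from Propositions \ref{prop:inv-char_polynomial5} and \ref{th:bracket-orthogonality}$(c)$.

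Finally, item $(e)$ is a direct coordinate check: writing out the Euclidean cross product $(\Abf-\Cbf)\times\Bbf$ componentwise and comparing each component with the trace (the sum of the first and third coordinates) of the corresponding bracket vector from \eqref{eq:Poisson-bracket-form} yields $\Nbf=(\Tr\{Q_2,Q_3\},\Tr\{Q_3,Q_1\},\Tr\{Q_1,Q_2\})$. Dividing by $2\tau$ via Proposition \ref{prop:R=N/2tau} and rewriting each trace as $2\langle\,\cdot\,,W\rangle_\psi$ produces the stated formula for $\Rbf$. The main obstacle is clerical rather than conceptual: the entire content is that the pseudo-Euclidean mixed product of the forms $Q_i$ and the Euclidean mixed product of $\frac{1}{2}(\Abf-\Cbf),\Bbf,\Hbf$ are two readings of the single determinant $\det J$, so the genuine care goes into tracking the factors of $\frac12$ and the signs in the multilinear expansion of the torsion.
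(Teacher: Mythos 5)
Your proposal is correct and takes essentially the same route as the paper: items $(a)$, $(b)$, $(d)$ reduce to Proposition \ref{prop:inv-char_polynomial5} and Proposition \ref{th:bracket-orthogonality} exactly as in the text, and item $(e)$ is the same componentwise check combined with $\Nbf=2\tau\Rbf$ and $\Tr Q=2\langle Q,W\rangle_\psi$. For item $(c)$ the paper performs the identical direct coordinate verification of $\langle Q_1,\{Q_2,Q_3\}\rangle_\psi=\langle \tfrac12(\Abf-\Cbf)\times\Bbf,\Hbf\rangle$ (phrased via cyclic symmetry of the triple product rather than your bookkeeping through $\det J$), so the difference is only organizational.
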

\begin{proof}
$(a)$ It is well known that for a surface $M$ in $\R^5$ we have $K=k_1+k_2+k_3$, where 
$k_i=a_ic_i-b_i^2=\langle Q_i,Q_i\rangle_\psi$\, (cf. \protect\cite{Little, IRFRT}).  %and Section\,\ref{section:Gaussian curvature}). 
\smallskip

\noindent
$(b)$ By Proposition\,\ref{th:bracket-orthogonality}\,$(b)$,  the three right-hand side terms 
in the expression of the invariant $\mathcal{A}=(k_1k_2-k_{12}^2)+(k_2k_3-k_{23}^2)+(k_3k_1-k_{31}^2)$ are the 
squared pseudo-areas of the parallelograms formed by the respective pairs of vectors $(Q_1,Q_2)$, $(Q_2,Q_3)$, 
$(Q_3,Q_1)$, and they are also equal to the squared pseudo-norms of the Poisson brackets of those pairs of vectors.  
\smallskip

\noindent
$(c)$ First, one directly checks that 
\begin{align*}
\langle Q_1,\{Q_2,Q_3\}\rangle_\psi & =\textstyle\frac{1}{2}\left((a_3c_2-a_2c_3)b_1+(a_1c_3-a_3c_1)b_2+(a_2c_1-a_1c_2)b_3\right) \\
 & =\textstyle\frac{1}{2}\left\langle\Hbf\times(\Abf-\Cbf)\,,\,\Bbf\right\rangle =\left\langle\frac{1}{2}(\Abf-\Cbf)\times\Bbf\,,\,\Hbf\right\rangle=:\tau\,.
\end{align*}
Next, we have $\Delta=\det[\mathcal{G}_p]$, and that $[\mathcal{G}_p]$ is the Gram matrix of the vectors 
$Q_1$, $Q_2$, $Q_3$ (for $\langle\,\cdot\,,\cdot\,\rangle_\psi$). Therefore Theorem\,\ref{th:features-pallelotope}\,$(c)$ 
follows from Proposition\,\ref{th:bracket-orthogonality}\,$(c)$. 
\smallskip

\noindent
$(d)$ It is a short and direct calculation. 
\smallskip

\noindent
$(e)$ One directly verifies \eqref{eq:N=(tr,tr,tr)}. To prove \eqref{eq:R=(<,>,<,>,<,>)} one 
applies the relation $\Tr Q=2\langle Q,W\rangle$ to the brackets in \eqref{eq:N=(tr,tr,tr)}, and  
uses that $\Nbf=2\tau\Rbf$ (Proposition\,\ref{prop:R=N/2tau}). 
\end{proof}

For $M$ in $\R^5$, we note $N$ the oriented area of the parallelogramme formed by $\Abf-\Cbf$ and $\Bbf$ 
(sign $N$= sign $\tau$). Thus $N=\mathrm{sign}\,\tau\|(\Abf-\Cbf)\times\Bbf\|$. If $\tau\neq 0$, then $N=2\tau\|\Rbf\|$.

%%%%%%%%%%%%%%%%%%%%%%%%%%%%%%%%%%%%%%%%%%%%%%%%%%%%%%%%%%%%%%%%%%%%%%%%%%%%%%%%%%%%%%%%%%%%%%%%%%%%
%%%%%%%%%%%%%%%%%%%%%%%%%%%%%%%%%%%%%%%%  THEOREM  %%%%%%%%%%%%%%%%%%%%%%%%%%%%%%%%%%%%%%%%%%%%%%%%%
%%%%%%%%%%%%%%%%%%%%%%%%%%%%%%%%%%%%%%%%%%%%%%%%%%%%%%%%%%%%%%%%%%%%%%%%%%%%%%%%%%%%%%%%%%%%%%%%%%%%
\begin{theorem}
Let $M$ be a smooth surface in $\R^5$, being $Q_1$, $Q_2$, $Q_3$ the components of its local 
quadratic map at $p$ {\rm (in an orthonormal basis $\nbf_1$, $\nbf_2$, $\nbf_3$ of $N_pM$)}. 
\smallskip
  
\noindent
\textbf{\textit{\small A}}. If $Q_1$, $Q_2$, $Q_3$ are not coplanar and $\nbf_1$, $\nbf_2$, $\nbf_3$ 
is a principal basis of $N_pM$, then 
\smallskip

\noindent
$(\psi e)$ $p$ is pseudo-elliptic \,iff \,  
$\dfrac{\langle Q_1,W\rangle_{\psi}^2}{\langle Q_1, Q_1\rangle_{\psi}^3}+
\dfrac{\langle Q_2,W\rangle_{\psi}^2}{\langle Q_2, Q_2\rangle_{\psi}^3}+
\dfrac{\langle Q_3,W\rangle_{\psi}^2}{\langle Q_3, Q_3\rangle_{\psi}^3}>0$; 
\medskip

\noindent
$(\psi h)$ $p$ is pseudo-hyperbolic \,iff \,  
$\dfrac{\langle Q_1,W\rangle_{\psi}^2}{\langle Q_1, Q_1\rangle_{\psi}^3}+
\dfrac{\langle Q_2,W\rangle_{\psi}^2}{\langle Q_2, Q_2\rangle_{\psi}^3}+
\dfrac{\langle Q_3,W\rangle_{\psi}^2}{\langle Q_3, Q_3\rangle_{\psi}^3}<0$; 
\medskip

\noindent
$(\psi p)$ $p$ is pseudo-parabolic \,iff \,  
$\dfrac{\langle Q_1,W\rangle_{\psi}^2}{\langle Q_1, Q_1\rangle_{\psi}^3}+
\dfrac{\langle Q_2,W\rangle_{\psi}^2}{\langle Q_2, Q_2\rangle_{\psi}^3}+
\dfrac{\langle Q_3,W\rangle_{\psi}^2}{\langle Q_3, Q_3\rangle_{\psi}^3}=0$; 
\medskip

% \noindent
% $(K)$ The Gauss curvature $K$ is equal to the squared pseudo-norm of the diagonal vector $Q:=Q_1+Q_2+Q_3:$ 
% $K=\langle\, Q,Q\,\rangle_\psi$; 
% \medskip
%
% \noindent
% $(\mathcal{A})$ The vector $Q^*:=Q_1^*+Q_2^*+Q_3^*$ provides $\mathcal{A}$, 
% namely $\mathcal{A}=\langle \tau Q^*,\tau Q^*\rangle_\psi$. 
% \medskip
%
\noindent
\textbf{\textit{\small B}}. If $Q_1$, $Q_2$, $Q_3$ lie on a plane $\Pi$, being not colinear, then $\tau=0$. 
Moreover {\rm (Fig.\,\ref{fig:Positions-Plane12})}, 
\smallskip

\noindent
$(\varphi e)$ $p$ is flat-elliptic \,iff\, $\mathcal{A}>0$ \,iff\, $\Pi$ intersects the cone $\mathcal{C}$ only at the origin; 
\smallskip 

\noindent
$(\varphi\mathring{e})$ $p$ satisfies $4\mathcal{A}=N^2>0$ \,iff\, $\|\Hbf\|=0$\,iff\, $\Pi$ 
is pseudo-orthogonal to the axis of $\mathcal{C}$; 
\smallskip 

\noindent
$(\varphi h)$ $p$ is flat-hyperbolic \,iff\, $\mathcal{A}<0$ \,iff\, $\Pi$ intersects the cone $\mathcal{C}$ along two lines; 
\smallskip

\noindent
$(\varphi p)$ $p$ is flat-parabolic \,iff\, $\mathcal{A}=0$ \,iff\, the plane $\Pi$ is tangent to the cone $\mathcal{C}$; 
\smallskip

\noindent
$(su)$ $p$ is semi-umbilic \,iff\, $N=0$ \,iff \, $\Pi$ contains the axis of the cone $\mathcal{C}$. 
\end{theorem}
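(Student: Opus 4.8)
The plan is to derive both parts from results already established, reducing each trichotomy to a single sign computation. For Part \textbf{A} the key observation is that, by item \textbf{a}$_\psi$ of Theorem\,\ref{prop:caustic-cone}, the point $p$ is pseudo-elliptic, pseudo-hyperbolic or pseudo-parabolic according to whether $\mathcal{G}_p^*(\Rbf)$ is positive, negative or zero; so it suffices to express $\mathcal{G}_p^*(\Rbf)$, in a principal basis, in terms of the pseudo-scalar products $\langle Q_i,W\rangle_\psi$ and $\langle Q_i,Q_i\rangle_\psi$. For Part \textbf{B} the idea is that coplanarity of $Q_1,Q_2,Q_3$ makes the configuration effectively two-dimensional, so that the whole classification collapses onto the already proved $\R^4$ classification of Theorem\,\ref{th:points-by-Q1Q2}.

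First I would treat Part \textbf{A}. In a principal basis $\nbf_1,\nbf_2,\nbf_3$ the matrix $[\mathcal{G}_p]$ is diagonal with entries the principal focal curvatures $\K_i$, and the Gram-matrix description of $[\mathcal{G}_p]$ gives $\K_i=\langle Q_i,Q_i\rangle_\psi$, while by Theorem\,\ref{th:features-pallelotope}$(d)$ the components of $\Hbf$ are $h_i=\langle Q_i,W\rangle_\psi$. By Proposition\,\ref{proposition:H=QR}$(b)$ the vertex is $\Rbf=(h_1/\K_1,h_2/\K_2,h_3/\K_3)$, and since $[\mathcal{G}_p^*]=[\mathcal{G}_p]^{-1}$ is diagonal with entries $1/\K_i$, one gets
\[
\mathcal{G}_p^*(\Rbf)=\tfrac12\sum_{i=1}^3\frac{\rho_i^2}{\K_i}
=\tfrac12\sum_{i=1}^3\frac{h_i^2}{\K_i^3}
=\tfrac12\sum_{i=1}^3\frac{\langle Q_i,W\rangle_\psi^2}{\langle Q_i,Q_i\rangle_\psi^3}\,.
\]
As the factor $\tfrac12$ is positive, the sign of $\mathcal{G}_p^*(\Rbf)$ equals the sign of the displayed sum, and item \textbf{a}$_\psi$ of Theorem\,\ref{prop:caustic-cone} yields the three equivalences $(\psi e)$, $(\psi h)$, $(\psi p)$ at once. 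This step is routine substitution and I do not expect any obstacle here.

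Next I would treat Part \textbf{B}. Since $\tau=\langle Q_1,\{Q_2,Q_3\}\rangle_\psi$ is the oriented pseudo-volume of the parallelepiped spanned by $Q_1,Q_2,Q_3$ (Theorem\,\ref{th:features-pallelotope}$(c)$), coplanarity forces $\tau=0$. To obtain the remaining equivalences I would apply an orthogonal change of the basis $\nbf_1,\nbf_2,\nbf_3$ of $N_pM$; because $Q_1,Q_2,Q_3$ span only the plane $\Pi$, it can be chosen so that the third component becomes the zero form, $Q_3'=0$. Then $k_3=k_{13}=k_{23}=0$, so by \eqref{eq:invariant-A} the invariant reduces to $\mathcal{A}=k_1k_2-k_{12}^2$, which is precisely the determinant invariant $\Delta$ of the surface in $\R^4$ with local quadratic map $(Q_1',Q_2')$; likewise $N^2=\|(\Abf-\Cbf)\times\Bbf\|^2$ equals the square of the $\R^4$ normal curvature. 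Moreover $\E_p$ and $\Hbf$ now lie in the plane $\{q_3=0\}=\Pi_\E$, which contains $p$, so the $\R^5$ notions flat-elliptic, flat-hyperbolic, flat-parabolic and semi-umbilic coincide (via Theorem\,\ref{prop:caustic-cone}, item \textbf{a}$_\f$, and the $\R^4$ classification of points) with the $\R^4$ notions elliptic, hyperbolic, parabolic and semi-umbilic. Theorem\,\ref{th:points-by-Q1Q2}\,\textbf{A} then gives the equivalences $(\f e)$, $(\f\mathring e)$, $(\f h)$, $(\f p)$, $(su)$ together with their geometric readings through the position of $\Pi$ relative to the discriminant cone $\mathcal{C}$.

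The only delicate point is the sign of $N$ when $\tau=0$: the $\R^5$ normal curvature was defined with sign equal to $\mathrm{sign}\,\tau$, which is undefined here. This is harmless, since Part \textbf{B} involves only $N^2$ and the condition $N=0$, both sign-independent and invariant under the orthogonal change of basis. As a cross-check independent of the reduction, one may observe directly that all brackets $\{Q_i,Q_j\}$ are pseudo-orthogonal to $\Pi$ (Proposition\,\ref{th:bracket-orthogonality}$(a)$), hence proportional to a single pseudo-normal $n$ of $\Pi$; consequently $\mathcal{A}=\big(\lambda_{12}^2+\lambda_{23}^2+\lambda_{31}^2\big)\|n\|_\psi^2$ has the sign of $\|n\|_\psi^2$, which by Proposition\,\ref{prop:sign-||Q||} locates $n$ inside, outside or on $\mathcal{C}$ and thereby fixes the position of $\Pi$ with respect to $\mathcal{C}$. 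I expect the reduction itself to be the main conceptual step, the rest being bookkeeping.
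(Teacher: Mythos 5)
Your argument is correct, and for Part \textbf{A} it is essentially the paper's own proof: both reduce the trichotomy to the sign of $\mathcal{G}_p^*(\Rbf)$ via Theorem\,\ref{prop:caustic-cone}\,\textbf{a}$_\psi$, using $\K_i=\langle Q_i,Q_i\rangle_\psi$, $h_i=\langle Q_i,W\rangle_\psi$ and $\rho_i=h_i/\K_i$ in a principal basis. (The overall factor $\tfrac12$ you carry along, versus the paper's unnormalised $\sum\rho_i^2/\K_i$, is immaterial for the sign, as you note.) For Part \textbf{B} your main route differs from the paper's. The paper simply says to repeat the proof of Theorem\,\ref{th:points-by-Q1Q2}\,\textbf{A} with $\Delta$ replaced by $\mathcal{A}$ --- which is exactly your ``cross-check'': all brackets $\{Q_i,Q_j\}$ are pseudo-orthogonal to $\Pi$, hence proportional to one pseudo-normal $n$, so $\sign\mathcal{A}=\sign\|n\|_\psi^2$ locates $\Pi$ relative to the cone via Proposition\,\ref{prop:sign-||Q||}. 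Your primary argument instead rotates the orthonormal normal frame so that $Q_3'=0$ and quotes the already-proved $\R^4$ classification verbatim; this is legitimate (the kernel of $c\mapsto\sum c_iQ_i$ is one-dimensional, the span $\Pi$, the invariants $\mathcal{A}$, $N^2$, $\|\Hbf\|$ and the conditions $\tau=0$, $N=0$ are frame-independent, and with $a_3'=b_3'=c_3'=0$ the ellipse $\E_p$ and $\Hbf$ sit in $\{q_3=0\}\ni p$, so the flat-$X$ notions of Theorem\,\ref{prop:caustic-cone}\,\textbf{a}$_\f$ match the $\R^4$ notions). What the reduction buys is that nothing from the $\R^4$ proof has to be re-derived; what the paper's route buys is that it avoids the frame change and the (small but real) bookkeeping needed to check that every notion in the statement is invariant under it. You correctly flag the only delicate point, the undefined sign of $N$ when $\tau=0$; since only $N^2$ and $N=0$ occur, this is indeed harmless.
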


\begin{proof}[\underline{Proof of \textit{\textbf{A}}}]
From Theorem\,\ref{th:features-pallelotope}-$d$, the equalities $\langle Q_i,Q_i\rangle=\mathcal{K}_i$ and 
relation\,\eqref{eq:relation_for_R-R^5} in Theorem\,\ref{proposition:H=QR}, we get the equalities 
$\langle Q_i,W\rangle_\psi=\rho_i\mathcal{K}_i$, which imply that 
\[\dfrac{\langle Q_1,W\rangle_{\psi}^2}{\langle Q_1, Q_1\rangle_{\psi}^3}+
\dfrac{\langle Q_2,W\rangle_{\psi}^2}{\langle Q_2, Q_2\rangle_{\psi}^3}+
\dfrac{\langle Q_3,W\rangle_{\psi}^2}{\langle Q_3, Q_3\rangle_{\psi}^3}=\mathcal{G}_p^*(\Rbf)\,.\]
Now, item \textbf{\textit{\small A}} follows from the three subitems of 
Theorem\,\ref{prop:caustic-cone}-\textit{\textbf{a}}$_\psi$. 
\medskip

\noindent
\underline{\textit{Proof of \textit{\textbf{B}}}}. One mostly repetes the proof of item \textbf{\textit{\small A}} 
of Theorem\,\ref{th:points-by-Q1Q2}, but the role of the invariant $\Delta$ is played by the invariant $\mathcal{A}$ 
-- in Fig.\,\ref{fig:Positions-Plane12}, replace $\Delta$ with $\mathcal{A}$.  
\end{proof}

%%%%%%%%%%%%%%%%%%%%%%%%%%%%%%%%%%%%%%%%%%%%%%%%%%%%%%%%%%%%%%%%%%%%%%%%%%%%%%%%%%%%%%%%%%%%%%%%%%%%
%%%%%%%%%%%%%%%%%%%%%%%%%%%%%%%%%%%%%%%%  THEOREM  %%%%%%%%%%%%%%%%%%%%%%%%%%%%%%%%%%%%%%%%%%%%%%%%%
%%%%%%%%%%%%%%%%%%%%%%%%%%%%%%%%%%%%%%%%%%%%%%%%%%%%%%%%%%%%%%%%%%%%%%%%%%%%%%%%%%%%%%%%%%%%%%%%%%%%
\begin{proposition}\label{prop:K1<K2<0<K3}
At a point $p$ of a smooth surface in $\R^5$ with $\Delta\neq 0$, two principal 
focal curvatures are negative and one is positive, say $\K_1\leq\K_2<0<\K_3$.  
\end{proposition}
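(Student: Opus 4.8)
The plan is to read off the signs of the principal focal curvatures $\K_1,\K_2,\K_3$ from the \emph{signature} of the pseudo-scalar product $\langle\cdot,\cdot\rangle_\psi$, using that these curvatures are the eigenvalues of the Gram matrix $[\mathcal{G}_p]$ of $Q_1,Q_2,Q_3$ (see \eqref{eq:Gp=[matrix]}). Everything hinges on the fact that $\langle\cdot,\cdot\rangle_\psi$ is a Lorentzian form of inertia $(1,2)$.

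First I would use $\Delta=\det[\mathcal{G}_p]\neq 0$ to conclude that the Gram determinant is nonzero, so $Q_1,Q_2,Q_3$ are linearly independent and hence form a basis of the $3$-space of quadratic forms on $\R^2$; in this basis $[\mathcal{G}_p]$ is exactly the matrix of $\langle\cdot,\cdot\rangle_\psi$. Next I would compute the signature of $\langle\cdot,\cdot\rangle_\psi$ directly: the identity
\[
\langle Q,Q\rangle_\psi=ac-b^2=\textstyle\frac14(a+c)^2-\frac14(a-c)^2-b^2
\]
exhibits one positive and two negative directions, i.e. inertia $(1,2,0)$ --- this is the analytic content of Proposition~\ref{prop:sign-||Q||} (a convex, ``timelike'' interior of the cone $\mathcal{C}$ and a ``spacelike'' exterior). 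By Sylvester's law of inertia the Gram matrix in \emph{any} basis shares this inertia; since the signs of the eigenvalues of a symmetric matrix record precisely its inertia (orthogonal diagonalisation is a congruence), $[\mathcal{G}_p]$ has exactly one positive and two negative eigenvalues and no zero eigenvalue. As these eigenvalues are $\K_1,\K_2,\K_3$, two of them are negative and one is positive, and relabelling gives $\K_1\le\K_2<0<\K_3$.

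The same conclusion follows even more transparently from the projective viewpoint: choosing a principal basis of $N_pM$, the forms $Q_1,Q_2,Q_3$ are pairwise pseudo-orthogonal with $\K_i=\|Q_i\|_\psi^2$, so by Proposition~\ref{conjugacy_psi-orthogonality} the three distinct non-collinear points $\hat Q_1,\hat Q_2,\hat Q_3$ form a self-conjugate triangle in $\RP^2$. Fact~\ref{fact:selfconj-triangle} then places two of them outside $\hat{\mathcal{C}}$ and one inside, whence by Proposition~\ref{prop:sign-||Q||} two of the $\|Q_i\|_\psi^2=\K_i$ are negative and one positive. There is no real obstacle here; the only point that needs care is that $\Delta\neq0$ (nonvanishing Gram determinant) is exactly what guarantees the three forms span the whole $3$-space, so that no degenerate/zero principal focal curvature occurs and the inertia of the ambient form is faithfully transmitted to $[\mathcal{G}_p]$.
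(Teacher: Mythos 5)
Your proposal is correct, and its second paragraph is in essence the paper's own proof: the author disposes of the statement in one line by invoking Proposition~\ref{prop:sign-||Q||} together with Fact~\ref{fact:selfconj-triangle}, i.e.\ exactly the observation that in a principal basis the $\hat Q_i$ form a self-conjugate triangle with respect to $\hat{\mathcal{C}}$, so two vertices lie outside and one inside, giving two negative and one positive value of $\K_i=\|Q_i\|_\psi^2$. Your first argument, via Sylvester's law of inertia, is a genuinely different (and arguably more self-contained) route: it bypasses the classical pole--polar Facts~\ref{fact:conj-points} and~\ref{fact:selfconj-triangle}, which the paper states without proof, and instead reads the inertia $(1,2)$ of $\langle\cdot,\cdot\rangle_\psi$ directly off the identity $ac-b^2=\tfrac14(a+c)^2-\tfrac14(a-c)^2-b^2$ and transports it to the Gram matrix $[\mathcal{G}_p]$ of \eqref{eq:Gp=[matrix]} by congruence invariance. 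What the Sylvester route buys is a proof from first principles that also makes transparent why the $\R^4$ analogue (Proposition~\ref{prop:at-least-one-negative}) only forces \emph{at least} one negative curvature: a nondegenerate $2$-plane in a Lorentzian $3$-space can have signature $(1,1)$ or $(0,2)$. What the paper's projective route buys is consistency with the geometric language of \S\ref{Section-Pseudo-Euclidean} and a picture (the self-conjugate triangle relative to the conic) that the author reuses elsewhere. Your handling of the hypothesis $\Delta\neq 0$ --- guaranteeing linear independence of $Q_1,Q_2,Q_3$ and excluding a zero eigenvalue --- is exactly the care the argument needs.
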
 
\begin{proof}
The statement follows from Proposition\,\ref{prop:sign-||Q||} and Fact\,\ref{fact:selfconj-triangle}. 
\end{proof}

% \begin{corollary} 
% $(a)$ At each $p\in M$ in $\R^5$ the determinant invariant satisfies $\Delta\geq 0$. 
% \smallskip
%
% \noindent
% $(b)$ At each point $p$ of $M$ in $\R^5$ the Gauss quadratic form $\mathcal{G}_p$ is not definite. 
% \end{corollary}
%
%
% We naturally get the dual definition. 
% \smallskip
%
% \noindent
% \textit{\textbf{\small Conjugate Line}}. Given a line $\ell$ in $\RP^2$, we say that a line $\ell'$ 
% \textit{is conjugate to} $\,\ell\,$ for $\hat{\mathcal{C}}$ if $\,\ell'\,$ contains the pole of $\ell$. \
% ($\ell$ and $\ell'$ are also called \textit{orthogonal}.)
%
% {\footnotesize
% \begin{exercise*}[for the reader]
% Prove the following evident facts (and the dual statements)\,:
% \smallskip
%
% \noindent 
% $(i)$ {\em Given point $P$, all its conjugate points belong to a line\,$:$ the polar of $P$}. 
% \smallskip
%
% \noindent 
% $(ii)$ {\em A point $P'$ is conjugate to $P$ if and only if $P$ is conjugate to $P'$}. 
% \smallskip
%
% \noindent 
% $(iii)$ {\em The pole of the line through two given points is the intersection point of 
% the polar lines ofthese two points}. 
% \end{exercise*}}

%%%%%%%%%%%%%%%%%%%%%%%%%%%%%%%%%%%%%%%%%%%%%%%%%%%%%%%%%%%%%%%%%%%%%%%%%%%%%%%%%%%%%%%%%%%%%%%%%
%%%%%%%%%%%%%%%%%%%%%%%%%%%%%%%%%%%%%%%%%%%%%%%%%%%%%%%%%%%%%%%%%%%%%%%%%%%%%%%%%%%%%%%%%%%%%%%%%
\section{Inequalities Between Local Invariants in $\R^4$ and in $\R^5$}\label{section:Inequalities-Local-Invariants}
%%%%%%%%%%%%%%%%%%%%%%%%%%%%%%%%%%%%%%%%%%%%%%%%%%%%%%%%%%%%%%%%%%%%%%%%%%%%%%%%%%%%%%%%%%%%%%%%%
%%%%%%%%%%%%%%%%%%%%%%%%%%%%%%%%%%%%%%%%%%%%%%%%%%%%%%%%%%%%%%%%%%%%%%%%%%%%%%%%%%%%%%%%%%%%%%%%%
%%%%%%%%%%%%%%%%%%%%%%%%%%%%%%%%%%%%%%%%%%%%%%%%%%%%%%%%%%%%%%%%%%%%%%%%%%%%%%%%%%%%%%%%%%%%%%%%%
\subsection{\textbf{Inequalities that Restrict the Local Invariants of Surfaces in $\R^4$}}
%%%%%%%%%%%%%%%%%%%%%%%%%%%%%%%%%%%%%%%%%%%%%%%%%%%%%%%%%%%%%%%%%%%%%%%%%%%%%%%%%%%%%%%%%%%%%%%%%
%We provide additional inequalities between the local invariants of surfaces in $\R^4$. 

%%%%%%%%%%%%%%%%%%%%%%%%%%%%%%%%%%%%%%%%%%%%%%%%%%%%%%%%%%%%%%%%%%%%%%%%%%%%%%%%%%%%%%%%%%%%%%%%%%%%%%%%%%%%%%%%%%%%
%%%%%%%%%%%%%%%%%%%%%%%%%%%%%%%       PROPOSITION       %%%%%%%%%%%%%%%%%%%%%%%%%%%%%%%%%%%%%%%%%%%%%%%%%%%%%%%%%%%%
%%%%%%%%%%%%%%%%%%%%%%%%%%%%%%%%%%%%%%%%%%%%%%%%%%%%%%%%%%%%%%%%%%%%%%%%%%%%%%%%%%%%%%%%%%%%%%%%%%%%%%%%%%%%%%%%%%%%
\begin{proposition}\label{prop:inequalities-H^2}
	$(a)$ If $p\in M\subset\R^4$ is elliptic with $\K_1\leq\K_2<0$, then {\rm (Fig.\,\ref{fig:conics-of-H-R})}
	\begin{equation}\label{eq:inequalities-H}
	0\leq \left(1-\frac{N^2}{4\Delta}\right)\K_2 \leq \|\Hbf\|^2 \leq  \left(1-\frac{N^2}{4\Delta}\right)\K_1\,.
	\end{equation}
	and 
	\begin{equation}\label{eq:inequalities-R}
	0\leq \left(1-\frac{N^2}{4\Delta}\right)\Big/\K_1\leq \|\Rbf\|^2 \leq \left(1-\frac{N^2}{4\Delta}\right)\Big/\K_2\,.
	\end{equation}

	$(b)$ If $p\in M$ is hyperbolic with $\K_1<0<\K_2$, then $\|\Hbf\|$ and $\|\Rbf\|$ satisfy {\rm (Fig.\,\ref{fig:conics-of-H-R}):} 
	\[0<\left(1-\frac{N^2}{4\Delta}\right)\K_1 \leq \|\Hbf\|^2 \quad \mbox{and}\quad 0<\left(1-\frac{N^2}{4\Delta}\right)\Big/\K_1\leq \|\Rbf\|^2
	\]
\end{proposition}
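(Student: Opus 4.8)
The plan is to read both chains of inequalities off Theorem \ref{H-positions}, which already confines $\Hbf$ and $\Rbf$ to explicit conics; the proposition then becomes an elementary extremal-distance computation on those conics. Abbreviate $\alpha:=1-N^2/4\Delta$. Choosing a principal basis of $N_pM$, Theorem \ref{H-positions} gives, for $\Hbf=(h_1,h_2)$ and $\Rbf=(\rho_1,\rho_2)$,
\[
\frac{h_1^2}{\K_1}+\frac{h_2^2}{\K_2}=\alpha=\K_1\rho_1^2+\K_2\rho_2^2 ,
\]
so everything reduces to bounding $\|\Hbf\|^2=h_1^2+h_2^2$ and $\|\Rbf\|^2=\rho_1^2+\rho_2^2$ subject to a single quadratic constraint each.

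First I would fix the sign of $\alpha$. In the elliptic case the hypothesis $\K_1\le\K_2<0$ forces $\alpha=h_1^2/\K_1+h_2^2/\K_2\le0$; in the hyperbolic case $\Delta<0$ by \eqref{eq:Delta>0-elliptic}, so $\alpha=1-N^2/4\Delta\ge1>0$. The core of part $(a)$ is then a two-sided sandwich. Since $\K_1\le\K_2<0$ one has $1/\K_1\ge1/\K_2$, and because $h_1^2,h_2^2\ge0$,
\[
\frac{\|\Hbf\|^2}{\K_2}\ \le\ \frac{h_1^2}{\K_1}+\frac{h_2^2}{\K_2}=\alpha\ \le\ \frac{\|\Hbf\|^2}{\K_1}.
\]
Multiplying through by the (negative) $\K_i$ reverses the inequalities and yields $\alpha\K_2\le\|\Hbf\|^2\le\alpha\K_1$, the outer positivity $0\le\alpha\K_2$ being immediate from $\alpha\le0$, $\K_2<0$. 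The identical manipulation on the equation of $\mathcal{R}$ — this time comparing $\K_1\le\K_2$ directly instead of their reciprocals — gives $\alpha/\K_1\le\|\Rbf\|^2\le\alpha/\K_2$. Geometrically these four numbers are precisely the squared semi-axes of the ellipses $\mathcal{H}$ and $\mathcal{R}$, so the claim is just that a point on an ellipse lies between its minor- and major-axis endpoints, matching Fig.\,\ref{fig:conics-of-H-R}.

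For part $(b)$ the same manipulation applies, but now only the positive principal focal curvature produces a finite (lower) bound, since $\mathcal{H}$ and $\mathcal{R}$ are hyperbolae. With $\K_1<0<\K_2$ and $\alpha>0$, the constraint forces $h_2^2/\K_2=\alpha+h_1^2/|\K_1|\ge\alpha$, hence $\|\Hbf\|^2\ge h_2^2\ge\alpha\K_2>0$ — the vertex of $\mathcal{H}$ on the $q_2$-axis — with no upper bound as the branch runs off to infinity; the analogous step on $\mathcal{R}$ gives $\|\Rbf\|^2\ge\alpha/\K_2>0$.

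The one genuine obstacle I anticipate is sign bookkeeping: every multiplication or division by a $\K_i$ (and by $\alpha$, whose sign is opposite in the two cases) flips an inequality, and one must separately certify each endpoint's weak or strict positivity. I would also flag that in $(b)$ it is the positive curvature that controls the finite bound, so the displayed index should be the positive one (here $\K_2$) rather than $\K_1$.
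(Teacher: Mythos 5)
Your argument is correct and is essentially the paper's own proof: the paper likewise reads both chains off Theorem~\ref{H-positions} and records precisely your sandwich $\frac{h_1^2+h_2^2}{\K_2}\leq\frac{h_1^2}{\K_1}+\frac{h_2^2}{\K_2}\leq\frac{h_1^2+h_2^2}{\K_1}$ before declaring the remaining cases (including item $(b)$) ``similar''. Your closing remark about item $(b)$ is also well taken: under the stated convention $\K_1<0<\K_2$ one has $\alpha=1-N^2/4\Delta>0$, so the sharp positive lower bounds are $\alpha\K_2$ and $\alpha/\K_2$ (governed by the positive focal curvature), and the $\K_1$ in the paper's displayed chain is an index slip, since $0<\alpha\K_1$ is false there.
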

%%%%%% P R O O F >>>>>>>>>>>>>>>>>>>>>>>>>>>>>>>>>>>>>>>>>>>>>>>>>>>>>>>>>>>>>>>>>>>>>>>>>>>>>>>>>>>>>>>>>>>>>>>>>>
\begin{proof}
	$(a)$ The inequality $\mathcal{K}_1\leq\mathcal{K}_2<0$ implies the following two inequalities
	\[\frac{h_1^2+h_2^2}{\mathcal{K}_2}\leq\frac{h_1^2}{\mathcal{K}_1}+\frac{h_2^2}{\mathcal{K}_2}\leq\frac{h_1^2+h_2^2}{\mathcal{K}_1}\,,\] 
	which together with equation \eqref{equation:MC-vector}, satisfied by $\|\Hbf\|$, provide inequalities \eqref{eq:inequalities-H}. 
	
	The proofs of \eqref{eq:inequalities-R} and of item $b$ are similar. 
\end{proof}
%%%%%%<<<<<<<<<<<<<<<<<<<<<<<<<<<<<<<<<<<<<<<<<<<<<<<<<<<<<<<<<<<<<<<<<<<<<<<<<<<<<<<<<<<<<<<<<<<<<<<<< P R O O F %%
%%>>>>>>>>>>>>>>>>>>>>>>>>>>>>>>>>>>>>>>>>>>>>>>>>>>>>>>>>>>>>>>>>>>>>>>>>>>>>>>>>>>>>>>>>>>>>>>>>>>>>>>
%%<<<<<<<<<<<<<<<<<<<<<<<<<<<<<<<<<<<<<<<<<<<<<<<<<<<<<<<<<<<<<<<<<<<<<<<<<<<<<<<<<<<<<<<<<<<<<<<<<<<<<<

%%%%%%%%%%%%%%%%%%%%%%%%%%%%%%%%%%%%%%%%%%%%%%%%%%%%%%%%%%%%%%%%%%%%%%%%%%%%%%%%%%%%%%%%%%%%%%%%%%%%
%%%%%%%%%%%%%%%%%%%%%%%%%%%%%%%%%%%%%%%%  THEOREM  %%%%%%%%%%%%%%%%%%%%%%%%%%%%%%%%%%%%%%%%%%%%%%%%%
%%%%%%%%%%%%%%%%%%%%%%%%%%%%%%%%%%%%%%%%%%%%%%%%%%%%%%%%%%%%%%%%%%%%%%%%%%%%%%%%%%%%%%%%%%%%%%%%%%%%
\begin{theorem}\label{Th:restrictions1}
At every point of a smooth surface in $\R^4$ the following relations hold 

$(a)$ $N^2\geq 4\Delta$ \ \ $($equivalently, $N^2\geq 4\K_1\K_2$$)\,;$  

$(b)$ $K^2\geq 4\Delta\,;$ 

$(c)$ If $\Delta\geq 0$, then $K<0$ \ \  $($equivalently, if $K\geq 0$, then $\Delta<0$$)$. 
\end{theorem}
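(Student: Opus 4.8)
The plan is to read all three relations off the two principal focal curvatures $\K_1,\K_2$, the roots of the characteristic polynomial $\lambda^2-K\lambda+\Delta$ of the real symmetric matrix $[\mathcal{G}_p]$ (Proposition \ref{prop:inv-char_polynomial}), so that $K=\K_1+\K_2$ and $\Delta=\K_1\K_2$, together with the pseudo-Euclidean reading of $K,\Delta,N$ from Theorem \ref{th:features-pallelogram}. Parts (a) and (b) are then two discriminant identities. For \textbf{(a)} I would use $\Delta=\|\{Q_1,Q_2\}\|_\psi^2$ and $N=\Tr\{Q_1,Q_2\}$: writing the form $\{Q_1,Q_2\}$ with coefficients $(\alpha,2\beta,\gamma)$, its associated symmetric matrix has trace $N=\alpha+\gamma$ and determinant $\Delta=\alpha\gamma-\beta^2$, hence
\[
N^2-4\Delta=(\alpha+\gamma)^2-4(\alpha\gamma-\beta^2)=(\alpha-\gamma)^2+4\beta^2\ge 0 ,
\]
and substituting $\Delta=\K_1\K_2$ yields the equivalent $N^2\ge4\K_1\K_2$. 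For \textbf{(b)}, since $[\mathcal{G}_p]$ is real symmetric its eigenvalues $\K_1,\K_2$ are real, so
\[
K^2-4\Delta=(\K_1+\K_2)^2-4\K_1\K_2=(\K_1-\K_2)^2\ge 0 .
\]

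For \textbf{(c)} the strategy is to show that, under $\Delta\ge0$, at least one $\K_i$ is negative: since $\Delta=\K_1\K_2\ge0$ forces $\K_1,\K_2$ to share a (weak) sign, a single strictly negative focal curvature drags the other to be $\le0$ and gives $K=\K_1+\K_2<0$. For $\Delta>0$ this is exactly Proposition \ref{prop:at-least-one-negative}. The crux is the boundary $\Delta=0$, where that open-condition statement is unavailable and must be replaced by its pole--polar origin. I would pass to a principal basis, in which $\K_i=\|Q_i\|_\psi^2$ and $Q_1,Q_2$ are pseudo-orthogonal, i.e. $\hat Q_1,\hat Q_2$ are conjugate with respect to $\hat{\mathcal{C}}$ (Proposition \ref{conjugacy_psi-orthogonality}). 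As long as $Q_1,Q_2$ are linearly independent, $\hat Q_1\neq\hat Q_2$ and Fact \ref{fact:conj-points} puts at least one of them outside $\hat{\mathcal{C}}$, so $\min(\K_1,\K_2)<0$ by Proposition \ref{prop:sign-||Q||}; with $\Delta=\K_1\K_2\ge0$ the other is $\le0$, giving $K<0$. This covers $\Delta>0$ (elliptic) \emph{and} the parabolic part of $\Delta=0$, where $Q_1,Q_2$ still span a plane but the pseudo-scalar product degenerates on it, forcing $\det[\mathcal{G}_p]=\Delta=0$.

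\emph{The main obstacle} is the remaining degenerate locus, where $Q_1,Q_2$ span only a line (the image of $\f_p$ is a line or a point). Then $[\mathcal{G}_p]$ has rank $\le1$, so automatically $\Delta=0$, $N=\Tr\{Q_1,Q_2\}=0$, $\K_2=0$, and $K=\K_1=\|Q\|_\psi^2$ for a generator $Q$ of that line. Here Fact \ref{fact:conj-points} no longer applies, and the sign of $K$ is governed entirely by the position of the image line relative to the cone $\mathcal{C}$, as catalogued in Theorem \ref{th:points-by-Q1Q2}\textbf{B}: the line lies outside $\mathcal{C}$ at real inflections, where $\|Q\|_\psi^2<0$ and hence $K<0$. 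Pinning down this one-dimensional cone-position is the delicate point of \textbf{(c)}, and I would settle it by feeding the trichotomy of Proposition \ref{prop:sign-||Q||} back into the identity $K=\|Q\|_\psi^2$, which is where the argument is most sensitive to the configuration of $\f_p$.
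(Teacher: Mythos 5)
Your parts (a) and (b) are correct and coincide with the paper's argument: the paper derives both from the single elementary fact that a symmetric $2\times 2$ matrix $\mathcal{M}$ satisfies $\Tr^2\mathcal{M}\geq 4\det\mathcal{M}$, applied to $[\{Q_1,Q_2\}]$ (trace $N$, determinant $\Delta$, by Theorem\,\ref{th:features-pallelogram}) and to $[\mathcal{G}_p]$ (trace $K$, determinant $\Delta$); your coefficient computation and your identity $K^2-4\Delta=(\K_1-\K_2)^2$ are that same fact written out. On (c) you are actually \emph{more} careful than the paper: the paper's one-line proof cites Proposition\,\ref{prop:at-least-one-negative}, whose hypothesis is $\Delta\neq 0$, so it really only covers $\Delta>0$. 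Your patch for $\Delta=0$ with $Q_1,Q_2$ linearly independent is sound: in a principal basis $\hat Q_1,\hat Q_2$ are distinct conjugate points, Fact\,\ref{fact:conj-points} forces one of $\K_i=\|Q_i\|_\psi^2$ to be negative, and $\K_1\K_2=\Delta=0$ then kills the other, giving $K<0$ (and, as you note, linear independence survives the passage to a principal basis, since the basis change acts invertibly on the pair $(Q_1,Q_2)$).

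The genuine gap is your final step. You propose to settle the collinear locus by ``feeding the trichotomy of Proposition\,\ref{prop:sign-||Q||} back into $K=\|Q\|_\psi^2$,'' but no such argument can exist, because on that locus the implication $\Delta\geq 0\Rightarrow K<0$ is simply false: Theorem\,\ref{th:points-by-Q1Q2}\textbf{B}, which you cite, lists not only real inflections (line outside $\mathcal{C}$, $K<0$) but also imaginary inflections (line inside $\mathcal{C}$, $K>0$), umbilics (line equal to the axis, $K>0$), and flat inflections/flat umbilics ($K=0$) --- all with $\Delta=0$. Concretely, any surface contained in a hyperplane $\R^3\subset\R^4$, say the paraboloid $(s,t,\tfrac12(s^2+t^2),0)$ or a round sphere, has $Q_2\equiv 0$, hence $\Delta\equiv 0$, while $K$ is the ordinary Gauss curvature and is positive. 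So the trichotomy does not pin the image line outside the cone, and your proof of (c) is complete exactly on the cases you did close: $\Delta>0$, and $\Delta=0$ with $Q_1,Q_2$ independent --- i.e.\ elliptic and genuinely parabolic points, which is the content of the paper's corollary. The honest fix is to restrict (c) by excluding the degenerate locus where the image of $\f_p$ is a line or a point (equivalently, where $\E_p$ degenerates); note that the paper's own proof shares this gap, since it invokes Proposition\,\ref{prop:at-least-one-negative} outside its stated hypothesis $\Delta\neq 0$.
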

%%%%%%%%%%%%%%%%%%%%%%%%%%%%%%%%%%%%%%%%%%%%%%%%%%%%%%%%%%%%%%%%%%%%%%%%%%%%%%%%%
%%%%%% P R O O F >>>>>>>>>>>>>>>>>>>>>>>>>>>>>>>>>>>>>>>>>>>>>>>>>>>>>>>>>>>>>>>>>>>>>>>>>>>>>>>>>>>>>>>>>>>>>>>>>>
\begin{proof}
Items $a$ and $b$ follow from the following elementary fact on symmetric matrices\,: 
\smallskip 
 
\noindent
\textit{Any symmetric $2\times 2$-matrix $\mathcal{M}$ satisfy} \ $\Tr^2\mathcal{M}\geq 4\det \mathcal{M}$. \ Let us apply it.  
\smallskip

\noindent
$(a)$ For the matrix $[\{Q_1,Q_2\}]$ we have $\Tr[\{Q_1,Q_2\}]=N$\, and\, $\det[\{Q_1,Q_2\}]=\Delta$; \\
$(b)$ For the matrix $[\mathcal{G}_p]$ we have $\Tr[\mathcal{G}_p]=K$\, and\, $\det[\mathcal{G}_p]=\Delta$. 
\smallskip

\noindent
$(c)$ $\Delta\geq 0$ implies that both $\K_1$ and $\K_2$ are not positive  
because at least one of them is negative (Prop.\,\ref{prop:at-least-one-negative}) and $\K_1\K_2=\Delta\geq 0$. 
Therefore $K=\K_1+\K_2<0$.
\end{proof}
%%%%%%<<<<<<<<<<<<<<<<<<<<<<<<<<<<<<<<<<<<<<<<<<<<<<<<<<<<<<<<<<<<<<<<<<<<<<<<<<<<<<<<<<<<<<<<<<<<<<<<< P R O O F %%

%%%%%%%%%%%%%%%%%%%%%%%%%%%%%%%%%%%%%%%%%%%%%%%%%%%%%%%%%%%%%%%%%%%%%%%%%%%%%%%%%%%%%%%%%%%%%%%%%%%%%%%%%
%%%%%%%%%%%%%%%%%%%%%%%%%%%%%%%%%%%%%%%%  COROLLARY  %%%%%%%%%%%%%%%%%%%%%%%%%%%%%%%%%%%%%%%%%%%%%%%%%%%%
%%%%%%%%%%%%%%%%%%%%%%%%%%%%%%%%%%%%%%%%%%%%%%%%%%%%%%%%%%%%%%%%%%%%%%%%%%%%%%%%%%%%%%%%%%%%%%%%%%%%%%%%%
\begin{corollary*}[of Th.\,\ref{Th:restrictions1}\,$(c)$]
Let $M$ be a smooth surface in $\R^4$. At every elliptic or parabolic point the Gauss curvature $K$ is negative. 
If the curve along which $K=0$ is nonempty, then it is entirely contained in the hyperbolic domain, 
which has open regions where $K<0$ and open regions where $K>0$.
\end{corollary*}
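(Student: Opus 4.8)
The plan is to deduce everything from Theorem \ref{Th:restrictions1}(c), using the sign characterisation \eqref{eq:Delta>0-elliptic} of elliptic and hyperbolic points by the sign of $\Delta$.

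First I would treat the elliptic and parabolic cases together. At an elliptic point one has $\Delta > 0$ by \eqref{eq:Delta>0-elliptic}, and at a parabolic point $\Delta = 0$ (the boundary case of the dichotomy $\Delta > 0$ / $\Delta < 0$, where $\E_p$ is non-degenerate but $p$ lies on it); in either situation $\Delta \geq 0$, so Theorem \ref{Th:restrictions1}(c) forces $K < 0$. This proves the first assertion.

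Next I would locate the zero set of $K$ by invoking the equivalent contrapositive form of Theorem \ref{Th:restrictions1}(c): whenever $K \geq 0$ one has $\Delta < 0$. At any point of $\{K = 0\}$ the inequality $K \geq 0$ holds, hence $\Delta < 0$, i.e.\ the point is hyperbolic by \eqref{eq:Delta>0-elliptic}. Therefore the whole set $\{K = 0\}$ lies inside the open hyperbolic domain $\{\Delta < 0\}$. The identical argument shows that every point with $K > 0$ is hyperbolic, so the region $\{K > 0\}$ is contained in the hyperbolic domain as well.

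Finally, for the claim that the hyperbolic domain carries open regions of each sign of $K$, I would argue by the sign change of $K$ across its zero curve: a nonempty regular level curve $\{K = 0\}$ separates a region where $K > 0$ from a region where $K < 0$. Since this curve sits in the open set $\{\Delta < 0\}$, a small neighbourhood of any of its points is entirely hyperbolic and meets both $\{K > 0\}$ and $\{K < 0\}$, producing open hyperbolic regions of both signs. The one delicate point is precisely this sign change: the hypothesis that $\{K = 0\}$ is a \emph{curve} (and not, say, a set where $K$ merely touches zero from below) is what guarantees that the positive region $\{K > 0\}$ is nonempty, and this is the crux of the argument. It is exactly here that the implication $K \geq 0 \Rightarrow \Delta < 0$ does the essential work, by confining those positive values to the hyperbolic locus while the elliptic and parabolic points are already pinned to $K < 0$.
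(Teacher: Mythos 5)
Your proof is correct and is exactly the immediate deduction from Theorem\,\ref{Th:restrictions1}\,$(c)$ together with \eqref{eq:Delta>0-elliptic} that the paper intends (the corollary is stated there without a written proof). Your additional remark that one needs $\{K=0\}$ to be a genuine (regular) curve in order for $K$ to actually change sign and produce a nonempty open region $\{K>0\}$ is a point the paper leaves implicit, and you handle it correctly.
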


Let $p\in M\subset\R^4$ be a point at which the inequality of Th.\,\ref{Th:restrictions1}($b$) 
becomes equality. 
%%%%%%%%%%%%%%%%%%%%%%%%%%%%%%%%%%%%%%%%%%%%%%%%%%%%%%%%%%%%%%%%%%%%%%%%%%%%%%%%%%%%%%%%%%%%%%%%%%%%%%%%%%%%%%%%%%%%
%%%%%%%%%%%%%%%%%%%%%%%%%%%%%%%       PROPOSITION       %%%%%%%%%%%%%%%%%%%%%%%%%%%%%%%%%%%%%%%%%%%%%%%%%%%%%%%%%%%%
%%%%%%%%%%%%%%%%%%%%%%%%%%%%%%%%%%%%%%%%%%%%%%%%%%%%%%%%%%%%%%%%%%%%%%%%%%%%%%%%%%%%%%%%%%%%%%%%%%%%%%%%%%%%%%%%%%%%
\begin{proposition}
Let $K$, $\Delta$, $N$, $\|\Hbf\|$ the local invariants of $M$ in $\R^4$ at $p$. 
The following statements are equivalent: 
\medskip

\noindent 
$(a)$ The equality $K^2=4\Delta$ holds\,;  
\smallskip

\noindent
$(b)$ The local caustic at $p$, $\mathcal{C}_p$, is a circle\, \,{\rm (i.e. $\K_1=\K_2$)}; 
\smallskip

\noindent
$(c)$  The centre $\Rbf$ of $\mathcal{C}_p$, the centre $\Hbf$ of $\mathcal{E}_p$ and $p$ 
are colinear, with $p$ between $\Rbf$ and $\Hbf$; 
\begin{equation}\label{eq:H=(1-N^2/K^2-1)K/2}
\hspace*{-2.5cm}(d) \qquad  \qquad  \|\Hbf\|^2 = \left(\frac{N^2}{K^2}-1\right)\Delta^{\frac{1}{2}}\,  \qquad  \left(\mbox{or } 
\|\Rbf\|^2 = \left(\frac{N^2}{K^2}-1\right)\Delta^{-\frac{1}{2}}\right).
\end{equation}
\end{proposition}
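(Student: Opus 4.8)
The plan is to reduce everything to the two principal focal curvatures $\K_1$, $\K_2$ and their symmetric functions $K=\K_1+\K_2$, $\Delta=\K_1\K_2$ from Proposition~\ref{prop:inv-char_polynomial}. The hub of the argument is the identity $K^2-4\Delta=(\K_1+\K_2)^2-4\K_1\K_2=(\K_1-\K_2)^2$, which shows at once that $(a)$ holds iff $\K_1=\K_2$. By Proposition~\ref{prop:at-least-one-negative} one of the curvatures is negative, so this common value is negative and $\Delta=\K_1^2>0$, an elliptic point. To connect with $(b)$ I would invoke the level-set description in Theorem~\ref{th:equation_local_caustic}: in a principal basis $\mathcal{C}_p$ reads $\K_1(q_1-\rho_1)^2+\K_2(q_2-\rho_2)^2=-N^2/4\Delta$, a circle precisely when $\K_1=\K_2$. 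This settles $(a)\Leftrightarrow(b)$, the cleanest link.

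For $(b)\Rightarrow(c)$ I would feed $\K_1=\K_2=\K<0$ into $\Hbf=\flecha{\mathcal{G}_p}\Rbf$ (Proposition~\ref{proposition:H=QR}): the operator $\flecha{\mathcal{G}_p}$ becomes the homothety $\K\,\mathrm{Id}$, so $\Hbf=\K\Rbf$ with $\K<0$; hence $\Hbf$ and $\Rbf$ are antiparallel and the origin $p$ lies between them. For $(b)\Rightarrow(d)$ I would substitute $\K_1=\K_2=\K$ into the two locus equations of Theorem~\ref{H-positions}, which in a principal basis become $\|\Hbf\|^2/\K=1-N^2/4\Delta$ and $\K\|\Rbf\|^2=1-N^2/4\Delta$. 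Using $\K=K/2=-\Delta^{1/2}$ (the negative root, since $\K<0$ and $\Delta=\K^2$) together with $N^2/4\Delta=N^2/K^2$, these rearrange into exactly the two formulas of $(d)$.

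The reverse implications are where the main obstacle lies. For $(c)\Rightarrow(b)$, writing $\Hbf=(\K_1\rho_1,\K_2\rho_2)$ shows that colinearity of $\Hbf$, $\Rbf$ and $p$ is the vanishing of $\rho_1\rho_2(\K_1-\K_2)$; this forces $\K_1=\K_2$ only after one excludes the case where $\Rbf$ (equivalently $\Hbf$) points along a single principal normal direction, in which colinearity holds trivially for any $\K_1,\K_2$. I would therefore carry the standing non-degeneracy $h_1h_2\neq 0$ (both principal mean-curvature components nonzero), under which $\rho_1\rho_2\neq 0$ and the conclusion $\K_1=\K_2$ is immediate.

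The converse of $(d)$ is the genuinely delicate point, and I expect it to require the two formulas of $(d)$ jointly rather than either one alone: dividing them gives $\|\Hbf\|^2/\|\Rbf\|^2=\Delta$, and substituting $h_i=\K_i\rho_i$ reduces this to $(\K_1-\K_2)(\K_1\rho_1^2-\K_2\rho_2^2)=0$. Closing the argument then means ruling out the spurious branch $\K_1\rho_1^2=\K_2\rho_2^2$, which I would do by comparing the resulting value constraint with $\langle\Hbf,\Rbf\rangle=1-N^2/4\Delta$ and checking that the branch forces $\rho_i^2<0$, hence is empty over the reals unless $\K_1=\K_2$. I would make the non-degeneracy hypothesis explicit and note that on the excluded principal-direction locus a single formula of $(d)$, and the geometric picture $(c)$, can hold without $(a)$; so the four statements are equivalent precisely under that mild genericity.
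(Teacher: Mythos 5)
Your proposal is correct and, on the implications the paper actually argues, follows the same route: the identity $K^2-4\Delta=(\K_1-\K_2)^2$ for $(a)\Leftrightarrow(b)$, the homothety $\flecha{\mathcal{G}_p}=\K_1\,\mathrm{Id}$ fed into $\Hbf=\flecha{\mathcal{G}_p}\Rbf$ (Proposition\,\ref{proposition:H=QR}) for $(b)\Rightarrow(c)$, and substitution of $\K_1=\K_2$ into Theorem\,\ref{H-positions} for $(b)\Rightarrow(d)$. Where you genuinely diverge is in the converses, and there you are more careful than the paper, whose proof states $(b)\Leftrightarrow(c)$ and $(b)\Leftrightarrow(d)$ but only exhibits the forward directions. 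Your reduction of colinearity to $(\K_1-\K_2)\rho_1\rho_2=0$ is exactly right, and the locus $\rho_1\rho_2=0$ is not vacuous: for instance $Q_1=\frac{1}{2}(-s^2+2\sqrt{2}\,st-t^2)$, $Q_2=\frac{\sqrt{2}}{2}(s^2-t^2)$ is a principal-basis quadratic map with $\K_1=-1$, $\K_2=-2$, $\Hbf=(-1,0)$, $\Rbf=(1,0)$, so $(c)$ holds while $K^2=9\neq 8=4\Delta$; the statement therefore does need your non-degeneracy hypothesis $h_1h_2\neq 0$. Your reading of $(d)$ is likewise the right one: a single formula of $(d)$ does not imply $(a)$ (with $\K_1=-2$, $\K_2=-1$ the equation $\|\Hbf\|^2=(N^2/K^2-1)\Delta^{1/2}$, combined with the identity of Theorem\,\ref{H-positions}, becomes $(9-4\sqrt{2})h_1^2+(9-8\sqrt{2})h_2^2=-\sqrt{2}$, which has real solutions with $h_1h_2\neq 0$), whereas the two formulas jointly do suffice, and the elimination you sketch actually closes: on the branch $\K_1\rho_1^2=\K_2\rho_2^2=m$ one has $2m=\langle\Rbf,\Hbf\rangle=1-N^2/4\Delta$ and $\|\Rbf\|^2=mK/\Delta$, and the second formula of $(d)$ then forces $m=(4\Delta-K^2)\Delta^{1/2}/(K^3+8\Delta^{3/2})$, which is strictly positive whenever $K^2>4\Delta$ (numerator and denominator are both negative since $K<0$), contradicting $m=\K_1\rho_1^2\leq 0$. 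So your argument is complete where the paper's is elliptical; the price, which you correctly make explicit, is the genericity hypothesis for $(c)$ and the joint reading of the two formulas in $(d)$.
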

%%%%%% P R O O F >>>>>>>>>>>>>>>>>>>>>>>>>>>>>>>>>>>>>>>>>>>>>>>>>>>>>>>>>>>>>>>>>>>>>>>>>>>>>>>>>>>>>>>>>>>>>>>>>>
\begin{proof}
Take a principal basis $\nbf_1$, $\nbf_2$ of $N_pM$ and let $\mathcal{K}_1$, $\mathcal{K}_2$ 
be the corresponding principal focal curvatures at $p$. Then $K=\mathcal{K}_1+\mathcal{K}_2$ 
and $\Delta=\mathcal{K}_1\mathcal{K}_2$. 
\smallskip

\noindent
$(a)\Leftrightarrow(b)$: The equality $K^2=4\Delta$ is written as 
$(\mathcal{K}_1+\mathcal{K}_2)^2=4\mathcal{K}_1\mathcal{K}_2$, which is equivalent to 
$(\mathcal{K}_1-\mathcal{K}_2)^2=0$, and then equivalent to $\mathcal{K}_1=\mathcal{K}_2$. 
\smallskip

\noindent
$(b)\Leftrightarrow(c)$: The equality $\mathcal{K}_1=\mathcal{K}_2$ holds iff the linear 
operator $\flecha{\mathcal{G}_p}$ is a homothety, that is, $\flecha{\mathcal{G}_p}=\mathcal{K}_1\mathrm{Id}_2$. 
Since $\Hbf=\flecha{\mathcal{G}_p}\Rbf$, we have $\Hbf=\mathcal{K}_1\Rbf$, which means that $\Hbf$, $\Rbf$ and $p$ 
(the origin of $N_pM$) are colinear. The point $p$ lies between $\Rbf$ and $\Hbf$ because $\mathcal{K}_1<0$.  
\smallskip

\noindent
$(b)\Leftrightarrow(d)$: Inequalities\,\eqref{eq:inequalities-H} imply that $\mathcal{K}_1=\mathcal{K}_2$ iff 
equality \eqref{eq:H=(1-N^2/K^2-1)K/2} holds.
\end{proof}
%%%%%%<<<<<<<<<<<<<<<<<<<<<<<<<<<<<<<<<<<<<<<<<<<<<<<<<<<<<<<<<<<<<<<<<<<<<<<<<<<<<<<<<<<<<<<<<<<<<<<<< P R O O F %%

%%%%%%%%%%%%%%%%%%%%%%%%%%%%%%%%%%%%%%%%%%%%%%%%%%%%%%%%%%%%%%%%%%%%%%%%%%%%%%%%%%%%%%%%%%%%%%%%%%
\subsection{\textbf{Inequalities bounding the local invariants of surfaces in $\R^5$}}\label{subsect-inequalities_R^5}
%%%%%%%%%%%%%%%%%%%%%%%%%%%%%%%%%%%%%%%%%%%%%%%%%%%%%%%%%%%%%%%%%%%%%%%%%%%%%%%%%%%%%%%%%%%%%%%%%%
%We provide additional inequalities between the local invariants of surfaces in $\R^5$.
%%%%%%%%%%%%%%%%%%%%%%%%%%%%%%%%%%%%%%%%%%%%%%%%%%%%%%%%%%%%%%%%%%%%%%%%%%%%%%%%%%%%%%%%%%%%%%%%%%%%%%%%%%%%%%%%%%%%
%%%%%%%%%%%%%%%%%%%%%%%%%%%%%%%       PROPOSITION       %%%%%%%%%%%%%%%%%%%%%%%%%%%%%%%%%%%%%%%%%%%%%%%%%%%%%%%%%%%%
%%%%%%%%%%%%%%%%%%%%%%%%%%%%%%%%%%%%%%%%%%%%%%%%%%%%%%%%%%%%%%%%%%%%%%%%%%%%%%%%%%%%%%%%%%%%%%%%%%%%%%%%%%%%%%%%%%%%
\begin{proposition}\label{prop:K_3<H}
	For any $p\in M\subset\R^5$ with $\Delta\neq 0$, the squared mean curvature $\|\Hbf\|^2$ and the squared 
	umbilical radius $\|\Rbf\|^2$ bound the respective eigenvalues of $\mathcal{G}_p$ and $\mathcal{G}_p^*$. 
	Namely, if $\K_1\leq\K_2<0<\K_3$ we have {\rm (Fig.\,\ref{fig:hyperboloids-R-H})}
	\begin{equation}\label{eq:K3<=H^2}
	\K_3\leq\|\Hbf\|^2 \qquad\mbox{and}\qquad \K_3^{-1}\leq\|\Rbf\|^2\,.
	\end{equation}
\end{proposition}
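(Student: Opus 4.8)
The plan is to work in a principal basis of $N_pM$, so that $\mathcal{G}_p$ and $\mathcal{G}_p^*$ are simultaneously diagonal and Theorem \ref{th:hyperbolods-R-and-H} supplies explicit equations for the hyperboloids $\mathcal{H}$ and $\mathcal{R}$ on which $\Hbf$ and $\Rbf$ respectively lie. Writing $\Hbf=(h_1,h_2,h_3)$ and $\Rbf=(\rho_1,\rho_2,\rho_3)$ in this basis, equalities \eqref{eq:H&R_K_1,k_2,k_3} become
\[
\frac{h_1^2}{\K_1}+\frac{h_2^2}{\K_2}+\frac{h_3^2}{\K_3}=1
\qquad\text{and}\qquad
\K_1\rho_1^2+\K_2\rho_2^2+\K_3\rho_3^2=1\,.
\]
The crucial input is the sign pattern $\K_1\le\K_2<0<\K_3$ guaranteed by Proposition \ref{prop:K1<K2<0<K3}; everything else is an elementary estimate.

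For the first inequality I would use that, since $\K_1,\K_2<0$, the two terms $h_1^2/\K_1$ and $h_2^2/\K_2$ are non-positive. Moving them to the right-hand side of the first equality yields $h_3^2/\K_3\ge 1$, and since $\K_3>0$ this gives $h_3^2\ge\K_3$. Then
\[
\|\Hbf\|^2=h_1^2+h_2^2+h_3^2\ge h_3^2\ge\K_3\,.
\]

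For the second inequality the same reasoning applies to the equation of $\mathcal{R}$: the terms $\K_1\rho_1^2$ and $\K_2\rho_2^2$ are non-positive, so $\K_3\rho_3^2\ge 1$, i.e. $\rho_3^2\ge\K_3^{-1}$, whence $\|\Rbf\|^2\ge\rho_3^2\ge\K_3^{-1}$.

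There is no genuine obstacle here; the one point requiring care—the closest thing to a ``hard part''—is verifying that the discarded terms really do have the right (non-positive) sign. This is exactly what the two-negative--one-positive signature of the principal focal curvatures at a point with $\Delta\neq0$ provides, so the inequalities follow with no computation beyond isolating the single positive-eigenvalue term.
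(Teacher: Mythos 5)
Your proof is correct and follows the same route the paper intends: it reads off the inequalities directly from equations \eqref{eq:H&R_K_1,k_2,k_3} in a principal basis, using the sign pattern $\K_1\le\K_2<0<\K_3$ to discard the non-positive terms. The paper's own proof simply states that the inequalities are a ``direct consequence'' of those equations; you have supplied exactly the elementary estimate it leaves implicit.
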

%%%%%% P R O O F >>>>>>>>>>>>>>>>>>>>>>>>>>>>>>>>>>>>>>>>>>>>>>>>>>>>>>>>>>>>>>>>>>>>>>>>>>>>>>>>>>>>>>>>>>>>>>>>>>
\begin{proof}
	Inequalities \eqref{eq:K3<=H^2} are direct consequence of equations \eqref{eq:H&R_K_1,k_2,k_3} -- see also Fig.\,\ref{fig:hyperboloids-R-H}.
\end{proof}
%%%%%%<<<<<<<<<<<<<<<<<<<<<<<<<<<<<<<<<<<<<<<<<<<<<<<<<<<<<<<<<<<<<<<<<<<<<<<<<<<<<<<<<<<<<<<<<<<<<<<<< P R O O F %%

%%%%%%%%%%%%%%%%%%%%%%%%%%%%%%%%%%%%%%%%%%%%%%%%%%%%%%%%%%%%%%%%%%%%%%%%%%%%%%%%%%%%%%%%%%%%%%%%%%%%
%%%%%%%%%%%%%%%%%%%%%%%%%%%%%%%%%%%%%%%%  THEOREM  %%%%%%%%%%%%%%%%%%%%%%%%%%%%%%%%%%%%%%%%%%%%%%%%%
%%%%%%%%%%%%%%%%%%%%%%%%%%%%%%%%%%%%%%%%%%%%%%%%%%%%%%%%%%%%%%%%%%%%%%%%%%%%%%%%%%%%%%%%%%%%%%%%%%%%
\begin{theorem}\label{Th:restrictionsR^5}
At every point of a smooth surface in $\R^5$ the following relations hold 
\medskip

\centerline{
$(a)$ $K^2\geq 3\mathcal{A}\,;$ \hspace*{2cm} 
$(b)$ $\mathcal{A}^2\geq 3K\Delta\,.$ }
\end{theorem}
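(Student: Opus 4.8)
The plan is to reduce both inequalities to classical symmetric-function inequalities (Newton's inequalities) for the three principal focal curvatures. By Proposition~\ref{prop:inv-char_polynomial5} and the identities displayed just after~\eqref{eq:Delta=K1K1-K=K1+K2}, the invariants $K$, $\mathcal{A}$, $\Delta$ are precisely the three elementary symmetric functions $e_1=\K_1+\K_2+\K_3$, $e_2=\K_1\K_2+\K_2\K_3+\K_3\K_1$ and $e_3=\K_1\K_2\K_3$ of the principal focal curvatures, i.e. of the eigenvalues of the Gauss quadratic form $\mathcal{G}_p$. The first thing I would stress is that, since $[\mathcal{G}_p]$ is a real symmetric matrix, its eigenvalues $\K_1,\K_2,\K_3$ are automatically real; hence the whole statement is equivalent to the purely algebraic assertions $e_1^2\ge 3e_2$ and $e_2^2\ge 3e_1e_3$ for arbitrary real numbers $\K_1,\K_2,\K_3$. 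No sign hypothesis is needed, so the bounds hold at every point, including where $\Delta=0$.

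For part $(a)$ I would exhibit the gap $K^2-3\mathcal{A}$ as a sum of squares. A direct expansion gives
\[
K^2-3\mathcal{A}=e_1^2-3e_2=\K_1^2+\K_2^2+\K_3^2-(\K_1\K_2+\K_2\K_3+\K_3\K_1)=\tfrac{1}{2}\big((\K_1-\K_2)^2+(\K_2-\K_3)^2+(\K_3-\K_1)^2\big)\ge 0,
\]
with equality exactly when $\K_1=\K_2=\K_3$ (an umbilical-type condition on $\mathcal{G}_p$).

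For part $(b)$ I would run the analogous computation one degree higher. Expanding $\mathcal{A}^2=e_2^2$ and $3K\Delta=3e_1e_3$ and cancelling the common term $2\K_1\K_2\K_3(\K_1+\K_2+\K_3)$ leaves
\[
\mathcal{A}^2-3K\Delta=e_2^2-3e_1e_3=\tfrac{1}{2}\big((\K_1\K_2-\K_2\K_3)^2+(\K_2\K_3-\K_3\K_1)^2+(\K_3\K_1-\K_1\K_2)^2\big)\ge 0,
\]
so $(b)$ follows at once, with equality iff the three pairwise products $\K_1\K_2,\K_2\K_3,\K_3\K_1$ coincide.

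There is essentially no conceptual obstacle here: both inequalities are instances of Newton's inequalities $p_k^2\ge p_{k-1}p_{k+1}$ (with $p_k=e_k/\binom{3}{k}$) for real numbers, and the only ingredient that genuinely uses the geometry is the reality of $\K_1,\K_2,\K_3$, guaranteed by the symmetry of $[\mathcal{G}_p]$. The sole ``work'' is the routine verification of the second sum-of-squares identity, and that is the one place I would double-check, since a sign slip in the cancellation of the $2\K_1\K_2\K_3(\K_1+\K_2+\K_3)$ term is the only plausible error. As an alternative presentation one could simply invoke Newton's inequalities directly, but the two explicit sum-of-squares identities make the proof self-contained and also pin down the equality cases.
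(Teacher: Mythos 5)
Your proof is correct and follows essentially the same route as the paper: both identify $K$, $\mathcal{A}$, $\Delta$ with the elementary symmetric functions of the (real) principal focal curvatures and then reduce to Newton's inequalities $E_k^2\ge E_{k-1}E_{k+1}$ for $n=3$. The only difference is presentational: the paper simply cites Newton's inequalities, whereas you verify the two needed instances by explicit sum-of-squares identities (both of which check out), which additionally pins down the equality cases.
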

%%%%%% P R O O F >>>>>>>>>>>>>>>>>>>>>>>>>>>>>>>>>>>>>>>>>>>>>>>>>>>>>>>>>>>>>>>>>>>>>>>>>>>>>>>>>>>>>>>>>>>>>>>>>>
\begin{proof}
Newton's inequality on elementary symmetric functions $\sigma_1$,\ldots,$\sigma_n$ of $n$ variables states that 
the functions $E_k=\sigma_k/\binom{n}{k}$, with $E_0=1$, satisfy the inequalities 
\begin{equation}\label{eq:Newton-inequalities}
  E_k^2\geq E_{k-1}E_{k+1} \qquad (1\leq k \leq n-1).
\end{equation}
Since the invariants $K$, $\mathcal{A}$ and $\Delta$ are the elementary symmetric functions of the 
principal focal curvatures, $K=\K_1+\K_2+\K_3$, $\mathcal{A}=\K_1\K_2+\K_2\K_3+\K_3\K_1$, $\Delta=\K_1\K_2\K_3$, 
we have $E_1=K/3$, $E_2=\mathcal{A}/3$ and $E_3=\Delta$. It remains to apply \eqref{eq:Newton-inequalities} 
for $k=1$ and $2$. %, Theorem\,\ref{Th:restrictionsR^5} is proved.
\end{proof}
%%%%%%<<<<<<<<<<<<<<<<<<<<<<<<<<<<<<<<<<<<<<<<<<<<<<<<<<<<<<<<<<<<<<<<<<<<<<<<<<<<<<<<<<<<<<<<<<<<<<<<< P R O O F %%
%
% \begin{remark*}
% The inequalities of Theorem\,\ref{Th:restrictionsR^5} become equalities only at flat umbilics. 
% \end{remark*}
%

%%>>>>>>>>>>>>>>>>>>>>>>>>>>>>>>>>>>>>>>>>>>>>>>>>>>>>>>>>>>>>>>>>>>>>>>>>>>>>>>>>>>>>>>>>>>>>>>>>>>>>>>

\noindent
\textbf{\textit{Wintgen Inequality}}. Given $p\in M$ in $\R^n$, we note 
$|N|:=\|\Abf-\Cbf\|\|\Bbf\||\sin(\Abf-\Cbf, \Bbf)|$ the area of the parallelogramme 
formed by $\Abf-\Cbf$ and $\Bbf$ in $N_pM$. 

\begin{proposition}[Wintgen Inequality]
At any point of a surface in $\R^{2+\ell}$ 
\begin{equation}\label{eq:wintgen-ineq}
  \|\Hbf\|^2\geq K+|N|\,.
\end{equation}
\end{proposition}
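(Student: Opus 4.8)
The plan is to reduce Wintgen's inequality to an elementary consequence of the arithmetic--geometric mean inequality, after rewriting the quantity $\|\Hbf\|^2-K$ in terms of the two vectors $\Abf-\Cbf$ and $\Bbf$ that generate the indicatrix ellipse $\E_p$.

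First I would compute $\|\Hbf\|^2-K$ explicitly. Using $\Hbf=\tfrac12(\Abf+\Cbf)$ together with the fact that the Gaussian curvature is $K=\sum_i(a_ic_i-b_i^2)=\langle\Abf,\Cbf\rangle-\|\Bbf\|^2$ (this is $\Tr[\mathcal{G}_p]$, where each diagonal entry is $k_i=\langle Q_i,Q_i\rangle_\psi=a_ic_i-b_i^2$), a direct expansion of $\tfrac14\|\Abf+\Cbf\|^2$ minus $\langle\Abf,\Cbf\rangle-\|\Bbf\|^2$ collapses to the identity
\[
\|\Hbf\|^2-K=\tfrac14\|\Abf-\Cbf\|^2+\|\Bbf\|^2 .
\]
This is the only computation in the argument; it simply separates the ``centre'' contribution $\Hbf$ from the two semi-diameters $\tfrac12(\Abf-\Cbf)$ and $\Bbf$ of $\E_p$ that appear in the parametrisation \eqref{eq:parametrisation-E_p}.

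Next I would bound $|N|$ from above. By its very definition, $|N|=\|\Abf-\Cbf\|\,\|\Bbf\|\,|\sin(\Abf-\Cbf,\Bbf)|$, and since $|\sin|\le 1$ this gives $|N|\le\|\Abf-\Cbf\|\,\|\Bbf\|$. Finally, applying the arithmetic--geometric mean inequality to the two nonnegative numbers $\tfrac14\|\Abf-\Cbf\|^2$ and $\|\Bbf\|^2$ yields
\[
\tfrac14\|\Abf-\Cbf\|^2+\|\Bbf\|^2\ \ge\ 2\sqrt{\tfrac14\|\Abf-\Cbf\|^2\,\|\Bbf\|^2}\ =\ \|\Abf-\Cbf\|\,\|\Bbf\|\ \ge\ |N| .
\]
Combining this with the identity of the first step gives $\|\Hbf\|^2-K\ge|N|$, which is exactly \eqref{eq:wintgen-ineq}.

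There is no serious obstacle: the whole content lies in recognising the identity of the first step, after which the inequality is purely algebraic. I would add the remark that both estimates become equalities simultaneously precisely when $\|\tfrac12(\Abf-\Cbf)\|=\|\Bbf\|$ and $\Abf-\Cbf\perp\Bbf$, that is, exactly when the two semi-diameters of $\E_p$ are orthogonal and of equal length, i.e.\ when the indicatrix $\E_p$ is a circle. This characterises the points at which Wintgen's inequality is sharp.
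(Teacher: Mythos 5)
Your proof is correct and follows essentially the same route as the paper: both arguments rest on the identity $\|\Hbf\|^2-K=\tfrac14\|\Abf-\Cbf\|^2+\|\Bbf\|^2$ (the paper phrases it via $\|(\Abf+\Cbf)/2\|^2-\|(\Abf-\Cbf)/2\|^2=\langle\Abf,\Cbf\rangle$ and $K=\langle\Abf,\Cbf\rangle-\|\Bbf\|^2$) combined with the AM--GM bound $\tfrac14\|\Abf-\Cbf\|^2+\|\Bbf\|^2\geq\|\Abf-\Cbf\|\,\|\Bbf\|\geq|N|$. Your closing remark on the equality case ($\E_p$ a circle) is a correct bonus not present in the paper.
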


\begin{proof}
For any two vectors $X$, $Y$ in Euclidean space $\R^\ell$ the 
sum of their squared norms is not less than twice the area of the parallelogram that they form: 
\[\|X\|^2+\|Y\|^2\geq 2\|X\|\|Y\|\geq \|2X\|\|Y\||\sin(X,Y)|\,.\]
Applying this inequality to the vectors $X=(\Abf-\Cbf)/2$ and $Y=\Bbf$, we get 
\[\|(\Abf-\Cbf)/2\|^2\geq \|\Abf-\Cbf\|\|\Bbf\||\sin(\Abf-\Cbf,\Bbf)|-\|\Bbf\|^2=|N|-\|\Bbf\|^2\,.\]
Using that $\Hbf=(\Abf+\Cbf)/2$, and the equalities $\|(\Abf+\Cbf)/2\|^2-\|(\Abf-\Cbf)/2\|^2=\langle \Abf,\Cbf\rangle$ and 
$K=\langle \Abf,\Cbf\rangle-\|\Bbf\|^2$, we get $\|\Hbf\|^2\geq K+|N|$. 
\end{proof}

Inequality \eqref{eq:wintgen-ineq} was found for surfaces in $\R^4$ in \protect\cite{Wintgen} (see also \protect\cite{Goncalves}). 

%%%%%%%%%%%%%%%%%%%%%%%%%%%%%%%%%%%%%%%%%%%%%%%%%%%%%%%%%%%%%%%%%%%%%%%%%%%%%%%%%%%%%%%%%%%%%%%%%
%%%%%%%%%%%%%%%%%%%%%%%%%%%%%%%%%%%%%%%%%%%%%%%%%%%%%%%%%%%%%%%%%%%%%%%%%%%%%%%%%%%%%%%%%%%%%%%%%
\section{Paired Quadratic Map and its Invariants}\label{section:Paired Quadratic Form}
%%%%%%%%%%%%%%%%%%%%%%%%%%%%%%%%%%%%%%%%%%%%%%%%%%%%%%%%%%%%%%%%%%%%%%%%%%%%%%%%%%%%%%%%%%%%%%%%%
%%%%%%%%%%%%%%%%%%%%%%%%%%%%%%%%%%%%%%%%%%%%%%%%%%%%%%%%%%%%%%%%%%%%%%%%%%%%%%%%%%%%%%%%%%%%%%%%%

%%%%%%%%%%%%%%%%%%%%%%%%%%%%%%%%%%%%%%%%%%%%%%%%%%%%%%%%%%%%%%%%%%%%%%%%%%%%%%%%%%%%%%%%%%%%%%%%%
\subsection{\textbf{Legendre Transform and Paired Quadratic Map}}
%%%%%%%%%%%%%%%%%%%%%%%%%%%%%%%%%%%%%%%%%%%%%%%%%%%%%%%%%%%%%%%%%%%%%%%%%%%%%%%%%%%%%%%%%%%%%%%%%

Similarly to the local quadratic map of $M$ at $p$, $\f_p:T_pM\to N_pM$, each homogeneous quadratic 
map $\f:\R^2\to\R^\ell$, 
\begin{equation}\label{eq:quad-map-phi}
\f(s,t)=\textstyle\frac{1}{2}\left(s^2\Abf+2st\Bbf+t^2\Cbf\right)=\left(Q_1,\ldots,Q_\ell\right)\,,
\end{equation} 
with $\Abf$, $\Bbf$, $\Cbf\in\R^\ell$, has its indicatrix ellipse 
$\E: \RP^1\to\R^\ell$: 
\begin{equation}\label{eq:parametrisation-E}
\E(\theta)=\frac{\Abf+\Cbf}{2}+\cos 2\theta\frac{\Abf-\Cbf}{2}+\sin 2\theta \Bbf\,, 
\end{equation}
(doubly covered by the map $\hat{\E}:=2\f$ restricted to the unit circle), 
%\[\hat{\E}(\ubf):=2\f(\ubf) \qquad \mbox{with } \|\ubf\|=1\,,\]
has its ``\textit{caustic}'' $\mathcal{C}$ (or ``\textit{focal}'' $\mathcal{F}$) in $\R^\ell\sqcup\RP^{\ell-1}_\infty$ 
defined in affine coordinates by the equation 
\begin{equation}\label{eq:focal-of-phi}
  (\,\langle \Abf\,, \,q \rangle-1\,)(\,\langle \Cbf\,, \,q \rangle-1\,)-\langle \Bbf\,, \,q \rangle^2=0\,, \quad q\in\R^{\ell}\,, 
\end{equation}
and has its associated ``Gauss'' quadratic form $\mathcal{G}:\R^\ell\to\R$ given by 
\begin{equation}\label{eq:QF-of-phi}
  \mathcal{G}(q)=\frac{1}{2}\det\begin{pmatrix}
 \langle \Abf\,, \, q\rangle & \langle \Bbf\,, \, q\rangle  \\ 
 \langle \Bbf\,, \, q\rangle & \langle \Cbf\,, \, q\rangle  
\end{pmatrix}\,,
\end{equation}
together with its associated symmetric linear map $\flecha{\mathcal{G}}:\R^\ell\to\R^\ell$, whose matrix $[\mathcal{G}]$ is the Gram matrix 
of the vectors $Q_1,\ldots,Q_\ell$ for the pseudo-scalar product $\langle\,,\,\rangle_\psi$ on the $3$-dimensional vector 
space $\mathfrak{sl}(2,\R)$. 
\medskip

\noindent 
\textit{\textbf{\small Legendre Transform of a Quadratic Form $\mathcal{G}$}}. 
The Legendre transform of a function $f:\R^\ell\to\R$ is a (may be multivalued) function 
$f^\vee$ of the variables $p_i:=\D f/\D x_i$, given by $f^\vee(p)=\langle p,x \rangle-f(x)$, where the 
$x_i=x_i(p)$ are determined by the ``slopes'' $p_k$. 

For a quadratic form $\mathcal{G}(q)=\frac{1}{2}\langle q,\flecha{\mathcal{G}}q\rangle$ with 
$\det[\mathcal{G}]\neq 0$ 
we have $p=\flecha{\mathcal{G}}q$, $q=\flecha{\mathcal{G}}^{-1}\,p$ and, then, 
$\mathcal{G}^\vee(p)=\langle p, q \rangle-\mathcal{G}(q)=\frac{1}{2}\left\langle p,\flecha{\mathcal{G}}^{-1}p\right\rangle$. 
Thus $\mathcal{G}^\vee:\R^\ell\to\R$ is a quadratic form whose associated symmetric 
linear map $\flecha{\mathcal{G}}^\vee:\R^\ell\to\R^\ell$ is the inverse of $\flecha{\mathcal{G}}$, 
that is $\flecha{\mathcal{G}}^\vee=\flecha{\mathcal{G}}^{-1}$. Clearly $\,(\mathcal{G}^\vee)^\vee=\mathcal{G}$.\, 
Identifying the $q$- and $p$-spaces we get
\begin{equation}\label{eq:Leg-transf-matrix}
\mathcal{G}^\vee(q)=\frac{1}{2}\left\langle q\,,\, \flecha{\mathcal{G}}^{-1}\,q\right\rangle\,.
\end{equation}  

%%%%%%%%%%%%%%%%%%%%%%%%%%%%%%%% DEFINITION %%%%%%%%%%%%%%%%%%%%%%%%%%%%%%%%%%%%%%%%%%%%%%%%%%%%%%%%%%
\noindent 
\textit{\textbf{\small Paired Quadratic Map}}. Take a quadratic map $\f$ given in \eqref{eq:quad-map-phi} 
with $\det[\mathcal{G}]\neq 0$. We call \textit{paired quadratic map} of $\f$ the map $\f^*:\R^2\to\R^\ell$ given by 
$\f^*=\flecha{\mathcal{G}}^{-1}\comp\f$: 
\[\f^*(s,t)=\textstyle\frac{1}{2}(s^2\Abf^*+2st\Bbf^*+t^2\Cbf^*)\,,\] 
where $\Abf^*:=\flecha{\mathcal{G}}^{-1}(\Abf)$, $\Bbf^*:=\flecha{\mathcal{G}}^{-1}(\Bbf)$, $\Cbf^*:=\flecha{\mathcal{G}}^{-1}(\Cbf)$. 

Thus the components of $\f^*$, \,$Q_1^*$,\ldots, $Q_\ell^*$,\, are given in terms of the matrix $[\mathcal{G}]^{-1}$ by
\[\begin{pmatrix}
        Q_1^* \\ 
        \vdots \\
        Q_\ell^*
\end{pmatrix}=
[\mathcal{G}]^{-1}\begin{pmatrix}
        Q_1 \\ 
        \vdots \\
        Q_\ell
\end{pmatrix}\,,\]
and the paired ``Gauss'' quadratic form $\mathcal{G}^*:\R^\ell\to\R$ is given by 
\begin{equation}\label{eq:pairedQF-of-phi}
  \mathcal{G}^*(q)=\frac{1}{2}\det\begin{pmatrix}
 \langle \Abf^*\,, \, q\rangle & \langle \Bbf^*\,, \, q\rangle  \\ 
 \langle \Bbf^*\,, \, q\rangle & \langle \Cbf^*\,, \, q\rangle  
\end{pmatrix}\,.
\end{equation}

%%%%%%%%%%%%%%%%%%%%%%%%%%%%%%%%%%%%%%%%%%%%%%%%%%%%%%%%%%%%%%%%%%%%%%%%%%%%%%%%%%%%%%%%%%%%%%%%%%%%%%%%%%%%%%%%%%%%
%%%%%%%%%%%%%%%%%%%%%%%%%%%%%%%       PROPOSITION       %%%%%%%%%%%%%%%%%%%%%%%%%%%%%%%%%%%%%%%%%%%%%%%%%%%%%%%%%%%%
%%%%%%%%%%%%%%%%%%%%%%%%%%%%%%%%%%%%%%%%%%%%%%%%%%%%%%%%%%%%%%%%%%%%%%%%%%%%%%%%%%%%%%%%%%%%%%%%%%%%%%%%%%%%%%%%%%%%
\begin{proposition}\label{prop:G^v=G^*}
Let $\f:\R^2\to\R^\ell$ a homogeneous quadratic map and $\mathcal{G}:\R^\ell\to\R$ be its quadratic form 
\eqref{eq:QF-of-phi} with $\det\mathcal{G}\neq 0$. 
The Legendre transform $\mathcal{G}^\vee$ of $\mathcal{G}$ coincides with the paired quadratic form $\mathcal{G}^*$ 
{\rm (associated to the quadratic map $\f^*$):} $\mathcal{G}^\vee=\mathcal{G}^*$. 

Moreover, we have that 
\begin{equation}\label{eq:paired-of-paired}
  \flecha{\mathcal{G}}^*=\flecha{\mathcal{G}}^{-1}\,, \quad (\f^*)^*=\f, \quad \mbox{and} \quad (\mathcal{G}^*)^*=\mathcal{G}\,.
\end{equation}
\end{proposition}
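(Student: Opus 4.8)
The plan is to reduce the entire proposition to the single matrix identity $[\mathcal{G}^*]=[\mathcal{G}]^{-1}$, after which all the assertions follow by purely formal manipulation. The only tool needed is the Gram‑matrix description of the ``Gauss'' quadratic form: for any homogeneous quadratic map $\f$ as in \eqref{eq:quad-map-phi}, the matrix of the form $\mathcal{G}$ of \eqref{eq:QF-of-phi} equals the Gram matrix $\left(\langle Q_i,Q_j\rangle_\psi\right)$ of its components for the pseudo‑scalar product on $\mathfrak{sl}(2,\R)$ — this is recorded in the text just after \eqref{eq:QF-of-phi} and is the content of the earlier proposition on $[\mathcal{G}_p]$. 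Applying this same description to the paired map $\f^*$, whose components are $Q_1^*,\ldots,Q_\ell^*$, the matrix $[\mathcal{G}^*]$ of the form \eqref{eq:pairedQF-of-phi} is precisely the Gram matrix of $Q_1^*,\ldots,Q_\ell^*$. This first reduction is the one point that deserves a word of care, since \eqref{eq:pairedQF-of-phi} is written as a determinant; but once it is recognised as a Gram matrix, no determinant is ever computed.

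First I would carry out the central computation. Writing $G:=[\mathcal{G}]$, so that $G_{ij}=\langle Q_i,Q_j\rangle_\psi$, and recalling $Q_i^*=\sum_j (G^{-1})_{ij}Q_j$, bilinearity of $\langle\cdot,\cdot\rangle_\psi$ together with the symmetry of $G^{-1}$ and the relation $\sum_m G_{km}(G^{-1})_{mj}=\delta_{kj}$ gives
\[
[\mathcal{G}^*]_{ij}=\langle Q_i^*,Q_j^*\rangle_\psi
=\sum_{k,m}(G^{-1})_{ik}(G^{-1})_{jm}\,G_{km}
=\sum_{k,m}(G^{-1})_{ik}\,G_{km}(G^{-1})_{mj}=(G^{-1})_{ij}\,,
\]
that is $[\mathcal{G}^*]=G^{-1}G\,G^{-1}=G^{-1}=[\mathcal{G}]^{-1}$. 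This is the whole arithmetical core; there is no genuine obstacle beyond keeping the index bookkeeping straight and invoking the symmetry of $G$ (which holds because $\langle\cdot,\cdot\rangle_\psi$ is symmetric).

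From $[\mathcal{G}^*]=[\mathcal{G}]^{-1}$ everything else is immediate. Comparing with \eqref{eq:Leg-transf-matrix}, whose matrix is also $[\mathcal{G}]^{-1}$, yields $\mathcal{G}^\vee=\mathcal{G}^*$; passing to the associated symmetric maps gives $\flecha{\mathcal{G}}^*=\flecha{\mathcal{G}}^{-1}$. For involutivity I would observe that the pairing construction applied to $\f^*$ uses $\flecha{\mathcal{G}^*}^{-1}=\big(\flecha{\mathcal{G}}^{-1}\big)^{-1}=\flecha{\mathcal{G}}$, whence $(\f^*)^*=\flecha{\mathcal{G}^*}^{-1}\comp\f^*=\flecha{\mathcal{G}}\comp\flecha{\mathcal{G}}^{-1}\comp\f=\f$; and since $[(\mathcal{G}^*)^*]=[\mathcal{G}^*]^{-1}=\big([\mathcal{G}]^{-1}\big)^{-1}=[\mathcal{G}]$, we conclude $(\mathcal{G}^*)^*=\mathcal{G}$. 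Thus the proof is essentially formal once the Gram‑matrix identity is in hand, and the ``hard part'' is really only the conceptual reduction identifying $[\mathcal{G}^*]$ with the Gram matrix of the $Q_i^*$ rather than recomputing \eqref{eq:pairedQF-of-phi} from scratch.
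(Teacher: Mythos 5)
Your proof is correct, but it takes a different route from the paper's. The paper argues pointwise with the determinant \eqref{eq:pairedQF-of-phi}: it substitutes $\Abf^*=\flecha{\mathcal{G}}^{-1}\Abf$, $\Bbf^*=\flecha{\mathcal{G}}^{-1}\Bbf$, $\Cbf^*=\flecha{\mathcal{G}}^{-1}\Cbf$, uses the self-adjointness of $\flecha{\mathcal{G}}^{-1}$ for the Euclidean product on $\R^\ell$ to move the operator onto $q$ inside each entry, obtains the intermediate identity $\mathcal{G}^*(q)=\mathcal{G}\bigl(\flecha{\mathcal{G}}^{-1}q\bigr)$, and then unwinds the definition of $\mathcal{G}$ to reach $\tfrac{1}{2}\bigl\langle q,\flecha{\mathcal{G}}^{-1}q\bigr\rangle=\mathcal{G}^\vee(q)$. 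You instead work entirely on the pseudo-Euclidean side: both $[\mathcal{G}]$ and $[\mathcal{G}^*]$ are Gram matrices of the components for $\langle\cdot\,,\cdot\rangle_\psi$, and the Gram matrix of $Q_i^*=\sum_j(G^{-1})_{ij}Q_j$ is $G^{-1}GG^{-1}=G^{-1}$. The two computations are two faces of the same conjugation identity (the matrix of $\mathcal{G}\comp\flecha{\mathcal{G}}^{-1}$ is exactly $G^{-1}GG^{-1}$), but your version makes $[\mathcal{G}^*]=[\mathcal{G}]^{-1}$ the explicit centrepiece, which is clean and matches the assertion already made when $\mathcal{G}_p^*$ is first introduced; its one extra dependency is the Gram-matrix description of \eqref{eq:QF-of-phi}, which the paper states for general $\ell$ but only verifies by direct calculation in the cases $\ell=2,3$ (the same expansion works for any $\ell$, so this is harmless). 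The paper's version is self-contained given only the definition of $\flecha{\mathcal{G}}$ and yields the useful by-product $\mathcal{G}^*=\mathcal{G}\comp\flecha{\mathcal{G}}^{-1}$. Your derivation of \eqref{eq:paired-of-paired} coincides with the paper's, which likewise treats it as an immediate formal consequence.
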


\begin{proof}
% \noindent
% \textit{Proof}. 
Using \eqref{eq:pairedQF-of-phi}, the symmetry of $\flecha{\mathcal{G}}^{-1}$ and \eqref{eq:Leg-transf-matrix},  
for any $q\in\R^\ell$ we get the equalities 
\begin{align*}
\hspace{2cm}\mathcal{G}^*(q) & =\frac{1}{2}\det\begin{pmatrix}
 \langle \Abf^*\,, \, q\rangle & \langle \Bbf^*\,, \, q\rangle  \\ 
 \langle \Bbf^*\,, \, q\rangle & \langle \Cbf^*\,, \, q\rangle  
\end{pmatrix}= \frac{1}{2}\det\begin{pmatrix}
 \langle \flecha{\mathcal{G}}^{-1}\Abf\,, \, q\rangle & \langle \flecha{\mathcal{G}}^{-1}\Bbf\,, \, q\rangle  \\ 
 \langle \flecha{\mathcal{G}}^{-1}\Bbf\,, \, q\rangle & \langle \flecha{\mathcal{G}}^{-1}\Cbf\,, \, q\rangle  
\end{pmatrix} \\
                &=\frac{1}{2}\det\begin{pmatrix}
 \langle \Abf\,, \, \flecha{\mathcal{G}}^{-1}q\rangle & \langle \Bbf\,, \, \flecha{\mathcal{G}}^{-1}q\rangle  \\ 
 \langle \Bbf\,, \, \flecha{\mathcal{G}}^{-1}q\rangle & \langle \Cbf\,, \, \flecha{\mathcal{G}}^{-1}q\rangle  
\end{pmatrix}=\mathcal{G}\left(\flecha{\mathcal{G}}^{-1}q\right) \\ 
  & =\frac{1}{2}\left\langle \flecha{\mathcal{G}}\flecha{\mathcal{G}}^{-1}q\,,\,\flecha{\mathcal{G}}^{-1}q\right\rangle=
  \frac{1}{2}\left\langle q\,,\,\flecha{\mathcal{G}}^{-1}q\right\rangle=\mathcal{G}^\vee(q)\,. %\hspace{3.9cm}{\square}
\end{align*}
The relations \eqref{eq:paired-of-paired} follow directly from the first statement.
\end{proof}

%%%%%%%%%%%%%%%%%%%%%%%%%%%%%%%%%%%%%%%%%%%%%%%%%%%%%%%%%%%%%%%%%%%%%%%%%%%%%%%%%%%%%%%%%%%%%%%%%%%%%%%%%%%%%%%%%%%%
\subsection{\textbf{Paired Indicatrix, Paired Local Caustic and Paired Invariants}}
%%%%%%%%%%%%%%%%%%%%%%%%%%%%%%%%%%%%%%%%%%%%%%%%%%%%%%%%%%%%%%%%%%%%%%%%%%%%%%%%%%%%%%%%%%%%%%%%%%%%%%%%%%%%%%%%%%%%
We are essentially concerned with quadratic maps $\R^2\to\R^\ell$, given in  \eqref{eq:quad-map-phi}, 
for $\ell=2$ and $\ell=3$ because the vectors $\Abf$, $\Bbf$, $\Cbf$ generate a vector space of dimension 
at most $3$, and there is at most three linearly independent vectors $Q_i$ in $\mathfrak{sl}(2,\R)$. 
The cases $\ell=2$ and $3$ correspond respectively to the local quadratic maps of a surface $M$ in $\R^4$ and in $\R^5$. 
\medskip

\noindent
\textit{\textbf{\small Paired Local Caustic}}.
The \textit{paired local caustic} $\mathcal{C}_p^*$ of $M\subset\R^{\ell+2}$ at $p$ is the hypersurface of the 
normal space $N_pM$ which, in affine coordinates, is given by the equation
\[(\,\langle \Abf^*\,, \,q \rangle-1\,)(\,\langle \Cbf^*\,, \,q \rangle-1\,)-\langle \Bbf^*\,, \,q \rangle^2=0\,.\]

\noindent
\textit{\textbf{\small Paired Indicatrix Ellipse}}.
The \textit{paired indicatrix ellipse} $\E_p^*$ of $M\subset\R^{\ell+2}$ at $p$ is the indicatrix ellipse 
of the paired local quadratic map $\f_p^*$. That is, $\E_p^*:\RP^1\to N_pM$, 
\begin{equation}\label{eq:parametrisation-E_p^*}
\E_p^*(\theta)=\frac{\Abf^*+\Cbf^*}{2}+\cos 2\theta\frac{\Abf^*-\Cbf^*}{2}+\sin 2\theta \Bbf^*\,,
\end{equation}
and $\E_p^*$ is doubly covered by restricting $2\f_p^*$ to $\sph^1$: $\,\hat{\E_p^*}(\ubf)=2\,\f_p^*(\ubf)\,$ with $\|\ubf\|=1$.
\medskip

The paired (second order) local invariants of a surface have the following expressions 

%%%%%%%%%%%%%%%%%%%%%%%%%%%%%%%%%%%%%%%%%%%%%%%%%%%%%%%%%%%%%%%%%%%%%%%%%%%%%%%%%%%%%%%%%%%%%%%%%%%%%%%%%%%%%%%%%%%%
%%%%%%%%%%%%%%%%%%%%%%%%%%%%%%%       PROPOSITION       %%%%%%%%%%%%%%%%%%%%%%%%%%%%%%%%%%%%%%%%%%%%%%%%%%%%%%%%%%%%
%%%%%%%%%%%%%%%%%%%%%%%%%%%%%%%%%%%%%%%%%%%%%%%%%%%%%%%%%%%%%%%%%%%%%%%%%%%%%%%%%%%%%%%%%%%%%%%%%%%%%%%%%%%%%%%%%%%%
\begin{proposition}\label{prop:paired-inv-R^4}
For each point $p$ of $M$ in $\R^4$ with $\Delta\neq 0$ we have 
\[\Delta^*=\frac{1}{\Delta}\,, \quad K^*=\frac{K}{\Delta}\,, \quad N^*=\frac{N}{\Delta}\,, 
\quad \mathcal{K}_1^*=\frac{1}{\mathcal{K}_1} \quad \mbox{and} \quad \mathcal{K}_2^*=\frac{1}{\mathcal{K}_2}\,. \]
\end{proposition}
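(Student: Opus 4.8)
The plan is to reduce every paired invariant to the single structural fact, established in Proposition~\ref{prop:G^v=G^*}, that the Gauss quadratic form of the paired map $\f_p^*$ is $\mathcal{G}_p^*$ with $[\mathcal{G}_p^*]=[\mathcal{G}_p]^{-1}$ and $\flecha{\mathcal{G}_p}^*=\flecha{\mathcal{G}_p}^{-1}$. Since $\Delta\neq 0$, the matrix $[\mathcal{G}_p]$ is invertible, so all these objects are well defined. The paired invariants $\Delta^*$, $K^*$, $\mathcal{K}_1^*$, $\mathcal{K}_2^*$ are by definition the invariants of $\f_p^*$, i.e.\ the determinant, trace and eigenvalues of $[\mathcal{G}_p^*]$ (Proposition~\ref{prop:inv-char_polynomial}); I would treat these four first, and then handle $N^*$ separately, since $N$ is not a spectral invariant of $\mathcal{G}_p$.

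First I would compute $\Delta^*$ and $K^*$. By Proposition~\ref{prop:inv-char_polynomial} applied to $\f_p^*$, one has $\Delta^*=\det[\mathcal{G}_p^*]$ and $K^*=\Tr[\mathcal{G}_p^*]$. Using $[\mathcal{G}_p^*]=[\mathcal{G}_p]^{-1}$ gives at once
\[\Delta^*=\det[\mathcal{G}_p]^{-1}=\frac{1}{\det[\mathcal{G}_p]}=\frac{1}{\Delta}\,.\]
For the trace, I would invoke the elementary $2\times 2$ identity $\Tr(M^{-1})=\Tr(M)/\det(M)$ (equivalently $\tfrac{1}{\lambda_1}+\tfrac{1}{\lambda_2}=\tfrac{\lambda_1+\lambda_2}{\lambda_1\lambda_2}$), which yields $K^*=\Tr[\mathcal{G}_p]/\det[\mathcal{G}_p]=K/\Delta$. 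The principal focal curvatures being the eigenvalues of the Gauss quadratic form, $\mathcal{K}_1^*$, $\mathcal{K}_2^*$ are the eigenvalues of $[\mathcal{G}_p]^{-1}$; since inverting a matrix inverts its eigenvalues, $\mathcal{K}_i^*=1/\mathcal{K}_i$. These are consistent with $\Delta^*=\mathcal{K}_1^*\mathcal{K}_2^*=1/\Delta$ and $K^*=\mathcal{K}_1^*+\mathcal{K}_2^*=K/\Delta$ via \eqref{eq:Delta=K1K1-K=K1+K2}.

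The remaining identity $N^*=N/\Delta$ is the one point that does not follow merely from the spectrum of $\mathcal{G}_p$, and it is where the Lie-algebra structure enters. I would use the formula $N=\Tr\{Q_1,Q_2\}$ of Theorem~\ref{th:features-pallelogram}$(c)$, so that $N^*=\Tr\{Q_1^*,Q_2^*\}$. Writing $\begin{pmatrix}Q_1^*\\ Q_2^*\end{pmatrix}=[\mathcal{G}_p]^{-1}\begin{pmatrix}Q_1\\ Q_2\end{pmatrix}$ and using that the Poisson bracket is bilinear and antisymmetric, the bracket of the two linear combinations collapses to
\[\{Q_1^*,Q_2^*\}=\det\!\big([\mathcal{G}_p]^{-1}\big)\,\{Q_1,Q_2\}=\frac{1}{\Delta}\{Q_1,Q_2\}\,.\]
Taking the trace of this identity in $\mathfrak{sl}(2,\R)$ gives $N^*=\tfrac{1}{\Delta}\Tr\{Q_1,Q_2\}=N/\Delta$. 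The main (mild) obstacle is precisely this last step: one must recognise that $N$ is the trace of the \emph{bracket} of the components rather than a function of $\mathcal{G}_p$ alone, and that the $2\times 2$ change of components multiplies the bracket by its determinant $1/\Delta$. Everything else is an immediate consequence of $[\mathcal{G}_p^*]=[\mathcal{G}_p]^{-1}$.
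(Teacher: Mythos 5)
Your proof is correct and follows essentially the same route as the paper: everything is derived from $\flecha{\mathcal{G}_p}^*=\flecha{\mathcal{G}_p}^{-1}$ (equivalently $[\mathcal{G}_p^*]=[\mathcal{G}_p]^{-1}$), giving $\Delta^*=1/\Delta$, $K^*=K/\Delta$ and $\K_i^*=1/\K_i$ from the spectrum of the inverse matrix. Your treatment of $N^*$ via $N=\Tr\{Q_1,Q_2\}$ and the identity $\{Q_1^*,Q_2^*\}=\det\bigl([\mathcal{G}_p]^{-1}\bigr)\{Q_1,Q_2\}$ is the exact analogue of the paper's sample computation of $\tau^*$ (a linear change of components scales the alternating quantity by the determinant), and it cleanly supplies the one case the paper's proof leaves implicit.
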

As a direct corollary we get that the quantity $N^2/\Delta$ is self-paired: 
\begin{equation}\label{N^2/D=N*^2/D*}
  \frac{{N^*}^2}{\Delta^*}=\frac{N^2}{\Delta}
\end{equation}

%%%%%%%%%%%%%%%%%%%%%%%%%%%%%%%%%%%%%%%%%%%%%%%%%%%%%%%%%%%%%%%%%%%%%%%%%%%%%%%%%%%%%%%%%%%%%%%%%%%%%%%%%%%%%%%%%%%%
%%%%%%%%%%%%%%%%%%%%%%%%%%%%%%%       PROPOSITION       %%%%%%%%%%%%%%%%%%%%%%%%%%%%%%%%%%%%%%%%%%%%%%%%%%%%%%%%%%%%
%%%%%%%%%%%%%%%%%%%%%%%%%%%%%%%%%%%%%%%%%%%%%%%%%%%%%%%%%%%%%%%%%%%%%%%%%%%%%%%%%%%%%%%%%%%%%%%%%%%%%%%%%%%%%%%%%%%%
\begin{proposition}\label{prop:paired-inv-R^5}
For each point $p$ of $M$ in $\R^5$ with $\Delta\neq 0$ we have 
\begin{equation}\label{eq:relations-paired-invR^5}
\Delta^*=\frac{1}{\Delta}\,, \quad K^*=\frac{\mathcal{A}}{\Delta}\,, \quad \mathcal{A}^*=\frac{K}{\Delta}\,, 
\quad \tau^*=\frac{\tau}{\Delta}=\frac{1}{\tau} \quad \mbox{and} \quad \mathcal{K}_i^*=\frac{1}{\mathcal{K}_i}\,.
\end{equation}
\end{proposition}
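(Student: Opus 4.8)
The plan is to reduce every identity to a statement about the eigenvalues of $\mathcal{G}_p$. By Proposition\,\ref{prop:G^v=G^*} the paired quadratic form $\mathcal{G}_p^*$ is the Gauss quadratic form of the paired quadratic map $\f_p^*$, and its matrix is $[\mathcal{G}_p^*]=[\mathcal{G}_p]^{-1}$; hence the paired principal focal curvatures (the eigenvalues of $\mathcal{G}_p^*$) are the reciprocals of those of $\mathcal{G}_p$. This yields at once $\mathcal{K}_i^*=1/\mathcal{K}_i$, the last of the stated relations, and it lets me apply the entire invariant theory of \S\ref{sect-GaussMap-GQF} verbatim to $\f_p^*$.

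First I would handle $\Delta^*$, $K^*$ and $\mathcal{A}^*$. Applying Proposition\,\ref{prop:inv-char_polynomial5} to $\f_p^*$ shows that $K^*,\mathcal{A}^*,\Delta^*$ are the elementary symmetric functions of $\mathcal{K}_1^*,\mathcal{K}_2^*,\mathcal{K}_3^*$. Substituting $\mathcal{K}_i^*=1/\mathcal{K}_i$ and clearing the common denominator $\mathcal{K}_1\mathcal{K}_2\mathcal{K}_3=\Delta$ gives $\Delta^*=1/(\mathcal{K}_1\mathcal{K}_2\mathcal{K}_3)=1/\Delta$, then $K^*=\sum_i 1/\mathcal{K}_i=\mathcal{A}/\Delta$ and $\mathcal{A}^*=\sum_{i<j}1/(\mathcal{K}_i\mathcal{K}_j)=K/\Delta$, where in the last two I use that $K,\mathcal{A},\Delta$ are the corresponding symmetric functions of the $\mathcal{K}_i$ (Proposition\,\ref{prop:inv-char_polynomial5}).

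For the torsion I would argue by multilinearity. By Theorem\,\ref{th:features-pallelotope}\,$(c)$, $\tau=\langle Q_1,\{Q_2,Q_3\}\rangle_\psi$ is the oriented pseudo-volume of the parallelepiped spanned by $Q_1,Q_2,Q_3$ in $\mathfrak{sl}(2,\R)$, and the same formula applied to $\f_p^*$ reads $\tau^*=\langle Q_1^*,\{Q_2^*,Q_3^*\}\rangle_\psi$. Since the paired components satisfy $(Q_1^*,Q_2^*,Q_3^*)^{\mathsf T}=[\mathcal{G}_p]^{-1}(Q_1,Q_2,Q_3)^{\mathsf T}$ and the pseudo-volume is an alternating trilinear form of its three arguments, it rescales by the determinant of the transforming matrix: $\tau^*=\det\!\big([\mathcal{G}_p]^{-1}\big)\,\tau=\tau/\Delta$. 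Combining this with $\Delta=\tau^2$ (Theorem\,\ref{th:features-pallelotope}\,$(c)$) gives $\tau/\Delta=\tau/\tau^2=1/\tau$, i.e. the full chain $\tau^*=\tau/\Delta=1/\tau$.

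The relations for $\Delta^*,K^*,\mathcal{A}^*$ and $\mathcal{K}_i^*$ are purely formal once $[\mathcal{G}_p^*]=[\mathcal{G}_p]^{-1}$ is available, so the single delicate point is the sign of $\tau^*$: the identity $(\tau^*)^2=\Delta^*=1/\Delta$ by itself would only give $\tau^*=\pm1/\tau$. The value of the oriented pseudo-volume computation is precisely that it pins the sign down directly, producing $\tau^*=\det([\mathcal{G}_p]^{-1})\,\tau$ with $\det([\mathcal{G}_p]^{-1})=1/\Delta>0$, which together with $\Delta=\tau^2$ forces $\tau^*=1/\tau$ unambiguously. I expect this sign bookkeeping to be the main (and only real) obstacle.
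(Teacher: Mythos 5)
Your proof is correct and takes essentially the same route as the paper: everything rests on $\flecha{\mathcal{G}_p}^*=\flecha{\mathcal{G}_p}^{-1}$ (giving $\Delta^*=1/\Delta$, $\K_i^*=1/\K_i$, and the symmetric-function identities for $K^*$, $\mathcal{A}^*$), plus a determinant/multilinearity argument for $\tau^*$. The only cosmetic difference is that you rescale the pseudo-volume $\langle Q_1^*,\{Q_2^*,Q_3^*\}\rangle_\psi$ in $\mathfrak{sl}(2,\R)$, while the paper rescales the Euclidean volume $\det\left(\frac{1}{2}(\Abf^*-\Cbf^*)\ \ \Bbf^*\ \ \Hbf^*\right)$ in $N_pM$; both computations produce the same factor $\det\left(\left[\mathcal{G}_p\right]^{-1}\right)=1/\Delta$ and the same sign conclusion $\tau^*=1/\tau$.
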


\begin{proof}
To prove Propositions \ref{prop:paired-inv-R^4} and \ref{prop:paired-inv-R^5} one uses that 
$\flecha{\mathcal{G}_p}^*=\flecha{\mathcal{G}_p}^{-1}$ which implies that 
$\Delta^*=\det[\mathcal{G}_p^*]=1/\det[\mathcal{G}_p]=1/\Delta$. 
For example, 
\begin{align*}
\tau^* & =\det\left(\frac{\Abf^*-\Cbf^*}{2} \ \ \Bbf^* \ \ \Hbf^*\right)
=\det\left(\flecha{\mathcal{G}_p}^*\frac{\Abf-\Cbf}{2} \ \ \flecha{\mathcal{G}_p}^*\Bbf \ \ \flecha{\mathcal{G}_p}^*\Hbf\right) \\
 & =\det\left(\left[\flecha{\mathcal{G}_p}^*\right]\left(\frac{\Abf-\Cbf}{2} \ \ \Bbf \ \ \Hbf\right)\right)
=\det\left[\flecha{\mathcal{G}_p}^*\right]\det\left(\frac{\Abf-\Cbf}{2} \ \ \Bbf \ \ \Hbf\right) \\ 
 & =\frac{1}{\Delta}\tau=\frac{1}{\tau}
\end{align*}
Also the relation $\flecha{\mathcal{G}_p}^*=\flecha{\mathcal{G}_p}^{-1}$ directly implies that $\K_i^*=1/\K_i$. 
\end{proof}

%%%%%%%%%%%%%%%%%%%%%%%%%%%%%%%%%%%%%%%%%% R E M A R K %%%%%%%%%%%%%%%%%%%%%%%%%%%%%%%%%%%%%%%%%%%%%%%%%%%%%%
\begin{remark*}
\textit{Given $p\in M$ in $\R^5$ with $\Delta\neq 0$ the inequalities $\,K^2>3\mathcal{A}\,$ and $\,\mathcal{A}^2>3K\Delta\,$} 
(of Theorem\,\ref{Th:restrictionsR^5}) \textit{are paired to each other.} 
\end{remark*}
\begin{proof}
Using Proposition \ref{prop:paired-inv-R^5} the inequality $(K^*)^2>3\mathcal{A}^*$ becomes $\mathcal{A}^2>3K\Delta$ and 
the inequality $(\mathcal{A}^*)^2>3K^*\Delta^*$ becomes $K^2>3\mathcal{A}$.
\end{proof} 

%%%%%%%%%%%%%%%%%%%%%%%%%%%%%%%%%%%%%%%%%%%%%%%%%%%%%%%%%%%%%%%%%%%%%%%%%%%%%%%%%%%%%%%%%%%%%%%%%%%%%%%%%%%%%%%%%%%%
\subsection{\textbf{Relations Between $\E_p$, $\mathcal{C}_p$ and Their Paired Versions $\E_p^*$, $\mathcal{C}_p^*$}}
%%%%%%%%%%%%%%%%%%%%%%%%%%%%%%%%%%%%%%%%%%%%%%%%%%%%%%%%%%%%%%%%%%%%%%%%%%%%%%%%%%%%%%%%%%%%%%%%%%%%%%%%%%%%%%%%%%%%

%%%%%%%%%%%%%%%%%%%%%%%%%%%%%%%%%%%%%%%%%%%%%%%%%%%%%%%%%%%%%%%%%%%%%%%%%%%%%%%%%%%%%%%%%%%%%%%%%%%%%%%%%%%%%%%%%%%%
%%%%%%%%%%%%%%%%%%%%%%%%%%%%%%%       PROPOSITION       %%%%%%%%%%%%%%%%%%%%%%%%%%%%%%%%%%%%%%%%%%%%%%%%%%%%%%%%%%%%
%%%%%%%%%%%%%%%%%%%%%%%%%%%%%%%%%%%%%%%%%%%%%%%%%%%%%%%%%%%%%%%%%%%%%%%%%%%%%%%%%%%%%%%%%%%%%%%%%%%%%%%%%%%%%%%%%%%%
\begin{proposition}\label{prop:H-R_paired}
For each point $p$ of a surface in $\R^4$ $($or in $\R^5$$)$ with $\Delta\neq 0$ we have$:$

\noindent
$a)$ The centre $\Hbf$ of the indicatrix
ellipse $\E_p$ coincides with the centre $\Rbf^*$ of the paired local caustic $\mathcal{C}_p^*$ 
and the centre $\Rbf$ of the local caustic $\mathcal{C}_p$ coincides with the centre $\Hbf^*$ of the 
paired indicatrix ellipse $\E_p^*$:
\begin{equation}\label{eq:R*=H-H*=R}
  \Rbf^*=\Hbf \quad \mbox{and}\quad \Hbf^*=\Rbf\,.
\end{equation}
$b)$ The linear map $\flecha{\mathcal{G}_p}$ sends $\E_p^*$ onto $\E_p$, that is 
$\,\flecha{\mathcal{G}_p}\comp\hat{\E_p^*}(\ubf)=\hat{\E_p}(\ubf)$ \,$(\|u\|=1)$. 
\medskip

\noindent
$c)$ The linear map $\flecha{\mathcal{G}_p}$ sends $\mathcal{C}_p$ onto $\mathcal{C}_p^*$, that is 
$\,\flecha{\mathcal{G}_p}(\mathcal{C}_p)=\mathcal{C}_p^*$. 
\end{proposition}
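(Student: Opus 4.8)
The plan is to reduce all three items to the single structural identity $\f_p^*=\flecha{\mathcal{G}_p}^{-1}\comp\f_p$ (the definition of the paired map), supplemented by two facts from Proposition\,\ref{prop:G^v=G^*}, namely $\flecha{\mathcal{G}_p^*}=\flecha{\mathcal{G}_p}^{-1}$ and $(\f_p^*)^*=\f_p$, and by the relation $\Hbf=\flecha{\mathcal{G}_p}\Rbf$ of Proposition\,\ref{proposition:H=QR}. I would first note that the argument proving $\Hbf=\flecha{\mathcal{G}_p}\Rbf$ is purely algebraic (it only locates the critical point of the quadric $g$), so it applies verbatim to the abstract quadratic map $\f_p^*$ as well; this is what lets me speak of $\Hbf^*$ and $\Rbf^*$ on an equal footing with $\Hbf$, $\Rbf$.

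For item $(a)$ I would argue directly and then invoke involutivity. Since $\Abf^*=\flecha{\mathcal{G}_p}^{-1}\Abf$ and $\Cbf^*=\flecha{\mathcal{G}_p}^{-1}\Cbf$, the centre of $\E_p^*$ is $\Hbf^*=\tfrac12(\Abf^*+\Cbf^*)=\flecha{\mathcal{G}_p}^{-1}\tfrac12(\Abf+\Cbf)=\flecha{\mathcal{G}_p}^{-1}\Hbf$, which equals $\Rbf$ by Proposition\,\ref{proposition:H=QR}; hence $\Hbf^*=\Rbf$. Applying this identity to the map $\f_p^*$ in place of $\f_p$ gives $\mathbf H\big((\f_p^*)^*\big)=\mathbf R(\f_p^*)$, and since $(\f_p^*)^*=\f_p$ this reads $\Hbf=\Rbf^*$, completing $(a)$.

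Item $(b)$ is immediate from the double–cover definitions $\hat{\E_p}(\ubf)=2\f_p(\ubf)$ and $\hat{\E_p^*}(\ubf)=2\f_p^*(\ubf)$: I would simply compute $\flecha{\mathcal{G}_p}\big(\hat{\E_p^*}(\ubf)\big)=2\,\flecha{\mathcal{G}_p}\f_p^*(\ubf)=2\,\flecha{\mathcal{G}_p}\flecha{\mathcal{G}_p}^{-1}\f_p(\ubf)=2\f_p(\ubf)=\hat{\E_p}(\ubf)$ for every unit $\ubf$.

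Item $(c)$ is the only part that needs a real idea, and the key step I would spotlight is transferring the linear map off the coefficient vectors and onto the argument by symmetry: because $\flecha{\mathcal{G}_p}^{-1}$ is self-adjoint and $\Abf^*=\flecha{\mathcal{G}_p}^{-1}\Abf$, one has $\langle \Abf^*,\flecha{\mathcal{G}_p}q\rangle=\langle \flecha{\mathcal{G}_p}^{-1}\Abf,\flecha{\mathcal{G}_p}q\rangle=\langle \Abf,q\rangle$, and likewise for $\Bbf^*$ and $\Cbf^*$. Substituting $q'=\flecha{\mathcal{G}_p}q$ into the defining equation $(\langle\Abf^*,q'\rangle-1)(\langle\Cbf^*,q'\rangle-1)-\langle\Bbf^*,q'\rangle^2=0$ of $\mathcal{C}_p^*$ then reproduces exactly the equation of $\mathcal{C}_p$ at $q$, so $q\in\mathcal{C}_p\iff\flecha{\mathcal{G}_p}q\in\mathcal{C}_p^*$; as $\flecha{\mathcal{G}_p}$ is a bijection of $N_pM$ this yields $\flecha{\mathcal{G}_p}(\mathcal{C}_p)=\mathcal{C}_p^*$. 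The main (mild) obstacle is purely bookkeeping: these caustics live in the projective completion $N_pM\sqcup\RP^{\ell-1}_\infty$, so I would remark that $\flecha{\mathcal{G}_p}$ extends projectively and matches the homogenised equations, carrying the points at infinity of $\mathcal{C}_p$ to those of $\mathcal{C}_p^*$. As a cross-check I would also sketch the conceptual route through Theorem\,\ref{th:focal_quadric-indicatrix}: writing $T=\flecha{\mathcal{G}_p}$ and using $\mathcal{C}_p=\E_p^\vee$, $\mathcal{C}_p^*=(\E_p^*)^\vee$, together with the transformation rule $(TX)^\vee=T^{-\top}X^\vee$ for polar duals under an invertible linear map and the symmetry $T^\top=T$, item $(b)$ gives $\mathcal{C}_p^*=(\flecha{\mathcal{G}_p}^{-1}\E_p)^\vee=\flecha{\mathcal{G}_p}\,\E_p^\vee=\flecha{\mathcal{G}_p}(\mathcal{C}_p)$, in agreement with the direct computation.
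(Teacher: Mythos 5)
Your proposal is correct, and items $(a)$ and $(b)$ follow essentially the paper's own route: $(b)$ is the identical two-line computation with the double covers, and for $(a)$ you merely spell out what the paper compresses into a citation of Propositions\,\ref{proposition:H=QR} and \ref{prop:G^v=G^*} (your remark that the critical-point argument behind $\Hbf=\flecha{\mathcal{G}_p}\Rbf$ applies verbatim to the abstract map $\f_p^*$ is exactly the point needed to make the involutivity step $\Hbf=\Rbf^*$ legitimate). Where you genuinely diverge is item $(c)$. The paper proves it from the level-set form of Theorem\,\ref{th:equation_local_caustic}, computing $\mathcal{G}_p(q-\Rbf)=\mathcal{G}_p^*\bigl(\flecha{\mathcal{G}_p}q-\Rbf^*\bigr)$ and then invoking the paired-invariant identity \eqref{N^2/D=N*^2/D*} to match the right-hand constants $-N^2/4\Delta$ and $-{N^*}^2/4\Delta^*$; you instead substitute $q'=\flecha{\mathcal{G}_p}q$ directly into the defining equation
$(\langle\Abf^*,q'\rangle-1)(\langle\Cbf^*,q'\rangle-1)-\langle\Bbf^*,q'\rangle^2=0$
and use self-adjointness to collapse $\langle\Abf^*,\flecha{\mathcal{G}_p}q\rangle=\langle\Abf,q\rangle$, etc. Your route is slightly more economical: it needs no prior knowledge of the invariants $N$, $\Delta$ or of Theorem\,\ref{th:equation_local_caustic}, it treats the $\R^4$ and $\R^5$ cases uniformly, and it handles the points at infinity cleanly via the homogenised equation, whereas the paper's version buys, as a by-product, the interpretation of $\mathcal{C}_p$ and $\mathcal{C}_p^*$ as corresponding level sets of $\mathcal{G}_p$ and $\mathcal{G}_p^*$. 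Your polar-duality cross-check $(TX)^\vee=T^{-\top}X^\vee$ combined with Theorem\,\ref{th:focal_quadric-indicatrix} is also sound and is arguably the most conceptual of the three arguments.
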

\begin{proof}
$a)$ The relations \eqref{eq:R*=H-H*=R} follow from Proposition \ref{proposition:H=QR} and Proposition\,\ref{prop:G^v=G^*}. 
\smallskip

\noindent
$b)$ To prove that $\flecha{\mathcal{G}_p}$ sends $\E_p^*$ onto $\E_p$, we shall use their double covering maps: 
\begin{align*}
\hspace{0.7cm}\flecha{\mathcal{G}_p}\comp\hat{\E_p^*}(\ubf) &=2\flecha{\mathcal{G}_p}\comp\f_p^*(\ubf)=
2\flecha{\mathcal{G}_p}\flecha{\mathcal{G}_p}^{-1}\comp\f_p(\ubf)=2\f_p(\ubf) \\
             &=\hat{\E_p}(\ubf)\,.
\end{align*}

\noindent
$c)$ We shall use equation \eqref{eq:Gp(q-R)=-N2/4D} for $\mathcal{C}_p$ and $\mathcal{C}p^*$, to prove that $\,q\in\mathcal{C}_p\,$ iff 
$\,\flecha{\mathcal{G}_p}q\in\mathcal{C}_p^*$: 
\begin{align*}
\hspace{2cm}\mathcal{G}_p(q-\Rbf)& =\frac{1}{2}\left\langle \flecha{\mathcal{G}_p}(q-\Rbf)\,,\,q-\Rbf\right\rangle \\
             &=\frac{1}{2}\left\langle \flecha{\mathcal{G}_p}(q-\Rbf)\,,\,\flecha{\mathcal{G}_p}^{-1}\flecha{\mathcal{G}_p}(q-\Rbf)\right\rangle
             =\mathcal{G}_p^*\left(\flecha{\mathcal{G}_p}(q-\Rbf)\right)  \\
                &= \mathcal{G}_p^*\left(\flecha{\mathcal{G}_p}q-\Rbf^*\right)\,.
\end{align*}
Thus, by relation \eqref{N^2/D=N*^2/D*}, $2\mathcal{G}_p(q-\Rbf)=-N^2/4\Delta\,$ iff 
$\,2\mathcal{G}_p^*\left(\flecha{\mathcal{G}_p}q-\Rbf^*\right)=-{N^*}^2/4\Delta^*$. 
\end{proof}

%%%%%%%%%%%%%%%%%%%%%%%%%%%%%%%%%%%%%%%%%%%%%%%%%%%%%%%%%%%%%%%%%%%%%%%%%%%%%%%%%%%%%%%%%%%%%%%%%%%%%%%%%%%%%%%%%%%%
%%%%%%%%%%%%%%%%%%%%%%%%%%%%%%%       PROPOSITION       %%%%%%%%%%%%%%%%%%%%%%%%%%%%%%%%%%%%%%%%%%%%%%%%%%%%%%%%%%%%
%%%%%%%%%%%%%%%%%%%%%%%%%%%%%%%%%%%%%%%%%%%%%%%%%%%%%%%%%%%%%%%%%%%%%%%%%%%%%%%%%%%%%%%%%%%%%%%%%%%%%%%%%%%%%%%%%%%%
\begin{theorem}[Paired Wintgen Inequalities]\label{th:paired-wingten-inequalities}
\textit{Given $p\in M$ in $\R^4$ $($or in $\R^5$$)$ with $\Delta\neq 0$, let $\Rbf$ be the centre $($resp. the vertex$)$ 
of the local caustic $\mathcal{C}_p$. Then,} 
\[(a) \, \|\Rbf\|^2\geq K/\Delta+|N/\Delta| \qquad \quad \mbox{ for $M$ in $\R^4$}\,.\] 
\[(b) \, \|\Rbf\|^2\geq \mathcal{A}/\Delta+2\|\Hbf\|/|\tau| \qquad \mbox{for $M$ in $\R^5$}\,.\] 
\end{theorem}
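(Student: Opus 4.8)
The plan is to obtain both inequalities not by a direct computation on $M$, but by applying the already-established Wintgen inequality \eqref{eq:wintgen-ineq} to the \emph{paired} quadratic map $\f_p^*$. The crucial observation is that the proof of \eqref{eq:wintgen-ineq} uses only the three coefficient vectors $\Abf$, $\Bbf$, $\Cbf\in N_pM$ and the Euclidean quantities assembled from them, namely $\Hbf=(\Abf+\Cbf)/2$, $K=\langle\Abf,\Cbf\rangle-\|\Bbf\|^2$, and the parallelogram area $|N|$ on $\Abf-\Cbf$ and $\Bbf$. Nothing there requires these vectors to come from an honest surface; the argument is valid verbatim for any homogeneous quadratic map $\R^2\to\R^\ell$. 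In particular it applies to $\f_p^*$, whose vectors are $\Abf^*$, $\Bbf^*$, $\Cbf^*$, yielding the master inequality $\|\Hbf^*\|^2\geq K^*+|N^*|$, which I then translate back into the original invariants using the paired-invariant dictionary.

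For item $(a)$ ($M$ in $\R^4$), I would first invoke Proposition \ref{prop:H-R_paired}$(a)$ to replace $\Hbf^*$ by $\Rbf$, and then Proposition \ref{prop:paired-inv-R^4} to substitute $K^*=K/\Delta$ and $N^*=N/\Delta$. The master inequality $\|\Hbf^*\|^2\geq K^*+|N^*|$ becomes exactly $\|\Rbf\|^2\geq K/\Delta+|N/\Delta|$, which is precisely the assertion of $(a)$.

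For item $(b)$ ($M$ in $\R^5$) the translation is slightly more delicate, and this is where the only real work lies. Again $\Hbf^*=\Rbf$ by Proposition \ref{prop:H-R_paired}, and by Proposition \ref{prop:paired-inv-R^5} one has $K^*=\mathcal{A}/\Delta$. For the remaining term $|N^*|$ I would use the $\R^5$ identity $|N|=2|\tau|\,\|\Rbf\|$ (which follows from $\Rbf=\frac{1}{2\tau}(\Abf-\Cbf)\times\Bbf$ of Proposition \ref{prop:R=N/2tau}), now applied to the paired map: $|N^*|=2|\tau^*|\,\|\Rbf^*\|$. Since $\tau^*=1/\tau$ (Proposition \ref{prop:paired-inv-R^5}) and $\Rbf^*=\Hbf$ (Proposition \ref{prop:H-R_paired}), this gives $|N^*|=2\|\Hbf\|/|\tau|$, so the master inequality reads exactly $\|\Rbf\|^2\geq\mathcal{A}/\Delta+2\|\Hbf\|/|\tau|$, which is $(b)$.

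The main obstacle here is conceptual rather than computational: one must be satisfied that the Wintgen inequality genuinely transfers to $\f_p^*$ — that its proof nowhere used $M$ being an actual surface rather than an abstract homogeneous quadratic map — and then one must correctly match each paired quantity ($\Hbf^*$, $K^*$, $N^*$, $\tau^*$, $\Rbf^*$) to its original counterpart through Propositions \ref{prop:paired-inv-R^4}, \ref{prop:paired-inv-R^5} and \ref{prop:H-R_paired}. Once this dictionary is in place, both inequalities fall out immediately with no further calculation.
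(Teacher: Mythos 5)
Your proposal is correct and follows essentially the same route as the paper: apply the Wintgen inequality to the paired quadratic map $\f_p^*$ and translate $\Hbf^*=\Rbf$, $K^*$, $N^*$ (and, in the $\R^5$ case, $|N^*|=2|\tau^*|\|\Rbf^*\|=2\|\Hbf\|/|\tau|$) back via Propositions \ref{prop:H-R_paired}, \ref{prop:paired-inv-R^4}, \ref{prop:paired-inv-R^5} and \ref{prop:R=N/2tau}. Your explicit remark that the Wintgen argument uses only the vectors $\Abf$, $\Bbf$, $\Cbf$ and hence transfers verbatim to any homogeneous quadratic map is exactly the (implicit) justification the paper relies on.
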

\begin{proof}
$(a)$ Use Wintgen inequality for $\f_p^*$, $\|\Hbf^*\|^2\geq K^*+|N^*|$, and Prop. \ref{prop:H-R_paired} and \ref{prop:paired-inv-R^4}.

\noindent
$(b)$ Use Wintgen inequality for $\f_p^*$ together with \eqref{eq:R*=H-H*=R}, the paired version
of Proposition\,\ref{prop:R=N/2tau} (i.e. $2|\tau^*|\|\Rbf^*\|=|N^*|$), again \eqref{eq:R*=H-H*=R}  
and \eqref{eq:relations-paired-invR^5}.  
\end{proof}

Notice that the inequality of Theorem\,\ref{th:paired-wingten-inequalities}-$b$ is equivalent to the inequality 
\[N^2\geq4\mathcal{A}+8|\tau|\|\Hbf\|\,.\]

%%%%%%%%%%%%%%%%%%%%%%%%%%%%%%%%%%%%%%%%%%%%%%%%%%%%%%%%%%%%%%%%%%%%%%%%%%%%%%%%%%%%%%%ant%%%%%%%%%%%%%%%%%%%%%%%%%%%%%
\subsubsection{\textbf{The configurations of $\E_p$, $\mathcal{C}_p$ and $\E_p^*$, $\mathcal{C}_p^*$ for non parabolic $p\in M$ in $\R^4$}}
%%%%%%%%%%%%%%%%%%%%%%%%%%%%%%%%%%%%%%%%%%%%%%%%%%%%%%%%%%%%%%%%%%%%%%%%%%%%%%%%%%%%%%%%%%%%%%%%%%%%%%%%%%%%%%%%%%%%

%%%%%%%%%%%%%%%%%%%%%%%%%%%%%%%%%%%%%%%%%%%%%%%%%%%%%%%%%%%%%%%%%%%%%%%%%%%%%%%%%%%%%%%%%%%%%%%%%%%%%%%%%%%%%%%%%%%%
%%%%%%%%%%%%%%%%%%%%%%%%%%%%%%%       PROPOSITION       %%%%%%%%%%%%%%%%%%%%%%%%%%%%%%%%%%%%%%%%%%%%%%%%%%%%%%%%%%%%
%%%%%%%%%%%%%%%%%%%%%%%%%%%%%%%%%%%%%%%%%%%%%%%%%%%%%%%%%%%%%%%%%%%%%%%%%%%%%%%%%%%%%%%%%%%%%%%%%%%%%%%%%%%%%%%%%%%%
\begin{proposition}\label{prop:Cp_bitangent_Ep^*}
For each point $p$ of a surface in $\R^4$ with $\Delta\neq 0$ we have {\rm (Fig.\,\ref{fig:Cp-Ep_Cp*-Ep*-elliptic})}$:$ 

\noindent
$a)$ The paired indicatrix ellipse $\E_p^*$ is bitangent to the local caustic $\mathcal{C}_p$. 
The two tangency points are those where the line $\{\l\Rbf:\l\in\R\}$ cuts $\mathcal{C}_p$. 
Both tangency directions are orthogonal to $\Hbf$. 
\medskip

\noindent
$b)$ The indicatrix ellipse $\E_p$ is bitangent to the paired local caustic $\mathcal{C}_p^*$. 
The two tangency points are those where the line $\{\l\Hbf:\l\in\R\}$ cuts $\mathcal{C}_p^*$. 
Both tangency directions are orthogonal to $\Rbf$.
\end{proposition}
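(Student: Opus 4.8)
The plan is to deduce part \textit{b} from part \textit{a} using the linear map $\flecha{\mathcal{G}_p}$, and then to prove \textit{a} by a direct computation with two concentric conics in a principal basis.

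\smallskip
\emph{Reduction of \textit{b} to \textit{a}.} Since $\Delta\neq0$ the self-adjoint map $\flecha{\mathcal{G}_p}$ is invertible, and by Proposition~\ref{prop:H-R_paired}\,$b),c)$ it carries $\E_p^*$ onto $\E_p$ and $\mathcal{C}_p$ onto $\mathcal{C}_p^*$. A linear isomorphism preserves incidence and order of contact, so it takes the bitangency of \textit{a} to a bitangency of $\E_p$ with $\mathcal{C}_p^*$, which is \textit{b}. The two refinements transport as well: since $\flecha{\mathcal{G}_p}\Rbf=\Hbf$ (Proposition~\ref{proposition:H=QR}), the contact line $\{\lambda\Rbf\}$ goes to $\{\lambda\Hbf\}$; and if a contact direction $w$ is orthogonal to $\Hbf$, then $\langle\flecha{\mathcal{G}_p}w,\Rbf\rangle=\langle w,\flecha{\mathcal{G}_p}\Rbf\rangle=\langle w,\Hbf\rangle=0$, so its image is orthogonal to $\Rbf$, exactly as \textit{b} asserts. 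Hence it suffices to prove \textit{a}.

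\smallskip
\emph{Set-up for \textit{a}.} The conics $\mathcal{C}_p$ and $\E_p^*$ are concentric: the centre of $\mathcal{C}_p$ is $\Rbf$ (Theorem~\ref{th:equation_local_caustic}) and the centre of $\E_p^*$ is $\Hbf^*=\Rbf$ (Proposition~\ref{prop:H-R_paired}\,$a)$). I would fix a principal basis and the centred coordinate $\tilde q=q-\Rbf$. By Theorem~\ref{th:equation_local_caustic}, $\mathcal{C}_p$ is $\tilde q^\top[\mathcal{G}_p]\,\tilde q=-N^2/4\Delta$ with $[\mathcal{G}_p]=\mathrm{diag}(\K_1,\K_2)$. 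For $\E_p^*$, the parametrisation \eqref{eq:parametrisation-E_p} exhibits $\E_p$ as the central conic $(q-\Hbf)^\top G^{-1}(q-\Hbf)=1$, with $G=\tfrac14(\Abf-\Cbf)(\Abf-\Cbf)^\top+\Bbf\Bbf^\top$; pushing this through $\E_p=\flecha{\mathcal{G}_p}(\E_p^*)$ and $\Hbf=\flecha{\mathcal{G}_p}\Rbf$ presents $\E_p^*$ as $\tilde q^\top([\mathcal{G}_p]\,G^{-1}[\mathcal{G}_p])\,\tilde q=1$.

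\smallskip
\emph{Bitangency and read-off.} Two distinct concentric central conics $\tilde q^\top A\tilde q=1$ and $\tilde q^\top B\tilde q=1$ are bitangent exactly when $A-B$ has rank one, the chord of contact being $v^\top\tilde q=0$ when $A-B\propto vv^\top$. With $m=-N^2/4\Delta$ the difference of our two conic matrices is congruent to $\tfrac{1}{m}[\mathcal{G}_p^*]-G^{-1}$, so it is enough to show this matrix has rank one; here the principal basis does the work. One has $\det G=N^2/4$, and, because the off-diagonal entry $k_{12}$ of $[\mathcal{G}_p]$ vanishes, the entry identities $G_{11}=h_1^2-\K_1$, $G_{22}=h_2^2-\K_2$, $G_{12}=h_1h_2$. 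Substituting,
\[\tfrac{1}{m}[\mathcal{G}_p^*]-G^{-1}=-\tfrac{4}{N^2}\begin{pmatrix}h_2^2 & -h_1h_2\\ -h_1h_2 & h_1^2\end{pmatrix}=-\tfrac{4}{N^2}\begin{pmatrix}h_2\\-h_1\end{pmatrix}\begin{pmatrix}h_2 & -h_1\end{pmatrix},\]
which is manifestly rank one (equivalently, the vanishing of its determinant is the locus relation $\Hbf\in\mathcal{H}$ of Theorem~\ref{H-positions}). The rank-one direction is $[\mathcal{G}_p](h_2,-h_1)=(\K_1h_2,-\K_2h_1)$, which is orthogonal to $\Rbf=(h_1/\K_1,h_2/\K_2)$, so the chord of contact is the line $\{\lambda\Rbf\}$ and the tangency points are its two intersections with $\mathcal{C}_p$; and at such a point $\tilde q_T=c\Rbf$ the normal to $\mathcal{C}_p$ is $[\mathcal{G}_p]\tilde q_T/m=(c/m)\Hbf$, so the contact direction is orthogonal to $\Hbf$, completing \textit{a}.

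\smallskip
The main obstacle is organising the algebra of the last paragraph—producing the conic matrix of $\E_p^*$ and recognising the rank-one collapse; once the problem is framed through concentric conics in a principal basis, the decisive input is simply $k_{12}=0$ (equivalently $\det G=N^2/4$, equivalently $\Hbf\in\mathcal{H}$). One should also keep the non-degeneracy hypotheses in view (an elliptic or hyperbolic $p$, as in Fig.~\ref{fig:Cp-Ep_Cp*-Ep*-elliptic}), so that $G$ is invertible, $m\neq0$, and the contact points are real (reality being guaranteed by Theorem~\ref{th:focal_quadric-indicatrix}\,\textit{\textbf{b}}).
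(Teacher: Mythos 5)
Your proposal is correct, but for part \textit{a} it follows a genuinely different route from the paper. The paper's proof is synthetic: it parametrises the intersections of $\{\l\Rbf\}$ with $\mathcal{C}_p$ by the quadratic $\langle\Rbf,\Hbf\rangle\l^2-2\langle\Rbf,\Hbf\rangle\l+1=0$, computes the pole $\hat q^\vee=\Hbf/(\l\langle\Rbf,\Hbf\rangle)$ of the tangent line to $\mathcal{C}_p$ at $\hat q=\l\Rbf$, observes that the two roots are exchanged by $\l\mapsto 1/(\l\langle\Rbf,\Hbf\rangle)$ so that $\hat q^\vee$ lies on both $\E_p$ and $\mathcal{C}_p^*$, and then applies the Duality Theorem\,\ref{th:focal_quadric-indicatrix} a second time (to the pair $\E_p^*$, $\mathcal{C}_p^*$) to conclude that the polar of $\hat q^\vee$ is tangent to $\E_p^*$ at $\hat q$. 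You instead write both curves as concentric conics about $\Rbf$ in a principal basis and show that the difference of their matrices is congruent to $-\tfrac{4}{N^2}(h_2,-h_1)(h_2,-h_1)^\top$, reading off the chord of contact as the kernel and the contact direction from the gradient; the decisive identities $G_{11}=h_1^2-\K_1$, $G_{22}=h_2^2-\K_2$, $G_{12}=h_1h_2$ (using $k_{12}=0$) and $\det G=N^2/4$ all check out, and your remark that the rank-one collapse is equivalent to the locus relation $2\mathcal{G}_p^*(\Hbf)=1-N^2/4\Delta$ of Theorem\,\ref{H-positions} is accurate. What each approach buys: yours is self-contained linear algebra and makes the bitangency an identity between explicit matrices, while the paper's argument exposes the duality mechanism (the same pole--polar correspondence applied to both paired configurations) and avoids choosing a basis. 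For part \textit{b} your transport by $\flecha{\mathcal{G}_p}$ is essentially the paper's reduction via Proposition\,\ref{prop:H-R_paired}, made explicit (including the correct transfer of orthogonality by self-adjointness). One caveat, which you partially flag: both your computation and the paper's implicitly need $N\neq 0$ (so that $G$ is invertible and $m\neq 0$) and $\Hbf\neq 0$ (otherwise the difference matrix vanishes and the two conics coincide); the hypothesis $\Delta\neq 0$ alone does not exclude these degenerations, so it is worth stating them.
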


\begin{proof}
$a)$ By relations \eqref{eq:Gp(q-R)=-N2/4D} and \eqref{eq:<R,H>=1-N^2/4D}, the two points at which the line 
$\{\l\Rbf:\l\in\R\}$ cuts $\mathcal{C}_p$ correspond to the values of $\l$ which satisfy the equation 
\begin{equation}\label{eq:Gp[lR-R]=RH-1}
  2\mathcal{G}_p(\l\Rbf-\Rbf)=\langle\Rbf\,,\,\Hbf\rangle-1\,.
\end{equation}
Equation \eqref{eq:Gp[lR-R]=RH-1} is rewritten as 
$(\l-1)^2\langle \Rbf\,,\,\flecha{\mathcal{G}_p}\Rbf\rangle=\langle\Rbf\,,\,\Hbf\rangle-1$, which is equivalent to 
\begin{equation}\label{eq:eq-bitangent-points}
  \langle\Rbf\,,\,\Hbf\rangle\l^2-2\langle\Rbf\,,\,\Hbf\rangle\l+1=0\,.
\end{equation}
Take $\hat{q}:=\l\Rbf$, being $\l$ a solution of \eqref{eq:eq-bitangent-points}. 
The tangent line to $\mathcal{C}_p$ at $\hat{q}$, $\ell_{\hat{q}^\vee}$, is given by the equation 
\[d{\mathcal{G}_p}_{|\hat{q}}(q-\hat{q})=0\,,\] 
where $\,d{\mathcal{G}_p}_{|\hat{q}}=\l(\K_1\rho_1dq_1+\K_2\rho_2dq_2)=\l(h_1dq_1+h_2dq_2)$,\, that is, 
the equation of the tangent line $\ell_{\hat{q}^\vee}$ is $\,h_1(q_1-\l\rho_1)+h_2(q_2-\l\rho_2)=0$,\, which is equivalent to 
\begin{equation}\label{eq:tangency-line}
 \frac{h_1}{\l\langle\Rbf,\Hbf\rangle}q_1+\frac{h_2}{\l\langle\Rbf,\Hbf\rangle}q_2=1\,. 
\end{equation}

Therefore, the point $\hat{q}^\vee=\frac{1}{\l\langle\Rbf,\Hbf\rangle}\Hbf$ is the p\^ole of 
the tangent line $\ell_{\hat{q}^\vee}$ (see \S\ref{sect-polar_dual-subvariety}). 
So, by Theorem \ref{th:focal_quadric-indicatrix}-\textit{\textbf{a}}, 
$\hat{q}^\vee$ lies on the ellipse $\E_p$. 
To prove that $\hat{q}^\vee$ is also a point of the paired local caustic $\mathcal{C}_p^*$, 
observe that by relations \eqref{eq:Gp(q-R)=-N2/4D} and \eqref{eq:<R,H>=1-N^2/4D} for $\mathcal{C}_p^*$, and 
by equalities \eqref{eq:R*=H-H*=R}, the two points at which the line $\{\a\Hbf:\a\in\R\}$ cuts $\mathcal{C}_p^*$ 
correspond to the two values of $\a$ which are solutions of equation \eqref{eq:eq-bitangent-points}. 

Now $\l$ is a solution of \eqref{eq:eq-bitangent-points} iff the second solution of \eqref{eq:eq-bitangent-points} 
is $\a=1/\l\langle\Rbf,\Hbf\rangle$.
Therefore,  $\hat{q}^\vee$ is a point of $\mathcal{C}_p^*$ and, by Theorem \ref{th:focal_quadric-indicatrix}-\textit{\textbf{a}} 
applied to $\mathcal{C}_p^*$ and $\E_p^*$, its polar line $\ell_{\hat{q}^\vee}$ is tangent to $\E_p^*$ at $\hat{q}$. 
Thus $\mathcal{C}_p$ and $\E_p^*$ are tangent at $\hat{q}$. 
The orthogonality of $\Hbf$ to the line $\ell_{\hat{q}^\vee}$ follows from equation \eqref{eq:tangency-line}.
Proposition \ref{prop:Cp_bitangent_Ep^*}-$a$ is proved. 
\smallskip

\noindent
$b)$ By Proposition \ref{prop:H-R_paired}, Proposition \ref{prop:Cp_bitangent_Ep^*}-$b$ holds iff Proposition \ref{prop:Cp_bitangent_Ep^*}-$a$ holds.
\end{proof}

\begin{figure}[ht] %< préférence de placement h , t , b or p >
\centering
\includegraphics[scale=0.49]{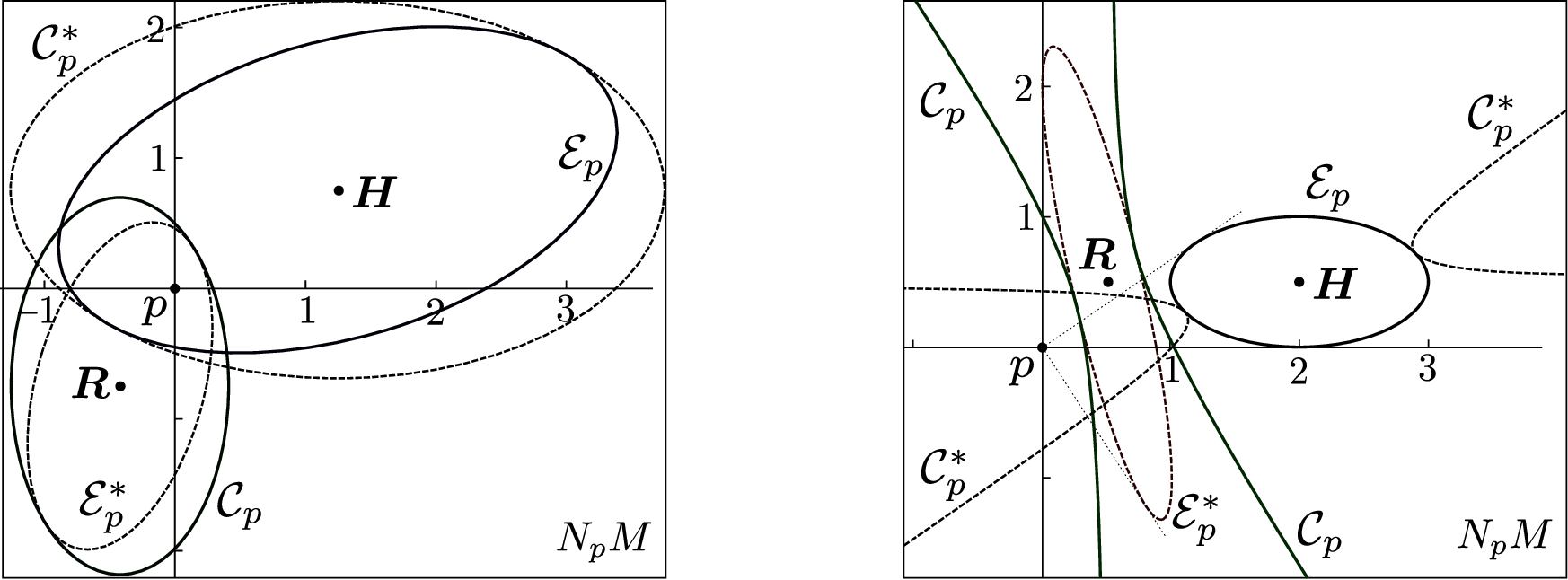}
\caption{\small Configuration of the curves $\E_p$, $\mathcal{C}_p$, $\E_p^*$ and $\mathcal{C}_p^*$ at an elliptic and at a hyperbolic point.}
\label{fig:Cp-Ep_Cp*-Ep*-elliptic}
\end{figure} 

\begin{example*}[\textbf{of elliptic point}]\label{ex:Cp-Ep_Cp*-Ep*}
Take a point $p\in M$ in $\R^4$, having local quadratic map  
$\f_p(s,t)=\left(\frac{1}{2}(2s^2+4st+\frac{1}{2}t^2), \frac{1}{2}(2s^2-\frac{1}{2}t^2)\right)$. 
We get the following curves on $N_pM$:
\[\mathcal{C}_p:\, \textstyle 3\left(q_1+\frac{5}{12}\right)^2+\left(q_2+\frac{3}{4}\right)^2=\frac{25}{12}\,;
\quad \E_p(\theta)=\begin{pmatrix}
\text{\footnotesize 5/4} \\ 
\text{\footnotesize 3/4}
\end{pmatrix}
+\cos 2\theta\begin{pmatrix}
\text{\footnotesize 3/4} \\ 
\text{\footnotesize 5/4}
\end{pmatrix}
+\sin 2\theta\begin{pmatrix}
\text{\footnotesize 2} \\ 
\text{\footnotesize 0}
\end{pmatrix}; \] 
\[\mathcal{C}_p^*:\, \textstyle \frac{1}{3}\left(q_1-\frac{5}{4}\right)^2+\left(q_2-\frac{3}{4}\right)^2=\frac{25}{12}\,;
\quad \E_p^*(\theta)=-\begin{pmatrix}
\text{\footnotesize 5/12} \\ 
\text{\footnotesize 3/4}
\end{pmatrix}
-\cos 2\theta\begin{pmatrix}
\text{\footnotesize 1/4} \\ 
\text{\footnotesize 5/4}
\end{pmatrix}
-\sin 2\theta\begin{pmatrix}
\text{\footnotesize 2/3} \\ 
\text{\footnotesize 0}
\end{pmatrix}. \] 
These four curves are depicted in Fig.\,\ref{fig:Cp-Ep_Cp*-Ep*-elliptic}-left.
\end{example*} 

\begin{example*}[\textbf{of hyperbolic point}]\label{ex:Cp-Ep_Cp*-Ep*-hyp}
Take $p\in M$ in $\R^4$, having local quadratic map  
$\f_p(s,t)=\left(\frac{1}{2}(3s^2-t^2), \frac{1}{2}(\frac{1}{2}s^2+st+\frac{1}{2}t^2)\right)$. 
We get the following curves on $N_pM$:
\[\mathcal{C}_p:\, \textstyle 3\left(q_1-\frac{1}{2}\right)^2+2\left(q_1-\frac{1}{2}\right)\left(q_2-\frac{1}{2}\right)=\frac{1}{4}\,;
\quad \textstyle \E_p(\theta)=\begin{pmatrix}
\text{\footnotesize 2} \\ 
\text{\footnotesize 1/2}
\end{pmatrix}
+\cos 2\theta\begin{pmatrix}
 \text{\footnotesize 1} \\ 
\text{\footnotesize 0}
\end{pmatrix}
+\sin 2\theta\begin{pmatrix}
\text{\footnotesize 0} \\ 
\text{\footnotesize 1/2}
\end{pmatrix}; \] 
\[\mathcal{C}_p^*:\, \textstyle 2\left(q_1-2\right)\left(q_2-\frac{1}{2}\right)-3\left(q_2-\frac{1}{2}\right)^2=\frac{1}{4}\,;
\quad \E_p^*(\theta)=\begin{pmatrix}
\text{\footnotesize 1/2} \\ 
\text{\footnotesize 1/2}
\end{pmatrix}
+\cos 2\theta\begin{pmatrix}
\text{\footnotesize 0} \\ 
\text{\footnotesize 1}
\end{pmatrix}
+\sin 2\theta\begin{pmatrix}
\text{\footnotesize 1/2} \\ 
\text{\footnotesize -3/2}
\end{pmatrix}. \] 
These four curves are depicted in Fig.\,\ref{fig:Cp-Ep_Cp*-Ep*-elliptic}-right.
\end{example*} 

%%%%%%%%%%%%%%%%%%%%%%%%%%%%%%%%%%%%%%%%%%%%%%%%%%%%%%%%%%%%%%%%%%%%%%%%%%%%%%%%%%%%%%%%%%%%%%%%%%%%%%%%%%%%%%%%%%%%
%%%%%%%%%%%%%%%%%%%%%%%%%%%%%%%       THEOREM      %%%%%%%%%%%%%%%%%%%%%%%%%%%%%%%%%%%%%%%%%%%%%%%%%%%%%%%%%%%%
%%%%%%%%%%%%%%%%%%%%%%%%%%%%%%%%%%%%%%%%%%%%%%%%%%%%%%%%%%%%%%%%%%%%%%%%%%%%%%%%%%%%%%%%%%%%%%%%%%%%%%%%%%%%%%%%%%%%
\begin{theorem}\label{th:equiv-binormal-asymptotic}
Let $\f_p=(Q_1,Q_2)$ be the local quadratic map at a hyperbolic point $p$ of $M$ in $\R^4$ $(\Delta<0)$ and $\ubf\in T_pM$. 
The following six statements are equivalent$:$ 
\smallskip

\noindent
$a)$ The vector $\ubf\in T_pM$ determines an asymptotic direction of $M$ at $p$; 
\smallskip

\noindent
$b)$ The Poisson bracket of $Q_1$, $Q_2$ vanishes at $\ubf:$ $\{Q_1,Q_2\}(\ubf)=0$; 
\smallskip

\noindent
$c)$ The tangent line to $\E_p$ at $\E_p(\ubf)$ contains $p$ {\rm (it is generated by the vector $\E_p(\ubf)$)}; 
\smallskip

\noindent
$d)$  The tangent line to $\E_p^*$ at $\E_p^*(\ubf)$ contains $p$ {\rm (it is generated by the vector $\E_p^*(\ubf)$)}; 
\smallskip

\noindent
$e)$ The vector $\E_p^*(\ubf)$ is binormal {\rm (i.e. $\mathcal{G}_p(\E_p^*(\ubf))=0$)};  
\smallskip

\noindent
$f)$ The vector  $\E_p^*(\ubf)$ is orthogonal to $\E_p(\ubf)$, that is $\left\langle \E_p^*(\ubf),\E_p(\ubf)\right\rangle=0$. 
\end{theorem}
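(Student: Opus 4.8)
The plan is to make statement $(b)$ (the vanishing of the Poisson bracket $\{Q_1,Q_2\}$ at $\ubf$) the hub through which all other statements pass, and to power every computation by one evaluation identity. Writing $\ubf=(u_1,u_2)$, let $P_\ubf:=(u_2^2,-u_1u_2,u_1^2)$ be the degenerate quadratic form $\frac{1}{2}(u_2 s-u_1 t)^2$. A one-line check gives, for every quadratic form $Q=(a,b,c)$,
\[
Q(\ubf)=\langle Q,P_\ubf\rangle_\psi,\qquad \langle P_\ubf,P_\ubf\rangle_\psi=0 .
\]
Thus evaluation at $\ubf$ is the same as pseudo-scalar multiplication against the null vector $P_\ubf$; in particular $\{Q_1,Q_2\}(\ubf)=\langle\{Q_1,Q_2\},P_\ubf\rangle_\psi$. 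This is what ultimately ties the ``bracket'' conditions to the ``duality/pairing'' conditions.

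First I would prove $(a)\Leftrightarrow(b)$. Put $v_1:=u_1\Abf+u_2\Bbf$ and $v_2:=u_1\Bbf+u_2\Cbf$ in $N_pM$, so that by \eqref{eq:[dG(p,n)]} one has $[d^\perp_{p,\nbf}\Gamma]\ubf=(\langle v_1,\nbf\rangle,\langle v_2,\nbf\rangle)$. The direction $\ubf$ is asymptotic precisely when some nonzero degenerate normal $\nbf$ has $\ubf$ in $\ker[d^\perp_{p,\nbf}\Gamma]$, i.e. when $\nbf$ is orthogonal to both $v_1,v_2$; since $N_pM$ is a plane this occurs iff $v_1,v_2$ are dependent, i.e. $v_1\times v_2:=\det(v_1\ v_2)=0$. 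Expanding this $2\times2$ determinant and comparing with \eqref{eq:Poisson-bracket-form} gives $v_1\times v_2=2\{Q_1,Q_2\}(\ubf)$, which is $(a)\Leftrightarrow(b)$. Next, for $(b)\Leftrightarrow(c)$ I would use $\hat{\E_p}(\ubf)=u_1v_1+u_2v_2$ and $\E_p'(\theta)=2(u_1v_2-u_2v_1)$: the tangent line to $\E_p$ at $\E_p(\ubf)$ contains $p$ iff $\hat{\E_p}(\ubf)\times\E_p'(\theta)=0$, and a short expansion collapses, using $\|\ubf\|=1$, to $\hat{\E_p}(\ubf)\times\E_p'(\theta)=2(v_1\times v_2)=4\{Q_1,Q_2\}(\ubf)$.

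Statement $(d)$ then comes for free by linearity. By Proposition \ref{prop:H-R_paired}$(b)$ the paired indicatrix is the linear image $\hat{\E_p^*}=\flecha{\mathcal{G}_p}^{-1}\comp\hat{\E_p}$, and the invertible map $\flecha{\mathcal{G}_p}^{-1}$ fixes $p$ and sends tangent lines to tangent lines; hence the tangent to $\E_p^*$ at $\E_p^*(\ubf)$ contains $p$ iff the tangent to $\E_p$ at $\E_p(\ubf)$ does, i.e. $(c)\Leftrightarrow(d)$.

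The substantive step is $(e)\Leftrightarrow(f)\Leftrightarrow(b)$. Using $\hat{\E_p^*}(\ubf)=\flecha{\mathcal{G}_p}^{-1}\hat{\E_p}(\ubf)$ together with $\mathcal{G}_p(\flecha{\mathcal{G}_p}^{-1}w)=\mathcal{G}_p^*(w)$ and $\langle\flecha{\mathcal{G}_p}^{-1}w,w\rangle=2\mathcal{G}_p^*(w)$ (from $\flecha{\mathcal{G}_p}^*=\flecha{\mathcal{G}_p}^{-1}$ and \eqref{eq:Leg-transf-matrix}), both $(e)$ and $(f)$ reduce to the single scalar condition $\mathcal{G}_p^*(\hat{\E_p}(\ubf))=0$. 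To evaluate it, note $\hat{\E_p}(\ubf)=(2Q_1(\ubf),2Q_2(\ubf))$ and $[\mathcal{G}_p^*]=[\mathcal{G}_p]^{-1}=\frac{1}{\Delta}\left(\begin{smallmatrix}k_2&-k_{12}\\-k_{12}&k_1\end{smallmatrix}\right)$, so
\[
\mathcal{G}_p^*(\hat{\E_p}(\ubf))=\frac{2}{\Delta}\bigl(k_2\,Q_1(\ubf)^2-2k_{12}\,Q_1(\ubf)Q_2(\ubf)+k_1\,Q_2(\ubf)^2\bigr).
\]
Replacing each $Q_i(\ubf)$ by $\langle Q_i,P_\ubf\rangle_\psi$, the bracketed expression is exactly the negative of the $3\times3$ Gram determinant of $Q_1,Q_2,P_\ubf$ for $\langle\,,\rangle_\psi$, expanded along the $P_\ubf$-row whose diagonal entry $\langle P_\ubf,P_\ubf\rangle_\psi$ vanishes. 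By Proposition \ref{th:bracket-orthogonality}$(c)$ that Gram determinant equals $\langle P_\ubf,\{Q_1,Q_2\}\rangle_\psi^2=\{Q_1,Q_2\}(\ubf)^2$, so
\[
\mathcal{G}_p^*(\hat{\E_p}(\ubf))=-\frac{2}{\Delta}\,\{Q_1,Q_2\}(\ubf)^2 .
\]
Since $\Delta\neq0$ (indeed $\Delta<0$), this vanishes iff $\{Q_1,Q_2\}(\ubf)=0$, giving $(e)\Leftrightarrow(f)\Leftrightarrow(b)$ and closing the cycle. The main obstacle is precisely recognizing this last identity: the nominally degree-four conditions $(e),(f)$ are a perfect square of the degree-two condition $(b)$, and exhibiting this cleanly requires the null-vector expansion of $P_\ubf$ and the triple-product (Gram) formula of Proposition \ref{th:bracket-orthogonality}$(c)$ rather than a brute-force coordinate expansion.
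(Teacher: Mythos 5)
Your proof is correct, and it takes a noticeably different route from the paper's in two places. For $a\Leftrightarrow c$ the paper invokes the Duality Theorem (Theorem \ref{th:focal_quadric-indicatrix}): a contact direction is asymptotic iff its focal centre is at infinity iff the polar of that centre, namely the tangent to $\E_p$ at $\E_p(\ubf)$, passes through $p$; you instead prove $a\Leftrightarrow b\Leftrightarrow c$ by a direct kernel computation with $v_1=u_1\Abf+u_2\Bbf$, $v_2=u_1\Bbf+u_2\Cbf$ and the identities $\det(v_1\ v_2)=2\{Q_1,Q_2\}(\ubf)$, $\hat{\E}_p(\ubf)=u_1v_1+u_2v_2$, $\E_p'(\theta)=2(u_1v_2-u_2v_1)$ — more elementary, and it recovers the classical $a\Leftrightarrow c$ as a by-product rather than assuming the duality machinery. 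For the block $e\Leftrightarrow f\Leftrightarrow b$, both arguments ultimately rest on the identity $\langle \f_p,\f_p^*\rangle\equiv-\{Q_1,Q_2\}^2/\Delta$ (Lemma \ref{lemma:f_pf_p^*=-{Q_1Q_2}^2/D}), but where the paper verifies it coefficient by coefficient in a principal basis, you derive it structurally: the observation that evaluation at $\ubf$ is pairing against the null form $P_\ubf=\frac{1}{2}(u_2s-u_1t)^2$ (so $Q(\ubf)=\langle Q,P_\ubf\rangle_\psi$ with $\|P_\ubf\|_\psi^2=0$) turns the quartic $k_2Q_1(\ubf)^2-2k_{12}Q_1(\ubf)Q_2(\ubf)+k_1Q_2(\ubf)^2$ into (minus) the Gram determinant of $Q_1,Q_2,P_\ubf$, which Proposition \ref{th:bracket-orthogonality}$(c)$ identifies with $\{Q_1,Q_2\}(\ubf)^2$. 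This explains \emph{why} the degree-four conditions $(e)$, $(f)$ are a perfect square of the degree-two condition $(b)$, something the paper's brute-force check leaves opaque; the only cost is that you lean on Proposition \ref{th:bracket-orthogonality}$(c)$, which the paper itself only establishes by an unrecorded direct calculation. The step $c\Leftrightarrow d$ via $\flecha{\mathcal{G}_p}\comp\hat{\E_p^*}=\hat{\E_p}$ is identical to the paper's. All the computations you state check out.
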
 

{\footnotesize
\noindent
\textbf{Note}. The equivalence $a\Leftrightarrow c$ has been well known (Cf. \protect\cite{Goncalves, IRFRT}). }
\smallskip

To prove Theorem\,\ref{th:equiv-binormal-asymptotic}, we need the following lemma (proved below): 

%%%%%%%%%%%%%%%%%%%%%%%%%%%%%%%%%%%%  LEMMA  %%%%%%%%%%%%%%%%%%%%%%%%%%%%%%%%%%%%%%%%%%%%%%%%%%%%%%%%%%%%%%%%%%%
%%%%%%%%%%%%%%%%%%%%%%%%%%%%%%%%%%%%%%%%%%%%%%%%%%%%%%%%%%%%%%%%%%%%%%%%%%%%%%%%%%%%%%%%%%%%%%%%%%%%%%%%%%%%%%%%
\begin{lemma}\label{lemma:f_pf_p^*=-{Q_1Q_2}^2/D}
At any point $p$ of $M$ in $\R^4$, with $\Delta\neq 0$, we have $\langle \f_p\,,\, \f_p^*\rangle\equiv -\{Q_1,Q_2\}^2/\Delta$, that is 
\begin{equation}\label{eq:<phi,phi^*>=-{Q1,Q2}/D}
  (Q_1Q_1^*+Q_2Q_2^*)(\ubf)=-\frac{\{Q_1,Q_2\}^2(\ubf)}{\Delta} \quad \mbox{for any }\ubf\in T_pM\,.
\end{equation}
\end{lemma}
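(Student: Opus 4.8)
The plan is to reduce the lemma to a single identity between binary quartic forms and then settle that identity by a short pseudo-Euclidean argument rather than a brute quartic expansion. First I would unwind the definition of the paired map. Since $\left(\begin{smallmatrix}Q_1^*\\Q_2^*\end{smallmatrix}\right)=[\mathcal{G}_p]^{-1}\left(\begin{smallmatrix}Q_1\\Q_2\end{smallmatrix}\right)$ and $[\mathcal{G}_p]^{-1}=\frac{1}{\Delta}\left(\begin{smallmatrix}k_2&-k_{12}\\-k_{12}&k_1\end{smallmatrix}\right)$, one gets $Q_1^*=\frac{1}{\Delta}(k_2Q_1-k_{12}Q_2)$ and $Q_2^*=\frac{1}{\Delta}(k_1Q_2-k_{12}Q_1)$, so that
\[
\langle\f_p,\f_p^*\rangle=Q_1Q_1^*+Q_2Q_2^*=\frac{1}{\Delta}\left(k_2Q_1^2-2k_{12}Q_1Q_2+k_1Q_2^2\right).
\]
This reduces the lemma to the purely algebraic identity between quartic forms in $\ubf=(s,t)$,
\[
k_2Q_1^2-2k_{12}Q_1Q_2+k_1Q_2^2=-\{Q_1,Q_2\}^2,
\]
which I will call $(\star)$.

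To prove $(\star)$ I would work inside the $3$-dimensional pseudo-Euclidean space of quadratic forms. Set $R:=\{Q_1,Q_2\}$. By Proposition \ref{th:bracket-orthogonality}, $R$ is pseudo-orthogonal to $Q_1$ and $Q_2$ and $\|R\|_\psi^2=\Delta$; as $\Delta\neq0$ this makes $(Q_1,Q_2,R)$ a basis whose Gram matrix is block diagonal, $G=\mathrm{diag}\!\big(\left(\begin{smallmatrix}k_1&k_{12}\\k_{12}&k_2\end{smallmatrix}\right),\,\Delta\big)$. The crucial observation is that \emph{evaluation at a fixed direction is a pseudo-scalar product}: from \eqref{eq:psi-scalar-prod} one checks that for every form $Q$ and every $\ubf=(u_1,u_2)$,
\[
Q(\ubf)=\langle Q,\delta_\ubf\rangle_\psi,\qquad \delta_\ubf:=\tfrac12(u_2s-u_1t)^2,
\]
and that $\delta_\ubf$ is a null form, $\|\delta_\ubf\|_\psi^2=0$. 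Writing $\xi_i:=\langle e_i,\delta_\ubf\rangle_\psi=e_i(\ubf)$ for the basis $e_1=Q_1$, $e_2=Q_2$, $e_3=R$, the difference of the two sides of $(\star)$ at $\ubf$ is exactly $k_2\xi_1^2-2k_{12}\xi_1\xi_2+k_1\xi_2^2+\xi_3^2=\Delta\,\xi^{\top}G^{-1}\xi$. But $\xi=Gc$, where $\delta_\ubf=\sum_i c_ie_i$, so $\xi^{\top}G^{-1}\xi=c^{\top}Gc=\|\delta_\ubf\|_\psi^2=0$; hence $(\star)$ holds identically and the lemma follows.

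The only genuine obstacle is spotting the representation $Q(\ubf)=\langle Q,\delta_\ubf\rangle_\psi$ by a null form and recognizing that the quadratic form built from $k_1,k_2,k_{12}$ in $(\star)$ is precisely $\Delta$ times the inverse-Gram form $\xi^{\top}G^{-1}\xi$; once these are in place the vanishing is automatic because $\delta_\ubf$ lies on the cone of degenerate forms. If one prefers to avoid this device, $(\star)$ can instead be verified by comparing the five coefficients of $s^4,\dots,t^4$ on both sides, using \eqref{eq:Poisson-bracket-form} together with $k_i=a_ic_i-b_i^2$ and $k_{12}=\tfrac12(a_1c_2+a_2c_1)-b_1b_2$; this is routine but longer, and the global sign is pinned down by the test case $(Q_1,Q_2)=(\tfrac12s^2,\tfrac12t^2)$, for which both sides equal $-\tfrac14s^2t^2$.
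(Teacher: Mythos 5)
Your proof is correct, but it takes a genuinely different route from the paper's. The paper first passes to a principal basis of $N_pM$, so that $[\mathcal{G}_p]$ is diagonal and $Q_i^*=Q_i/\mathcal{K}_i$, and then establishes the resulting quartic identity $\mathcal{K}_2Q_1^2+\mathcal{K}_1Q_2^2=-\{Q_1,Q_2\}^2$ by comparing the five coefficients of $s^4,\dots,t^4$ one by one (using pseudo-orthogonality of $Q_1,Q_2$ to simplify each comparison). You instead stay in an arbitrary orthonormal frame, invert the $2\times 2$ matrix $[\mathcal{G}_p]$ explicitly, and prove the identity $(\star)$ conceptually: the two key observations — that evaluation at $\ubf$ is the pseudo-pairing with the null form $\delta_\ubf=\frac12(u_2s-u_1t)^2$ lying on the discriminant cone, and that $(Q_1,Q_2,\{Q_1,Q_2\})$ is a basis of $\mathfrak{sl}(2,\R)$ with block-diagonal Gram matrix $\mathrm{diag}\bigl([\mathcal{G}_p],\Delta\bigr)$ by Proposition \ref{th:bracket-orthogonality} — reduce everything to $\xi^{\top}G^{-1}\xi=c^{\top}Gc=\|\delta_\ubf\|_\psi^2=0$. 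Both observations check out ($\langle Q,\delta_\ubf\rangle_\psi=Q(\ubf)$ and $\|\delta_\ubf\|_\psi^2=u_1^2u_2^2-(u_1u_2)^2=0$ are immediate from \eqref{eq:psi-scalar-prod}), and the Gram matrix is nonsingular since its determinant is $\Delta^2\neq 0$, so $\delta_\ubf$ does expand in that basis. What your approach buys: it eliminates the coefficient-by-coefficient expansion entirely, it explains \emph{why} the identity holds (the evaluation functional is a point of the cone of degenerate forms), and — as a bonus you do not mention — the same computation $\xi^{\top}G^{-1}\xi=\|\delta_\ubf\|_\psi^2=0$ applied to the full basis $Q_1,Q_2,Q_3$ with $G=[\mathcal{G}_p]$ gives Lemma \ref{lemma::<phi,phi^*>=0} for surfaces in $\R^5$ at once, so the two lemmas acquire a uniform proof. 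What the paper's route buys is only that it needs no auxiliary structure beyond Proposition \ref{th:bracket-orthogonality}(a) and a finite check. Your fallback remark (direct coefficient comparison plus the sign-fixing test case $(Q_1,Q_2)=(\frac12 s^2,\frac12 t^2)$, where both sides equal $-\frac14 s^2t^2$) is essentially the paper's argument.
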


\begin{remark*}
Of course, we have also the relation $\langle \f_p\,,\, \f_p^*\rangle\equiv -\{Q_1^*,Q_2^*\}^2/\Delta^*$
\end{remark*}

\begin{proof}[\textbf{\small Proof of Theorem\,\ref{th:equiv-binormal-asymptotic}}]
\underline{$c\Leftrightarrow d$}: the equivalence holds since $\flecha{\mathcal{G}_p}\comp\E_p^*=\E_p$ (Prop.\,\ref{prop:H-R_paired}). 
\smallskip

\noindent
\underline{$e\Leftrightarrow f\Leftrightarrow b$}: by Lemma\,\ref{lemma:f_pf_p^*=-{Q_1Q_2}^2/D} and 
Prop.\,\ref{prop:H-R_paired}, for any $\ubf\in T_pM$ (with $\|\ubf\|=1$) we have: 
\[2\mathcal{G}_p(\E_p^*(\ubf))=\left\langle\E_p^*(\ubf),\E_p(\ubf)\right\rangle=-\frac{4\{Q_1,Q_2\}(\ubf)}{\Delta}\,,\]
which imply the equivalence of $e$, $f$ and $b$. 
\smallskip

\noindent
\underline{$a\Leftrightarrow c$}: by definition, a unit vector $\ubf\in T_pM$ defining a sphere-contact direction 
is asymptotic iff its associated focal centre $q\in\mathcal{C}_p$ is at infinity; this holds iff 
the polar of $q$ (the tangent to $\E_p$ at $\E_p(\ubf)$, by Th.\,\ref{th:focal_quadric-indicatrix}) contains $p$. 
This proves that $a\Leftrightarrow c$. 
\smallskip

\noindent
\underline{$e\Leftrightarrow c$}:
A vector in $N_pM$ is binormal iff it is orthogonal to a line through $p$ tangent to $\E_p$.  
We have proved that $\E_p^*(\ubf)$ is binormal iff $\left\langle\E_p^*(\ubf),\E_p(\ubf)\right\rangle=0$. 
Thus $\E_p^*(\ubf)$ is binormal iff $\E_p(\ubf)$ has the direction of the tangent to $\E_p$ 
at $\E_p(\ubf)$. That is,  $e\Leftrightarrow c$
\end{proof}

\begin{proof}[\textbf{\small Proof of Lemma\,\ref{lemma:f_pf_p^*=-{Q_1Q_2}^2/D}}]
Choosing a principal basis, the matrix $[\mathcal{G}_p^*]$ is diagonal with diagonal entries $1/\mathcal{K}_1$, $1/\mathcal{K}_2$. 
Thus $Q_1^*=Q_1/\mathcal{K}_1$ and $Q_2^*=Q_2/\mathcal{K}_2$ and then 
\[Q_1Q_1^*+Q_2Q_2^*=\frac{Q_1^2}{\mathcal{K}_1}+\frac{Q_2^2}{\mathcal{K}_2}=\frac{\mathcal{K}_2Q_1^2+\mathcal{K}_1Q_2^2}{\Delta}\,.\]

To prove equality \eqref{eq:<phi,phi^*>=-{Q1,Q2}/D}, it suffices to verify that the coefficients of the quartic form 
$\mathcal{K}_2Q_1^2+\mathcal{K}_1Q_2^2$ coincide with the respective coefficients 
of $-\{Q_1,Q_2\}^2$ - see \eqref{eq:Poisson-bracket-form} in \S\,\ref{sect-pseudi-euclidian_space-quad}. 

To check that the coefficient of $s^4$ in $\mathcal{K}_2Q_1^2+\mathcal{K}_1Q_2^2$ and in 
$-\{Q_1,Q_2\}^2$ is the same, that is  
 \[(a_2c_2-b_2^2)a_1^2/4+(a_1c_1-b_1^2)a_2^2/4=-(a_1b_2-a_2b_1)^2/4\,,\]
one uses that, in a principal basis, $Q_1$ and $Q_2$ are pseudo-orthogonal.

Similarly, one verifies that the respective coefficients of $s^3t$ 
%in $\mathcal{K}_2Q_1^2+\mathcal{K}_1Q_2^2$ and in $-\{Q_1,Q_2\}^2$ 
coincide:
\[(a_2c_2-b_2^2)a_1b_1+(a_1c_1-b_1^2)a_2b_2=-(a_1b_2-a_2b_1)(a_1c_2-a_2c_1)/2\,.\]

Writing down the coefficients of the remaining terms ($s^2t^2$, $st^3$, $t^4$) of $\mathcal{K}_2Q_1^2+\mathcal{K}_1Q_2^2$,
one checks that they coincide with those of $-\{Q_1,Q_2\}^2$.
\end{proof}

%%%%%%%%%%%%%%%%%%%%%%%%%%%%%%%%%%%%%%%%%%%%%%%%%%%%%%%%%%%%%%%%%%%%%%%%%%%%%%%%%%%%%%%%%%%%%%%%%%%%%%%%%%%%%%%%%%%%
\subsection{\textbf{The cones $\Sigma_p$, $\Sigma_p^*$ related to $\E_p$, $\E_p^*$ and to $K$,  $K^*=\mathcal{A}/\Delta$ for $M$ in $\R^5$}}
%%%%%%%%%%%%%%%%%%%%%%%%%%%%%%%%%%%%%%%%%%%%%%%%%%%%%%%%%%%%%%%%%%%%%%%%%%%%%%%%%%%%%%%%%%%%%%%%%%%%%%%%%%%%%%%%%%%%

%%%%%%%%%%%%%%%%%%%%%%%%%%%%%%%%%%%%  LEMMA  %%%%%%%%%%%%%%%%%%%%%%%%%%%%%%%%%%%%%%%%%%%%%%%%%%%%%%%%%%%%%%%%%%%%%%
\begin{lemma}\label{lemma::<phi,phi^*>=0}
At any point $p$ of $M$ in $\R^5$, with $\Delta\neq 0$, we have $\langle \f_p\,,\, \f_p^*\rangle\equiv 0$, that is 
\begin{equation}\label{eq:<phi,phi^*>=0}
  (Q_1Q_1^*+Q_2Q_2^*+Q_3Q_3^*)(\ubf)=0 \quad \mbox{for any }\ubf\in T_pM\,.
\end{equation}
\end{lemma}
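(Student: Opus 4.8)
The plan is to reduce the claimed quartic identity to a single structural fact about the pseudo-Euclidean space $\mathfrak{sl}(2,\R)$ of quadratic forms: that evaluating a quadratic form at a point is itself a pseudo-scalar product, taken against a null vector. Everything else is then an orthogonal-basis expansion.

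First I would pass to a principal basis of $N_pM$, so that $[\mathcal{G}_p]$ is diagonal with entries $\K_1,\K_2,\K_3$, all nonzero since $\Delta=\K_1\K_2\K_3\neq0$. Exactly as in the proof of Lemma~\ref{lemma:f_pf_p^*=-{Q_1Q_2}^2/D}, this gives $Q_i^*=Q_i/\K_i$, makes $Q_1,Q_2,Q_3$ pairwise pseudo-orthogonal with $\|Q_i\|_\psi^2=\K_i$, and turns the claim $\langle\f_p,\f_p^*\rangle\equiv0$ into the polynomial identity
\[\sum_{i=1}^3\frac{Q_i(\ubf)^2}{\K_i}=0\qquad\text{for all }\ubf=(u_1,u_2).\]
Since $\Delta\neq0$ the Gram determinant is nonzero, so $Q_1,Q_2,Q_3$ are linearly independent and hence form a pseudo-orthogonal basis of the whole $3$-dimensional space $\mathfrak{sl}(2,\R)$.

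The key observation is that evaluation is a pseudo-scalar product. Writing $w_\ubf$ for the degenerate quadratic form $\tfrac12(u_2s-u_1t)^2$, i.e. the vector $(u_2^2,-u_1u_2,u_1^2)$, a one-line check against the definition $\langle Q_i,Q_j\rangle_\psi=(a_ic_j+a_jc_i)/2-b_ib_j$ gives
\[Q(\ubf)=\langle Q,w_\ubf\rangle_\psi\qquad\text{for every quadratic form }Q.\]
Being a perfect square, $w_\ubf$ lies on the discriminant cone $\mathcal{C}$, so $\|w_\ubf\|_\psi^2=u_1^2u_2^2-(u_1u_2)^2=0$. Expanding this vanishing pseudo-norm in the pseudo-orthogonal basis $\{Q_i\}$,
\[0=\|w_\ubf\|_\psi^2=\sum_{i=1}^3\frac{\langle w_\ubf,Q_i\rangle_\psi^2}{\K_i}=\sum_{i=1}^3\frac{Q_i(\ubf)^2}{\K_i},\]
which is precisely the identity sought.

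The only real obstacle is spotting the evaluation-as-pseudo-scalar-product formula $Q(\ubf)=\langle Q,w_\ubf\rangle_\psi$; after that the argument is immediate, coordinate-free, and explains the $\R^4$ case uniformly. Indeed, for $M$ in $\R^4$ the two forms $Q_1,Q_2$ span only a plane; the complementary basis direction is $\{Q_1,Q_2\}$ with $\|\{Q_1,Q_2\}\|_\psi^2=\Delta$, and expanding $\|w_\ubf\|_\psi^2=0$ in the basis $\{Q_1,Q_2,\{Q_1,Q_2\}\}$ now carries an extra summand, yielding $\sum_{i=1,2}Q_i(\ubf)^2/\K_i=-\{Q_1,Q_2\}(\ubf)^2/\Delta$, which is exactly Lemma~\ref{lemma:f_pf_p^*=-{Q_1Q_2}^2/D}. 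In $\R^5$ the three $Q_i$ already exhaust $\mathfrak{sl}(2,\R)$, so no such term survives and the sum vanishes.
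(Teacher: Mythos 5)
Your proof is correct, but it takes a genuinely different route from the paper's. The paper first establishes $Q_1^*=\{Q_2,Q_3\}/\tau$, $Q_2^*=\{Q_3,Q_1\}/\tau$, $Q_3^*=\{Q_1,Q_2\}/\tau$ (formula \eqref{eq:Q1^*={Q2,Q3}/tau}), reduces the claim to the cyclic identity $Q_1\{Q_2,Q_3\}+Q_2\{Q_3,Q_1\}+Q_3\{Q_1,Q_2\}\equiv 0$, and verifies that identity by expanding the coefficients of the quartic form and observing that every term cancels in pairs. You instead work in a principal basis, where $Q_i^*=Q_i/\K_i$, and reduce everything to two facts: evaluation at $\ubf$ is the pseudo-pairing $Q(\ubf)=\langle Q,w_\ubf\rangle_\psi$ against the degenerate form $w_\ubf=\frac{1}{2}(u_2s-u_1t)^2$, and $w_\ubf$ is isotropic, so expanding $\|w_\ubf\|_\psi^2=0$ in the pseudo-orthogonal basis $\{Q_1,Q_2,Q_3\}$ (a genuine basis of $\mathfrak{sl}(2,\R)$ since the Gram determinant $\Delta$ is nonzero) gives $\sum_i Q_i(\ubf)^2/\K_i=0$ at once; all the steps check out, including the formula $Q(\ubf)=\langle Q,w_\ubf\rangle_\psi$ and the Parseval-type expansion with non-null basis vectors. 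The paper's computation has the side benefit of exhibiting the cyclic bracket identity, which resonates with the Jacobi identity used in Proposition\,\ref{prop:Sum{Qi,Qi^*}=0} and avoids choosing a principal basis; your argument is computation-free, explains \emph{why} the identity holds (isotropy of the evaluation covector), and, as you note, uniformly recovers the $\R^4$ analogue (Lemma\,\ref{lemma:f_pf_p^*=-{Q_1Q_2}^2/D}): there $Q_1,Q_2$ span only a nondegenerate plane, the bracket $\{Q_1,Q_2\}$ with $\|\{Q_1,Q_2\}\|_\psi^2=\Delta$ completes the pseudo-orthogonal basis, and the surviving third term of the expansion is exactly $-\{Q_1,Q_2\}(\ubf)^2/\Delta$.
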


Lemma \ref{lemma::<phi,phi^*>=0} is proved in \S\ref{subsect-relations_Qi-Qi*}.

%%%%%%%%%%%%%%%%%%%%%%%%%%%%%%%%%%%%%%%%%%%%%%%%%%%%%%%%%%%%%%%%%%%%%%%%%%%%%%%%%%%%%%%%%%%%%%%%%%%%%%%%%%%%%%%%%%%%
%%%%%%%%%%%%%%%%%%%%%%%%%%%%%%%        COROLLARY        %%%%%%%%%%%%%%%%%%%%%%%%%%%%%%%%%%%%%%%%%%%%%%%%%%%%%%%%%%%%
%%%%%%%%%%%%%%%%%%%%%%%%%%%%%%%%%%%%%%%%%%%%%%%%%%%%%%%%%%%%%%%%%%%%%%%%%%%%%%%%%%%%%%%%%%%%%%%%%%%%%%%%%%%%%%%%%%%%
\begin{corollary}[]
At a point $p$ of $M$ in $\R^5$, with $\Delta\neq 0$, each vector of $\E_p$ is orthogonal to its 
paired vector of $\E_p^*$$:$\, $\left\langle\E_p(\ubf)\,,\,\E_p^*(\ubf))\right\rangle=0$.
\end{corollary}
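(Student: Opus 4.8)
The plan is to derive the corollary as an immediate consequence of Lemma \ref{lemma::<phi,phi^*>=0}, which is the only non-trivial ingredient, by translating the assertion about the ellipses $\E_p$, $\E_p^*$ into an assertion about the quadratic maps $\f_p$, $\f_p^*$ by means of their double coverings.

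First I would fix a unit vector $\ubf\in T_pM$ and recall the double-covering relation \eqref{eq:double-covering-E_p}, $\hat{\E}_p(\ubf)=2\f_p(\ubf)$, together with its paired analogue $\hat{\E_p^*}(\ubf)=2\f_p^*(\ubf)$ introduced just after \eqref{eq:parametrisation-E_p^*}. In the notation of the corollary these read $\E_p(\ubf)=2\f_p(\ubf)$ and $\E_p^*(\ubf)=2\f_p^*(\ubf)$, so that the two vectors whose orthogonality is claimed are nothing but the values at $\ubf$ of the local quadratic map and of its paired map, up to the common factor $2$.

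Next, expanding the Euclidean inner product of $N_pM$ in the orthonormal frame $\nbf_1,\nbf_2,\nbf_3$ and using bilinearity, I would write
\[
\left\langle\E_p(\ubf),\,\E_p^*(\ubf)\right\rangle
=4\left\langle\f_p(\ubf),\,\f_p^*(\ubf)\right\rangle
=4\,\bigl(Q_1Q_1^*+Q_2Q_2^*+Q_3Q_3^*\bigr)(\ubf)\,,
\]
which reduces the claim to the vanishing of $\bigl(Q_1Q_1^*+Q_2Q_2^*+Q_3Q_3^*\bigr)(\ubf)$.

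Finally, that vanishing is exactly the content of Lemma \ref{lemma::<phi,phi^*>=0}, namely $\langle\f_p,\f_p^*\rangle\equiv 0$, whence $\langle\E_p(\ubf),\E_p^*(\ubf)\rangle=0$. The hard part is therefore not in the corollary at all but in Lemma \ref{lemma::<phi,phi^*>=0} itself, which is established through the relations between the components $Q_i$ and their paired components $Q_i^*$; once that identity is granted, the corollary is a one-line rescaling argument presenting no further obstacle.
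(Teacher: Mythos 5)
Your proposal is correct and is exactly the paper's own argument: the identity $\langle\E_p(\ubf),\E_p^*(\ubf)\rangle=4\langle\f_p(\ubf),\f_p^*(\ubf)\rangle$ via the double coverings, followed by an appeal to Lemma\,\ref{lemma::<phi,phi^*>=0}. No differences worth noting.
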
 
\begin{proof}
$\left\langle\E_p(\ubf)\,,\,\E_p^*(\ubf))\right\rangle=4\left\langle\f_p(\ubf)\,,\,\f_p^*(\ubf)\right\rangle=0 
\quad\mbox{(by Lemma\,\ref{lemma::<phi,phi^*>=0}).}$
\end{proof}

%%%%%%%%%%%%%%%%%%%%%%%%%%%%%%%%%%%%%%%%%%%%%%%%%%%%%%%%%%%%%%%%%%%%%%%%%%%%%%%%%%%%%%%%%%%%%%%%%%%%%%%%%%%%%%%%%%%%
%%%%%%%%%%%%%%%%%%%%%%%%%%%%%%%       PROPOSITION       %%%%%%%%%%%%%%%%%%%%%%%%%%%%%%%%%%%%%%%%%%%%%%%%%%%%%%%%%%%%
%%%%%%%%%%%%%%%%%%%%%%%%%%%%%%%%%%%%%%%%%%%%%%%%%%%%%%%%%%%%%%%%%%%%%%%%%%%%%%%%%%%%%%%%%%%%%%%%%%%%%%%%%%%%%%%%%%%%
\begin{proposition}\label{prop:G(E^*)=0}
At any point $p$ of $M$ in $\R^5$, with $\Delta\neq 0$, we have $\mathcal{G}_p^*\comp\f_p\equiv 0\equiv\mathcal{G}_p\comp\f_p^*$, that is 
\[\mathcal{G}_p^*(\f_p(\ubf))=0 \quad \mbox{and} \quad \mathcal{G}_p(\f_p^*(\ubf))=0 \quad \mbox{for any }\ubf\in T_pM\,.\]
\end{proposition}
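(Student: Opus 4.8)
The plan is to obtain this proposition as an immediate consequence of Lemma \ref{lemma::<phi,phi^*>=0}, once the two left-hand quantities are rewritten as the inner product $\langle\f_p,\f_p^*\rangle$. The bridge is purely formal: evaluating the paired Gauss quadratic form $\mathcal{G}_p^*$ on a value of the original quadratic map $\f_p$ reproduces exactly $\tfrac12\langle\f_p,\f_p^*\rangle$, which the lemma asserts to be zero.

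Concretely, I would first recall two facts from \S\ref{section:Paired Quadratic Form}: by Proposition \ref{prop:G^v=G^*} together with \eqref{eq:Leg-transf-matrix} the paired form is $\mathcal{G}_p^*(q)=\tfrac12\langle q,\flecha{\mathcal{G}_p}^{-1}q\rangle$, and by the definition of the paired quadratic map $\f_p^*=\flecha{\mathcal{G}_p}^{-1}\comp\f_p$. Substituting $q=\f_p(\ubf)$ and using this second identity gives, for every $\ubf\in T_pM$,
\[
\mathcal{G}_p^*(\f_p(\ubf))=\tfrac12\big\langle\f_p(\ubf),\,\flecha{\mathcal{G}_p}^{-1}\f_p(\ubf)\big\rangle=\tfrac12\big\langle\f_p(\ubf),\,\f_p^*(\ubf)\big\rangle,
\]
which vanishes by Lemma \ref{lemma::<phi,phi^*>=0}. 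For the second identity I would run the same computation with the roles reversed, using $\flecha{\mathcal{G}_p}\f_p^*=\flecha{\mathcal{G}_p}\flecha{\mathcal{G}_p}^{-1}\f_p=\f_p$:
\[
\mathcal{G}_p(\f_p^*(\ubf))=\tfrac12\big\langle\f_p^*(\ubf),\,\flecha{\mathcal{G}_p}\f_p^*(\ubf)\big\rangle=\tfrac12\big\langle\f_p^*(\ubf),\,\f_p(\ubf)\big\rangle=0,
\]
again by Lemma \ref{lemma::<phi,phi^*>=0} (equivalently, one may invoke $(\f_p^*)^*=\f_p$ and $\flecha{\mathcal{G}_p^*}^{-1}=\flecha{\mathcal{G}_p}$ from \eqref{eq:paired-of-paired} and apply the first display to $\f_p^*$).

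Thus the proposition is essentially a one-line reformulation, and the genuine content—the main obstacle—sits in Lemma \ref{lemma::<phi,phi^*>=0}, whose proof is deferred to \S\ref{subsect-relations_Qi-Qi*}. Were I to establish that lemma, I would pass to a principal basis of $N_pM$, in which $[\mathcal{G}_p^*]$ is diagonal with entries $1/\K_i$, so that $Q_i^*=Q_i/\K_i$ and the assertion reduces to the identity $\sum_i Q_i^2/\K_i\equiv 0$ among quartic forms on $\R^2$. The clean way to see this is that $\sum_i Q_i^2/\K_i$ is exactly the $\langle\cdot,\cdot\rangle_\psi$-contraction of the symmetric product $(Q,Q')\mapsto QQ'$, since in a principal basis $\K_i=\|Q_i\|_\psi^2$ are the diagonal pseudo-norms of the mutually pseudo-orthogonal forms $Q_1,Q_2,Q_3$; such a contraction is independent of the chosen pseudo-orthogonal basis, so it suffices to check it once, e.g. on $W=\tfrac12(s^2+t^2)$, $U=\tfrac12(s^2-t^2)$, $V=st$, where $\|W\|_\psi^2=1$, $\|U\|_\psi^2=\|V\|_\psi^2=-1$ and indeed $W^2-U^2-V^2=0$. (This is precisely the feature of the $\R^5$ case that makes the right-hand side vanish, whereas for $\R^4$ the analogous Lemma \ref{lemma:f_pf_p^*=-{Q_1Q_2}^2/D} yields $-\{Q_1,Q_2\}^2/\Delta$ rather than $0$.)
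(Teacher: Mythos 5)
Your derivation of the proposition is exactly the paper's: the author likewise writes $2\mathcal{G}_p^*(\f_p(\ubf))=\langle\f_p(\ubf),\flecha{\mathcal{G}_p}^*\f_p(\ubf)\rangle=\langle\f_p(\ubf),\f_p^*(\ubf)\rangle$ and invokes Lemma~\ref{lemma::<phi,phi^*>=0}, treating the second identity symmetrically. Your supplementary sketch of that lemma, however, departs from the paper: the author proves it via $Q_i^*=\{Q_j,Q_k\}/\tau$ and a coefficient-by-coefficient verification that the quartic $Q_1\{Q_2,Q_3\}+Q_2\{Q_3,Q_1\}+Q_3\{Q_1,Q_2\}$ vanishes, whereas you observe that $\sum_i Q_iQ_i^*=\sum_i Q_i^2/\K_i$ is the $\langle\cdot,\cdot\rangle_\psi$-trace of the symmetric product, hence basis-independent, and check it once on $W=\tfrac12(s^2+t^2)$, $U=\tfrac12(s^2-t^2)$, $V=st$. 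Your route is cleaner and makes transparent why the sum vanishes in the $\R^5$ case (signature $(1,2)$ of the full space of quadratic forms) while the $\R^4$ analogue leaves the residue $-\{Q_1,Q_2\}^2/\Delta$; both arguments are correct.
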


\begin{proof}
On gets directly the following equalities
\[2\mathcal{G}_p^*(\f_p(\ubf))=\left\langle\f_p(\ubf)\,,\,\flecha{\mathcal{G}_p}^*(\f_p(\ubf))\right\rangle
=\langle\f_p(\ubf)\,,\,\f_p^*(\ubf)\rangle=0 \quad\mbox{(by Lemma\,\ref{lemma::<phi,phi^*>=0}).}\]
The second equality is proved in the same way.
\end{proof}

%%%%%%%%%%%%%%%%%%%%%%%%%%%%%%%%%%%%%%%%%%%%%%%%%%%%%%%%%%%%%%%%%%%%%%%%%%%%%%%%%%%%%%%%%%%%%%%%%%%%%%%%%%%%%%%%%%%%
%%%%%%%%%%%%%%%%%%%%%%%%%%%%%%%       THEOREM      %%%%%%%%%%%%%%%%%%%%%%%%%%%%%%%%%%%%%%%%%%%%%%%%%%%%%%%%%%%%
%%%%%%%%%%%%%%%%%%%%%%%%%%%%%%%%%%%%%%%%%%%%%%%%%%%%%%%%%%%%%%%%%%%%%%%%%%%%%%%%%%%%%%%%%%%%%%%%%%%%%%%%%%%%%%%%%%%%
\begin{theorem}\label{th:equationsSigma_pSigma_p^*}
At a point $p\in M$ in $\R^5$, with $\Delta\neq 0$, the cone $\Sigma_p$ of degenerate normal vectors 
%of the Gauss quadratic form $\mathcal{G}_p$ 
is the cone based on the paired indicatrix ellipse $\E_p^*$. 
Conversely, the cone $\Sigma_p^*$ based on the ellipse $\E_p$ is the set of degenerate vectors 
of the paired quadratic form $\mathcal{G}_p^*:$
\begin{equation}\label{eq:Gp(Ep*)=Gp*(Ep)=0}
  \mathcal{G}_p(\E_p^*(\ubf))=0 \quad \mbox{and} \quad \mathcal{G}_p^*(\E_p(\ubf))=0 \quad \mbox{for any }\ubf\in T_pM\,.
\end{equation}
\end{theorem}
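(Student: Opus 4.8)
The plan is to reduce both vanishing identities in \eqref{eq:Gp(Ep*)=Gp*(Ep)=0} to Proposition \ref{prop:G(E^*)=0} by combining the homogeneity of the Gauss quadratic forms with the double-covering description of the indicatrices. Recall from \eqref{eq:double-covering-E_p} and \eqref{eq:parametrisation-E_p^*} that, for a unit vector $\ubf\in T_pM$, one has $\E_p(\ubf)=2\f_p(\ubf)$ and $\E_p^*(\ubf)=2\f_p^*(\ubf)$. Since $\mathcal{G}_p$ and $\mathcal{G}_p^*$ are quadratic forms, hence homogeneous of degree two, I would simply write
\[
\mathcal{G}_p(\E_p^*(\ubf))=\mathcal{G}_p(2\f_p^*(\ubf))=4\,\mathcal{G}_p(\f_p^*(\ubf))
\quad\text{and}\quad
\mathcal{G}_p^*(\E_p(\ubf))=4\,\mathcal{G}_p^*(\f_p(\ubf)),
\]
and invoke Proposition \ref{prop:G(E^*)=0}, which asserts exactly that $\mathcal{G}_p(\f_p^*(\ubf))=0$ and $\mathcal{G}_p^*(\f_p(\ubf))=0$ for every $\ubf\in T_pM$. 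This immediately yields both equalities of \eqref{eq:Gp(Ep*)=Gp*(Ep)=0}.

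Next I would translate these pointwise identities into the asserted statement about cones. By Proposition \ref{prop:Cone_degenerate_normals}, the cone $\Sigma_p$ of degenerate normal vectors is precisely the zero locus $\{q\in N_pM:\mathcal{G}_p(q)=0\}$; the first identity then says that every point of the paired indicatrix ellipse $\E_p^*$ lies on $\Sigma_p$. As $\Sigma_p$ is a quadratic cone with vertex at the origin $p$, the entire cone over $\E_p^*$ (with vertex $p$) is contained in $\Sigma_p$. Symmetrically, the second identity places $\E_p$ inside the zero locus of $\mathcal{G}_p^*$, that is, inside $\Sigma_p^*$, recovering the equation $\mathcal{G}_p^*(q)=0$ for $\Sigma_p^*$ announced in the Note preceding the theorem.

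The only genuine point requiring care — and what I expect to be the main, though mild, obstacle — is the passage from \emph{contained in} to \emph{equal to}: I must argue that the cone over $\E_p^*$ is all of $\Sigma_p$ and not a proper subcone. Here I would use that, when $\E_p^*$ is non-degenerate, it is a genuine ellipse spanning a plane not through the origin, so its cone with vertex $p$ is itself a non-degenerate quadratic cone; two quadratic cones sharing a common vertex and a full conic section must coincide, whence $\Sigma_p$ equals the cone based on $\E_p^*$ and, likewise, $\Sigma_p^*$ equals the cone based on $\E_p$. In the degenerate configurations of $\E_p^*$ (a segment or a point), the same substitution checks the statement directly, with $\Sigma_p$ degenerating accordingly.
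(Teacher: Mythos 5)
Your proposal is correct and follows essentially the same route as the paper, whose entire proof is the one-line observation that $\E_p=2\f_p$, $\E_p^*=2\f_p^*$ together with Propositions \ref{prop:Cone_degenerate_normals} and \ref{prop:G(E^*)=0}. Your additional paragraph upgrading the containment of $\E_p^*$ in $\Sigma_p$ to equality of the two quadratic cones is a reasonable extra precaution that the paper leaves implicit, but it does not change the substance of the argument.
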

\begin{proof}
It follows because $\E_p=2\f_p$, $\E_p^*=2\f_p^*$ and from Propositions \ref{prop:Cone_degenerate_normals} and \ref{prop:G(E^*)=0}. 
\end{proof}

\noindent
\textbf{\textit{\small Principal Diagonals}}. The \textit{principal diagonal lines} in $N_pM$ 
are the four lines spanned by the vectors $\epsilon_1\nbf_1+\epsilon_2\nbf_2+\epsilon_3\nbf_3$ with $\epsilon_i=\pm 1$, 
$\nbf_1$, $\nbf_2$, $\nbf_3$ being a principal basis. 

%%%%%%%%%%%%%%%%%%%%%%%%%%%%%%%%%%%%%%%%%%%%%%%%%%%%%%%%%%%%%%%%%%%%%%%%%%%%%%%%%%%%%%%%%%%%%%%%%%%%%%%%%%%%%%%%%%%%
%%%%%%%%%%%%%%%%%%%%%%%%%%%%%%%       PROPOSITION       %%%%%%%%%%%%%%%%%%%%%%%%%%%%%%%%%%%%%%%%%%%%%%%%%%%%%%%%%%%%
%%%%%%%%%%%%%%%%%%%%%%%%%%%%%%%%%%%%%%%%%%%%%%%%%%%%%%%%%%%%%%%%%%%%%%%%%%%%%%%%%%%%%%%%%%%%%%%%%%%%%%%%%%%%%%%%%%%%
\begin{proposition}\label{prop:paired-cones-K-A}
Let $p$ be a point of a smooth surface $M$ in $\R^5$, with $\Delta\neq 0$. 

\noindent
$(a)$ The cones $\Sigma_p$ and $\Sigma_p^*$ are orthogonal (\protect\cite{Costa_Moraes_R-F}) and 
pseudo-orthogonal$:$ each line of $\Sigma_p$ is orthogonal to a line of $\Sigma_p^*$ and 
pseudo-orthogonal to another line of $\Sigma_p^*$, and vice-versa. 
\smallskip

\noindent
$(b)$ The symmetry planes of both cones $\Sigma_p$ and $\Sigma_p^*$ are the planes spanned 
by pairs of vectors of a principal basis of $\mathcal{G}_p$. Their common axis  
is the line spanned by $\ebf_3$. 
\smallskip

\noindent
$(c)$ In the plane $q_3=1$, the ellipses $\widehat{\Sigma}_p:=\Sigma_p\cap(q_3=1)$ and 
$\widehat{\Sigma}_p^*:=\Sigma_p^*\cap(q_3=1)$ are 
polar dual to each other with respect the unit circle {\rm (in Fig.\,\ref{fig:dual-ellipses} 
the dotted circle is $\sph^1$)}. 
\smallskip

\noindent
$(d)$ The invariants $K$ and $\mathcal{A}$ are obtained respectively by evaluating $\mathcal{G}_p$ 
and $\mathcal{G}_p^*$ at principal diagonal vectors whose respective components have length $1$ and $\tau$: 
\[\mathcal{G}_p\begin{pmatrix}
 \epsilon_1  \\ 
 \epsilon_2  \\
 \epsilon_3 
\end{pmatrix}=K \qquad \mbox{ and } \qquad 
\mathcal{G}_p^*\begin{pmatrix}
 \tau\epsilon_1 \\ 
 \tau\epsilon_2 \\
 \tau\epsilon_3
\end{pmatrix}=\mathcal{A}\,. \qquad \epsilon_i=\pm 1\,.
\]

\noindent
$(e)$ The sign of $K$ is positive $($negative, zero$)$ iff the principal diagonal lines 
lie inside $($resp. outside, on$)$ the cone $\Sigma_p$. 
The sign of $\mathcal{A}$ is positive $($negative, zero$)$ iff the diagonal lines 
lie inside $($resp. outside, on$)$ the cone $\Sigma_p^*$ {\rm (see Fig.\,\ref{fig:paired-conesSigmapSigmap*})}. 
\smallskip

\noindent
$(f)$ At least one of the invariants $\mathcal{A}$ or $K$ is negative. 
\end{proposition}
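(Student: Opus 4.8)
The plan is to deduce item $(f)$ directly from the spectral description of the two invariants together with the sign pattern of the principal focal curvatures established in Proposition \ref{prop:K1<K2<0<K3}. Recall that $K$ and $\mathcal{A}$ are the first and second elementary symmetric functions of $\K_1,\K_2,\K_3$, namely $K=\K_1+\K_2+\K_3$ and $\mathcal{A}=\K_1\K_2+\K_2\K_3+\K_3\K_1$, and that (since $\Delta\neq0$) Proposition \ref{prop:K1<K2<0<K3} gives the ordering $\K_1\le\K_2<0<\K_3$. Geometrically this is exactly what items $(d)$ and $(e)$ encode: $\sign K$ records whether the principal diagonal lines lie inside or outside $\Sigma_p$, while $\sign\mathcal{A}$ records the same for $\Sigma_p^*$, so $(f)$ asserts that these diagonals cannot lie inside both cones at once.

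First I would reduce to a purely algebraic inequality by the substitution $a:=-\K_1$, $b:=-\K_2$, $c:=\K_3$, so that $a\ge b>0$ and $c>0$. In these variables $K=c-(a+b)$ and $\mathcal{A}=ab-c(a+b)$. Arguing by contradiction, I would assume that both $K\ge0$ and $\mathcal{A}\ge0$. The first assumption reads $c\ge a+b$, and feeding this into the second gives $ab\ge c(a+b)\ge (a+b)^2=a^2+2ab+b^2$, hence $0\ge a^2+ab+b^2$. Since $a,b>0$ this is impossible, so at least one of $K$, $\mathcal{A}$ must be (strictly) negative.

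I do not expect a serious obstacle here; the only point that deserves care is that $(f)$ is genuinely a statement about the particular eigenvalues and not a formal consequence of the polar duality of $\Sigma_p$ and $\Sigma_p^*$ from item $(c)$. Indeed, a form of signature $(1,2)$ and its inverse share the same positive cone, so a diagonal direction could a priori sit inside both; what rules this out is precisely the quantitative link $c\ge a+b$ forced by $K\ge0$ together with $a,b>0$. For this reason I would present the contradiction argument above as the rigorous core, using items $(d)$ and $(e)$ only to translate the conclusion back into the language of the cones $\Sigma_p$ and $\Sigma_p^*$.
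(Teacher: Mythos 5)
Your argument for item $(f)$ is correct: with $a=-\K_1$, $b=-\K_2$, $c=\K_3$ (all positive by Proposition \ref{prop:K1<K2<0<K3}), the assumptions $K\geq 0$ and $\mathcal{A}\geq 0$ give $ab\geq c(a+b)\geq(a+b)^2$, which is absurd. This is in fact \emph{more} explicit than the paper's treatment, which disposes of $(f)$ in one line by saying it ``follows from items $d$ and $e$'' and pointing at the figures; the intended route there is geometric --- the corner points $(\pm1,\pm1)$ cannot lie inside both of the polar-dual ellipses $\widehat{\Sigma}_p$ and $\widehat{\Sigma}_p^*$ of item $(c)$ --- whereas yours is a direct eigenvalue computation. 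One small caveat: your remark that $(f)$ is ``genuinely not a formal consequence of the polar duality'' is overstated. A point of norm $\geq 1$ can never lie inside both an origin-centred ellipse and its polar dual (by Cauchy--Schwarz, $\left(x^2+y^2\right)^2\leq\left(x^2/A^2+y^2/B^2\right)\left(A^2x^2+B^2y^2\right)$ in a common principal frame), and $\|(\pm1,\pm1)\|=\sqrt2>1$, so $(f)$ \emph{does} follow formally from $(c)$; what is true is that the $q_3$-axis lies inside both cones, so one must use the specific diagonal direction and not just the signature.

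The genuine gap is that the statement you were asked to prove has six items, and your proposal addresses only $(f)$. Items $(a)$--$(e)$ are not formalities: $(a)$ requires the identities $\langle\f_p(\ubf),\f_p^*(\ubf)\rangle=0$ (Lemma \ref{lemma::<phi,phi^*>=0}) and its pseudo-metric variant to exhibit the orthogonal and pseudo-orthogonal pairings of lines; $(b)$ and $(c)$ require writing both cones in canonical form $\sum\K_iq_i^2=0$ and $\sum\K_i^{-1}q_i^2=0$ in a principal basis and checking that the resulting ellipses in $\{q_3=1\}$ are polar dual; and $(d)$, which your proof of $(f)$ implicitly leans on via $(e)$, requires the evaluations $\mathcal{G}_p(\epsilon_1,\epsilon_2,\epsilon_3)=\frac{1}{2}\sum\K_i=K/2$ --- note, incidentally, that as literally stated the displayed formulas in $(d)$ are off by the factor $\frac{1}{2}$ coming from the normalisation $\mathcal{G}_p(q)=\frac{1}{2}\sum\K_iq_i^2$, and $\mathcal{G}_p^*(\tau\epsilon_1,\tau\epsilon_2,\tau\epsilon_3)=\frac{\tau^2}{2}\sum\K_i^{-1}=\frac{1}{2}\Delta\sum\K_i^{-1}=\mathcal{A}/2$ uses $\tau^2=\Delta$. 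Since these verifications are short but load-bearing (your reduction of $(f)$ to the eigenvalues is exactly the content of $(d)$ and $(e)$), they need to be supplied for the proof of the proposition to be complete.
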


\begin{proof}
$(a)$ The `positive' half of $\Sigma_p$ is the image of $\f_p^*$ and the `positive' half of $\Sigma_p^*$ 
is the image of $\f_p$ (Th.\,\ref{th:equationsSigma_pSigma_p^*} and Prop.\,\ref{prop:G(E^*)=0}). 
Therefore, a vector of $\Sigma_p$ is of the form $\pm\f_p^*(\ubf)=\pm(Q_1/K_1,Q_2/\K_2,Q_3/\K_3)(\ubf)$ for some $\ubf\in T_pM$ 
and a vector of $\Sigma_p^*$ is of the form $\pm\f_p(\vbf)=\pm(Q_1,Q_2,Q_3)(\vbf)$ for some $\vbf\in T_pM$.

The two vectors of $\Sigma_p^*$, $\f_p(\ubf)=(Q_1,Q_2,Q_3)(\ubf)$ and $(-Q_1,-Q_2,Q_3)(\ubf)$, are respectively 
orthogonal and pseudo-orthogonal (with respect to the pseudo-metric $(-1,-1,1)$) to the line of $\Sigma_p$ generated by $\f_p^*(\ubf)$, namely 
$\langle \f_p(\ubf),\f_p^*(\ubf)\rangle=0$ (Lemma\,\ref{lemma::<phi,phi^*>=0}) and 
\[\left\langle\left(-Q_1,-Q_2,Q_3\right)(\ubf),\left(\frac{Q_1}{\K_1},\frac{Q_2}{\K_2},\frac{Q_3}{\K_3}\right)(\ubf)\right\rangle_{(-1,-1,1)}
=\langle \f_p(\ubf),\f_p^*(\ubf)\rangle=0\,.\] 
$(b)$ One uses that, in a principal basis, the respective equations of $\Sigma_p$ and $\Sigma_p^*$ are 
both in canonical form (with $\K_1\leq\K_2<0<\K_3$):
\[\K_1q_1^2+\K_2q_2^2+\K_3q_3^2=0 \qquad \mbox{and} \qquad \K_1^{-1}q_1^2+\K_2^{-1}q_2^2+\K_3^{-1}q_3^2=0\,.\]
$(c)$ The statement follows because the equations of the respective ellipses are 
\[(-\K_1/\K_3)q_1^2+(-\K_2/\K_3)q_2^2=1 \qquad \mbox{and} \qquad (-\K_3/\K_1)q_1^2+(-\K_3/\K_2)q_2^2=1\,.\]
$(d)$ One just evaluates $\mathcal{G}_p$ and $\mathcal{G}_p^*$ on the respective mantioned diagonal vectors. 
\smallskip

\noindent
$(e)$ It follows from item $d$. 
\smallskip

\noindent
$(f)$ It follows from item $d$ and $e$ (see  Figs.\,\ref{fig:paired-conesSigmapSigmap*} and \ref{fig:dual-ellipses}).
\end{proof}

\begin{proposition}
At the points $p\in M$ in $\R^5$ with $\tau\neq 0$, 
there are five possible generic configurations of the cones $\Sigma_p$ and $\Sigma_p^*$ relative to the principal diagonals {\rm  (Fig.\,\ref{fig:paired-conesSigmapSigmap*})}. 
%Moreover, $K$ (resp. $\mathcal{A}$) is positive iff the cone $\Sigma_p$ $($resp. $\Sigma_p^*$$)$ contains 
%the principal diagonals.
\end{proposition}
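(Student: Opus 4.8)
The plan is to reduce the relative position of the four principal diagonals with respect to the two cones $\Sigma_p$ and $\Sigma_p^*$ to the single pair of signs $(\sign K,\sign\mathcal{A})$, and then to determine exactly which sign pairs can occur under the eigenvalue constraint of Proposition \ref{prop:K1<K2<0<K3}. First I would observe that $\tau\neq 0$ forces $\Delta=\tau^2>0$, so $\K_1\leq\K_2<0<\K_3$. By Proposition \ref{prop:paired-cones-K-A}(d)--(e) a principal diagonal lies inside, on, or outside $\Sigma_p$ according as $K>0$, $K=0$, or $K<0$, and inside, on, or outside $\Sigma_p^*$ according as $\mathcal{A}>0$, $\mathcal{A}=0$, or $\mathcal{A}<0$; moreover all four diagonals behave identically, since $\mathcal{G}_p$ (resp. $\mathcal{G}_p^*$) takes the same value $K$ (resp. $\mathcal{A}$) on every diagonal vector $(\epsilon_1,\epsilon_2,\epsilon_3)$ (resp. $\tau(\epsilon_1,\epsilon_2,\epsilon_3)$). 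Thus the configuration is encoded by $(\sign K,\sign\mathcal{A})$.

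Second, I would cut the a priori nine sign pairs down to five. Proposition \ref{prop:paired-cones-K-A}(f) asserts that at least one of $\mathcal{A}$, $K$ is negative, which immediately forbids the four pairs in which both invariants are $\geq 0$, namely $(+,+)$, $(+,0)$, $(0,+)$, $(0,0)$. This leaves the five candidates $(-,+)$, $(-,0)$, $(-,-)$, $(0,-)$, $(+,-)$, where the pair is written as $(\sign K,\sign\mathcal{A})$.

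Third, I would establish realizability and the transition order by a one-parameter sweep. Writing $x=-\K_1\geq y=-\K_2>0$ and $z=\K_3>0$, one computes $K=z-(x+y)$ and $\mathcal{A}=xy-z(x+y)$, so $\sign K$ changes at the threshold $z=x+y$ while $\sign\mathcal{A}$ changes at $z=\tfrac{xy}{x+y}$. The elementary inequality $\tfrac{xy}{x+y}<x+y$ places the $\mathcal{A}$-threshold strictly below the $K$-threshold, so as $z$ increases from $0$ to $\infty$ with $x,y$ fixed one passes in order through exactly the five pairs $(-,+)\to(-,0)\to(-,-)\to(0,-)\to(+,-)$; hence each of the five is realized. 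The three open cases $K<0<\mathcal{A}$, then $K<0$ and $\mathcal{A}<0$, then $0<K$ and $\mathcal{A}<0$ are the generic ones, and the two cases carrying an equality are the transitional configurations separating them (the pseudo-parabolic locus $\mathcal{A}=0$ and the locus $K=0$).

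The main obstacle is the first step: convincingly arguing that the incidence of the diagonals with the two cones is a \emph{complete} invariant of the configuration, so that no finer data (such as the mutual nesting of $\Sigma_p$ and $\Sigma_p^*$ away from the diagonals) produces extra cases. I would dispose of this by invoking Proposition \ref{prop:paired-cones-K-A}(a)--(c): the two cones are orthogonal and pseudo-orthogonal, share the same symmetry planes and common axis, and have polar-dual cross-sections, so their mutual position is rigid; only the placement of the diagonals, governed by $(\sign K,\sign\mathcal{A})$, can vary, and that placement takes precisely the five listed forms.
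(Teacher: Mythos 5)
Your proposal is correct and follows the same route as the paper: the paper's one-line proof simply invokes items (c) and (d) of Proposition \ref{prop:paired-cones-K-A}, i.e. the configuration is read off from the position of the diagonals relative to the two cones, which is governed by the signs of $K$ and $\mathcal{A}$, and the count of five comes from excluding the sign pairs with both invariants nonnegative. What you add beyond the paper is worthwhile: the explicit realizability sweep (with $x=-\K_1\geq y=-\K_2>0$, $z=\K_3>0$, the thresholds $z=\tfrac{xy}{x+y}<x+y$ for $\mathcal{A}$ and $K$) shows that all five sign pairs actually occur and pins down the transition order $(-,+)\to(-,0)\to(-,-)\to(0,-)\to(+,-)$, a point the paper leaves implicit; you also correctly note that exclusion of the four forbidden pairs is item (f) rather than (c)--(d) alone. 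The only caveat is your final "completeness" step: that the incidence of the diagonals is the whole story is really a matter of what the proposition's figure counts as a configuration, and your appeal to the rigidity statements (a)--(c) is the right justification for it.
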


\begin{proof}
It follows from items $c$ and $d$ of Proposition\,\ref{prop:paired-cones-K-A}.
\end{proof}

\begin{figure}[ht] %< préférence de placement h , t , b or p >
\centering
\includegraphics[scale=0.085]{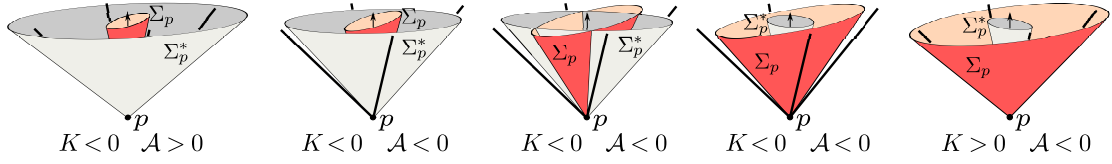}
\caption{\small The five generic configurations of the cones $\Sigma_p$ and $\Sigma_p^*$ with the principal diagonals.} 
\label{fig:paired-conesSigmapSigmap*}
\end{figure}

\begin{figure}[ht] %< préférence de placement h , t , b or p >
\centering
\includegraphics[scale=0.084]{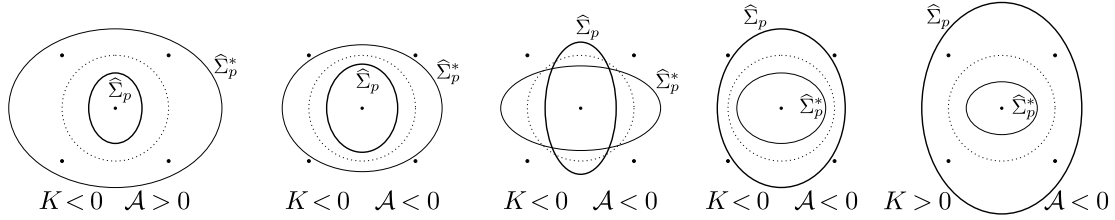}
\caption{\small The sections $q_3=1$ of $\Sigma_p$ and $\Sigma_p^*$ are ellipses 
$\widehat{\Sigma}_p$ and $\widehat{\Sigma}_p^*$ dual to each other. The corner points $(\pm 1, \pm 1)$ lie inside $\widehat{\Sigma}_p$ (resp. $\widehat{\Sigma}_p^*$) 
iff $K$ (resp. $\mathcal{A}$) is positive. 
The unit circle is dotted.} 
\label{fig:dual-ellipses}
\end{figure}

%%%%%%%%%%%%%%%%%%%%%%%%%%%%%%%%%%%%%%%%%%%%%%%%%%%%%%%%%%%%%%%%%%%%%%%%%%%%%%%%%%%%%%%%%%%%%%%%%%%%%%%%%%%%%%%%%
%%%%%%%%%%%%%%%%%%%%%%%%%%%%%%%%%%%%%%%%%%%%%%%%%%%%%%%%%%%%%%%%%%%%%%%%%%%%%%%%%%%%%%%%%%%%%%%%%%%%%%%%%%%%%%%%%
\subsection{\textbf{Relations between the components of two paired quadratic maps}\label{subsect-relations_Qi-Qi*}}
%%%%%%%%%%%%%%%%%%%%%%%%%%%%%%%%%%%%%%%%%%%%%%%%%%%%%%%%%%%%%%%%%%%%%%%%%%%%%%%%%%%%%%%%%%%%%%%%%%%%%%%%%%%%%%%%%
%%%%%%%%%%%%%%%%%%%%%%%%%%%%%%%%%%%%%%%%%%%%%%%%%%%%%%%%%%%%%%%%%%%%%%%%%%%%%%%%%%%%%%%%%%%%%%%%%%%%%%%%%%%%%%%%%
The quadratic forms $Q_i$ and their paired quadratic forms $Q_i^*$ satisfy several relations. 

\begin{lemma}\label{lemma:<Q_i,Q_j^*>}
At any point $p$ of $M$ in $\R^4$ $($or in $\R^5$$)$, with $\Delta\neq 0$, we have 
\begin{equation}\label{eq:<Q_i,Q_j^*>=d_ij}
\langle Q_i\,,\,Q_i^*\rangle_\psi=1 \quad \mbox{and} \quad \langle Q_i\,,\,Q_j^*\rangle_\psi=0 \ \mbox{for } i\neq j\,.
\end{equation}
\end{lemma}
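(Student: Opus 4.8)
The plan is to read the claim off directly from the matrix definition of the paired quadratic forms together with the fact (established in the Proposition around \eqref{eq:Gp=[matrix]}) that $[\mathcal{G}_p]$ is the Gram matrix of $Q_1,\dots,Q_\ell$ for $\langle\,\cdot\,,\cdot\,\rangle_\psi$; that is, its $(i,m)$ entry is $k_{im}=\langle Q_i,Q_m\rangle_\psi$. No geometry beyond this is needed: the statement is a pure identity between the entries of $[\mathcal{G}_p]$ and those of its inverse, and the hypothesis $\Delta\neq 0$ enters only to guarantee that $[\mathcal{G}_p]^{-1}$, and hence the $Q_j^*$, exist.

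First I would recall, from the definition of the paired quadratic map in \S\ref{section:Paired Quadratic Form}, that the components $Q_j^*$ are given by $(Q_1^*,\dots,Q_\ell^*)^{\mathsf T}=[\mathcal{G}_p]^{-1}\,(Q_1,\dots,Q_\ell)^{\mathsf T}$, so that
\[
Q_j^*=\sum_{m}\big([\mathcal{G}_p]^{-1}\big)_{jm}\,Q_m\,.
\]
Then, using that $\langle\,\cdot\,,\cdot\,\rangle_\psi$ is bilinear, I would expand
\[
\langle Q_i\,,\,Q_j^*\rangle_\psi
=\sum_{m}\big([\mathcal{G}_p]^{-1}\big)_{jm}\,\langle Q_i\,,\,Q_m\rangle_\psi
=\sum_{m} k_{im}\,\big([\mathcal{G}_p]^{-1}\big)_{mj}\,,
\]
where in the last step I use the symmetry of $[\mathcal{G}_p]^{-1}$ (a consequence of the symmetry of $[\mathcal{G}_p]$) to rewrite $\big([\mathcal{G}_p]^{-1}\big)_{jm}$ as $\big([\mathcal{G}_p]^{-1}\big)_{mj}$.

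Finally I would recognise the right-hand side as the $(i,j)$ entry of the matrix product $[\mathcal{G}_p]\,[\mathcal{G}_p]^{-1}=\mathrm{Id}$, since $k_{im}=\big([\mathcal{G}_p]\big)_{im}$; hence $\langle Q_i\,,\,Q_j^*\rangle_\psi=\delta_{ij}$, which is exactly \eqref{eq:<Q_i,Q_j^*>=d_ij}. There is no genuine obstacle here: once the paired forms are written in coordinates, the lemma reduces to reading the tautology $[\mathcal{G}_p]\,[\mathcal{G}_p]^{-1}=\mathrm{Id}$ entrywise, the only subtlety being the bookkeeping of indices and the symmetry of the inverse. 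I would add the remark that this identity is precisely what makes $\{Q_i^*\}$ the pseudo-dual basis of $\{Q_i\}$ for $\langle\,\cdot\,,\cdot\,\rangle_\psi$, which is the conceptual point underlying the further relations collected in \S\ref{subsect-relations_Qi-Qi*}.
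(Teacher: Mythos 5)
Your proof is correct and follows essentially the same route as the paper: both expand $Q_j^*=\sum_m([\mathcal{G}_p]^{-1})_{jm}Q_m$, use bilinearity of $\langle\,\cdot\,,\cdot\,\rangle_\psi$ together with the Gram-matrix identity $k_{im}=([\mathcal{G}_p])_{im}$, and read off $\delta_{ij}$ from $[\mathcal{G}_p][\mathcal{G}_p]^{-1}=\mathrm{Id}$. The only difference is that you carry out the computation in general index notation, whereas the paper writes out the $2\times 2$ and $3\times 3$ cases explicitly; the content is identical.
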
 

\begin{proof}
Write $k_{ij}^*:=\langle Q_i^*\,,\,Q_j^*\rangle_\psi$ (with $k_i^*:=k_{ii}^*$) for the entries of the matrix $[\mathcal{G}_p^*]$. 
For $M$ in $\R^4$ we have that $Q_1^*=k_1^*Q_1+k_{12}^*Q_2$ and $Q_2^*=k_{12}^*Q_1+k_2^*Q_2$. Then 
\[\langle Q_1\,,\,Q_1^*\rangle_\psi=\langle Q_1\,,\,k_1^*Q_1+k_{12}^*Q_2\rangle_\psi=k_1^*k_1+k_{12}^*k_{12}=1\quad 
\mbox{(because $[\mathcal{G}_p^*][\mathcal{G}_p]=\mathrm{Id}$)}\,.\]
In the same way, one proves that $\langle Q_2\,,\,Q_2^*\rangle_\psi=1$ and 
$\langle Q_1\,,\,Q_2^*\rangle_\psi=\langle Q_2\,,\,Q_1^*\rangle_\psi=0$. 

For $M$ in $\R^5$ we have $Q_1^*=k_1^*Q_1+k_{12}^*Q_2+k_{13}^*Q_3$ and we get in the same way that
\[\langle Q_1\,,\,Q_1^*\rangle_\psi=\langle Q_1\,,\,k_1^*Q_1+k_{12}^*Q_2+k_{13}^*Q_3\rangle_\psi=k_1^*k_1+k_{12}^*k_{12}+k_{13}^*k_{13}=1\,.\] 
The remaining equalities are proved in the same way. 
\end{proof}

%%%%%%%%%%%%%%%%%%%%%%%%%%%%%%%%%%%%%%%%%%%%%%%%%%%%%%%%%%%%%%%%%%%%%%%%%%%%%%%%%%%%%%%%%%%%%%%%%%%%%%%%%%%%%%%%%%%%
%%%%%%%%%%%%%%%%%%%%%%%%%%%%%%%       PROPOSITION       %%%%%%%%%%%%%%%%%%%%%%%%%%%%%%%%%%%%%%%%%%%%%%%%%%%%%%%%%%%%
%%%%%%%%%%%%%%%%%%%%%%%%%%%%%%%%%%%%%%%%%%%%%%%%%%%%%%%%%%%%%%%%%%%%%%%%%%%%%%%%%%%%%%%%%%%%%%%%%%%%%%%%%%%%%%%%%%%%
\begin{proposition}
For every $p\in M$ in $\R^5$ with $\tau\neq 0$ the paired quadratic forms $Q_i^*$ are given in terms of the 
Poisson bracket as follows {\rm (compare with formula \eqref{eq:R=(<,>,<,>,<,>)})}
\begin{equation}\label{eq:Q1^*={Q2,Q3}/tau}
 Q_1^*=\frac{\{Q_2,Q_3\}}{\tau}\,\quad Q_2^*=\frac{\{Q_3,Q_1\}}{\tau}\,\quad Q_3^*=\frac{\{Q_1,Q_2\}}{\tau}\,. 
\end{equation}
\end{proposition}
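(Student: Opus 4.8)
The plan is to recognise the three right-hand sides as the paired forms $Q_1^*,Q_2^*,Q_3^*$ by checking that they satisfy the biorthogonality relations that \emph{characterise} the latter. By Lemma \ref{lemma:<Q_i,Q_j^*>} the paired forms are precisely the vectors $Q_j^*$ with $\langle Q_i,Q_j^*\rangle_\psi=\delta_{ij}$. First I would observe that $\tau\neq 0$ forces $\Delta=\tau^2\neq 0$ (Theorem \ref{th:features-pallelotope}$(c)$), so $[\mathcal{G}_p]$ is nonsingular, $Q_1,Q_2,Q_3$ form a basis of $\mathfrak{sl}(2,\R)$, and — because $\langle\cdot\,,\cdot\rangle_\psi$ is nondegenerate — the dual basis determined by $\langle Q_i,\cdot\,\rangle_\psi=\delta_{ij}$ is unique. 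Thus it suffices to prove that $\widetilde{Q}_1:=\{Q_2,Q_3\}/\tau$, $\widetilde{Q}_2:=\{Q_3,Q_1\}/\tau$, $\widetilde{Q}_3:=\{Q_1,Q_2\}/\tau$ obey $\langle Q_i,\widetilde{Q}_j\rangle_\psi=\delta_{ij}$.

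The off-diagonal relations are immediate from Proposition \ref{th:bracket-orthogonality}$(a)$: since $\{Q_2,Q_3\}$ is pseudo-orthogonal to the plane it spans, $\langle Q_2,\widetilde{Q}_1\rangle_\psi=\langle Q_3,\widetilde{Q}_1\rangle_\psi=0$, and the remaining off-diagonal vanishings follow by cyclic relabelling. One diagonal relation is also immediate: Theorem \ref{th:features-pallelotope}$(c)$ gives $\tau=\langle Q_1,\{Q_2,Q_3\}\rangle_\psi$, hence $\langle Q_1,\widetilde{Q}_1\rangle_\psi=\tau/\tau=1$.

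The only point that requires a genuine argument is that the other two diagonal products, $\langle Q_2,\{Q_3,Q_1\}\rangle_\psi$ and $\langle Q_3,\{Q_1,Q_2\}\rangle_\psi$, also equal $\tau$ (and not some other scalar). I would settle this by exhibiting the trilinear form $T(Q_1,Q_2,Q_3):=\langle Q_1,\{Q_2,Q_3\}\rangle_\psi$ as an alternating form: combining the explicit bracket \eqref{eq:Poisson-bracket-form} with the pseudo-product \eqref{eq:psi-scalar-prod} yields
\[
T(Q_1,Q_2,Q_3)=\tfrac12\det\begin{pmatrix} a_1 & b_1 & c_1 \\ a_2 & b_2 & c_2 \\ a_3 & b_3 & c_3 \end{pmatrix},
\]
which is manifestly alternating in $Q_1,Q_2,Q_3$. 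A cyclic permutation $1\mapsto2\mapsto3\mapsto1$ permutes the rows by a $3$-cycle, an even permutation, so $T$ is cyclically invariant and all three diagonal products equal $\tau$. With both families of relations in hand, uniqueness of the dual basis gives $\widetilde{Q}_i=Q_i^*$, that is \eqref{eq:Q1^*={Q2,Q3}/tau}; this simultaneously upgrades the $W$-projection identity \eqref{eq:R=(<,>,<,>,<,>)} to the full vector identity between the $Q_i^*$ and the normalised brackets.
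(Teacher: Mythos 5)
Your proof is correct and follows essentially the same route as the paper: both characterise the $Q_i^*$ by the biorthogonality relations of Lemma \ref{lemma:<Q_i,Q_j^*>} and then verify that the normalised brackets satisfy them, using Proposition \ref{th:bracket-orthogonality}$(a)$ for the off-diagonal vanishings and the cyclic symmetry of $\tau=\langle Q_1,\{Q_2,Q_3\}\rangle_\psi$ for the diagonal ones. The only difference is that you justify two points the paper leaves as observations — the cyclic invariance (via the alternating determinant expression for the trilinear form) and the uniqueness of the dual basis (via $\Delta=\tau^2\neq 0$) — which is a welcome tightening rather than a new argument.
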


\begin{proof}
Observe that $\tau=\langle Q_1\,,\,\{Q_2,Q_3\}\rangle_\psi=\langle Q_2\,,\,\{Q_3,Q_1\}\rangle_\psi=\langle Q_3\,,\,\{Q_1,Q_2\}\rangle_\psi$. 

For $M$ in $\R^5$ the paired quadratic forms $Q_i^*$ are completely determined 
(in terms of the quadratic forms $Q_j$) by the identities \eqref{eq:<Q_i,Q_j^*>=d_ij}. 
Hence, it suffices to prove that the right hand side terms of the relations \eqref{eq:Q1^*={Q2,Q3}/tau} satisfy identities \eqref{eq:<Q_i,Q_j^*>=d_ij}.
Namely, \[\langle Q_1\,,\, \{Q_2,Q_3\}/\tau\rangle_\psi=\tau/\tau=1\,,\]
and, similarly, we get $\langle Q_3\,,\, \{Q_1,Q_2\}/\tau\rangle_\psi=\langle Q_2\,,\, \{Q_3,Q_1\}/\tau\rangle_\psi=1$. 

Finally, $\langle Q_1\,,\, \{Q_1,Q_2\}/\tau\rangle_\psi=\langle Q_2\,,\, \{Q_1,Q_2\}/\tau\rangle_\psi=0$ because 
$\{Q_1,Q_2\}$ is pseudo-orthogonal to $Q_1$ and to $Q_2$. The remaining identities are verified in the same way. 
\end{proof}

%%%%%%%%%%%%%%%%%%%%%%%%%%%%%%%%%%%%%%%%%%%%%   PROOF OF LEMMA   %%%%%%%%%%%%%%%%%%%%%%%%%%%%%%%%%%%%%%%%%%%%%%%%%%%%%%%
\begin{proof}[\textbf{Proof of Lemma \ref{lemma::<phi,phi^*>=0}}]
By equalities \eqref{eq:Q1^*={Q2,Q3}/tau}, to prove \eqref{eq:<phi,phi^*>=0} is equivalent to prove that
\begin{equation}\label{eq:<Q1,{Q2,Q_3}+...>=0}
  Q_1\{Q_2,Q_3\}+Q_2\{Q_3,Q_1\}+Q_3\{Q_1,Q_2\}\equiv 0\,.
\end{equation}
Now, to prove that each coefficient of the quartic form \eqref{eq:<Q1,{Q2,Q_3}+...>=0} is zero, one verifies that 
every term on each such coefficient appears twice but with opposite signs. 
For example, the respective coefficients of $s^4$ and of $t^4$ in \eqref{eq:<Q1,{Q2,Q_3}+...>=0}, given by 
\[(a_1(a_2b_3-a_3b_2)+a_2(a_3b_1-a_1b_3)+a_3(a_1b_2-a_2b_1))/2 \quad \mbox{and}\]
\[(c_1(b_2c_3-b_3c_2)+c_2(b_3c_1-b_1c_3)+c_3(b_1c_2-b_2c_1))/2\,,\]  
are clearly zero. Writing down the coefficients of the remaining monomials ($s^3t$, $s^2t^2$, $st^3$) one checks directly 
that they also vanish. 
\end{proof}

%%%%%%%%%%%%%%%%%%%%%%%%%%%%%%%%%%%%%%%%%%%%%%%%%%%%%%%%%%%%%%%%%%%%%%%%%%%%%%%%%%%%%%%%%%%%%%%%%%%%%%%%%%%%%%%%%%%%
%%%%%%%%%%%%%%%%%%%%%%%%%%%%%%%       PROPOSITION       %%%%%%%%%%%%%%%%%%%%%%%%%%%%%%%%%%%%%%%%%%%%%%%%%%%%%%%%%%%%
%%%%%%%%%%%%%%%%%%%%%%%%%%%%%%%%%%%%%%%%%%%%%%%%%%%%%%%%%%%%%%%%%%%%%%%%%%%%%%%%%%%%%%%%%%%%%%%%%%%%%%%%%%%%%%%%%%%%
\begin{proposition}
At any point $p$ of $M$ in $\R^4$, with $\Delta\neq 0$, we have 
\[\{Q_1,Q_1^*\}=-\{Q_2,Q_2^*\}\,.\]
\end{proposition}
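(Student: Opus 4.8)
The plan is to reduce the identity to the $\R$-bilinearity and antisymmetry of the Poisson bracket, using that the paired forms $Q_1^*,Q_2^*$ are $\R$-linear combinations of $Q_1,Q_2$ whose coefficients are the (scalar) entries of $[\mathcal{G}_p]^{-1}$. First I would write the inverse Gram matrix explicitly: for $M$ in $\R^4$ one has $[\mathcal{G}_p]=\begin{pmatrix} k_1 & k_{12} \\ k_{12} & k_2\end{pmatrix}$ with $\det[\mathcal{G}_p]=\Delta\neq 0$, so that
\[
[\mathcal{G}_p]^{-1}=\frac{1}{\Delta}\begin{pmatrix} k_2 & -k_{12} \\ -k_{12} & k_1\end{pmatrix},
\qquad\text{whence}\qquad
Q_1^*=\frac{k_2 Q_1-k_{12}Q_2}{\Delta},\quad Q_2^*=\frac{k_1 Q_2-k_{12}Q_1}{\Delta}.
\]

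Next I would compute the two brackets directly. The Poisson bracket is the $\R$-bilinear Lie bracket on $\mathfrak{sl}(2,\R)$, and $k_1,k_2,k_{12},\Delta$ are real numbers, so they pass through the bracket as constants; together with $\{Q_i,Q_i\}=0$ and $\{Q_2,Q_1\}=-\{Q_1,Q_2\}$ this gives
\[
\{Q_1,Q_1^*\}=\frac{1}{\Delta}\bigl(k_2\{Q_1,Q_1\}-k_{12}\{Q_1,Q_2\}\bigr)=\frac{-k_{12}}{\Delta}\{Q_1,Q_2\},
\]
\[
\{Q_2,Q_2^*\}=\frac{1}{\Delta}\bigl(k_1\{Q_2,Q_2\}-k_{12}\{Q_2,Q_1\}\bigr)=\frac{k_{12}}{\Delta}\{Q_1,Q_2\},
\]
so that $\{Q_1,Q_1^*\}=-\{Q_2,Q_2^*\}$, which is exactly the claim.

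There is no real obstacle here; the only point deserving a word is that the coefficients produced by $[\mathcal{G}_p]^{-1}$ are genuine scalars, so the bracket is linear in them, after which the identity is forced by $\{Q_i,Q_i\}=0$. As a cross-check (and alternative proof) I would note that $\{Q_1,Q_1^*\}+\{Q_2,Q_2^*\}$ is invariant under orthonormal changes of the normal frame $\nbf_1,\nbf_2$: since $(Q_1,Q_2)$ and $(Q_1^*,Q_2^*)$ transform by the same $R\in O(2)$, one has $\sum_i\{(RQ)_i,(RQ^*)_i\}=\sum_{j,k}\bigl(\sum_i R_{ij}R_{ik}\bigr)\{Q_j,Q_k^*\}=\sum_j\{Q_j,Q_j^*\}$ because $R^{T}R=\mathrm{Id}$, i.e. $\sum_i R_{ij}R_{ik}=\delta_{jk}$. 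In a principal basis $k_{12}=0$ forces $Q_i^*=Q_i/\mathcal{K}_i$, so both brackets vanish identically; invariance then transports this vanishing of the sum to an arbitrary orthonormal frame, recovering $\{Q_1,Q_1^*\}=-\{Q_2,Q_2^*\}$.
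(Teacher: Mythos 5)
Your proof is correct and follows essentially the same route as the paper: write $Q_1^*,Q_2^*$ as scalar combinations of $Q_1,Q_2$ via the entries of $[\mathcal{G}_p]^{-1}$ and let bilinearity together with $\{Q_i,Q_i\}=0$ and antisymmetry of the Poisson bracket force the cancellation. The frame-invariance cross-check at the end is a pleasant extra but not needed.
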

\begin{proof}
As in the proof of Lemma\,\ref{lemma:<Q_i,Q_j^*>}, we use the equalities $Q_1^*=k_1^*Q_1+k_{12}^*Q_2$ 
and $Q_2^*=k_{12}^*Q_1+k_2^*Q_2$ to get 
\begin{align*}
\{Q_1,Q_1^*\}+\{Q_2,Q_2^*\} & =\{Q_1,k_1^*Q_1+k_{12}^*Q_2\}+\{Q_2,k_{12}^*Q_1+k_2^*Q_2\} \\ 
 & =k_{12}^*\{Q_1,Q_2\}+k_{12}^*\{Q_2,Q_1\}=0\,.
\end{align*}
\end{proof}

%%%%%%%%%%%%%%%%%%%%%%%%%%%%%%%%%%%%%%%%%%%%%%%%%%%%%%%%%%%%%%%%%%%%%%%%%%%%%%%%%%%%%%%%%%%%%%%%%%%%%%%%%%%%%%%%%%%%
%%%%%%%%%%%%%%%%%%%%%%%%%%%%%%%       PROPOSITION       %%%%%%%%%%%%%%%%%%%%%%%%%%%%%%%%%%%%%%%%%%%%%%%%%%%%%%%%%%%%
%%%%%%%%%%%%%%%%%%%%%%%%%%%%%%%%%%%%%%%%%%%%%%%%%%%%%%%%%%%%%%%%%%%%%%%%%%%%%%%%%%%%%%%%%%%%%%%%%%%%%%%%%%%%%%%%%%%%
\begin{proposition}\label{prop:Sum{Qi,Qi^*}=0}
At any point $p$ of $M$ in $\R^5$, with $\tau\neq 0$, we have 
\[\{Q_1,Q_1^*\}+\{Q_2,Q_2^*\}+\{Q_3,Q_3^*\}=0\,.\] 
\end{proposition}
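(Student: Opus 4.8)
The plan is to reduce the stated identity directly to the Jacobi identity of the Lie algebra of quadratic forms. First I would use the expressions for the paired quadratic forms in terms of the Poisson bracket provided by \eqref{eq:Q1^*={Q2,Q3}/tau} (available precisely because $\tau\neq 0$), namely $Q_1^*=\{Q_2,Q_3\}/\tau$, $Q_2^*=\{Q_3,Q_1\}/\tau$ and $Q_3^*=\{Q_1,Q_2\}/\tau$. Substituting these into the left-hand side and factoring out the common scalar $1/\tau$ (which is legitimate since the Poisson bracket is bilinear), the quantity to be shown to vanish becomes
\[
\frac{1}{\tau}\Big(\{Q_1,\{Q_2,Q_3\}\}+\{Q_2,\{Q_3,Q_1\}\}+\{Q_3,\{Q_1,Q_2\}\}\Big).
\]

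The expression inside the parentheses is exactly the cyclic sum $[a,[b,c]]+[b,[c,a]]+[c,[a,b]]$ of the Jacobi identity. As recalled in \S\ref{sect-pseudi-euclidian_space-quad}, the Poisson bracket endows the $3$-dimensional space of quadratic forms on $\R^2$ with the Lie algebra structure of $\mathfrak{sl}(2,\R)$; hence the Jacobi identity holds and this cyclic sum vanishes identically. Therefore the whole expression is zero, which is the desired relation.

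The main point requiring care is not any computation but the bookkeeping of the cyclic ordering: one must check that the substitution of \eqref{eq:Q1^*={Q2,Q3}/tau} produces precisely the Jacobi cyclic arrangement (so that $\{Q_1,Q_1^*\}$ contributes $\{Q_1,\{Q_2,Q_3\}\}$, and similarly for the other two) rather than a mismatched combination; this is immediate from the cyclic pattern $1\mapsto(2,3)$, $2\mapsto(3,1)$, $3\mapsto(1,2)$. Should one prefer a self-contained argument avoiding the abstract Jacobi identity, an alternative route is a direct coefficient computation using the explicit bracket formula \eqref{eq:Poisson-bracket-form}: expanding \eqref{eq:Q1^*={Q2,Q3}/tau} reduces the claim to showing that $\{Q_1,\{Q_2,Q_3\}\}+\{Q_2,\{Q_3,Q_1\}\}+\{Q_3,\{Q_1,Q_2\}\}\equiv 0$, and one verifies, exactly as in the proof of Lemma~\ref{lemma::<phi,phi^*>=0}, that on each monomial coefficient ($s^4,s^3t,s^2t^2,st^3,t^4$) every term appears twice with opposite signs and so cancels. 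I would present the Jacobi-identity version as the proof and mention the direct verification only as a remark.
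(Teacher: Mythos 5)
Your proof is correct and coincides with the paper's own argument: substitute $Q_i^*=\{Q_j,Q_k\}/\tau$ from \eqref{eq:Q1^*={Q2,Q3}/tau}, factor out $1/\tau$, and conclude by the Jacobi identity in the Lie algebra of quadratic forms. The extra remark on a direct coefficient check is a harmless alternative, but the core route is identical to the paper's.
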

\begin{proof}
Using equalities \eqref{eq:Q1^*={Q2,Q3}/tau} we get the equality
\[\{Q_1,Q_1^*\}+\{Q_2,Q_2^*\}+\{Q_3,Q_3^*\}=
\frac{1}{\tau}\left(\left\{Q_1,\{Q_2,Q_3\}\right\}+\left\{Q_2,\{Q_3,Q_1\}\right\}+\left\{Q_3,\{Q_1,Q_2\}\right\}\right)\]
whose right-hand side term is zero by the Jacobi identity. 
\end{proof}

{\footnotesize 
  
}

\end{document}